\newenvironment{thisnote}{\par\color{gray}}{\par}
\def\@seccntformat#1{\protect\makebox[0pt][r]{\csname the#1\endcsname\hspace{12pt}}}\makeatother
\definecolor{mycolor1}{rgb}{0.105882,0.619608,0.466667}
\definecolor{mycolor2}{rgb}{0.85098,0.372549,0.00784314}
\definecolor{mycolor3}{rgb}{0.458824,0.439216,0.701961}
\definecolor{mycolor4}{rgb}{0.905882,0.160784,0.541176}
\definecolor{mycolor5}{rgb}{0.4,0.65098,0.117647}
\definecolor{mycolor6}{rgb}{0.65098,0.462745,0.113725}
\definecolor{mycolor7}{rgb}{0.901961,0.670588,0.00784314}
\definecolor{mycolor8}{rgb}{0.4,0.4,0.4}
\definecolor{mycolor9}{rgb}{0.301961,0,0.294118}
\definecolor{mycolor10}{rgb}{0.0313725,0.25098,0.505882}
\DeclareMathOperator{\arccosh}{arccosh}
\newcommand{\cutchunk}[1]{}
\newcommand{\removesafe}[1]{}
\newcommand{\calA}{\mathcal{A}}
\newcommand{\calB}{\mathcal{B}}
\newcommand{\calP}{\mathcal{P}}
\newcommand{\calF}{\mathcal{F}}
\newcommand{\calK}{\mathcal{K}}
\newcommand{\calM}{\mathcal{M}}
\newcommand{\calN}{\mathcal{N}}
\newcommand{\calO}{\mathcal{O}}
\newcommand{\Rmcurv}{\mathrm{Rm}}
\newcommand{\diag}{\mathrm{diag}}
\newcommand{\Vol}{\mathrm{Vol}}
\newcommand{\dist}{\mathrm{dist}}
\newcommand{\floor}[1]{\lfloor #1 \rfloor}
\newcommand{\grad}{\mathrm{grad}}
\newcommand{\Hess}{\mathrm{Hess}}
\newcommand{\inner}[2]{\left\langle{#1},{#2}\right\rangle}
\newcommand{\Kup}{{K_{\mathrm{up}}}}
\newcommand{\Klo}{{K_{\mathrm{lo}}}}
\newcommand{\norm}[1]{\left\|{#1}\right\|}
\newcommand{\Rball}{R_{\mathrm{ball}}}
\newcommand{\Rd}{{\mathbb{R}^{d}}}
\newcommand{\reals}{{\mathbb{R}}}
\newcommand{\Sd}{\mathbb{S}^{d}}
\newcommand{\Sym}{\mathrm{Sym}}
\newcommand{\T}{\mathrm{T}}
\newcommand{\trace}{\mathrm{Tr}}
\newcommand{\xorigin}{x_{\mathrm{ref}}}
\newcommand{\aref}[1]{\hyperref[#1]{A\ref{#1}}}
\newtheorem{assumption}{A\ignorespaces} 
\title[Negative curvature obstructs acceleration for geodesically convex optimization]{Negative curvature obstructs acceleration for {strongly} geodesically convex optimization, even with exact first-order oracles}
\begin{document}

\maketitle

\begin{abstract}
\citet{hamilton2021nogo} showed that, {in certain regimes}, it is not possible to accelerate Riemannian gradient descent in the {hyperbolic plane} if we restrict ourselves to algorithms which make queries in a (large) {bounded domain} and which receive gradients and function values corrupted by a (small) amount of {noise}.
We show that acceleration remains unachievable for any deterministic algorithm which receives exact gradient and function-value information (unbounded queries, no noise). 
Our results hold for a large class of Hadamard manifolds including hyperbolic spaces and the symmetric space $\mathrm{SL}(n) / \mathrm{SO}(n)$ of positive definite $n \times n$ matrices of determinant one.
{This cements a surprising gap between the complexity of convex optimization and geodesically convex optimization: for hyperbolic spaces, Riemannian gradient descent is optimal on the class of smooth and strongly geodesically convex functions (in the regime where the condition number scales with the radius of the optimization domain).}
The key idea for proving the lower bound consists of perturbing squared distance functions with sums of bump functions chosen by a resisting oracle.
\end{abstract}

\begin{keywords}%
  {geodesic convexity; Riemannian optimization; curvature; lower bounds; acceleration}%
\end{keywords}


\section{Introduction}
We consider optimization problems of the form
\begin{align}
	\min_{x \in \calM} f(x)
	\tag{P}
	\label{eq:P}
\end{align}
where $\calM$ is a Riemannian manifold and $f \colon \calM \rightarrow \reals$ is a smooth strongly geodesically convex (g-convex) function (we review technical geometric terms in Section~\ref{prelims}).
When $\calM$ is a Euclidean space, problem~\eqref{eq:P} amounts to smooth strongly convex optimization.

Several problems of interest are non-convex but can be recast as g-convex optimization problems, which means global solutions can be found efficiently.
{Examples and applications in data science, statistics and machine learning include: computing intrinsic means or medians on curved spaces~\citep{karcher1977riemannian,YuanHAG20} such as for computational anatomy~\citep{Fletcher2009TheGM} or phylogenetics~\citep[Ch.~8]{bacak2014hadamard}, metric learning~\citep{zadeh2016geometricmetriclearning}, computing optimistic likelihoods~\citep{nguyen2019calculating}, parameter estimation for mixture models~\citep{hosseini2015manifoldgmm}, robust covariance estimation and subspace recovery~\citep{auderset2005angular,wiesel2012gconvexity,zhang2012gconvexity,wiesel2015gconvexity,sra2015conicgeometricoptimspd,ciobotaru2020geometrical,franksmoitra2020}, estimation for matrix normal models ~\citep{tang2021integrated,amendola2021,franks2021neartyler}, sampling on Riemannian manifolds~\citep{goyalsampinggconvexity}, and landscape analysis such as for matrix completion~\citep{ahn2021riemannianmatrixfact}.  
In mathematics and theoretical computer science, applications of g-convex optimization include computing Brascamp--Lieb constants~\citep{sravishnoibracsampliebconstant}, the null cone membership problem and polynomial identity testing---see~\citep{allenzhuoperatorsplitting,burgissernoncommutativeoptimization,franksreichenback2021} and references therein.}
More generally, optimization on manifolds also has applications in scientific computing, imaging, communications and robotics~\citep{AMS08,hu2020introoptimmanifolds,boumal2020intromanifolds}.

Given these applications, it is natural to ask for fast algorithms for the g-convex optimization problem~\eqref{eq:P}.
We consider algorithms which have access to an oracle providing first-order information (function values and gradients), and consider the following computational task:
\begin{quote} \label{problem1}
\emph{Let $f \colon \calM \rightarrow \reals$ be a $\mu$-strongly g-convex function which is $L$-smooth in a geodesic ball $B$ of radius $r$ and whose minimizer $x^*$ lies in $B$ (see Sections~\ref{introproblemclass} and~\ref{gconvexfunctionsdetails}).
Find a point $x \in \calM$ within distance $\frac{r}{5}$ of $x^*$.}\footnote{It suffices to ask how many queries are required to reduce the uncertainty radius $r$ by a factor $\epsilon$, for any fixed $\epsilon \in (0, 1)$.  Throughout we take $\epsilon = \frac{1}{5}$, following~\citet{hamilton2021nogo}.}
\end{quote}
The radius $r$ represents our initial uncertainty about the location of the minimizer of $f$.
Thus we ask: how many queries are required to reduce our uncertainty by a constant factor (five in this case)?
When $\calM = \reals^d$ is Euclidean space, g-convexity is equivalent to convexity, and it is well known that (projected) gradient descent (GD) uses at most $O(\kappa)$ queries to solve this computational task, where  
the condition number $\kappa = \frac{L}{\mu}$ represents the {conditioning} of the problem.
In contrast, Nesterov's accelerated gradient method (NAG) (adapted to the ball domain) uses $\tilde{O}\big(\sqrt{\kappa}\big)$ queries~\citep[Thm.~6 and Sec.~5.1]{nesterovacceleratedgradientinconvexset2007}, and that is optimal~\citep[Ch. 2]{nesterov2004introductory}.\footnote{Throughout, $O$ and $\Omega$ do \emph{not} hide the parameter $r$.   Also, $\tilde{O}$ and $\tilde{\Omega}$ hide logarithmic factors in $\kappa=\frac{L}{\mu}$ and $r$.}

For the moment, let us consider the case where $\calM$ is a hyperbolic space, meaning it has constant negative curvature.
{\citet[App.~D]{davidmr} show that a version of projected Riemannian gradient descent (RGD) uses at most {$\tilde O(\kappa)$} gradient queries to solve the computational task described above.}\footnote{{In the published version of this paper, we cite~\citet{zhang2016complexitygeodesicallyconvex} (see Appendix~\ref{curvdependenceandbestratesforRGD}); however, as pointed out by~\citet{davidmr}, the proof in~\citep[Prop.~15]{zhang2016complexitygeodesicallyconvex} does not work in the constrained case.  \citet[App.~D]{davidmr} give a version of RGD for constrained optimization with rate $\tilde O(\kappa)$}.}
This matches the rate of gradient descent in Euclidean spaces.  We are led to the following question:
\begin{quote}
\emph{Is there an algorithm for g-convex optimization on hyperbolic spaces which solves the above computational task in $\tilde O(\sqrt{\kappa})$ queries?}
\end{quote}
\noindent In this paper, we show that \emph{no such accelerated algorithm exists}, and in fact a version of {RGD is optimal} for smooth strongly g-convex optimization on hyperbolic spaces, in the regime $r = \Theta(\kappa)$ (see Section~\ref{mainresultssection}).\footnote{To establish lower bounds when $\kappa \gg r \gg 1$, further ideas seem to be necessary.}
Indeed, a number of algorithms have been developed to address this question~\citep{liu2017gconvexacceleration,zhang2018estimatesequence,ahn2020nesterovs,jinsra2021riemannianacceleration,martinezrubio2021global,lezcanocasado2020adaptive,alimisis2019continuoustime,alimisis2021momentum,huang2021sparsepca,duruisseaux2021ODEacceleration,franca2021b,franca2021a}, but none are proven to achieve the fully accelerated rate of $\tilde O(\sqrt{\kappa})$.  

Our analysis builds on the recent work of~\citet{hamilton2021nogo}, who show that acceleration on the hyperbolic plane is impossible when function values and gradients are corrupted by noise, even when this noise is very small.
Their argument introduces a number of important ideas, most notably a key geometric property of the hyperbolic plane which we call the ``ball-packing property'': for $r > 0$ sufficiently large, any geodesic ball of radius $r$ in the hyperbolic plane contains $N = e^{\Theta(r)}$ disjoint open geodesic balls of radius $\frac{r}{4}$.
Using several of~\citet{hamilton2021nogo}'s ideas, plus additional ideas we introduce, we prove that acceleration is impossible even when function values and gradients are known exactly, i.e., not corrupted by noise.

In addition to proving lower bounds for queries yielding exact information (our main contribution), we improve upon the results of~\citet{hamilton2021nogo} in several ways.  In Section~\ref{subsecballpackingproperty}, we establish the ball-packing property for a large class of Hadamard manifolds, including the symmetric space $\mathrm{SL}(n) / \mathrm{SO}(n)$ of positive definite matrices with determinant one which is important in applications, at least in part because the Riemannian metric on $\mathrm{SL}(n) / \mathrm{SO}(n)$ is the Fisher--Rao information metric for covariance matrices of Gaussian distributions~\citep{skovgaard1984riemgeogaussians,statsofPDmatricesfisher2006}.  In turn, we show that acceleration is impossible on this large class of Hadamard manifolds.  
\citet{hamilton2021nogo} also restrict all algorithms to query in a bounded domain.  We remove this assumption using a reduction which, starting from hard functions designed for algorithms making bounded queries, produces hard functions for algorithms which can make unbounded queries.  
\cutchunk{
\item \citet{hamilton2021nogo} prove lower bounds for strongly g-convex optimization.  In Section~\ref{nonstronglyconvexcaseextension}, we extend our lower bounds to nonstrongly g-convex optimization.  {In the appropriate regime,} acceleration remains impossible.}


\subsection{Key ideas: building hard g-convex functions}
\citet{hamilton2021nogo} establish their lower bound by exhibiting a distribution on strongly g-convex functions which is challenging for any algorithm receiving function information corrupted by noise.
Amazingly, the hard distribution they consider is simply a uniform distribution over a finite number of Riemannian squared distance functions.
Intuitively, this distribution is difficult for algorithms because geodesics diverge rapidly in hyperbolic space (see Lemma~\ref{geodesicsdiverge}), so a small amount of noise in a gradient is magnified.
However, like in Euclidean spaces, the Riemannian gradient of a squared distance function points directly towards the function's minimizer.
Therefore, squared distance functions are not enough to go beyond noisy oracles. 

\cutchunk{
Classical lower bounds in smooth convex optimization---``worst function in the world'' arguments~\citep[Ch.~2]{nesterov2004introductory}---are based on quadratic functions.
Thus, we at first considered classes of quadratic-like functions on hyperbolic spaces which generalize the Riemannian squared distance functions.
Ultimately, we were unable to make this work.
}

The key idea we introduce is to use squared distance functions \emph{perturbed by a resisting oracle}, i.e., functions $f(x) = \frac{1}{2}\dist(x, x^*)^2 + H(x)$ with $\norm{\Hess H(x)}$ small.
The perturbations $H$ are not g-convex, but since their Hessian is small, the perturbed functions $f$ retain strong g-convexity.
Each perturbation is constructed as a \emph{sum of bump functions}, that is, $C^\infty$ functions with compact support.

\cutchunk{
We note that the hard functions \citet{hamilton2021nogo} construct (as well as the ones we construct) are significantly different from the hard functions typically used to construct lower bounds for smooth convex optimization in Euclidean space.
In those ``worst function in the world'' arguments, it is crucial that the dimension of the underlying Euclidean space is allowed to be arbitrarily large.
This is not the case here.  For example, even in the hyperbolic plane (dimension $d=2$), Theorem~\ref{cor1} rules out acceleration.
}


\cutchunk{
\citet{hamilton2021nogo} prove their lower bound for algorithms which make queries in a bounded domain.   To allow algorithms to make unbounded queries, we introduce a \emph{reduction} which, starting from hard functions designed for algorithms making bounded queries, produces hard functions for algorithms which can make unbounded queries.
Reductions---which are certainly not new to complexity analysis---feature prominently in applications of our main technical theorem (stated and proved in Section~\ref{maintheoremandproof}).
For example, in Section~\ref{maintheoremandproof} we focus on deriving lower bounds for strongly g-convex functions.
Then, in Section~\ref{nonstronglyconvexcaseextension}, we provide a reduction which shows how to extend our results to the differentiable nonstrongly g-convex case ($\mu = 0$).
}

\subsection{Algorithm and problem classes} \label{introproblemclass}
It is crucial to define the class of functions for which we prove lower bounds.
A natural function class to consider is the set of functions $f \colon \calM \rightarrow \reals$ which are $L$-smooth\footnote{We say a function is $L$-smooth if it has {$L$-Lipschitz Riemannian gradient} (Definition~\ref{definitionLsmoothness}).  When we say a function is smooth, we mean that it is $L$-smooth for some $L \geq 0$.  We say a function is $C^{\infty}$ if it is infinitely differentiable.} and $\mu$-strongly g-convex on all of $\calM$ (see Section~\ref{gconvexfunctionsdetails}).
Yet, if $\calM$ has sectional curvatures upper bounded by some $\Kup < 0$ or if $\calM = \mathrm{SL}(n) / \mathrm{SO}(n)$, then this class is empty.  It is impossible for a function to be both $L$-smooth and strongly g-convex on all of $\calM$ if $\Kup < 0$ or if $\calM = \mathrm{SL}(n) / \mathrm{SO}(n)$.\footnote{See Proposition~\ref{geometryinfluencesobjective} in Appendix~\ref{geometryinfluencesfunctions}, which is an extension of a result due to~\citet{hamilton2021nogo}.}

A simple remedy for this issue is to consider minimizing $\mu$-strongly g-convex functions which are $L$-smooth in a ball of finite radius $r$.
This is especially natural since whether acceleration is possible depends on how $r$ compares with $\kappa$---this will become clearer in Section~\ref{comparisontolit}.
Let $\calM$ be a Hadamard manifold, and let $B(\xorigin, r) \subseteq \calM$ denote the closed geodesic ball centered at $\xorigin \in \calM$ of radius $r$ (see Section~\ref{hadamardmanssections}).
We consider the following class of real-valued functions $f \colon \calM \rightarrow \reals$.
\begin{definition} \label{deffirstfctclass}
For $\kappa\geq 1, r > 0, \xorigin \in \calM$, let $\calF_{\kappa, r}^{\xorigin}(\calM)$ be the set of $C^{\infty}$ functions on $\calM$ which
\begin{itemize}
\item are $\mu$-strongly g-convex in all of $\calM$ with $\mu > 0$;
\item are $L$-smooth in $B(\xorigin, r)$ with $\kappa = \frac{L}{\mu}$; and
\item have a unique global minimizer $x^*$ which lies in the ball $B(\xorigin, \frac{3}{4} r)$.
\end{itemize}
\end{definition}
In the third item of Definition~\ref{deffirstfctclass}, we require $\frac{3}{4} r$ instead of $r$ to ensure that the ball $B(x^*, \frac{r}{5})$ is contained in the interior of $B(\xorigin, r)$.

We impose no restrictions on the algorithm except that it is deterministic.
A deterministic first-order algorithm $\calA$ on $\calM$ is an initial point $x_0$ and a sequence of maps $(\calA_{k} \colon (\reals \times \T \calM)^k \rightarrow \calM)_{k \geq 1}$.
Running an algorithm $\calA$ on a cost function $f \colon \calM \rightarrow \reals$ produces iterates $x_0, x_1, x_2, \ldots$ given by $x_{k} = \calA_k((f_0, (x_0, g_0)), \ldots, (f_{k-1}, (x_{k-1}, g_{k-1})))$, where $f_\ell = f(x_\ell)$ and $g_\ell = \grad f(x_\ell)$ constitute the past function value and gradient information gathered thus far.
It is an open question whether the lower bounds in this paper can be extended to randomized algorithms.
\cutchunk{
\noindent The function class captures our knowledge and ignorance about the cost function.  For example, $\calF_{\kappa, r}^{\xorigin}(\calM)$ expresses knowledge we have about (1) the conditioning of $f$ and (2) the rough location of the minimizer of $f$.

Given $f \in \calF_{\kappa, r}^{\xorigin}(\calM)$, we ask: 
\begin{quote} \label{problem1}
\emph{How many queries are required for an algorithm to find a point $x \in \calM$ within distance $\frac{r}{5}$ of $x^*$?}
\end{quote}
When $\calM$ is a Euclidean space, this computational task can be solved in $\tilde{O}(\sqrt{\kappa})$ queries using NAG adapted to the ball domain---see Theorem~\ref{nesterovproximalalgo} due to~\citet{nesterovacceleratedgradientinconvexset2007}.
In contrast, we show that, {in general}, $\tilde{\Omega}(\kappa)$ queries are necessary when $\calM$ is a negatively curved space.}


\cutchunk{
We also prove lower bounds about nonstrongly g-convex optimization ($\mu = 0$).
The class of functions which are $L$-smooth and (nonstrongly) g-convex on all of $\calM$ is not empty, so we do not encounter the same issue as in the strongly g-convex case.
We prove lower bounds about the subset of $L$-smooth and g-convex functions which are strictly g-convex on $\calM$ and have a unique minimizer.
As a result, our lower bounds also apply to the class of functions which are $L$-smooth and nonstrongly g-convex.
\begin{definition} \label{defsecondfctclass}
For every $L > 0, r > 0$ and $\xorigin \in \calM$, define $\tilde{\calF}_{L, r}^{\xorigin}(\calM)$ to be the set of all $C^{\infty}$ functions $f \colon \calM \rightarrow \reals$ which 
\begin{itemize}
\item are strictly g-convex in all of $\calM$;
\item are $L$-smooth in all of $\calM$; and
\item have a unique global minimizer $x^*$ which lies in the ball $B(\xorigin, r)$.
\end{itemize}
\end{definition}
Given $f \in \tilde{\calF}_{L, r}^{\xorigin}(\calM)$, we ask: 
\begin{quote}
\emph{For $\epsilon \in (0, 1)$, how many queries are required for an algorithm to find a point $x \in \calM$ such that $f(x) - f(x^*) \leq \epsilon \cdot \frac{1}{2} L r^2$?}
\end{quote}
Note that prior to making any queries, $\xorigin$ is our best guess for $x^*$, and we know that $f(\xorigin) - f(x^*) \leq \frac{1}{2} L r^2$.  
Therefore, we are asking how many queries are required to reduce our ignorance about the optimality gap by a factor $\epsilon$.
When $\calM$ is a Euclidean space, this computational task can be solved in $O(\frac{1}{\sqrt{\epsilon}})$ queries using NAG~\citep{nesterovagd1983}.
In contrast, we show that, {in general}, $\tilde{\Omega}(\frac{1}{\epsilon})$ queries are necessary when $\calM$ is a negatively curved space.
}

\subsection{Main results} \label{mainresultssection}
We now state our main results about the impossibility of acceleration for the function class $\calF_{\kappa, r}^{\xorigin}(\calM)$ in Definition~\ref{deffirstfctclass}.
There is some leeway in choosing the constants below.

\begin{theorem} \label{cor1}
Let $\calM$ be a Hadamard manifold of dimension $d \geq 2$ whose sectional curvatures are in the interval $[\Klo, \Kup]$ with $\Kup < 0$.
Let $\xorigin \in \calM$, $\kappa \geq 1000 \sqrt{\frac{{\Klo}}{{\Kup}}}$ and define $r > 0$ such that $\kappa = 12 r \sqrt{-\Klo}+9$.
For every deterministic first-order algorithm $\calA$, there is a function $f \in \calF_{\kappa, r}^{\xorigin}(\calM)$ such that algorithm $\calA$ requires at least
\begin{align*}
&\Bigg\lfloor \sqrt{\frac{\Kup}{\Klo}} \cdot \frac{\kappa}{1000 \log\big(10 \kappa\big)}\Bigg\rfloor = \tilde{\Omega}\bigg(\sqrt{\frac{\Kup}{\Klo}} \cdot\kappa\bigg)
\end{align*}
queries in order to find a point $x \in \calM$ within distance $\frac{r}{5}$ of the minimizer of $f$.
\end{theorem}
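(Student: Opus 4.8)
The plan is a resisting-oracle argument along the lines of~\citet{hamilton2021nogo}, fed by the ball-packing property and finished by a reduction that removes the bounded-domain restriction. Fix a deterministic first-order algorithm $\calA$. We will build an adversarial $f\in\calF_{\kappa,r}^{\xorigin}(\calM)$ of the form $f(x)=\tfrac12\dist(x,x^*)^2+H(x)$, where $H$ is a finite sum of $C^\infty$ bump functions of uniformly tiny Hessian, committing to $x^*$ and $H$ only at the end of the interaction. First, invoke the ball-packing property (Section~\ref{subsecballpackingproperty}): since the sectional curvatures lie in $[\Klo,\Kup]$ with $\Kup<0$ and $r$ is large --- which is exactly what the hypotheses $\kappa\geq 1000\sqrt{\Klo/\Kup}$ and $\kappa=12r\sqrt{-\Klo}+9$ buy us --- there are points $p_1,\dots,p_N\in B(\xorigin,\tfrac34 r)$, pairwise at distance more than $\tfrac25 r$, with $\log N=\Theta\big((d-1)\sqrt{-\Kup}\,r\big)$. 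Each $p_i$ is an admissible site for the minimizer $x^*$, and since two candidates are more than $\tfrac25 r$ apart, a single output point can be within $\tfrac r5$ of at most one of them.

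The oracle maintains an ``alive'' set $V_k$ with the invariant that, after queries $x_1,\dots,x_k$ answered by $(\phi_1,g_1),\dots,(\phi_k,g_k)$, every $i\in V_k$ carries a function $f_i=\tfrac12\dist(\cdot,q_i)^2+H_i\in\calF_{\kappa,r}^{\xorigin}(\calM)$ with $q_i$ in a tiny ball around $p_i$ and $H_i$ a sum of bumps supported near $x_1,\dots,x_k$, consistent with all answers so far ($f_i(x_\ell)=\phi_\ell$ and $\grad f_i(x_\ell)=g_\ell$ for $\ell\leq k$); on each new query it returns the pair $(\phi_{k+1},g_{k+1})$ that keeps the largest number of candidates consistent. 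The crux is to show that a single \emph{exact} query destroys only a $\poly(\kappa)$ fraction of the alive candidates, say $|V_{k+1}|\geq |V_k|/\kappa^{O(d)}$. This rests on two facts: (i) the answer $(\phi_{k+1},g_{k+1})$ need only match $\tfrac12\dist(x_{k+1},q_i)^2$ and $-\Log_{x_{k+1}}(q_i)$ up to a correction realizable by a bump near $x_{k+1}$ of admissibly small Hessian; and (ii) because geodesics emanating from $x_{k+1}$ diverge exponentially (Lemma~\ref{geodesicsdiverge}), for candidates far from $x_{k+1}$ the \emph{directions} $-\Log_{x_{k+1}}(q_i)$ are already indistinguishable at the scale of the packing, so essentially only the scalar $\dist(x_{k+1},q_i)$ can be probed --- and the bump blurs even that. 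One must also verify that the bumps selected over all $k=O(\log N)$ queries and all candidates can be laid out so that the accumulated perturbation keeps $\norm{\Hess H_i}$ below a small multiple of $\mu$ everywhere.

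As long as $|V_k|\geq 2$, some valid $f_i$ has its minimizer more than $\tfrac r5$ away from $\calA$'s current iterate, so $\calA$ has not solved the task; by the previous step $|V_k|\geq N/\kappa^{O(dk)}$, hence $|V_k|\geq 2$ for all $k=\Omega\big(\tfrac{\log N}{d\log\kappa}\big)=\Omega\big(\tfrac{\sqrt{-\Kup}\,r}{\log\kappa}\big)$, and choosing $f:=f_i$ for any surviving index $i$ exhibits the hard function together with the query lower bound. Converting to $\kappa$ via $\sqrt{-\Kup}\,r=\sqrt{\Kup/\Klo}\cdot\sqrt{-\Klo}\,r=\tfrac{1}{12}\sqrt{\Kup/\Klo}\,(\kappa-9)$ and absorbing constants produces exactly $\big\lfloor\sqrt{\Kup/\Klo}\cdot\frac{\kappa}{1000\log(10\kappa)}\big\rfloor$, with $\kappa\geq 1000\sqrt{\Klo/\Kup}$ ensuring both that the packing estimate applies and that the floor is nonzero.

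It remains to confirm $f\in\calF_{\kappa,r}^{\xorigin}(\calM)$ and to handle unbounded queries. For membership: $\tfrac12\dist(\cdot,x^*)^2$ is $1$-strongly g-convex on all of $\calM$, and on $B(\xorigin,r)\subseteq B(x^*,\tfrac74 r)$ its Hessian eigenvalues are at most $\sqrt{-\Klo}\cdot\tfrac74 r\coth\!\big(\sqrt{-\Klo}\cdot\tfrac74 r\big)$, which is $O(\sqrt{-\Klo}\,r)$, by the Hessian comparison for squared distance; adding $H$ with $\norm{\Hess H}\leq\mu/\mathrm{const}$ keeps $f$ $\mu$-strongly g-convex on $\calM$ and $L$-smooth in $B(\xorigin,r)$ with $L/\mu=\kappa=12r\sqrt{-\Klo}+9$, while leaving the minimizer in the tiny ball around a $p_i\subseteq B(\xorigin,\tfrac34 r)$. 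For unbounded queries, the oracle just described already answers arbitrary queries; alternatively one proves a bounded-query version first and applies the reduction --- which turns hard instances for bounded-query algorithms into hard instances for unbounded-query algorithms --- at the cost of only universal constants. I expect the crux (the ``$\poly(\kappa)$ fraction'' step) to be the main obstacle: making it rigorous requires an explicit bump construction --- centers, radii, heights --- that simultaneously reconciles the committed answers with $q_i$ near $p_i$, keeps $\norm{\Hess H_i}$ small despite many overlapping bumps, and pins the minimizer near $p_i$; this is precisely where exponential geodesic divergence does the work, and the rest is geometric bookkeeping built on the ball-packing property and the squared-distance Hessian comparison.
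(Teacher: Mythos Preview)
Your architecture matches the paper's---ball-packing, resisting oracle with bump-perturbed squared distances, reduction to lift the bounded-query result---but the mechanism you propose for the crux is not the one that works. You attribute the ``each query kills only a $\kappa^{O(d)}$ fraction'' step to Lemma~\ref{geodesicsdiverge}: geodesics diverge, so the directions $-\exp_{x_k}^{-1}(q_i)$ become indistinguishable and ``only the scalar distance can be probed.'' In the paper, geodesic divergence is used \emph{only} to establish the ball-packing (Lemma~\ref{lemmaNbig}). The oracle step relies on the opposite comparison, the \emph{nonpositive}-curvature inequality $\|\exp_{x_k}^{-1}(z_j)-\exp_{x_k}^{-1}(\xorigin)\|\le\dist(z_j,\xorigin)\le r$ (Proposition~\ref{TopogonovEuclidean}), which places all candidate gradients in a tangent-space ball of radius $O(r)$, followed by a pure pigeonhole/volume argument (Lemma~\ref{smallandlargeballslemma}): around each $\grad f_{j,k}(x_k)$ sits a ball whose radius is the bump's gradient budget, and some $g_k$ lies in at least a $(\text{small radius}/\text{big radius})^d$ fraction of them. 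No angular smallness is invoked, and your heuristic fails outright when $x_k\in B(\xorigin,r)$, where the directions to the $z_j$ spread over angles up to $\pi$.

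The second gap is what happens when $x_k$ is arbitrarily close to a previous query $x_\ell$. To keep the already-committed answer at $x_\ell$ intact, the bump at $x_k$ must have support radius at most $\tfrac14\dist(x_k,x_\ell)$, which forces its gradient budget down to $\Theta(\dist(x_k,x_\ell))$; nothing in your outline prevents the volume ratio from collapsing as $\dist(x_k,x_\ell)\to 0$. The paper's fix (Case~2 in Section~\ref{proofoflemmaSec3} and Appendix~\ref{case2}) is that all the $f_{j,k}$ share the gradient $g_\ell$ at $x_\ell$, so by Lipschitzness their gradients at $x_k$ cluster in a ball of radius $O(L\,\dist(x_k,x_\ell))$ around $P_{x_\ell\to x_k}g_\ell$; the big and small radii scale together and the ratio survives. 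This two-case split, together with the support rule $\Rball^{(k)}=\min\{\tfrac14\dist(x_k,x_\ell),\tfrac r8\}$, is what closes the construction. Once it is in place, your worry about accumulated overlapping bumps dissolves: each of the at most $2w$ bumps contributes Hessian norm at most $1/(4w)$, so the sum is at most $1/2$ regardless of overlap, and the minimizer stays exactly at $z_j$ because no bump support ever contains it.
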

\begin{corollary}
Let $\calM$ be a hyperbolic space ($\Klo = \Kup = K < 0$), $\xorigin \in \calM$, $\kappa \geq 1000$ and define $r > 0$ such that $\kappa = 12 r \sqrt{-K}+9$.  Among deterministic first-order algorithms, the projected gradient descent method in~\citep[App.~D]{davidmr} is optimal (up to log factors) on the function class $\calF_{\kappa, r}^{\xorigin}(\calM)$.
\end{corollary}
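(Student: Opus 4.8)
The plan is to observe that this corollary is a one-line specialization of Theorem~\ref{cor1}: I simply set $\Klo = \Kup = K < 0$ (the defining property of a hyperbolic space) and match the resulting lower bound against the known upper bound for projected RGD. First I would note that with $\Klo = \Kup = K$ the hypothesis $\kappa \geq 1000\sqrt{\Klo/\Kup}$ of Theorem~\ref{cor1} reads $\kappa \geq 1000$, which is exactly the hypothesis stated in the corollary, and the relation $\kappa = 12 r\sqrt{-\Klo}+9$ becomes $\kappa = 12 r\sqrt{-K}+9$, again matching. Plugging $\Kup/\Klo = 1$ into the conclusion of Theorem~\ref{cor1}, the lower bound on the number of queries becomes $\lfloor \kappa / (1000\log(10\kappa)) \rfloor = \tilde\Omega(\kappa)$: for every deterministic first-order algorithm there is an $f \in \calF_{\kappa,r}^{\xorigin}(\calM)$ forcing at least this many queries to locate $x^*$ to within $r/5$.

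Next I would invoke the matching upper bound: by~\citet[App.~D]{davidmr}, the projected Riemannian gradient descent method solves the computational task on $\calF_{\kappa,r}^{\xorigin}(\calM)$ (find a point within distance $r/5$ of $x^*$) using $\tilde O(\kappa)$ gradient queries. Since the regime here is $r = \Theta(\kappa)$ (equivalently $\kappa = 12r\sqrt{-K}+9$), this $\tilde O(\kappa)$ bound holds with $r$ tied to $\kappa$ as in the corollary, so the $\tilde O$ and $\tilde\Omega$ differ only by factors polylogarithmic in $\kappa$ (and in $r$, which is itself $\Theta(\kappa)$). Hence the query complexity of this projected RGD on the class $\calF_{\kappa,r}^{\xorigin}(\calM)$ is $\tilde\Theta(\kappa)$, and no deterministic first-order algorithm can do asymptotically better than logarithmic factors; this is precisely the assertion that projected RGD is optimal up to log factors.

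The only genuine content to check is bookkeeping: that the constant hypotheses of Theorem~\ref{cor1} specialize correctly (they do, as above), and that the upper bound of~\citet[App.~D]{davidmr} is stated for exactly the same function class and accuracy target ($r/5$) as the lower bound, in the same $r=\Theta(\kappa)$ regime. There is no real obstacle here — the heavy lifting is entirely in Theorem~\ref{cor1} and in the cited analysis of projected RGD; the corollary is just the statement that the two bounds meet. I would write it up in two sentences: one deriving the $\tilde\Omega(\kappa)$ lower bound from Theorem~\ref{cor1} with $\Klo=\Kup=K$, and one matching it to the $\tilde O(\kappa)$ upper bound of~\citet[App.~D]{davidmr}.
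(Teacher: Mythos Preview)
Your proposal is correct and matches the paper's intent: the corollary is stated without proof in the paper precisely because it is the immediate specialization of Theorem~\ref{cor1} to $\Klo=\Kup=K$ (giving the $\tilde\Omega(\kappa)$ lower bound) combined with the $\tilde O(\kappa)$ upper bound for projected RGD from~\citep[App.~D]{davidmr}. There is nothing more to it.
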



\cutchunk{\begin{corollary} \label{cor2}
Let $\calM$ be a hyperbolic space of curvature $K < 0$.
Given $\xorigin \in \calM$ and $\kappa \geq 1000$, define $r > 0$ such that $\kappa = 12 r \sqrt{-K} + 9$.

For every deterministic first-order algorithm $\calA$, there is a function $f \in \calF_{\kappa, r}^{\xorigin}(\calM)$ such that algorithm $\calA$ requires at least
\begin{align*}
\bigg \lfloor \frac{\kappa}{1000 \log(10 \kappa)} \bigg\rfloor = \tilde \Omega(\kappa)
\end{align*}
queries in order to find a point $x \in \calM$ within distance $\frac{r}{5}$ of the minimizer of $f$.
\end{corollary}}

The symmetric space $\mathrm{SL}(n) / \mathrm{SO}(n)$ does not have strictly negative curvature as required by Theorem~\ref{cor1}, but we can still show that acceleration is unachievable if $n \geq 2$ is held fixed as $\kappa$ grows.
\begin{theorem} \label{cor3}
Let $\xorigin \in \mathrm{SL}(n) / \mathrm{SO}(n)$, $\kappa \geq 1000 n$ and define $r > 0$ such that $\kappa = 6 r \sqrt{2} + 9$.
For every deterministic first-order algorithm $\calA$,
there is a function $f \in \calF_{\kappa, r}^{\xorigin}(\mathrm{SL}(n) / \mathrm{SO}(n))$
such that the algorithm $\calA$ requires at least $\big\lfloor \frac{1}{n} \cdot \frac{\kappa}{1000 \log(10 \kappa)}\big\rfloor = \tilde{\Omega}\big(\frac{1}{n} \cdot \kappa\big)$
queries in order to find a point $x$ within distance $\frac{r}{5}$ of the minimizer of $f$.
\end{theorem}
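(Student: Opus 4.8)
The plan is to obtain Theorem~\ref{cor3} as another instance of the resisting-oracle construction behind Theorem~\ref{cor1}, now with $\calM = \mathrm{SL}(n)/\mathrm{SO}(n)$. Two geometric facts are needed. First, $\mathrm{SL}(n)/\mathrm{SO}(n)$, i.e. the positive definite determinant-one matrices with the affine-invariant (Fisher--Rao) metric, is a Hadamard manifold whose sectional curvatures lie in $[\Klo,0]$ with $\Klo=-\tfrac12$ in this normalization. The fact that $\Kup=0$ is exactly why Theorem~\ref{cor1} does not apply directly, but it causes no trouble for the construction: the hard functions are $f(x)=\tfrac12\dist(x,x^*)^2+H(x)$ with $H$ a sum of bump functions of negligible Hessian, and (a) strong g-convexity of $f$ only needs $\Hess\big(\tfrac12\dist(\cdot,x^*)^2\big)\succeq\Id$, which holds whenever $\Kup\leq 0$, whereas (b) the $L$-smoothness of $f$ on $B(\xorigin,r)$ is controlled purely by the lower bound $\Klo$, yielding $L=O(1+r\sqrt{-\Klo})$ and hence $\kappa = 12r\sqrt{-\Klo}+9 = 6\sqrt2\,r+9$ once the bump slack is absorbed.

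Second, I would invoke the ball-packing property for $\mathrm{SL}(n)/\mathrm{SO}(n)$ proved in Section~\ref{subsecballpackingproperty}: for $r$ as above, a geodesic ball of radius $r$ contains $N=e^{\Omega(r/n)}$ pairwise-disjoint open geodesic balls of radius $r/4$, the $1/n$ factor reflecting that the exponential growth of this space---which also carries $(n-1)$-dimensional flats---is spread over more directions as $n$ grows. With these inputs in hand, the main technical theorem of Section~\ref{maintheoremandproof} applies essentially verbatim: a resisting oracle maintains a shrinking set of candidate minimizers among the centers of the disjoint $r/4$-balls and answers each query using $\tfrac12\dist(x,x^*)^2$ plus the bumps already committed near the queried point; using the rapid divergence of geodesics in nonpositive curvature (Lemma~\ref{geodesicsdiverge}) together with disjointness of the supports, each query is consistent with all but a $\poly(\kappa)$ fraction of the surviving candidates. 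Therefore any deterministic first-order algorithm needs $\Omega(\log N/\log\kappa)$ queries, and substituting $N=e^{\Omega(r/n)}$ and $r=\Theta(\kappa)$ gives $\big\lfloor\tfrac1n\cdot\tfrac{\kappa}{1000\log(10\kappa)}\big\rfloor$. It then remains to check that the $f$ produced lies in $\calF_{\kappa,r}^{\xorigin}(\mathrm{SL}(n)/\mathrm{SO}(n))$ as in Definition~\ref{deffirstfctclass}: $\mu$-strongly g-convex on all of $\calM$, $L$-smooth on $B(\xorigin,r)$ with $L/\mu=\kappa$, and with unique minimizer in $B(\xorigin,\tfrac34 r)$.

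The step I expect to be the main obstacle is the ball-packing estimate for $\mathrm{SL}(n)/\mathrm{SO}(n)$ with the correct constants and $n$-dependence: because this symmetric space is not strictly negatively curved, one cannot simply quote a hyperbolic-space computation, and must instead exhibit, inside a radius-$r$ ball and with explicit constants, an $e^{\Omega(r/n)}$-size family of points pairwise more than $r/2$ apart---for instance via an explicit family of block-structured positive definite matrices that diverge exponentially in directions transverse to a maximal flat. A secondary item is to confirm that the resisting-oracle theorem of Section~\ref{maintheoremandproof} is stated so as to accept $\Kup=0$ rather than quietly relying on strict negativity; by the dichotomy above this should only be a matter of tracking which estimates invoke $\Klo$ and which invoke $\Kup$.
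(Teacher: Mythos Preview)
Your overall plan---feed the ball-packing constants for $\mathrm{SL}(n)/\mathrm{SO}(n)$ together with $\Klo=-\tfrac12$ into the main technical theorem of Section~\ref{maintheoremandproof}---is exactly what the paper does, and you are right that the theorem only needs $\Klo<0$, not $\Kup<0$. However, the arithmetic behind your $\tfrac1n$ factor is wrong in two places that happen to cancel.

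First, the ball-packing in $\mathrm{SL}(n)/\mathrm{SO}(n)$ is \emph{much stronger} than $N=e^{\Omega(r/n)}$. The paper (Lemma~\ref{lemmaPDmatrices}) obtains $\tilde c=\Theta(n)$, i.e.\ $N\geq e^{\Theta(nr)}$, by observing that $\mathcal{SLP}_n$ contains a totally geodesic $(n{-}1)$-dimensional \emph{hyperbolic} submanifold of curvature $-\tfrac18$ and then invoking Lemma~\ref{lemmaNbig} on that submanifold. No ad hoc family of block matrices ``transverse to a maximal flat'' is needed; the hyperbolic slice does all the work.

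Second, the fraction of candidates eliminated per query is not $\poly(\kappa)$ but $(\poly(\kappa))^d$, where $d=\dim(\mathcal{SLP}_n)=\Theta(n^2)$: the resisting oracle compares volumes of balls in the $d$-dimensional tangent space $\T_{x_k}\calM$ (Lemma~\ref{smallandlargeballslemma}), and this is where the exponent $d$ enters the formula~\eqref{lowerboundonAk}. Consequently the number of survivable queries is $T\approx \tilde c\, r/(d\log\kappa)\approx (n\cdot r)/(n^2\log\kappa)=r/(n\log\kappa)$, which with $r=\Theta(\kappa)$ gives $\tilde\Omega(\kappa/n)$. So the $\tfrac1n$ comes from the ratio $\tilde c/d\approx n/n^2$, not from a weak ball-packing. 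If you plugged your claimed $N=e^{\Omega(r/n)}$ into the correct per-step loss $(\poly(\kappa))^d$, you would only get $\tilde\Omega(\kappa/n^3)$.
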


\cutchunk{
For the nonstrongly g-convex case, we have the following theorem for the function class in Definition~\ref{defsecondfctclass}.
Recall the computational task we consider after that definition.
\begin{theorem} \label{theoremtheorem}
Let $\calM$ be a Hadamard manifold of dimension $d \geq 2$ whose sectional curvatures are in the interval $[\Klo, \Kup]$ with $\Kup < 0$.
Let $\xorigin \in \calM$, $L > 0$, and choose $\epsilon \in (0, 2^{-16}]$ so that ${\epsilon} \log({\epsilon}^{-1})^2 \leq 2^{-25} \sqrt{\frac{\Kup}{\Klo}}$.
Let $r = \frac{1}{2^{19} {\epsilon} \log({\epsilon}^{-1})^2 \sqrt{-\Klo}}$.\footnote{Observe that with this choice of $r$, the upper bound on $\epsilon \log(\epsilon^{-1})^2$ is equivalent to $r \sqrt{-\Kup} \geq 64$.}

For every deterministic first-order algorithm $\calA$, there is a function $f \in \tilde{\calF}_{L, r}^{\xorigin}(\calM)$ such that $\calA$ requires at least
$$\Bigg\lfloor \sqrt{\frac{\Kup}{\Klo}} \cdot \frac{1}{2^{25} {\epsilon} \log({\epsilon}^{-1})^3}\Bigg\rfloor = \tilde{\Omega}\bigg(\sqrt{\frac{\Kup}{\Klo}} \cdot \frac{1}{\epsilon}\bigg)$$
queries in order to find a point $x \in \calM$ with $f(x) - f(x^*) \leq \epsilon \cdot \frac{1}{2}L r^2$.
\end{theorem}
\noindent For hyperbolic spaces, this theorem implies a version of Riemannian gradient descent is optimal on this function class (up to log factors)---see Section~\ref{nonstronglyconvexcaseextension}.
We can similarly prove the lower bound $\tilde{\Omega}(\frac{1}{n} \cdot \frac{1}{\epsilon})$ for nonstrongly g-convex functions on $\mathcal{SLP}_n$ and $\calP_n$.  We omit a detailed statement.
}

\cutchunk{\subsection*{Curvature dependence}
Let us study the influence of curvature in the lower bound from Corollary~\ref{cor2}.
We see that the curvature $K$ of the hyperbolic space influences the lower bound \emph{only} through the uncertainty radius $r$.
This is expected because the function class we consider only depends on $r$ and $K$ through $r \sqrt{-K}$.
More precisely, let $\calM_1 = (\calM, g)$ be a hyperbolic space of curvature $K_1 < 0$, and scale the metric $g$ on $\calM$ to get a hyperbolic space $\calM_2 = (\calM, \frac{K_1}{K_2} g)$ of curvature $K_2$.
Then it is easy to see that the function classes $\calF_{\kappa, r_1}^{\xorigin}(\calM_1)$ and $\calF_{\kappa, r_2}^{\xorigin}(\calM_2)$ are identical provided $r_1 \sqrt{-K_1} = r_2 \sqrt{-K_2}$. 
This also shows that there is a certain \textit{equivalence between the radius $r$ and the curvature $K$}.  Increasing the uncertainty radius $r$ has the same effect on optimization as making the space more curved.}

The lower bound $\tilde \Omega(\frac{\kappa}{n})$ also holds for the symmetric space $\calP_n$ of positive definite matrices with affine-invariant metric because it is isometric to $\reals \times \mathrm{SL}(n) / \mathrm{SO}(n)$ (see Appendix~\ref{PDmatricesappendix}).
It is an open question whether one can remove the factors $\sqrt{\frac{\Kup}{\Klo}}$ and $\frac{1}{n}$ in the lower bounds in Theorems~\ref{cor1} and~\ref{cor3}.

\cutchunk{
Whether acceleration is possible depends on how $r$ scales as the condition number $\kappa = \frac{L}{\mu}$ grows.
There are now known results for two regimes:
\begin{itemize}
\item ($r \leq O(\frac{1}{\kappa^{3/4}})$) In this regime, we can achieve acceleration due to the results of \citet{zhang2018estimatesequence} and \citet{ahn2020nesterovs} (also see Section~\ref{reductiontoconvexoptwhenrissmall}).
{For hyperbolic spaces in particular, \citet{martinezrubio2021global} shows that we can achieve acceleration when $r \leq O(1)$.}

\item ($r = \Theta(\kappa)$)  In this regime, our results show we cannot achieve acceleration.  
\end{itemize}
}


\subsection{Comparison to literature: best known upper bounds} \label{comparisontolit}
Let us review the best known upper bounds for smooth g-convex optimization (see Appendix~\ref{literature} for a more complete discussion of the literature).
\citet{ahn2020nesterovs} provide an algorithm {which is strictly faster than RGD}, and {requires only $\tilde{O}(\sqrt{\kappa})$ queries for the computational task described in the introduction when $r \leq O(\frac{1}{\kappa^{3/4}})$. 
Intuitively this makes sense because Riemannian manifolds are locally Euclidean, so in a small enough ball the effects of curvature are negligible.  When $r$ is not small, the algorithm of~\citet{ahn2020nesterovs} requires $\tilde O(\kappa)$ gradient queries.

The guarantees for the algorithm provided by~\citet{ahn2020nesterovs} hold for Hadamard manifolds of bounded curvature.  For hyperbolic spaces in particular, \citet{martinezrubio2021global} improves upon these guarantees by providing an algorithm requiring $e^{\tilde{O}(r)} \sqrt{\kappa}$ queries to solve the computational task; in particular, this algorithm is accelerated when $r \leq {O}(1)$.

\section{Preliminaries and the ball-packing property} \label{prelims}
We introduce the tools used to prove the main results.
For an introduction to Riemannian manifolds see~\citep{lee2012smoothmanifolds,lee2018riemannian}, or~\citep{AMS08,boumal2020intromanifolds} for an optimization perspective.

\subsection{Hadamard manifolds}\label{hadamardmanssections}
Throughout, $\calM$ denotes a smooth manifold which has tangent bundle $\T \calM$ and tangent spaces $\T_x \calM$.
We equip $\calM$ with a Riemannian metric: a smoothly-varying inner product $\inner{\cdot}{\cdot}_x$ on each tangent space $\T_x \calM$.
Throughout, we drop the subscript and denote these inner products by $\inner{\cdot}{\cdot}$.
The metric allows us to define the gradient $\grad f(x) \in \T_x \calM$ and Hessian $\Hess f(x) \colon \T_x \calM \rightarrow \T_x \calM$ of the cost function $f$ at each point $x$~\citep[Ch.~3,~5]{boumal2020intromanifolds}.  
We write $\norm{v} = \sqrt{\inner{v}{v}}$ for $v \in \T_x \calM$ and $\norm{A}$ for the operator norm of a linear operator $A \colon \T_x \calM \rightarrow \T_y \calM$.  We use $I$ to denote the identity linear operator from $\T_x \calM$ to $\T_x \calM$.

The Riemannian metric gives $\calM$ a notion of distance $\dist$ and geodesics.
The closed (geodesic) ball of radius $r$ centered at $x \in \calM$ is $B(x, r) = \{y \in \calM : \dist(y, x) \leq r\}$.
The closed ball in $\T_{x} \calM$ centered at $g \in \T_{x} \calM$ with radius $r$ is $B_x(g, r) = \{s \in \T_x \calM : \norm{s-g} \leq r\}$.  

The metric also provides a notion of \emph{intrinsic curvature}.
We focus on Hadamard manifolds:  
\begin{definition} \label{defhadamardmanifold}
A Riemannian manifold $\calM$ is a Hadamard manifold if $\calM$ is complete, simply connected and has nonpositive sectional curvature everywhere.
\end{definition}
By the Cartan--Hadamard Theorem, all $d$-dimensional Hadamard manifolds $\calM$ are diffeomorphic to $\Rd$~\citep[Thm. 12.8]{lee2018riemannian}.  
The Hopf--Rinow Theorem implies that the exponential map $\exp \colon \T \calM \rightarrow \calM$ is well defined on the entire tangent bundle, and moreover every pair of points can be connected by a unique geodesic and this geodesic is minimal~\citep[Prop. 12.9]{lee2018riemannian}.  
This means that the inverse of the exponential map $\exp_{x}^{-1} \colon \calM \rightarrow \T_x\calM$ is well defined for all $x \in \calM$.
We use $P_{x \rightarrow y} \colon \T_x \calM \rightarrow \T_y \calM$ to denote parallel transport along the geodesic connecting $x$ and $y$.

The next lemma is a direct consequence of the hyperbolic law of cosines and Toponogov's triangle comparison theorem---see Appendix~\ref{geolemmasandcharofgconvexity}.
It expresses the fact that when the underlying space is negatively curved, geodesics diverge quickly.
Lemma~\ref{geodesicsdiverge} forms the basis of Lemma~\ref{lemmaNbig} (spaces with sufficient negative curvature satisfy the ball-packing property), which is the most important geometric fact underlying Theorems~\ref{cor1} and~\ref{cor3}.  A proof of Lemma~\ref{geodesicsdiverge} can be found in Appendix~\ref{Appgeodesicsdiverge}.
\begin{lemma}[Geodesics diverge]\label{geodesicsdiverge}
Let $v_1, v_2$ be two tangent vectors at $\xorigin$ on a Hadamard manifold $\calM$ with identical norms $s = \norm{v_1} = \norm{v_2}$ and forming an angle at least $\theta$.
If the sectional curvatures of $\calM$ are upper bounded by $\Kup < 0$ and $\theta = e^{1 - \frac{2}{3} s \sqrt{-\Kup}}$, then $\dist(z_1, z_2) \geq \frac{2}{3} s$ where $z_i = \exp_{\xorigin}(v_i)$ for $i=1, 2$.
\end{lemma}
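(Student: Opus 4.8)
The plan is to reduce the statement on a general Hadamard manifold with curvature bounded above by $\Kup < 0$ to the corresponding statement in the model space of constant curvature $\Kup$, namely the rescaled hyperbolic plane, via Toponogov's triangle comparison theorem. First I would set up the comparison: form the geodesic triangle with vertex $\xorigin$ and the two geodesic segments $t \mapsto \exp_{\xorigin}(t v_i)$, $t \in [0,1]$, which have length $s$ each and enclose an angle $\geq \theta$ at $\xorigin$. Since the sectional curvatures are $\leq \Kup < 0$, Toponogov's theorem (in the ``hinge'' form) says that the distance $\dist(z_1, z_2)$ between the endpoints is \emph{at least} the distance between the endpoints $\barz_1, \barz_2$ of the comparison hinge in the model space $M_{\Kup}$ of constant curvature $\Kup$: same two side lengths $s$, same angle $\theta$ at the corresponding vertex. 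So it suffices to show $\dist_{M_{\Kup}}(\barz_1, \barz_2) \geq \frac{2}{3}s$.

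Next I would carry out the computation in the model space. By rescaling, $M_{\Kup}$ is the hyperbolic plane with metric scaled by $1/\sqrt{-\Kup}$, so I may work in the standard hyperbolic plane with curvature $-1$ and side lengths $\sigma \defeq s\sqrt{-\Kup}$, then divide the resulting distance by $\sqrt{-\Kup}$ at the end; the target inequality becomes $\dist \geq \frac{2}{3}\sigma$ in the curvature $-1$ plane. Here I invoke the hyperbolic law of cosines: with $\ell = \dist(\barz_1,\barz_2)$,
\begin{align*}
\cosh \ell = \cosh^2 \sigma - \sinh^2 \sigma \cos\theta = \cosh(2\sigma) - \sinh^2\sigma\,(1 + \cos\theta - 1),
\end{align*}
i.e. $\cosh\ell = \cosh^2\sigma - \sinh^2\sigma\cos\theta$. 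Using $\theta = e^{1 - \frac{2}{3}\sigma}$ (note $s\sqrt{-\Kup} = \sigma$, so the hypothesis reads exactly $\theta = e^{1-\frac23\sigma}$), one has $\cos\theta \leq 1 - \frac{\theta^2}{2} + \frac{\theta^4}{24} \leq 1 - \frac{\theta^2}{4}$ for $\theta$ not too large, hence
\begin{align*}
\cosh\ell \geq \cosh^2\sigma - \sinh^2\sigma\Big(1 - \tfrac{\theta^2}{4}\Big) = 1 + \tfrac{\theta^2}{4}\sinh^2\sigma.
\end{align*}
Since $\theta \sinh\sigma \geq \theta \cdot \frac{1}{2}e^{\sigma}/e = \frac12 e^{\sigma - 1} e^{1 - \frac23\sigma} \cdot \frac{1}{?}$ — more carefully, $\theta\sinh\sigma \geq \frac{1}{2}(e^\sigma - 1) e^{1-\frac23\sigma} \geq c\, e^{\frac13\sigma}$ for an explicit constant $c$ once $\sigma$ is bounded below — we get $\cosh\ell \geq 1 + c^2 e^{\frac23\sigma}/4$, and since $\cosh\ell \leq \frac12 e^\ell$ this yields $e^\ell \gtrsim e^{\frac23\sigma}$, i.e. $\ell \geq \frac23\sigma - O(1)$. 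To land the clean bound $\ell \geq \frac{2}{3}\sigma$ exactly, I would track the constants: the factor $e^1$ built into the definition of $\theta$ is precisely the slack chosen to absorb the $O(1)$ loss from $\cosh\ell \leq \frac12 e^\ell$ and from $\sinh\sigma \geq \frac12 e^\sigma(1 - e^{-2\sigma})$, and one verifies the inequality holds for all $\sigma > 0$ (or all $\sigma$ in the relevant range) by an elementary monotonicity argument. Dividing through by $\sqrt{-\Kup}$ gives $\dist(z_1,z_2) \geq \frac{2}{3}s$.

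The main obstacle is bookkeeping the constants so that the bound comes out as the stated clean $\frac{2}{3}s$ with no error term and no hidden hypothesis on $s$: the Taylor estimate $\cos\theta \leq 1 - \theta^2/4$ needs $\theta$ (hence $e^{1-\frac23 s\sqrt{-\Kup}}$) to be below some threshold, and the passage from $\cosh\ell$ to $\ell$ loses an additive constant that must be exactly covered by the $e^1$ prefactor in the definition of $\theta$. I expect the cleanest route is to prove the equivalent statement $\cosh\!\big(\tfrac{2}{3}s\sqrt{-\Kup}\big) \le 1 + \tfrac{\theta^2}{4}\sinh^2\!\big(s\sqrt{-\Kup}\big)$ directly as a one-variable inequality in $\sigma = s\sqrt{-\Kup} > 0$ after substituting $\theta = e^{1-\frac23\sigma}$, reducing everything to an elementary (if slightly delicate) calculus exercise; the Toponogov reduction and the law of cosines are the conceptual content, and this final inequality is the only genuinely computational step.
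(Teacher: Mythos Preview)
Your approach is exactly the paper's: Toponogov reduces to the constant-curvature model, the hyperbolic law of cosines gives
\[
\cosh\!\big(\dist(z_1,z_2)\sqrt{-\Kup}\big)\ \geq\ \cosh^2\sigma - \sinh^2\sigma\,\cos\theta,\qquad \sigma=s\sqrt{-\Kup},
\]
and the remaining work is the one-variable inequality that forces the right-hand side to be at least $\cosh(\tfrac{2}{3}\sigma)$.

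There is, however, a concrete step that fails as written. Your Taylor bound $\cos\theta \leq 1 - \tfrac{\theta^2}{4}$ is too loose: the inequality you propose to verify, $\cosh(\tfrac{2}{3}\sigma) \leq 1 + \tfrac{\theta^2}{4}\sinh^2\sigma$ with $\theta = e^{1-\frac{2}{3}\sigma}$, is \emph{false} for large $\sigma$. Asymptotically $\tfrac{\theta^2}{4}\sinh^2\sigma \sim \tfrac{e^2}{16}e^{\frac{2}{3}\sigma}$ while $\cosh(\tfrac{2}{3}\sigma)\sim \tfrac{1}{2}e^{\frac{2}{3}\sigma}$, so you would need $e^2 \geq 8$, which fails ($e^2\approx 7.39$). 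The $e^1$ slack in the definition of $\theta$ is not enough to cover this particular loss.

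The paper fixes both issues you flag (range of $\theta$, and the exact constant) in one stroke: it splits off the trivial case $\theta > \tfrac{\pi}{2}$ (where $\cos\theta<0$ already gives $\dist(z_1,z_2)\geq s$), and on $(0,\tfrac{\pi}{2}]$ it uses the sharper estimate $\cos\theta \leq 1 - \tfrac{\theta^2}{3}$, which is valid there. With the constant $\tfrac{1}{3}$ the asymptotic requirement becomes $e^2 \geq 6$, which holds, and in fact the resulting one-variable inequality
\[
e^{2-\frac{4}{3}\sigma}\ \geq\ 3\,\frac{\cosh(\tfrac{2}{3}\sigma)-1}{\sinh^2\sigma}
\]
holds for all $\sigma>0$. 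This is the only change needed; the rest of your plan matches the paper exactly.
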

It is instructive to compare this lemma to the Euclidean case, where the law of cosines implies $\norm{z_1-z_2}^2 \leq 2 s^2 - 2 s^2 \cos(\theta) = O(s^2 \theta^2)$.  Therefore, if $\theta = \Theta(e^{-s})$, then $\norm{z_1-z_2} = O(s e^{-s})$.

\subsection{The ball-packing property} \label{subsecballpackingproperty}

To prove the lower bound, we require our space to satisfy the following geometric property.

\begin{assumption}[Ball-packing property]
\label{assumptionNbigWeak}
There is a point $\xorigin \in \calM$, an $\tilde{r} > 0$ and a $\tilde{c} > 0$ such that for all $r \geq \tilde{r}$, there exist $N \geq e^{\tilde{c} r}$ points $z_1, \ldots, z_N$ in the ball $B(\xorigin, \frac{3}{4} r)$ so that 
all pairs of points are separated by a distance of at least $\frac{r}{2}$: 
$\dist(z_i, z_j) \geq \frac{r}{2}$ for all $i \neq j$.
We say $\calM$ satisfies the ball-packing property with $\tilde r, \tilde c$ and $\xorigin \in \calM$.
\end{assumption}

We think of the points $z_1, \ldots, z_N$ as centers of disjoint open balls of radius $\frac{r}{4}$ contained in $B(\xorigin, r)$.
No Euclidean space $\reals^d$ satisfies a ball-packing property as the volume of a ball of radius $r$ scales polynomially as $r^d$, not exponentially.
\begin{assumption}[Strong ball-packing property]
\label{assumptionNbigStrong}
There is an $\tilde{r} > 0$ and a $\tilde{c} > 0$ such that $\calM$ satisfies the ball-packing property with $\tilde r, \tilde c$ and every $\xorigin \in \calM$.
\end{assumption}

\begin{lemma} \label{lemmaNbig}
Let $d \geq 2$ and $\calM$ be a $d$-dimensional Hadamard manifold whose sectional curvatures are in the interval $(-\infty, \Kup]$ with $\Kup < 0$.  
Then $\calM$ satisfies the strong ball-packing property~\aref{assumptionNbigStrong} for $\tilde{r} = \frac{4}{\sqrt{-\Kup}}$ and $\tilde{c} = d \frac{\sqrt{-\Kup}}{8}$.
\end{lemma}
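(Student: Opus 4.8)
The plan is to produce, for an arbitrary center $\xorigin \in \calM$ and radius $r \geq \tilde{r} = \frac{4}{\sqrt{-\Kup}}$, a set of $N \geq e^{\tilde{c} r}$ points inside $B(\xorigin, \frac{3}{4}r)$ that are pairwise at distance $\geq \frac{r}{2}$. Since $\xorigin$ is arbitrary, establishing this gives the \emph{strong} ball-packing property. The natural construction is to place all points on the sphere $\{x : \dist(x, \xorigin) = \frac{3}{4}r\}$, i.e., to take $z_i = \exp_{\xorigin}(v_i)$ for unit-speed directions $v_i \in \T_{\xorigin}\calM$ with $\norm{v_i} = s := \frac{3}{4}r$. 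Then $z_i \in B(\xorigin, \frac{3}{4}r)$ automatically, and by Lemma~\ref{geodesicsdiverge} it suffices to choose the directions $v_i/\norm{v_i}$ on the unit sphere $\Sm \subset \T_{\xorigin}\calM$ (here $m = d$) so that any two of them form an angle at least $\theta := e^{1 - \frac{2}{3}s\sqrt{-\Kup}}$; the lemma then yields $\dist(z_i, z_j) \geq \frac{2}{3}s = \frac{2}{3}\cdot\frac{3}{4}r = \frac{r}{2}$, exactly as needed. (One should check $\theta \leq 1$, equivalently $s\sqrt{-\Kup} \geq \frac{3}{2}$, i.e. $\frac{3}{4}r\sqrt{-\Kup}\geq \frac{3}{2}$, i.e. $r \geq \frac{2}{\sqrt{-\Kup}}$, which holds since $r \geq \tilde r = \frac{4}{\sqrt{-\Kup}}$.)

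The remaining task is purely a sphere-packing estimate in the $(d-1)$-dimensional unit sphere $\Sd[d-1] \subset \reals^d$: how many unit vectors can be packed so that pairwise angles are at least $\theta$? Equivalently, this is a packing in the metric space $(\Sd[d-1], \angle)$ with separation $\theta$, or (up to constants, for small $\theta$) a Euclidean packing of $\Sd[d-1]$ with separation $\approx\theta$. A standard volumetric lower bound gives a $\theta$-separated set of size at least roughly $(c/\theta)^{d-1}$ for a dimensional constant $c$; more carefully, one can use the crude but clean bound that a maximal $\theta$-separated subset of $\Sd[d-1]$ has size $\geq \Vol(\Sd[d-1])/\Vol(\text{spherical cap of angular radius }\theta) \geq (\text{const})\cdot\theta^{-(d-1)}$ for $\theta$ small. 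I would then take logarithms: $\log N \gtrsim (d-1)\log(1/\theta) = (d-1)\big(\frac{2}{3}s\sqrt{-\Kup} - 1\big)$, and since $s = \frac{3}{4}r$, this is $(d-1)\big(\frac{1}{2}r\sqrt{-\Kup} - 1\big)$. Using $r\sqrt{-\Kup} \geq 4$ (from $r \geq \tilde r$) to absorb the $-1$ and the difference between $d-1$ and $d$, one arrives at $\log N \geq d\frac{\sqrt{-\Kup}}{8}r = \tilde{c}r$, matching the claimed constant $\tilde{c} = d\frac{\sqrt{-\Kup}}{8}$. The precise constants will need to be tracked honestly, but the order of magnitude is clearly right, and there is evident slack to make the arithmetic close.

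The main obstacle — really the only nontrivial point — is getting a clean, fully explicit lower bound on the size of a $\theta$-separated set on $\Sd[d-1]$ with a constant good enough to yield exactly $\tilde{c} = d\frac{\sqrt{-\Kup}}{8}$, rather than some messier dimensional constant. I would handle this by the greedy/maximal-separated-set argument: take a maximal set of points pairwise $\theta$-separated in angle; by maximality the caps of angular radius $\theta$ centered at these points cover $\Sd[d-1]$, so $N \geq \Vol(\Sd[d-1])/\sup_x \Vol(\text{cap}(x,\theta))$. One then needs a lower bound on this ratio; for $\theta \leq 1$ an elementary estimate (e.g. comparing the cap to a Euclidean ball of radius $\sin\theta \geq \theta/2$ in the tangent hyperplane, against the whole sphere) gives a bound of the form $N \geq 2^{d-1}\theta^{-(d-1)}$ or similar. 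Alternatively, one can cite or reprove a standard spherical-code volume bound. Plugging in $\theta = e^{1-\frac23 s\sqrt{-\Kup}}$ and simplifying with the hypotheses $d \geq 2$ and $r\sqrt{-\Kup}\geq 4$ finishes the proof; the only care required is that all the slack introduced by the crude cap estimate, the $-1$ in the exponent, and the $d$-vs-$(d-1)$ gap stays comfortably within what the target constant $\frac{\sqrt{-\Kup}}{8}$ allows, which it does.
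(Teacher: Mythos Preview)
Your proposal is correct and follows essentially the same approach as the paper: place points $z_i = \exp_{\xorigin}(v_i)$ on the sphere of radius $s = \frac{3}{4}r$, choose directions $v_i$ forming pairwise angles at least $\theta = e^{1-\frac{2}{3}s\sqrt{-\Kup}}$ via a maximal-separated-set/covering argument on $\mathbb{S}^{d-1}$ giving $N \geq \theta^{-(d-1)}$, invoke Lemma~\ref{geodesicsdiverge} for the separation, and then carry out the same arithmetic $(d-1)\big(\frac{1}{2}r\sqrt{-\Kup}-1\big) \geq \frac{1}{8}dr\sqrt{-\Kup}$ using $d\geq 2$ and $r\sqrt{-\Kup}\geq 4$. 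The paper packages the sphere-packing step as a separate lemma (Lemma~\ref{packingonsphere}) proving exactly $N \geq \theta^{-(d-1)}$ for $\theta \leq \frac{\pi}{2}$, but otherwise the proofs are identical.
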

\begin{proof}
Let $\xorigin \in \calM$.
Let $r \geq \tilde{r}$ and let $s = \frac{3}{4} r$.
Let $\theta = e^{1 - \frac{2}{3} s \sqrt{-\Kup}}$.
Consider the sphere $\mathbb{S}^{d-1}_{\xorigin}(s) = \{v \in \T_{\xorigin} \calM : \norm{v} = s\}.$
We have $\theta \leq \frac{\pi}{2}$ because $s\sqrt{-\Kup} \geq 3$.
Therefore using $d \geq 2$ and a standard covering number argument adapted to our setting (see Lemma~\ref{packingonsphere} in Appendix~\ref{appendixNbig}), we find there exist 
$$N \geq \theta^{-(d-1)} = e^{(d-1)(\frac{2}{3} s \sqrt{-\Kup} -1)} \geq e^{\frac{1}{2}d(\frac{1}{2} r \sqrt{-\Kup} -1)} \geq e^{\frac{1}{8} d r \sqrt{-\Kup}}$$
tangent vectors $v_1, \ldots, v_N \in \mathbb{S}^{d-1}_{\xorigin}(s)$ such that the angle between vectors $v_i$ and $v_j$ is at least $\theta$ for all $i \neq j$.
Define $z_j = \exp_{\xorigin}(v_j)$ for $j = 1, 2, \ldots, N$.  Therefore, $z_j \in B(\xorigin, \frac{3}{4} r)$ for all $j$.
Moreover, $\dist(z_i, z_j) \geq \frac{2}{3} s = \frac{r}{2}$ for all $i \neq j$ owing to Lemma~\ref{geodesicsdiverge} (geodesics diverge). 
\end{proof}

If $\calM$ is a hyperbolic space, we can instead argue Lemma~\ref{lemmaNbig} using a simple volume argument, combined with the fact that the covering number is less than the packing number~\citep[Lem.~4.2.8]{vershynin_2018}.
However, that argument does not hold for spaces with nonconstant curvature because we would need a bound on the ratio of $\Klo$ to $\Kup$.  Lemma~\ref{lemmaNbig} does not make such an assumption.

The symmetric spaces $\mathcal{SLP}_n = \mathrm{SL}(n) / \mathrm{SO}(n)$ and $\calP_n = \reals \times \mathcal{SLP}_n$ are Hadamard manifolds which do not have strictly negative curvature, so we cannot apply Lemma~\ref{lemmaNbig}.
However, $\mathcal{SLP}_n$ contains an $(n-1)$-dimensional totally geodesic submanifold isometric to a hyperbolic space~\citep[Ch.~II.10]{bridsonmetric}.
This allows us to prove Lemma~\ref{lemmaPDmatrices} in Appendix~\ref{PDmatricesappendix}, where we also argue the best possible $\tilde{c}$ for $\mathcal{SLP}_n$ satisfies $\tilde c \leq O(n^{3/2}) = o(\dim(\mathcal{SLP}_n))$.
\begin{lemma} \label{lemmaPDmatrices}
%
For $n \geq 3$, both $\mathcal{SLP}_n$ and $\mathcal{P}_n$ satisfy the strong ball-packing property~\aref{assumptionNbigStrong} with $\tilde{r} = 8 \sqrt{2}, \tilde{c} = \frac{n-1}{16 \sqrt{2}}$, and $\mathcal{SLP}_2$ and $\mathcal{P}_2$ satisfy the strong ball-packing property with $\tilde{r} = 4 \sqrt{2}, \tilde{c} = \frac{1}{4 \sqrt{2}}.$
\end{lemma}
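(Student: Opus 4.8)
The plan is to reduce the ball-packing property on $\mathcal{SLP}_n = \mathrm{SL}(n)/\mathrm{SO}(n)$ (and on $\calP_n = \reals \times \mathcal{SLP}_n$) to the hyperbolic case already settled by Lemma~\ref{lemmaNbig}. The idea is to exhibit, through every point, a totally geodesic submanifold isometric to a real hyperbolic space of an explicit dimension and curvature, run Lemma~\ref{lemmaNbig} inside it, and transport the resulting packing to the ambient manifold. Because both $\mathcal{SLP}_n$ and $\calP_n$ are homogeneous (their isometry groups act transitively; for $\calP_n$ this uses $\mathrm{Isom}(\reals) \times \mathrm{Isom}(\mathcal{SLP}_n)$), it is enough to do this for one fixed base point and then translate by isometries.

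For the geometric input I would argue as follows. When $n = 2$, $\mathcal{SLP}_2$ is itself a $2$-dimensional space form of constant curvature $-\frac{1}{2}$, so we take $\mathcal{N} = \mathcal{SLP}_2$. When $n \geq 3$, the connected subgroup $\mathrm{SO}_0(n-1,1) \subset \mathrm{SL}(n)$ (isometries of a quadratic form of signature $(n-1,1)$) has an orbit in $\mathcal{SLP}_n$ which is a complete totally geodesic submanifold isometric to $\mathbb{H}^{n-1}$; this is essentially the content of~\citep[Ch.~II.10]{bridsonmetric}, and can be checked directly by verifying that $\mathfrak{so}(n-1,1) \cap \mathrm{Sym}_0(n)$ is a Lie triple system. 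A short computation with the symmetric-space curvature formula $R(X,Y)Z = -[[X,Y],Z]$, in the affine-invariant normalization, shows this copy of $\mathbb{H}^{n-1}$ has constant curvature $-\frac{1}{8}$. In both cases $\mathcal{N}$ is complete and totally geodesic in the Hadamard manifold $\mathcal{SLP}_n$, so any two of its points are joined by a minimizing ambient geodesic lying inside $\mathcal{N}$; hence $\dist$ restricted to $\mathcal{N}$ coincides with the intrinsic distance of $\mathcal{N}$, and intersecting an ambient geodesic ball with $\mathcal{N}$ produces the corresponding intrinsic ball.

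Now apply Lemma~\ref{lemmaNbig} to $\mathcal{N}$: with $d = n - 1$ and $\Kup = -\frac{1}{8}$ for $n \geq 3$, and with $d = 2$ and $\Kup = -\frac{1}{2}$ for $n = 2$. This gives the strong ball-packing property for $\mathcal{N}$ with $\tilde r = 4/\sqrt{-\Kup}$ and $\tilde c = d\sqrt{-\Kup}/8$, i.e.\ $(\tilde r, \tilde c) = (8\sqrt 2, \frac{n-1}{16\sqrt 2})$ for $n \geq 3$ and $(4\sqrt 2, \frac{1}{4\sqrt 2})$ for $n = 2$. By the previous paragraph, the $N \geq e^{\tilde c r}$ points it produces in an intrinsic ball $B_{\mathcal{N}}(q, \frac{3}{4} r)$, pairwise at intrinsic distance at least $\frac{r}{2}$, also lie in $B_{\mathcal{SLP}_n}(q, \frac{3}{4} r)$ and are pairwise at ambient distance at least $\frac{r}{2}$. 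Finally, for an arbitrary $\xorigin \in \mathcal{SLP}_n$, choose an isometry $\phi$ with $\phi(q) = \xorigin$; then $\phi(\mathcal{N})$ is a complete totally geodesic copy of the same hyperbolic space passing through $\xorigin$, and applying the above to $\phi(\mathcal{N})$ yields the required configuration centered at $\xorigin$. This establishes the strong ball-packing property~\aref{assumptionNbigStrong} for $\mathcal{SLP}_n$ with the stated constants; the argument for $\calP_n$ is identical, taking $\{0\} \times \mathcal{N}$ as a totally geodesic submanifold of the Riemannian product and using that $\calP_n$ is homogeneous.

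The only genuinely nontrivial step is the geometric input: identifying the totally geodesic hyperbolic submanifold of dimension $n-1$ and, above all, pinning down its curvature in the affine-invariant normalization so that the constants $8\sqrt 2$ and $\frac{n-1}{16\sqrt 2}$ come out exactly; everything else --- transferring distances across a totally geodesic submanifold, and homogeneity --- is routine. Separately, to support the remark before the statement that the construction is not dimension-optimal, one bounds the exponential volume growth of geodesic balls in $\mathcal{SLP}_n$ (governed by the sum of the positive restricted roots counted with multiplicity, which is $\Theta(n^{3/2})$ in this normalization) against $e^{\tilde c r}$, obtaining $\tilde c \leq O(n^{3/2}) = o(\dim(\mathcal{SLP}_n)) = o(n^2)$.
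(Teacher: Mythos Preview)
Your proposal is correct and follows essentially the same route as the paper: exhibit a totally geodesic hyperbolic submanifold (the whole space $\mathcal{SLP}_2$ with curvature $-\tfrac12$ for $n=2$, and the $(n-1)$-dimensional $\mathrm{SO}_0(n-1,1)$-orbit with curvature $-\tfrac18$ for $n\geq 3$, as in Bridson--Haefliger), apply Lemma~\ref{lemmaNbig} inside it, transfer distances via total geodesy, and use homogeneity to get the strong version; the paper packages the submanifold step as a separate lemma and treats $\calP_n$ via a product-manifold lemma, but the content is the same, including the curvature computation yielding the exact constants and the $\tilde c \leq O(n^{3/2})$ remark (which the paper obtains by Bishop--Gromov via the constant Ricci curvature of $\mathcal{SLP}_n$).
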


\subsection{Geodesic convexity}\label{gconvexfunctionsdetails}  

A subset $D$ of a Hadamard manifold $\calM$ is g-convex if the geodesic segment connecting each pair of points in $D$ is contained in $D$~\citep{udriste1994convex}.  Geodesic balls are g-convex.
We study special functions on g-convex sets (Lemma~\ref{muHesslemma} has other characterizations of g-convexity and smoothness).


\cutchunk{\begin{definition}
Let $\calM$ be a Hadamard manifold, and let $D \subseteq \calM$ be g-convex.
A function $f \colon D \rightarrow \reals$ is said to be $\mu$-strongly g-convex if for all $x, y \in D$ the univariate function $f \circ \gamma \colon [0, 1] \rightarrow \reals$ is $\mu$-strongly convex, where $\gamma \colon [0, 1] \rightarrow \reals$ is the geodesic with $\gamma(0) = x, \gamma(1) = y$.
\end{definition}}


\begin{definition} \label{definitionLsmoothness}
Let $f\colon \calM \rightarrow \reals$ be a twice continuously differentiable function on a Hadamard manifold $\calM$, and let $D$ be a g-convex subset of $\calM$.  We say (somewhat restrictively) that:
\begin{itemize}
\item $f$ is $\mu$-strongly g-convex in $D$ if $\Hess f(x) \succeq \mu I$ for all $x \in D$.  
\item $f$ is $L$-smooth in $D$ if $\norm{\Hess f(x)} \leq L$ for all $x \in D$.
\end{itemize}
(If $f$ is $L$-smooth in $D$, then $\norm{\grad f(x) - P_{y \rightarrow x} \grad f(y)} \leq L \dist(x, y)$ for all $x, y \in D$.)
\end{definition}


\begin{lemma} \citep[Lem. 2 in App. B]{alimisis2019continuoustime} \label{Lipschitzgradsquareddistance}
Let $\calM$ be a Hadamard manifold with sectional curvatures in $[\Klo, 0]$.
Fix $z \in \calM$, and let $f \colon \calM \rightarrow \reals, f(x) = \frac{1}{2} \dist(x, z)^2$.  Then $f$ is $C^{\infty}$, $\grad f(x) = - \exp_x^{-1}(z)$, and $f$ is $1$-strongly g-convex in $\calM$ and $L$-smooth  in $B(z, r)$ for any $r > 0$ with $L = \frac{r \sqrt{-\Klo}}{\tanh(r \sqrt{-\Klo})} \leq 1 + r \sqrt{-\Klo}$.
%
%
\end{lemma}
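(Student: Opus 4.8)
The plan is to reduce every claim to standard Jacobi-field / Hessian-comparison estimates for the distance function $\rho(x) = \dist(x,z)$. First, smoothness and the gradient formula: since $\calM$ is Hadamard, the Cartan--Hadamard theorem makes $\exp_z\colon \T_z\calM \to \calM$ a diffeomorphism, so $\exp_z^{-1}$ is $C^\infty$ and $f(x) = \tfrac12\norm{\exp_z^{-1}(x)}^2$ is $C^\infty$ on all of $\calM$ (there is no issue at $x=z$ since $v\mapsto\tfrac12\norm{v}^2$ is smooth at the origin). The identity $\grad f(x) = -\exp_x^{-1}(z)$ then follows from the first variation formula for the energy of geodesics together with the Gauss lemma, or equivalently by differentiating $f\circ\gamma$ along an arbitrary geodesic $\gamma$ emanating from $x$ and using that $z\mapsto\tfrac12\dist(\cdot,z)^2$ has the expected radial derivative.

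The core of the argument is the Hessian estimate. Away from $z$, write $f = \tfrac12\rho^2$, so $\Hess f = \rho\,\Hess\rho + \mathrm{d}\rho\otimes\mathrm{d}\rho$. Along the unit-speed minimizing geodesic from $z$ through $x$, with radial unit vector $\partial_\rho$ at $x$: one has $\Hess\rho(\partial_\rho,\cdot) = 0$ (because $\norm{\nabla\rho}\equiv1$) and $\mathrm{d}\rho(\partial_\rho)=1$, hence $\Hess f(\partial_\rho,\partial_\rho) = 1$; and for a unit vector $u\perp\partial_\rho$, $\mathrm{d}\rho(u)=0$, so $\Hess f(u,u) = \rho\,\Hess\rho(u,u)$, where $\Hess\rho$ restricted to $\partial_\rho^{\perp}$ is the shape operator of the geodesic sphere through $x$. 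The Riccati comparison for this shape operator, using $\Klo \le \mathrm{sec} \le 0$, gives
\[
\frac{1}{\rho} \;\le\; \Hess\rho(u,u) \;\le\; \sqrt{-\Klo}\,\coth\!\big(\rho\sqrt{-\Klo}\big),
\]
the lower bound from comparison with the flat space $\reals^d$ (upper curvature bound $0$) and the upper bound from comparison with the hyperbolic space of curvature $\Klo$ (lower curvature bound). Multiplying by $\rho$, every eigenvalue of $\Hess f(x)$ lies in $[\,1,\; \rho\sqrt{-\Klo}\coth(\rho\sqrt{-\Klo})\,]$; at $x=z$, $\Hess f(z)=I$ by continuity.

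From this the conclusions are immediate. Since $\rho\sqrt{-\Klo}\coth(\rho\sqrt{-\Klo})\ge 1$ for all $\rho$, we get $\Hess f(x)\succeq I$ on all of $\calM$, i.e.\ $f$ is $1$-strongly g-convex. For the smoothness bound, on $B(z,r)$ we have $\rho\le r$, and $t\mapsto t\coth t$ is increasing on $(0,\infty)$, so $\norm{\Hess f(x)} \le r\sqrt{-\Klo}\coth(r\sqrt{-\Klo}) = \frac{r\sqrt{-\Klo}}{\tanh(r\sqrt{-\Klo})} =: L$. Finally, the inequality $L \le 1 + r\sqrt{-\Klo}$ is the elementary fact $s\coth s \le 1+s$ for $s = r\sqrt{-\Klo}>0$, which rearranges to $\frac{2s}{e^{2s}-1}\le 1$, i.e.\ $e^{2s}\ge 1+2s$. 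The only real obstacle is invoking the two comparison estimates with the correct normalization for both curvature bounds simultaneously; the rest is bookkeeping. (Since this statement is Lemma~2 in Appendix~B of \citet{alimisis2019continuoustime}, one may alternatively simply cite it, but the Jacobi-field argument above is self-contained.)
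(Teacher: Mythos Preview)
The paper does not give its own proof of this lemma; it simply cites it from \citet{alimisis2019continuoustime}. Your self-contained argument via the Hessian comparison theorem (Riccati comparison for the shape operator of geodesic spheres, bounded between the Euclidean and hyperbolic models) is correct and standard, and you correctly handle the radial/tangential splitting, the monotonicity of $t\coth t$, and the elementary inequality $s\coth s\le 1+s$. Since the paper treats this as a black-box citation, your write-up actually supplies more detail than the paper does.
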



\cutchunk{
The following proposition is another result showing that the geometry of the underlying space can strongly influence the cost function.
We do not use Proposition~\ref{geometryinfluencesobjective2} in this paper, but we suspect it might be relevant to algorithms and lower bounds for second-order methods for g-convex optimization, which have recently received a surge of interest~\citep{allenzhuoperatorsplitting,burgissernoncommutativeoptimization,franksreichenback2021}.

\begin{proposition} \label{geometryinfluencesobjective2}
Let $\calM$ be a Hadamard manifold whose sectional curvatures are in the interval $(-\infty, \Kup]$ with $\Kup < 0$.
Let $f \colon \calM \rightarrow \reals$ be a three-times differentiable function which is $\mu$-strongly g-convex in $\calM$.
Assume the Hessian of $f$ is $\rho$-Lipschitz in a ball $B(\xorigin, r)$, that is:
$$\norm{P_{x \rightarrow y}^* \Hess f(y) P_{x \rightarrow y} - \Hess f(x)} \leq \rho\dist(x, y), \quad \quad \forall x, y \in B(\xorigin, r).$$
Then, $\rho \geq r {\mu \left|\Kup\right|} / {2}.$
\end{proposition}
\noindent See Appendix~\ref{geometryinfluencesfunctionsrhohessianlips} for a proof of Proposition~\ref{geometryinfluencesobjective2}.
}

\section{Technical version of the main theorem and proof of key lemma} \label{maintheoremandproof}
We are now ready to prove our main technical theorem, from which Theorems~\ref{cor1} and~\ref{cor3} in the introduction follow.
For ease of exposition, we state and prove the following slightly simpler theorem in the main part of the paper.  
For this theorem, we assume the algorithm only receives gradient information (no function values), and the algorithm always makes queries in a bounded domain.
For the statement below, recall the definition of the function class $\mathcal{F}_{{\kappa}, r}^{\xorigin}(\calM)$ from Section~\ref{introproblemclass}.

\begin{theorem} \label{theoremnoacceleration}
Let $\calM$ be a Hadamard manifold of dimension $d \geq 2$ which satisfies the ball-packing property~\aref{assumptionNbigWeak} with constants $\tilde{r}, \tilde{c}$ and point $\xorigin \in \calM$.  
Also assume $\calM$ has sectional curvatures in the interval $[\Klo, 0]$ with $\Klo < 0$.
Let  $r \geq \max\big\{\tilde{r}, \frac{8}{\sqrt{-\Klo}}, \frac{4(d+2)}{\tilde{c}}\big\}$.  
Define ${\kappa} = 4 r\sqrt{-\Klo} + 3.$
Let $\calA$ be any deterministic algorithm which only makes gradient queries, and assume that $\calA$ always queries in $B(\xorigin, \mathscr{R})$, with $\mathscr{R} \geq r$.

Then there is a function $f \in \mathcal{F}_{\kappa, r}^{\xorigin}(\calM)$ with minimizer $x^*$ such that running $\calA$ on $f$ yields iterates $x_0, x_1, x_2, \ldots$ satisfying $\dist(x_{k}, x^*) \geq \frac{r}{4}$ for all $k = 0, 1, \ldots, T-1$, where
\begin{align} \label{rangefork}
T =  \Bigg\lfloor \frac{\frac{1}{2}\tilde{c} d^{-1} r}{\log\big(2000 \cdot \frac{1}{2} \tilde{c} d^{-1} r (3 \mathscr{R} \sqrt{-\Klo} + 2)\big)} \Bigg\rfloor.
\end{align}
%
%
%
\end{theorem}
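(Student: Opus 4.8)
The plan is to construct, for each candidate minimizer $z_i$ furnished by the ball‑packing property, a hard function of the form $f_i(x)=\tfrac12\dist(x,z_i)^2+H_i(x)$, where $H_i$ is a sum of at most $T$ bump functions with tiny Hessian, and to run a resisting‑oracle (adversary) argument: the oracle answers each gradient query so as to keep a large subfamily $\{f_i\}_{i\in A}$ simultaneously consistent with all answers given so far; since the algorithm's first $T$ iterates can be within $\tfrac r4$ of only $O(T)$ of the $\tfrac r2$‑separated centers, if the surviving family is still large after the queries there is a consistent $f_i$ whose minimizer $z_i$ is far from every iterate, and that $f_i$ is the desired hard instance.

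\emph{Candidates and class membership.} Apply Assumption~\aref{assumptionNbigWeak} to get $N\ge e^{\tilde c r}$ points $z_1,\dots,z_N\in B(\xorigin,\tfrac34 r)$ with $\dist(z_i,z_j)\ge\tfrac r2$ for $i\ne j$. By Lemma~\ref{Lipschitzgradsquareddistance}, each $\tfrac12\dist(\cdot,z_i)^2$ is $C^\infty$, $1$‑strongly g‑convex on $\calM$, and $L_0$‑smooth on $B(\xorigin,r)\subseteq B(z_i,\tfrac74 r)$ with $L_0\le 1+\tfrac74 r\sqrt{-\Klo}$. Hence if $H_i$ is $C^\infty$ with $\norm{\Hess H_i(x)}\le\tfrac12$ everywhere and $H_i$ vanishes on $B(z_i,\tfrac r5)$, then $f_i$ is $\tfrac12$‑strongly g‑convex on $\calM$, is $(L_0+\tfrac12)$‑smooth on $B(\xorigin,r)$, and has unique minimizer $z_i\in B(\xorigin,\tfrac34 r)$; since $(L_0+\tfrac12)/\tfrac12\le 3+\tfrac72 r\sqrt{-\Klo}\le 4r\sqrt{-\Klo}+3=\kappa$, we get $f_i\in\calF_{\kappa,r}^{\xorigin}(\calM)$. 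Each $H_i$ is built as $H_i=\sum_k b_k^{(i)}$ with $b_k^{(i)}$ a bump at the $k$‑th query point $x_k$; imposing the budget $\norm{\Hess b_k^{(i)}}\le\tfrac1{2T}$ guarantees $\norm{\Hess H_i}\le\tfrac12$ even if the supports overlap.

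\emph{Bump toolkit and the oracle.} I would first prove a bump lemma: for $p\in\calM$, $w\in\T_p\calM$, and radius $\rho$ with $\rho\sqrt{-\Klo}=O(1)$, there is a $C^\infty$ function $b$ supported in $B(p,\rho)$, equal to $x\mapsto\inner{w}{\exp_p^{-1}(x)}$ on $B(p,\tfrac\rho2)$ (so $\grad b(p)=w$), with $\norm{\Hess b(x)}\le C\norm{w}\bigl(\tfrac1\rho+\sqrt{-\Klo}\bigr)$ everywhere; this rests on comparison‑geometry estimates for the Hessians of $\dist(\cdot,p)^2$ and of $\inner{w}{\exp_p^{-1}(\cdot)}$ on a manifold of curvature in $[\Klo,0]$. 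The adversary then processes the queries $x_0,x_1,\dots$ while maintaining an ``alive'' set $A_k$, with $A_0=\{1,\dots,N\}$. At query $x_k$ it fixes a radius $\rho_k\asymp(\sqrt{-\Klo})^{-1}$ and will keep only alive $z_i$ with $\dist(x_k,z_i)>\rho_k+\tfrac r5$ (so the bump support avoids $B(z_i,\tfrac r5)$); by the bump lemma this makes any gradient within distance $\beta_k\asymp\rho_k/T\asymp 1/(T\sqrt{-\Klo})$ of $-\exp_{x_k}^{-1}(z_i)$ realizable as $\grad f_i(x_k)$. The oracle picks $g_k$ from a $\beta_k$‑net of the ball $B_{x_k}\!\bigl(0,\mathscr R+\tfrac34 r\bigr)\supseteq\{-\exp_{x_k}^{-1}(z_i)\}$, choosing the net point around which the most alive pull‑backs cluster; since that net has $O(\mathscr R\sqrt{-\Klo}/\beta_k)^d=O(T\mathscr R\sqrt{-\Klo})^d$ cells, this keeps $|A_k|\ge|A_{k-1}|\,\bigl(C'T\mathscr R\sqrt{-\Klo}\bigr)^{-d}$ (the exponential divergence of geodesics, Lemma~\ref{geodesicsdiverge}, is what makes the pull‑backs cluster tightly and keeps this loss subexponential). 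Degenerate cases — a query landing within $\rho_k+\tfrac r5$ of a surviving center, or inside a prior bump's support — are absorbed by additionally discarding $O(1)$ centers and, if needed, a smaller bump radius.

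\emph{Conclusion and the main difficulty.} Iterating, $|A_T|\ge N\bigl(C'T\mathscr R\sqrt{-\Klo}\bigr)^{-dT}=\exp\!\bigl(\tilde c r-dT\log(C'T\mathscr R\sqrt{-\Klo})\bigr)$, which exceeds $O(T)$ — the largest number of centers that can lie within $\tfrac r4$ of some iterate $x_0,\dots,x_{T-1}$ — provided $dT\log(C'T\mathscr R\sqrt{-\Klo})\le\tfrac12\tilde c r$; substituting the bound $T\le\tilde c d^{-1}r$ inside the logarithm and tracking constants, this is exactly the stated range $T\le\bigl\lfloor\frac{\frac12\tilde c d^{-1}r}{\log(2000\cdot\frac12\tilde c d^{-1}r(3\mathscr R\sqrt{-\Klo}+2))}\bigr\rfloor$. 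For such $T$ we pick $z_{i^*}\in A_T$ with $\dist(x_k,z_{i^*})\ge\tfrac r4$ for all $k\le T-1$ (such $z_{i^*}$ exists because the iterates cover fewer centers than $|A_T|$); since $\rho_k\le\tfrac r8$ (using $r\ge 8/\sqrt{-\Klo}$) no bump reaches $B(z_{i^*},\tfrac r8)$, so $H_{i^*}$ vanishes near $z_{i^*}$ and $f:=f_{i^*}\in\calF_{\kappa,r}^{\xorigin}(\calM)$ has minimizer $x^*=z_{i^*}$, with all oracle answers equal to the true gradients $\grad f(x_k)$. The crux is twofold: constructing the bump functions with a Hessian bound clean enough to yield the logarithmic — rather than polynomial or exponential — loss per query, which requires the comparison‑geometry Hessian estimates together with the right scaling of radii; and making the adaptive, mildly self‑referential counting rigorous, in particular controlling the interaction of the $O(1/T)$‑small bump budgets with the pigeonhole step and ruling out, uniformly over the algorithm's adversarially chosen query locations (near vs.\ far from the center cluster, near vs.\ far from earlier queries), any way to extract more than one $\log(\cdot)$‑sized ``bit'' per query.
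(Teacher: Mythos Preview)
Your high-level strategy matches the paper's: a resisting oracle maintaining candidate functions $f_i(x)=\tfrac12\dist(x,z_i)^2+H_i(x)$ with small-Hessian bumps, a pigeonhole on gradients, and the final counting that yields the formula for $T$. The class-membership check and the Hessian budget $O(1/T)$ per bump are also essentially what the paper does (its parameter $w$ plays the role of your $T$).

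There is, however, a genuine gap in how you treat queries close to a previous query point. You fix $\rho_k\asymp(\sqrt{-\Klo})^{-1}$ and dismiss the case ``$x_k$ inside a prior bump's support'' as absorbable by ``if needed, a smaller bump radius.'' This is precisely the hard case, not a degeneracy. If $\dist(x_k,x_\ell)\ll\rho_k$, a bump supported in $B(x_k,\rho_k)$ would alter $\grad f_i(x_\ell)$ and destroy consistency with the answer $g_\ell$ already given; shrinking the support to $\rho_k'\le\tfrac14\dist(x_k,x_\ell)$ repairs consistency but shrinks the achievable perturbation to $\beta_k'\asymp\rho_k'/T$, and your pigeonhole---which partitions the \emph{fixed} large ball $B_{x_k}(0,\mathscr R+\tfrac34 r)$ into $\beta_k'$-cells---then loses $(\mathscr R/\beta_k')^d\asymp(T\mathscr R/\rho_k')^d$ per step, unbounded as $\rho_k'\to 0$. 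The paper's resolution (its ``Case~2'' in the proof of Lemma~\ref{keylemma}) is to exploit the inductive consistency property itself: all surviving $f_{j,k}$ already satisfy $\grad f_{j,k}(x_\ell)=g_\ell$, so by gradient Lipschitzness their gradients at $x_k$ lie in a ball of radius $O(\mathscr R\sqrt{-\Klo})\cdot\dist(x_k,x_\ell)$ around $P_{x_\ell\to x_k}g_\ell$. This containing ball scales with $\dist(x_k,x_\ell)$ in the same way as $\beta_k'$ does, so the volume ratio stays $O(w\,\mathscr R\sqrt{-\Klo})^d$ uniformly in $\dist(x_k,x_\ell)$. Without this adaptive-radius-plus-Lipschitz argument, an algorithm that repeatedly queries arbitrarily close to earlier points would defeat your bound. (A minor side remark: Lemma~\ref{geodesicsdiverge} is used only to establish the ball-packing property, not to ``cluster the pull-backs''; the containment of $\{-\exp_{x_k}^{-1}(z_i)\}$ in a ball of radius $\lesssim\mathscr R+r$ is just Proposition~\ref{TopogonovEuclidean}.)
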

\noindent In the theorem, observe that $\kappa = \Theta(r)$ and so $T = \tilde \Theta(r) = \tilde \Theta(\kappa)$ (assuming $\mathscr{R} = \mathrm{poly}(r)$).

In Appendices~\ref{fctvalues} and~\ref{apppermittingqueriesoutsidedomain}, we state and prove Theorem~\ref{maintheoremunboundedqueries}: an extension of Theorem~\ref{theoremnoacceleration} which provides a lower bound for algorithms which can also make function-value queries as well as unbounded queries.
Theorems~\ref{cor1} and~\ref{cor3} follow directly from Theorem~\ref{maintheoremunboundedqueries} and the ball-packing properties established in Lemmas~\ref{lemmaNbig} and~\ref{lemmaPDmatrices}---see Appendices~\ref{helper1} and~\ref{helper2} for the details.

To allow for algorithms which make unbounded queries, the high-level idea is to modify all hard instances $f$ from Theorem~\ref{theoremnoacceleration} so that $f(x) = \frac{1}{2} \dist(x, \xorigin)^2$ for $x \not \in B(\xorigin, \mathscr{R})$ (recall $\mathscr{R} \geq r$).  
This way, the algorithm gains no information by querying outside the ball $B(\xorigin, \mathscr{R})$.
On the other hand, we still want the hard functions $f$ to remain untouched in the ball $B(\xorigin, r)$.
In the region between radii $r$ and $\mathscr{R}$, we smoothly interpolate between these two choices of functions.
We show that we can choose $\mathscr{R}$ appropriately so that the lower bound $\tilde{\Omega}(r)$ still holds and the modified functions are still strongly g-convex.
Technically, we do this via a reduction, which is depicted in Figure~\ref{figredboundedqueries} in Appendix~\ref{apppermittingqueriesoutsidedomain} with additional details.

\cutchunk{\noindent The last paragraph about the structure of the hard function $f$ is included because it is useful for proving Theorem~\ref{theoremtheorem} (see Section~\ref{nonstronglyconvexcaseextension}).}

\subsection{Key lemma: Pi\`ece de r\'esistance} \label{sec3dot2}
The main ingredient to prove Theorem~\ref{theoremnoacceleration} is Lemma~\ref{keylemma} stated below.
At a high-level, we show that as long as the algorithm has made at most $T$ queries, the oracle can always answer these queries in such a way that there exist two cost functions consistent with these queries and yet whose minimizers are significantly far away from each other.
Let us make this more precise.

Consider a Hadamard manifold $\calM$ satisfying the ball-packing property~\aref{assumptionNbigWeak} with $\tilde{r}, \tilde{c} > 0$ and $\xorigin \in \calM$. 
Let $z_1, z_2, \ldots, z_N,$ with $N \geq e^{\tilde{c} r},$ be points in $B(\xorigin, \frac{3}{4} r), r \geq \tilde{r},$ so that all pairs of points are separated by a distance of at least $\frac{r}{2}$.
Let $\calA$ be a first-order optimization algorithm.
One can even give the list of points $z_1, \ldots, z_N$ to the algorithm designer.
The algorithm $\calA$ queries points $x_0, x_1, \ldots$ and our job (as the resisting oracle) is to choose gradients $g_0, g_1, \ldots$ to return to $\calA$.

At each iteration $k \geq 0$, we maintain a list of ``active candidate functions'' $f_{j, k} \colon \calM \rightarrow \reals$ indexed by $j \in A_k \subseteq \{1, \ldots, N\}$.  
The notation $A_k$ stands for ``active'' set at iteration $k$.
Each of the functions $f_{j, k}, j \in A_k,$ is differentiable, strongly g-convex, and has minimizer at $z_j$ with $z_j$ a distance of at least $\frac{r}{4}$ from all queried points.
Additionally, the functions $f_{j, k}, j \in A_k,$ are consistent with the $k$ gradient queries $(x_0, g_0), \ldots, (x_{k-1}, g_{k-1})$ answered so far, meaning $\grad f_{j, k}(x_m) = g_m$ for all $m < k$ and $j \in A_k$.
Therefore, any of the functions $f_{j, k}$, with $j \in A_k$, can be the actual function being optimized.
Hence, any of the minimizers $z_j$, with $j \in A_k$, can be the minimizer of the actual function being optimized.
As long as $A_k$ is nonempty, we can conclude that the algorithm $\calA$ has not queried a point within distance $\frac{r}{4}$ of the minimizer up to iteration $k$.

The next set of active candidate functions $\{f_{j, k+1} : j \in A_{k+1}\}$ is chosen by modifying the current set of active candidate functions: $f_{j, k+1} = f_{j, k} + h_{j, k}$.  The modifications $h_{j, k}$ and the set $A_{k+1} \subseteq A_k$ are chosen so that $\grad f_{j, k+1}(x_k) = g_k$, where $g_k$ is the gradient chosen by the resisting oracle to return to the algorithm in response to the query $x_k$.
Given the queries $x_0, x_1, \ldots, x_k$ made by the algorithm and the current active set $A_k$, the resisting oracle chooses $g_k \in \T_{x_k} \calM$ in such a way that the algorithm gains as little information about the location of $x^*$ as possible.
This amounts to choosing $g_k$ so that the cardinality of $A_{k+1}$ is as large as possible.
For example, if $\calM$ is a $d$-dimensional hyperbolic space of curvature $-1$, we show 
$\left|A_{k+1}\right| \geq \tilde \Omega(\left|A_k\right| / r^d).$
Since $\left|A_0\right| \geq e^{\Omega(d r)}$ due to the ball-packing lemma~\ref{lemmaNbig}, this allows us to conclude the desired lower bound.


\begin{lemma} \label{keylemma}
Let $\calM$ be a Hadamard manifold of dimension $d \geq 2$ with sectional curvatures in the interval $[\Klo, 0]$ and $\Klo < 0$.
Let $\xorigin \in \calM$, $r \geq \frac{8}{\sqrt{-\Klo}}$, $\mathscr{R} \geq r$.  Let $z_1, \ldots, z_N \in B(\xorigin, \frac{3}{4} r)$ be distinct points in $\calM$ such that $\dist(z_i, z_j) \geq \frac{r}{2}$ for all $i \neq j$.  
Define $A_0 = \{1, 2, \ldots, N\}$.
Let $\calA$ be any first-order algorithm which only makes gradient queries and only queries points in $B(\xorigin, \mathscr{R})$.
Finally, let $w\geq 1$ (this is a tuning parameter we will set later).

For every $k = 0, 1, 2, \ldots, \floor{2w},$ algorithm $\calA$ queries
$x_k = \calA_k((x_0, g_0), \ldots, (x_{k-1}, g_{k-1}))$
and there exists a tangent vector $g_k \in \T_{x_k} \calM$ and a set $A_{k+1} \subseteq A_k$ satisfying 
\begin{align} \label{lowerboundonAk}
\left|A_{k+1}\right| \geq \frac{\left|A_k\right| - 1}{(2000 w(3 \mathscr{R} \sqrt{-\Klo} + 2))^d}
\end{align}
such that for each $j \in A_{k+1}$ there is a $C^{\infty}$ function $f_{j, k+1} \colon \calM \rightarrow \reals$ of the form
\begin{align} \label{definitionofHjk}
f_{j, k+1}(x) = \frac{1}{2} \dist(x, z_j)^2 + H_{j, k+1}(x)
\end{align}
satisfying:
\begin{enumerate} [label=\textbf{L\arabic*}]

\item \label{prop1} $f_{j, k+1}$ is $(1 - \frac{k+1}{4 w})$-strongly g-convex in $\calM$ and $[2 r \sqrt{-\Klo} + 1 + \frac{k+1}{4 w}]$-smooth in $B(\xorigin, r)$;

\item $\grad f_{j, k+1}(z_j) = 0$ (hence in particular, the minimizer of $f_{j, k+1}$ is $z_j$); \label{prop2}

\item $\grad f_{j, k+1}(x_{m}) = g_{m}$ for $m = 0, 1, \ldots, k$ ($f_{j, k+1}$ is compatible with all queries); \label{prop3}

\item $\dist(x_{m}, z_j) \geq \frac{r}{4}$ for all $m = 0, 1, \ldots, k$; \label{prop4}

\item $\norm{\grad H_{j, k+1}(x)} \leq \frac{k+1}{4 w \sqrt{-\Klo}}$ and $\norm{\Hess H_{j, k+1}(x)} \leq \frac{k+1}{4 w}$ for all $x \in \calM$.\label{prop5}\footnote{This last property is helpful for the induction used to prove Lemma~\ref{keylemma}.  It is not explicitly used to prove Theorem~\ref{theoremnoacceleration}.}
\end{enumerate}
\end{lemma}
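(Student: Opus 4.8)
The plan is to proceed by induction on $k$. The base case $k=0$ (before any query is answered) is essentially trivial: take $A_1$ to be a large subset of $A_0$, set $f_{j,0}(x) = \frac12\dist(x,z_j)^2$, i.e. $H_{j,0}\equiv 0$, which is $1$-strongly g-convex everywhere and $L$-smooth in $B(\xorigin,r)$ with $L \le 1 + r\sqrt{-\Klo} \le 2r\sqrt{-\Klo}+1$ by Lemma~\ref{Lipschitzgradsquareddistance}, with minimizer $z_j$ and zero perturbation. The only subtlety at $k=0$ is already the heart of the inductive step, so I would fold it in: given the query $x_0$, I must discard the indices $j$ whose minimizer $z_j$ is too close to $x_0$ (there is at most one, since the $z_j$ are $\frac r2$-separated and $\frac r4 + \frac r4 = \frac r2$), and then for the surviving indices I must return a single gradient $g_0$ that is consistent with a large fraction of the candidate functions.

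For the inductive step, assume the claim holds at level $k$ with active set $A_k$ and candidate functions $f_{j,k} = \frac12\dist(\cdot,z_j)^2 + H_{j,k}$. The algorithm produces $x_k \in B(\xorigin,\mathscr R)$. First I throw out the (at most one) index $j$ with $\dist(x_k, z_j) < \frac r4$, giving property~\ref{prop4}; call the remaining set $A_k'$, so $|A_k'| \ge |A_k| - 1$. For each $j \in A_k'$ I then want to define $h_{j,k}$ (a bump function supported near $x_k$, away from $z_j$ and away from the earlier query points $x_0,\dots,x_{k-1}$) so that $f_{j,k+1} = f_{j,k} + h_{j,k}$ has a \emph{common} gradient value $g_k$ at $x_k$. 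The vector $\grad f_{j,k}(x_k) = -\exp_{x_k}^{-1}(z_j) + \grad H_{j,k}(x_k)$ lives in $\T_{x_k}\calM$; by~\ref{prop5} and the triangle inequality its norm is at most $\dist(x_k,z_j) + \frac{k}{4w\sqrt{-\Klo}} \le (\mathscr R + \frac34 r) + O(1/\sqrt{-\Klo})$, so all these vectors lie in a ball in $\T_{x_k}\calM$ of radius $\approx 2\mathscr R$ (after crude bounding, comparable to $\frac{(3\mathscr R\sqrt{-\Klo}+2)}{\sqrt{-\Klo}}$). The resisting oracle covers this ball in $\T_{x_k}\calM \cong \reals^d$ by Euclidean balls of radius $\rho = \frac{1}{w\sqrt{-\Klo}}$ (times a small constant); a standard volume/covering bound gives a cover of size $\le (2000 w(3\mathscr R\sqrt{-\Klo}+2))^d$, and by pigeonhole one of these small balls, say centered at $g_k$, catches at least $|A_k'| / (2000w(3\mathscr R\sqrt{-\Klo}+2))^d$ of the indices. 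Setting $A_{k+1}$ to be exactly those indices yields~\eqref{lowerboundonAk}. For $j \in A_{k+1}$ we need a bump $h_{j,k}$ with $\grad h_{j,k}(x_k) = g_k - \grad f_{j,k}(x_k)$, a vector of norm $\le \rho$, vanishing gradient and Hessian at $z_j$ and at all $x_0,\dots,x_{k-1}$ (to preserve~\ref{prop2} and~\ref{prop3}), and with $\norm{\grad h_{j,k}} \le \frac{1}{4w\sqrt{-\Klo}}$, $\norm{\Hess h_{j,k}} \le \frac{1}{4w}$ everywhere: this is a routine but delicate bump-function construction using the injectivity of $\exp$ on a Hadamard manifold and the fact that, once the "bad" index is removed, $x_k$ is at distance $\ge \frac r4 \ge \frac{2}{\sqrt{-\Klo}}$ from every $z_j$ with $j \in A_{k+1}$ and (by induction and a similar separation argument) can be taken bounded away from the previously queried points — here one may need to shrink the bump's support to avoid the earlier $x_m$, which forces a trade-off between support radius and Hessian norm that the constant $\frac{1}{4w}$ is calibrated to absorb. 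Finally $h_{j,k}$ is chosen so that the function value/gradient at $x_k$ are updated and, by the $\frac{1}{4w}$ Hessian bound, $H_{j,k+1} = H_{j,k} + h_{j,k}$ still satisfies~\ref{prop5} with $k$ replaced by $k+1$ (telescoping: $k$ increments of size $\frac{1}{4w}$), and then~\ref{prop1} follows from Lemma~\ref{Lipschitzgradsquareddistance} together with~\ref{prop5}: $\Hess f_{j,k+1} \succeq (1 - \frac{k+1}{4w})I$ globally and $\norm{\Hess f_{j,k+1}} \le (1 + r\sqrt{-\Klo}) + \frac{k+1}{4w} \le 2r\sqrt{-\Klo}+1+\frac{k+1}{4w}$ in $B(\xorigin,r)$. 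Note $\frac{k+1}{4w} \le \frac{\lfloor 2w\rfloor + 1}{4w} \le \frac34$ for $w \ge 1$, so strong convexity is never lost.

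The main obstacle I anticipate is the bump-function construction in the inductive step: one must realize a \emph{prescribed} gradient at $x_k$ while keeping the gradient identically zero at the minimizer $z_j$ \emph{and} at all previously queried points $x_0,\dots,x_{k-1}$ \emph{and} keeping the perturbation $C^\infty$ with a uniform (curvature-scaled) bound on its Hessian. Guaranteeing that the support of the bump around $x_k$ can be taken to avoid the finitely many points $z_j, x_0,\dots,x_{k-1}$ requires a geometric separation argument — essentially that consecutive queries cannot be arbitrarily close to all active minimizers and to each other without already solving the problem — and controlling $\norm{\Hess h_{j,k}}$ in terms of $\frac{(\text{target gradient norm})}{(\text{support radius})^2}$ and the curvature lower bound $\Klo$ is where the specific constants ($\frac{1}{4w\sqrt{-\Klo}}$ for gradients, $\frac{1}{4w}$ for Hessians, the factor $2000$ and the $3\mathscr R\sqrt{-\Klo}+2$ term in the covering count) get pinned down. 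The rest — covering/pigeonhole, removing the single bad index, and assembling~\ref{prop1}–\ref{prop5} — is bookkeeping once the bump lemma is in hand.
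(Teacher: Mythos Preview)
The overall inductive skeleton---base case $H_{j,0}\equiv 0$, discard the at-most-one bad index, add a bump $h_{j,k}$ supported near $x_k$, pigeonhole on gradients in $\T_{x_k}\calM$, telescope the Hessian bounds to get \ref{prop1} and \ref{prop5}---matches the paper's proof. However, there is a genuine gap in your covering argument, exactly at the point you flag as ``the main obstacle.''

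You propose to cover a ball in $\T_{x_k}\calM$ of radius roughly $2\mathscr{R}$ (containing all $\grad f_{j,k}(x_k)$) by balls of a \emph{fixed} radius $\rho$ comparable to $\frac{1}{w\sqrt{-\Klo}}$. This works only if the bump $h_{j,k}$ can realize an arbitrary gradient correction of norm $\rho$ at $x_k$ while its support avoids $x_0,\dots,x_{k-1}$. But the algorithm may query $x_k$ \emph{arbitrarily close} to some previous $x_\ell$; you cannot assume otherwise, and no ``separation argument'' rules it out (the algorithm, not the oracle, chooses $x_k$). When $\dist(x_k,x_\ell)$ is tiny, the bump support must have radius at most $\tfrac14\dist(x_k,x_\ell)$, and then---keeping $\norm{\Hess h_{j,k}}\le \tfrac{1}{4w}$---the achievable gradient correction shrinks to order $\tfrac{1}{w}\dist(x_k,x_\ell)$, not $\tfrac{1}{w\sqrt{-\Klo}}$. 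With your fixed big-ball radius, the covering count blows up and \eqref{lowerboundonAk} fails. (Your remark that the trade-off is between support radius and \emph{Hessian} norm is also off: the Hessian bound $\tfrac{1}{4w}$ is fixed; what degrades as the support shrinks is the achievable \emph{gradient} norm.)

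The paper resolves this with a two-case analysis. In the delicate Case~2 ($x_k$ close to its nearest previous query $x_\ell$), the key observation is that by the inductive hypothesis \ref{prop3} all surviving candidates satisfy $\grad f_{j,k}(x_\ell) = g_\ell$, so by gradient-Lipschitzness (with constant of order $\mathscr R\sqrt{-\Klo}$ on the relevant ball) the vectors $\grad f_{j,k}(x_k)$ cluster around $P_{x_\ell\to x_k}g_\ell$ within a radius proportional to $\dist(x_k,x_\ell)$. Thus the big ball $B_k$ and the small balls $B_{j,k}$ shrink \emph{at the same rate} in $\dist(x_k,x_\ell)$, and the volume ratio---hence the pigeonhole count---stays bounded by $(2000w(3\mathscr R\sqrt{-\Klo}+2))^d$. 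This scale-matching is the missing idea; once you have it, the bump construction is routine (the paper's Lemma~\ref{lemmabumpfcts}) and the difficulty you anticipate disappears.
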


\begin{proof}[Proof of Theorem~\ref{theoremnoacceleration}]
Let us apply Lemma~\ref{keylemma} to $\calM$ and $\calA$.
Let the points $z_1, \ldots, z_N$ be provided by the ball-packing property so that $N\geq e^{\tilde{c} r}$.
Set $w = \tilde{c} d^{-1} r/4$ in Lemma~\ref{keylemma}.

It is easy to show by induction that inequality~\eqref{lowerboundonAk} along with $\left|A_0\right| \geq e^{\tilde{c} r}$ and $r \geq \frac{4(d+2)}{\tilde{c}}$ imply $\left|A_k\right| \geq 2$ for all $k \leq \min\{T, \floor{2 w}\} = T$.
For completeness, we give a short proof in Appendix~\ref{apptechnicalfactfromproofofbabytheorem}.

Since $A_T$ is nonempty, we can choose $j \in A_T$ and let $f = f_{j, T}$.
By property~\ref{prop1} and $T \leq 2 w$, $f$ is $\frac{1}{2}$-strongly g-convex in $\calM$ and $[2 r \sqrt{-\Klo} + \frac{3}{2}]$-smooth in $B(\xorigin, r)$.
Property~\ref{prop2} implies $f$ has minimizer $z_j$ which is contained in $B(\xorigin, \frac{3}{4}r)$.
Thus, $f$ is in $\mathcal{F}_{\kappa, r}^{\xorigin}(\calM)$ with $\kappa=4 r\sqrt{-\Klo} + 3$.

On the other hand, properties~\ref{prop3} and~\ref{prop4} imply running $\calA$ on $f$ produces iterates $x_0, \ldots, x_{T-1}$ satisfying $\dist(x_k, z_j) \geq \frac{r}{4}$ for all $k = 0, 1, \ldots, T-1$: all are far from the minimizer.
\end{proof}

%

\subsection{Proof of the key lemma~\ref{keylemma}} \label{proofoflemmaSec3}
We prove by induction on $k$ that there is a set $A_{k+1}$ and functions $f_{j, k+1}, j \in A_{k+1},$ satisfying the properties \ref{prop1}, \ref{prop2}, \ref{prop3}, \ref{prop4}, \ref{prop5}.
As we do this, we also construct the gradients $g_0, g_1, \ldots$, and we show that inequality~\eqref{lowerboundonAk} holds for each $k \geq 0$.

Our \textbf{base case} is $k+1 = 0$.
At the start (no queries), we simply define
\begin{align} \label{buildfj0}
f_{j, 0}(x) = \frac{1}{2} \dist(x, z_j)^2, \quad \forall j \in A_0 = \{1, \ldots, N\}.
\end{align}
Clearly $\grad f_{j, 0}(z_j) = 0$ (\ref{prop2} is satisfied), and $f_{j, 0}$ is $1$-strongly g-convex and $[2 r \sqrt{-\Klo} + 1]$-smooth in $B(\xorigin, r)$ by Lemma~\ref{Lipschitzgradsquareddistance} (\ref{prop1} is satisfied).
At iteration 0, all the functions $f_{j, 0}$ are trivially consistent with the set of past queries because there are no past queries (\ref{prop3} and \ref{prop4} are satisfied).  Finally, $H_{j, 0}$ is identically zero (\ref{prop5} is satisfied).

Now let us move on to the \textbf{inductive step}.  The remainder of this section is devoted to the inductive step.
We are at iteration $k \in [0, \floor{2 w})$, and there have been $k$ past queries at the points $x_0, \ldots, x_{k-1}$, along with the $k$ gradients $g_0, \ldots, g_{k-1}$ returned by the oracle.

The algorithm queries a point $x_k$.
If $k \geq 1$, let $x_{\ell}$ be a previous query point closest to $x_k$, i.e.,
\begin{align} \label{closestpoint}
x_{\ell} \in {\arg\min}_{x \in \{x_0, x_1, \ldots, x_{k-1}\}} \dist(x_k, x).
\end{align}
If $x_k = x_\ell$ (i.e., the algorithm repeats a query), just return $g_k = g_{\ell}$, take $A_{k+1} = A_k$, and we are done.
Otherwise, we can assume $x_k \neq x_\ell$.

By the inductive hypothesis (\textbf{IH}), we have a set $A_k$ such that for each $j \in A_k$ there is an infinitely differentable function $f_{j, k}$ for which:
\begin{enumerate} [label=\textbf{IH\arabic*}]
\item \label{IHprop1} $f_{j, k}$ is $(1 - \frac{k}{4 w})$-strongly g-convex in $\calM$ and $[2 r \sqrt{-\Klo} + 1 + \frac{k}{4 w}]$-smooth in $B(\xorigin, r)$;

\item $\grad f_{j, k}(z_j) = 0$; \label{IHprop2}

\item $\grad f_{j, k}(x_{m}) = g_{m}$ for $m = 0, \ldots, k-1$; \label{IHprop3}

\item $\dist(x_{m}, z_j) \geq \frac{r}{4}$ for all $m = 0, 1, \ldots, k-1$; \label{IHprop4}

\item $\norm{\grad H_{j, k}(x)} \leq \frac{k}{4 w\sqrt{-\Klo}}$ and $\norm{\Hess H_{j, k}(x)} \leq \frac{k}{4 w}$ for all $x \in \calM$. \label{IHprop5}
\end{enumerate}

We want to choose a large set $A_{k+1} \subseteq A_k$, and for each $j \in A_{k+1}$ we must construct $f_{j, k+1}$ as
\begin{align}\label{buildfjkplus1}
f_{j, k+1} = f_{j, k} + h_{j, k},
\end{align}
where $h_{j, k} \colon \calM \rightarrow \reals$ is an appropriately chosen function.
What properties do we want the functions $h_{j, k}$ to satisfy?
Compare the properties \ref{IHprop1}, \ref{IHprop2}, \ref{IHprop3}, \ref{IHprop4}, \ref{IHprop5} satisfied by $f_{j, k}$, with the properties \ref{prop1}, \ref{prop2}, \ref{prop3}, \ref{prop4}, \ref{prop5} we want $f_{j, k+1}$ to satisfy.
Let us look at each property.

\begin{itemize}
\item In order to ensure $f_{j, k+1}$ satisfies \ref{prop4}, we simply need to choose $A_{k+1}$ so that $\dist(x_k, z_j) \geq \frac{r}{4}$ for all $j \in A_{k+1}$.
Define
\begin{align} \label{definetildeAk}
\tilde{A}_k = \bigg\{j \in A_k : \dist(x_k, z_j) \geq \frac{r}{4}\bigg\}.
\end{align}
Since any pair of minimizers $z_i, z_j$ are separated by a distance of at least $r/2$, there is at most one $j \in A_k$ such that $\dist(x_k, z_j) < r/4$.
Therefore $|\tilde{A}_k| \geq |A_k| - 1$.
Below we define $A_{k+1}$ as a particular subset of $\tilde{A}_k$.

\item Let us look at property~\ref{prop3}.
If $k=0$, then $f_{j, k+1}$ is trivially consistent with the past queries (because there are none).
Assume $k\geq 1$.
In order for $f_{j, k+1}$ to remain consistent with the past queries $x_0, \ldots, x_{k-1}$, it is sufficient to enforce that the closed support\footnote{We use the notation $\mathrm{supp}(f)= \{x \in \calM : f(x) \neq 0\}$ to denote the support of a function $f \colon \calM \rightarrow \reals$, and $\overline{S}$ to denote the closure of the set $S$.} $\overline{\mathrm{supp}(h_{j, k})}$ does not contain $x_0, \ldots, x_{k-1}$.
Of course $h_{j, k}$ vanishes identically on the complement of its closed support $\calM \setminus \overline{\mathrm{supp}(h_{j, k})}$.
Further, $\calM \setminus \overline{\mathrm{supp}(h_{j, k})}$ is an open set, so 
$$\grad h_{j, k}(x) = 0, \quad \Hess h_{j, k}(x) = 0 \quad \forall x \in \calM \setminus \overline{\mathrm{supp}(h_{j, k})}.$$
Using $\grad f_{j, k+1} = \grad f_{j, k} + \grad h_{j, k}$ and the inductive hypothesis~\ref{IHprop3}, this ensures that $f_{j, k+1}$ is consistent with the queries $x_0, \ldots, x_{k-1}$.

In order to gain control of the gradient of $f_{j, k+1}$ at $x_k$, we also want the support of $h_{j, k}$ to contain $x_k$.
So using that $x_{\ell}$~\eqref{closestpoint} is a past query point closest to $x_k$, it is enough to enforce that the support of $h_{j, k}$ remains in the ball $B(x_k, \frac{1}{4} \dist(x_k, x_{\ell}))$.  
We are not done with~\ref{prop3} but let us move on for now.

\item In the previous item we saw that if $k\geq 1$, we want the support of $h_{j, k}$ to be contained in a ball centered at $x_k$ and whose radius is no more than $\frac{1}{4} \dist(x_k, x_{\ell})$.
For~\ref{prop2}, it is convenient to require that this radius is no more than $\frac{r}{8}$.
Precisely, we shall ensure that the support of $h_{j, k}$ is contained in the ball $B(x_k, \Rball^{(k)})$ where
\begin{align} \label{Rkball}
\Rball^{(0)} = \frac{r}{8}, \quad \quad \Rball^{(k)} = \min\Big\{\frac{1}{4}\dist(x_k, x_{\ell}), \frac{r}{8}\Big\} \quad \text{if } k\geq 1.
\end{align}

Let us now show that this choice of $\Rball^{(k)}$ guarantees~\ref{prop2}, i.e., $\grad f_{j, k+1}(z_j) = 0$.
We know $\grad f_{j, k}(z_j) = 0$.  Therefore, to satisfy \ref{prop2} it is sufficient to impose that the closed support of $h_{j, k}$ does not contain $z_j$.
We know that $\dist(x_k, z_j) \geq \frac{r}{4}$ for all $j \in \tilde{A}_{k}$.
Since $\Rball^{(k)} \leq \frac{r}{8}$, we indeed have $z_j \not \in B(x_k, \Rball^{(k)})$ for all $j \in \tilde{A}_{k}$.


\item Since $f_{j, k+1} = f_{j, k} + h_{j, k}$~\eqref{buildfjkplus1} and $H_{j, k+1} = H_{j, k} + h_{j, k}$ due to~\eqref{definitionofHjk},
$$\Hess f_{j, k} -\norm{\Hess h_{j, k}} I \preceq \Hess f_{j, k+1} \preceq \Hess f_{j, k} + \norm{\Hess h_{j, k}} I,$$
$$\norm{\grad H_{j, k+1}} \leq \norm{\grad H_{j, k}} + \norm{\grad h_{j, k}}, \quad \norm{\Hess H_{j, k+1}} \leq \norm{\Hess H_{j, k}} + \norm{\Hess h_{j, k}}.$$
Therefore (using Definition~\ref{definitionLsmoothness}), for $f_{j, k+1}$ to satisfy \ref{prop1} and $H_{j, k+1}$ to satisfy \ref{prop5}, it is enough to require $\norm{\Hess h_{j, k}(x)} \leq \frac{1}{4 w}$ and $\norm{\grad h_{j, k}(x)} \leq \frac{1}{4 w \sqrt{-\Klo}}$ for all $x \in \calM$.
\end{itemize}

Since we are looking for a function $h_{j, k}$ with support contained in a ball, we are looking to construct a \textit{bump function}, that is a $C^\infty$ function on $\calM$ whose closed support is compact.
In Appendix~\ref{appendixbumpfunction}, we state and prove Lemma~\ref{lemmabumpfcts} which, given a point $x_k \in \calM$ and a radius $\Rball > 0$, provides a family of bump functions 
\begin{align} \label{familyofbumpseq}
\{h_g \colon \calM \rightarrow \reals\}_{g \in B_{x_k}(0, w^{-1} g_{\mathrm{norm}}(\Rball))},
\end{align}
such that for each $g$ with $\norm{g} \leq w^{-1} g_{\mathrm{norm}}(\Rball)$, the function $h_g$ is supported in $B(x_k, \Rball)$ (property~\ref{BFprop3} in Lemma~\ref{lemmabumpfcts}), has $\grad h_g(x_k) = g$ (property \ref{BFprop2}), and in addition satisfies the bounds $\norm{\Hess h_{g}(x)} \leq \frac{1}{4 w}$ and $\norm{\grad h_{g}(x)} \leq \frac{1}{4 w \sqrt{-\Klo}}$ for all $x \in \calM$ (property \ref{BFprop4}).
Here $g_{\text{norm}} \colon [0, \infty) \rightarrow \reals$ is a certain univariate function with a simple explicit formula (equation~\eqref{gnormeq}).
(Remember that $B_{x_k}(0, w^{-1} g_{\mathrm{norm}}(\Rball)) \subseteq \T_{x_k}\calM$ denotes a Euclidean ball, see Section~\ref{hadamardmanssections}.)

So far we have not chosen $g_k \in \T_{x_k}\calM$.  However, using the family of bump functions~\eqref{familyofbumpseq}, we have shown for any choice of $g \in B_{x_k}(0, w^{-1} g_{\mathrm{norm}}(\Rball^{(k)}))$ and $j \in \tilde{A}_k$, the function $f_{j, k+1} = f_{j, k} + h_{g}$ satisfies properties \ref{prop1}, \ref{prop2}, \ref{prop4}, \ref{prop5}, as well as $\grad f_{j, k+1}(x_m) = g_m$ for $m = 0, 1, \ldots k-1$ (this is part of property~\ref{prop3}).
It remains to choose $g_k \in \T_{x_k} \calM$ and $A_{k+1} \subseteq \tilde{A}_k$ so that $\grad f_{j, k+1}(x_k) = g_k$ for all $j \in A_{k+1}$, and so that inequality~\eqref{lowerboundonAk} is satisfied.

Around each gradient $\grad f_{j, k}(x_k), j \in \tilde{A}_k$, there is a small ball $B_{j, k}$ which is the set of possible gradients of $f_{j, k+1} = f_{j, k} + h_g$ at $x_k$.
More precisely, the balls $B_{j, k} \subseteq \T_{x_k} \calM$ are defined by 
\begin{align} \label{definingBjk}
B_{j, k} = B_{x_k}(\grad f_{j, k}(x_k), w^{-1}g_{\mathrm{norm}}(\Rball^{(k)})), \quad \quad \forall j \in \tilde A_k.
\end{align}
Some of these balls overlap, and we want to choose $g_k \in \T_{x_k} M$ which simultaneously lies in as many of the balls as possible\cutchunk{---see Figure~\ref{figureballsBkandBjk}}.
This is because the oracle only gets to pick one $g_k$ and we would like $g_k$ to be compatible with as many $f_{j, k+1}, j \in \tilde{A}_k,$ as possible.  
Therefore, choose
\begin{align} \label{definegk}
g_k \in {\arg\max}_{g \in \T_{x_k} \calM} \left|\big\{j \in \tilde{A}_k : g \in B_{j, k}\big\}\right|.
\end{align}
Define $A_{k+1} = \{j \in \tilde A_k : g_k \in B_{j, k}\}.$
The number of balls $\{B_{j, k}\}$ which intersect at the common vector $g_k$ equals $\left|A_{k+1}\right|$.
For each $j \in A_{k+1}$ the vector $g_{j, k} = g_k - \grad f_{j, k}(x_k)$ is in $B_{x_k}(0, w^{-1}g_{\mathrm{norm}}(\Rball^{(k)}))$ and so
$$\grad (f_{j, k} + h_{g_{j, k}})(x_k) = \grad f_{j, k}(x_k) + \grad h_{g_{j, k}}(x_k)  = \grad f_{j, k}(x_k) + g_{j, k} = g_k$$
using property \ref{BFprop2} of Lemma~\ref{lemmabumpfcts}.
Therefore, defining $h_{j, k} = h_{g_{j, k}},$ we have 
$$\grad f_{j, k+1}(x_k) = \grad (f_{j, k} + h_{j, k})(x_k) = g_k,\quad \quad \text{for all $j \in A_{k+1}$.}$$

It remains to show inequality~\eqref{lowerboundonAk}, i.e., a large enough subset of the balls $B_{j, k}$ do indeed intersect at a common point.
For this, we use a geometric lemma (short proof in Appendix~\ref{appsimplegeolemma}).
\begin{lemma} \label{smallandlargeballslemma}
Consider $n$ closed balls $B_1, \ldots, B_n \subseteq \reals^d$ of radius $q$ each, and assume each of the balls is also contained in a larger closed ball $B$ of radius $r$: $B_j \subseteq B$ for all $j=1,\ldots, n$.  
Choose $g \in \arg \max_{y \in B} \left|\{j \in \{1, \ldots, n\} : y \in B_j\}\right|$ and let $A = \{j \in \{1, \ldots, n\} : g \in B_j\}$.
Then 
$$\left|A\right| \geq n \Vol(B_1) / \Vol(B) = n q^d / r^d.$$
\end{lemma}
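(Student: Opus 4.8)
The plan is to use a simple averaging (double-counting / probabilistic) argument over the larger ball $B$. Pick a point $y$ uniformly at random in $B$. For each $j \in \{1, \ldots, n\}$, the probability that $y \in B_j$ is exactly $\Vol(B_j)/\Vol(B) = q^d/r^d$, since $B_j \subseteq B$ and all the balls $B_j$ have radius $q$ (so equal volume $\Vol(B_1) = \cdots = \Vol(B_n) = q^d \Vol(B_0)$ where $B_0$ is the unit ball, and similarly $\Vol(B) = r^d \Vol(B_0)$). By linearity of expectation, the expected number of balls $B_j$ containing $y$ is $\sum_{j=1}^n \Pr[y \in B_j] = n q^d / r^d$.

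Next I would invoke the standard fact that a random variable must attain at least its expectation with positive probability: there exists some $y^* \in B$ with $\left|\{j : y^* \in B_j\}\right| \geq n q^d / r^d$. Since $g$ is chosen to maximize $\left|\{j : y \in B_j\}\right|$ over $y \in B$, we get $\left|A\right| = \left|\{j : g \in B_j\}\right| \geq \left|\{j : y^* \in B_j\}\right| \geq n q^d/r^d = n \Vol(B_1)/\Vol(B)$, which is the claim.

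There is essentially no obstacle here; the only point requiring a word of care is measurability of the counting function $y \mapsto \left|\{j : y \in B_j\}\right|$, which is immediate since it is a finite sum of indicator functions of (closed, hence measurable) balls, so the expectation computation is valid. One could alternatively phrase the argument by integrating $\sum_j \mathbf{1}_{B_j}(y)$ over $B$ with respect to Lebesgue measure and dividing by $\Vol(B)$, avoiding probabilistic language entirely; the inequality then follows because the maximum of a function is at least its average. I would present the integral version for cleanliness.
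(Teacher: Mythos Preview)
Your proposal is correct and essentially identical to the paper's proof: the paper integrates the counting function $N(x) = \sum_j \mathbf{1}_{B_j}(x)$ over $B$ to get $n\Vol(B_1) = \int_B N(x)\,dV(x) \leq \max_y N(y) \cdot \Vol(B)$, which is exactly the integral version you describe at the end. Your probabilistic phrasing is just this same computation divided through by $\Vol(B)$.
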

\cutchunk{
\begin{figure}[h]
    \centering
    \includegraphics[width=0.7\textwidth]{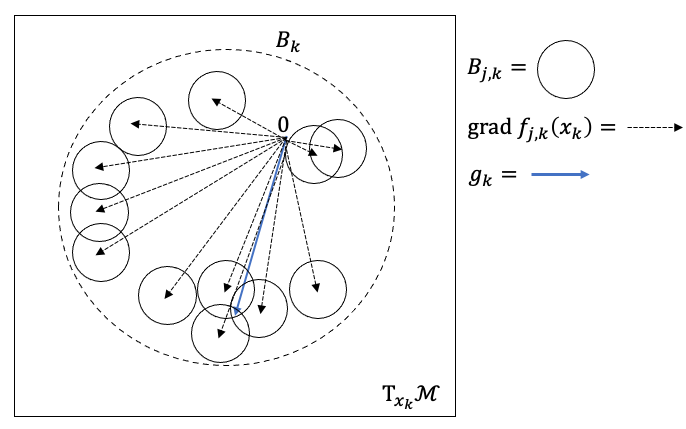}
    \caption{A depiction of how we choose the gradient $g_k$ in the proof of Lemma~\ref{keylemma} (Section~\ref{proofoflemmaSec3}).
    The balls $B_{j, k} \subset \T_{x_k}\calM$ are centered at the gradients $\{\grad f_{j, k}(x_k)\}_{j \in \tilde{A}_k}$ (the dashed arrows).
    The gradient $g_k \in \T_{x_k}\calM$ (the bolded blue arrow), is chosen in the intersection of as many balls $B_{j, k}$ as possible.
    The set of balls $B_{j, k}$ containing $g_k$ determines the subsequent active set $A_{k+1}$.  In the figure, $\left|A_{k+1}\right| = 3$.
    }
    \label{figureballsBkandBjk}
\end{figure}
}

To use Lemma~\ref{smallandlargeballslemma}, we need to find a ball $B_k \subseteq \T_{x_k}\calM$ containing all the balls $B_{j, k}, j \in \tilde A_k$, and we want the radius of $B_k$ to be small.
This is the last step of the proof.
Care has to be taken in bounding the radius of $B_k$ because the distance between $x_k$ and $x_\ell$~\eqref{closestpoint} can be arbitrarily small.
If $\dist(x_k, x_{\ell})$ is very small, then the radius of the balls $B_{j, k}$ is very small, and so we must show that the radius of $B_k$ is also sufficiently small for Lemma~\ref{smallandlargeballslemma} to be useful.
We upper bound the radius of the ball $B_k$ in the following two cases, showing that
\begin{align} \label{eqtobeshown}
\frac{\mathrm{Vol}(B_{j, k})}{\mathrm{Vol}(B_k)} \geq \frac{1}{(2000 w(3 \mathscr{R} \sqrt{-\Klo} + 2))^d}
\end{align}
holds in each case.
The most important part is to choose a good center for $B_k$:

\textbf{Case 1}: either $k=0$, or $k\geq 1$ and $\sqrt{-\Klo} \dist(x_k, x_{\ell}) > 4$.  
This captures the scenario where either there are no previous query points, or the algorithm queries $x_k$ not close to any previous query.  
{In this case, the overarching idea to upper bound the radius of $B_k$ is as follows: $f_{j, k}$ is a perturbed version of the squared distance function $x \mapsto \frac{1}{2} \dist(x, z_j)^2$.  
Therefore, the gradient of $f_{j, k}$ at $x_k$ (which is the center of $B_{j, k}$) is approximately $-\exp_{x_k}^{-1}(z_j)$ (see Lemma~\ref{Lipschitzgradsquareddistance}).
The points $\{z_j\}_{j \in \tilde{A}_k}$ are clustered around $\xorigin$ in a ball of radius $r$.  Therefore the vectors $\{-\exp_{x_k}^{-1}(z_j)\}_{j \in \tilde{A}_k}$ are clustered around $-\exp_{x_k}^{-1}(\xorigin)$.  
Consequently the same is true for the gradients $\{\grad f_{j, k}(x_k)\}_{j \in \tilde{A}_k}$.
We use this intuition to work out the details in Appendix~\ref{case1}.}

\textbf{Case 2}: $k \geq 1$ and $\sqrt{-\Klo} \dist(x_k, x_{\ell}) \leq 4$.
This captures the scenario where the algorithm queries $x_k$ close to a previous query.
{In this case, the overarching idea to bound the radius of a ball $B_k$ is as follows.  All the functions $f_{j, k}, j \in \tilde{A}_k$, have the same gradient at $x_\ell$, namely $g_\ell$.
Therefore, since $\dist(x_k, x_\ell)$ is small, gradient-Lipschitzness of the functions $f_{j, k}$ {(see Definition~\ref{definitionLsmoothness})} implies that the gradients $\{\grad f_{j, k}(x_k)\}_{j \in \tilde{A}_k}$ are all clustered around $P_{x_\ell \rightarrow x_k} g_\ell$ (the parallel transport of $g_\ell$ to $\T_{x_k} \calM$).
This intuition guides the details given in Appendix~\ref{case2}.}

After establishing inequality~\eqref{eqtobeshown}, we can use Lemma~\ref{smallandlargeballslemma} to show that $g_k$~\eqref{definegk} is contained in
\begin{align*}
\left|{A}_{k+1}\right| \geq |\tilde{A}_k| \frac{\Vol(B_{j, k})}{\Vol(B_k)} 
\geq \frac{|A_k| - 1}{(2000 w(3 \mathscr{R} \sqrt{-\Klo} + 2))^d}
\end{align*}
of the balls $B_{j, k}, j \in \tilde{A}_k$.
This concludes the inductive step, proving Lemma~\ref{keylemma}.

\cutchunk{
\section{The nonstrongly g-convex case} \label{nonstronglyconvexcaseextension}
In this section we prove Theorem~\ref{theoremtheorem}, which provides lower bounds for optimizing functions which are nonstrongly g-convex and $L$-smooth on all of $\calM$.
We further assume the functions admit a unique minimizer $x^* \in \calM$.
Specifically, recall the function class $\tilde{\mathcal{F}}_{L, r}^{\xorigin}(\calM)$ from Definition~\ref{defsecondfctclass} and the computational task decribed after that definition.  
Given $f \in \tilde{\calF}_{L, r}^{\xorigin}(\calM)$, we ask: for $\epsilon \in (0, 1)$, how many queries are required for an algorithm to find a point $x \in \calM$ such that $f(x) - f(x^*) \leq \epsilon \cdot \frac{1}{2} L r^2$?

When $\calM$ is a Euclidean space, GD uses at most $O(\frac{1}{\epsilon})$ queries to solve this problem, whereas NAG uses only $O(\frac{1}{\sqrt{\epsilon}})$, and this is optimal.  
Addressing the dependency in $\epsilon$, in Section~\ref{proofoflowerboundfornonstronglyconvexcase} we prove the lower bound $\tilde{\Omega}(\frac{1}{\epsilon})$ for the nonstrongly g-convex problem.
In Section~\ref{comparisontorgdnonstronglyconvexcase}, we first review known upper bounds for the the nonstrongly g-convex problem.
Then we show that a version of RGD solves this problem in $\tilde{O}(\frac{1}{\epsilon})$ queries, meaning RGD is optimal (up to logs) in the setting we consider.

\subsection{Proof of Theorem~\ref{theoremtheorem}} \label{proofoflowerboundfornonstronglyconvexcase}

Recall the function classes $\mathcal{F}_{\kappa, r}^{\xorigin}(\calM)$ and $\tilde{\mathcal{F}}_{L, r}^{\xorigin}(\calM)$ defined in Section~\ref{introproblemclass}.
We define an additional function class $\hat{\mathcal{F}}_{L, r}^{\xorigin}$ as follows.
\begin{definition}
For every $L > 0, r > 0$ and $\xorigin \in \calM$, define $\hat{\calF}_{L, r}^{\xorigin}(\calM)$ to be the set of all $C^{\infty}$ functions $f \colon \calM \rightarrow \reals$ which 
\begin{itemize}
\item are strictly g-convex in all of $\calM$;
\item are $L$-smooth in $B(\xorigin, r)$; and
\item have a unique global minimizer $x^*$ which lies in the ball $B(\xorigin, \frac{3}{4} r)$.
\end{itemize}
\end{definition}
\noindent We use this function class as an intermediary between the function class ${\mathcal{F}}_{\kappa, r}^{\xorigin}$, for which we have lower bounds, and the function class $\tilde{\mathcal{F}}_{L, r}^{\xorigin}$ for which we want to derive lower bounds.
Let us use Theorem~\ref{maintheoremunboundedqueries} to prove lower bounds for the intermediary function class $\hat{\mathcal{F}}_{L, r}^{\xorigin}$.
\begin{lemma} \label{lemmaaboutintermediaryfunctionclass}
Let $\calM$ be a Hadamard manifold of dimension $d \geq 2$ which satisfies the ball-packing property~\aref{assumptionNbigWeak} with constants $\tilde{r}, \tilde{c}$ and point $\xorigin \in \calM$.  
Also assume $\calM$ has sectional curvatures in the interval $[\Klo, 0]$ with $\Klo < 0$.
Let $L > 0$ and let $\hat\epsilon \in \Big(0, \min\Big\{\frac{1}{2^{10} \tilde{r} \sqrt{-\Klo}}, 2^{-13}, \frac{\tilde{c}}{2^{12} (d+2) \sqrt{-\Klo}}\Big\}\Big]$.
Define
\begin{align*}
\mu &= 64 \hat\epsilon L, && r = \frac{L / \mu}{16 \sqrt{-\Klo}} = \frac{1}{\hat\epsilon} \cdot \frac{1}{2^{10} \sqrt{-\Klo}}.
\end{align*}
Let $\calA$ be any deterministic algorithm.


There is a function $f \in \hat{\mathcal{F}}_{ L, r}^{\xorigin}$ with minimizer $x^*$ such that running $\calA$ on $f$ yields iterates $x_0, x_1, x_2, \ldots$ satisfying $f(x_k) - f(x^*) \geq 2 \hat\epsilon  L r^2$ for all $k = 0, 1, \ldots, T-1$, where
\begin{align*}
{T} = \Bigg\lfloor \frac{\tilde{c} (d+2)^{-1} r}{\log\big(2000 \tilde{c} (d+2)^{-1} r (3 \mathscr{R} \sqrt{-\Klo} + 2)\big)}\Bigg\rfloor \geq \Bigg\lfloor \frac{\tilde{c} (d+2)^{-1} r}{\log\big(2\cdot 10^6 \cdot \tilde{c} (d+2)^{-1} r (r \sqrt{-\Klo})^2\big)}\Bigg\rfloor
\end{align*}
with $\mathscr{R} = 2^9 r \log(r \sqrt{-\Klo})^2$.

Moreover, $f$ is $\mu$-strongly g-convex in $\calM$ and $\mu [12 \mathscr{R} \sqrt{-\Klo} + 9]$-smooth in the ball $B(\xorigin, \mathscr{R})$.  
Outside of the ball $B(\xorigin, \mathscr{R})$, $f(x) = 3\mu \dist(x, \xorigin)^2$ for all $x \not\in B(\xorigin, \mathscr{R})$.
\end{lemma}
\noindent Notice that $T = \tilde{\Theta}(r) = \Theta(\frac{1}{\hat\epsilon})$.
\begin{proof}
Due to the definition of $r$ and the assumed upper bound on $\hat\epsilon$, we can apply Theorem~\ref{maintheoremunboundedqueries} to $\calA$.
After scaling by $6 \mu$,\footnote{Scaling by $6 \mu$ is legitimate by a simple reduction argument: given an algorithm $\mathcal{A}$, we can create an algorithm $\hat{\mathcal{A}}$ which internally runs $\mathcal{A}$ to query an oracle $\calO_{\hat f}$ and multiplies the outputs of $\calO_{\hat f}$ by $6 \mu$ before forwarding them to $\mathcal{A}$.  We call upon Theorem~\ref{maintheoremunboundedqueries} to claim there exists a hard function $\hat f$ for $\hat{\mathcal{A}}$.  Since $\mathcal{A}$ is effectively interacting with $\calO_{6 \mu \hat f}$, we find $f = 6 \mu \hat f$ is a hard function for $\calA$.} that theorem provides a function $f$ which is $\mu$-strongly g-convex in all $\calM$ and $\mu [12 r \sqrt{-\Klo} + 9]$-smooth in $B(\xorigin, r)$ with minimizer $x^* \in B(\xorigin, \frac{3}{4} r)$ such that 
$$\dist(x_k, x^*) \geq \frac{r}{4} \quad \quad \forall k \leq T-1,$$
where $x_0, x_1, \ldots$ denote the queries produced by running $\calA$ on $f$.
Observe that $L \geq \mu [12 r \sqrt{-\Klo} + 9]$ by the definition of $\mu$ and $r$.
Thus, $f$ is in $\hat{\mathcal{F}}_{ L, r}^{\xorigin}$.

Also, using $\dist(x_k, x^*) \geq \frac{r}{4}$,
$$2 \hat\epsilon \cdot  L r^2 = \frac{1}{32} \mu r^2 \leq \frac{\mu}{2} \dist(x_k, x^*)^2 \leq f(x_k) - f(x^*), \quad \quad \forall k \leq T-1.$$
For the last inequality we used the $\mu$-strong g-convexity of $f$.
\end{proof}


Using a reduction, we next prove lower bounds for the function class $\tilde{\mathcal{F}}_{L, r}^{\xorigin}$.
The idea is that given a hard function $f$ from Lemma~\ref{lemmaaboutintermediaryfunctionclass}, we modify $f$ so that it remains the same inside $B(\xorigin, \mathscr{R})$, and outside $B(\xorigin, \mathscr{R})$ it is not strongly g-convex but is strictly g-convex and also $L$-smooth.
We know that $f(x)$ is proportional to $\dist(x, \xorigin)^2$ outside $B(\xorigin, \mathscr{R})$.
We modify $f$ so that it behaves similarly to $\dist(x, \xorigin)$ instead outside $B(\xorigin, \mathscr{R})$.  This works because the function $x \mapsto \dist(x, \xorigin)$ is g-convex and (outside a sufficiently large ball surrounding $\xorigin$) that same function is $2 \sqrt{-\Klo}$-smooth~\citep[Thm.~11.7]{lee2018riemannian}.

Define $\mathscr{D} \colon \calM \rightarrow \reals$ by $\mathscr{D}(x) = \frac{1}{2} \dist(x, \xorigin)^2$.
Given any $C^\infty$ function $f$, define the $C^\infty$ function $\tilde{f}_{\mathscr{R}} \colon \calM \rightarrow \reals$ by
\begin{align} \label{eqdefineftildeR}
\tilde{f}_{\mathscr{R}}(x) = u_{\mathscr{R}}\big(\mathscr{D}(x)\big) f(x),
\end{align}
where $u_{\mathscr{R}} \colon \reals \rightarrow \reals$ is a $C^\infty$ function which is $1$ on $(-\infty, \frac{1}{2} \mathscr{R}^2]$ and scales as $\sqrt{\frac{1}{2} \mathscr{R}^2 \cdot t^{-1}}$ for $t > \frac{1}{2} \mathscr{R}^2$ sufficiently large.
See Appendix~\ref{nonstronglyconvexboundsapphighlevel} for the definition of $u_{\mathscr{R}}$.
Suppose $f$ is a hard function from Lemma~\ref{lemmaaboutintermediaryfunctionclass}.  Since $\tilde{f}_{\mathscr{R}} = f$ in $B(\xorigin, \mathscr{R})$, we know $\tilde{f}_{\mathscr{R}}$ is $\mu$-strongly g-convex and $\mu [12 \mathscr{R} \sqrt{-\Klo} + 9]$-smooth in $B(\xorigin, \mathscr{R})$.
Lemma~\ref{lemmaaboutintermediaryfunctionclass} also guarantees $f(x) = 3\mu \dist(x, \xorigin)^2$ for all $x \not \in B(\xorigin, \mathscr{R})$.  
In Appendix~\ref{nonstronglyconvexboundsapp}, we use this fact to show that $\tilde{f}_{\mathscr{R}}$ is $\mu[24 \mathscr{R} \sqrt{-\Klo}]$-smooth and strictly g-convex outside of $B(\xorigin, \mathscr{R})$.
Using the definitions of $\mu, r, \mathscr{R}$ in Lemma~\ref{lemmaaboutintermediaryfunctionclass} as well as the bound $\hat\epsilon \leq 2^{-13}$, we conclude $\tilde{f}_{\mathscr{R}} \in \tilde{\mathcal{F}}_{\tilde{L}, r}^{\xorigin}$ where $\tilde{L} = L \cdot 2^{10} \log({\hat\epsilon}^{-1})^2$.

Given the oracle $\calO_f$ of any function $f$, we can use $\calO_f$ to emulate the oracle $\calO_{\tilde{f}_{\mathscr{R}}}$ using equation~\eqref{eqdefineftildeR} and the following formula for $\grad \tilde{f}_{\mathscr{R}}$:
\begin{equation} \label{eqfortildefRgrad}
\begin{split}
\grad \tilde{f}_{\mathscr{R}}(x) = 
\begin{cases}
	\grad f(x) & \text{if } \dist(x, \xorigin) \leq \mathscr{R}; \\
	u_{\mathscr{R}}\big(\mathscr{D}(x)\big) \grad f(x) \\
	\quad - f(x) u_{\mathscr{R}}'\big(\mathscr{D}(x)\big) \exp_{x}^{-1}(\xorigin) & \text{otherwise}.
\end{cases}
\end{split}
\end{equation}

To prove a lower bound for an algorithm $\tilde{\calA}$ designed to minimize nonstrongly g-convex functions, we make $\tilde{\calA}$ interact with the oracle $\calO_{\tilde{f}_{\mathscr{R}}}$ (which we simulate using $\calO_f$).
This implicitly defines an algorithm $\calA$ which interacts with $\calO_f$---see Figure~\ref{figrenonstronglyconvex}.
Explicitly, the algorithm $\calA$ internally runs $\tilde{\calA}$ as a subroutine as follows.  
If $\tilde{\calA}$ outputs $x_k$, then ${\calA}$ queries $\calO_f$ at $x_k$, receives $(f(x_k), \grad f(x_k))$ from $\calO_f$, and then passes $(\tilde{f}_{\mathscr{R}}(x_k), \grad \tilde{f}_{\mathscr{R}}(x_k))$ to $\tilde{\calA}$, which it computes using $(f(x_k), \grad f(x_k))$ and equations~\eqref{eqdefineftildeR} and~\eqref{eqfortildefRgrad}.
\begin{figure}[h]
    \centering
    \includegraphics[width=1\textwidth]{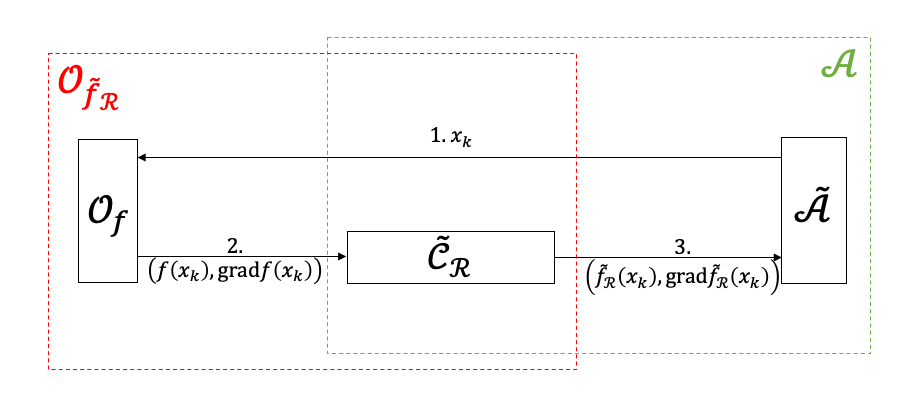}
    \caption{A diagram of the reduction used in Section~\ref{nonstronglyconvexcaseextension}.  
    The box $\tilde{\mathscr{C}}_{\mathscr{R}}$ represents a map which when given a pair $(f(x), \grad f(x))$, outputs $(\tilde{f}_{\mathscr{R}}(x), \grad \tilde{f}_{\mathscr{R}}(x))$ using equations~\eqref{eqdefineftildeR} and~\eqref{eqfortildefRgrad}.  }
    \label{figrenonstronglyconvex}
\end{figure}

Applying Lemma~\ref{lemmaaboutintermediaryfunctionclass} to the algorithm ${\calA}$, we know there is a function $f \in \hat{\mathcal{F}}_{ L, r}^{\xorigin}(\calM)$ with minimizer $x^*$ so that $f(x_k) - f(x^*) \geq 2 \hat\epsilon L r^2$ for all $k \leq T-1$.
Since $\tilde{f}_{\mathscr{R}} = f$ in $B(\xorigin, \mathscr{R})$ and $\tilde{f}_{\mathscr{R}}$ is strictly g-convex on $\calM$, we know the minimizer of $\tilde{f}_{\mathscr{R}}$ is also $x^*$ and that $\tilde{f}_{\mathscr{R}}(x_k) - \tilde{f}_{\mathscr{R}}(x^*) = f(x_k) - f(x^*)$ if $x_k \in B(\xorigin, \mathscr{R})$.
In Appendix~\ref{technicalfacttofinishproofoftheorem1p5}, we show that $\tilde{f}_{\mathscr{R}}(x_k) - \tilde{f}_{\mathscr{R}}(x^*) \geq 2 \hat\epsilon L r^2$ if $x_k \not \in B(\xorigin, \mathscr{R})$.
Lastly, observe that (by design) if we run $\tilde{\calA}$ on the function $\tilde{f}_{\mathscr{R}}$ then we get exactly the sequence $x_0, x_1, \ldots, x_{T-1}$.
We have proved the following theorem.
\begin{theorem} \label{thmonemore}
Let $\calM$ be a Hadamard manifold of dimension $d \geq 2$ which satisfies the ball-packing property~\aref{assumptionNbigWeak} with constants $\tilde{r}, \tilde{c}$ and point $\xorigin \in \calM$.  
Also assume $\calM$ has sectional curvatures in the interval $[\Klo, 0]$ with $\Klo < 0$.
Let $\tilde{L} > 0$ and $\epsilon \in (0, 2^{-16}]$ such that $2^{9} \epsilon \log({\epsilon}^{-1})^2 \leq \min\Big\{\frac{1}{2^{10} \tilde{r} \sqrt{-\Klo}}, 2^{-13}, \frac{\tilde{c}}{2^{12} (d+2) \sqrt{-\Klo}}\Big\}$.
Define
$$\hat \epsilon = 2^{9} \epsilon \log({\epsilon}^{-1})^2, \quad \quad L = \frac{\tilde{L}}{2^{10} \log({\hat\epsilon}^{-1})^2}.$$
Define $r = \frac{1}{\hat\epsilon} \cdot \frac{1}{2^{10} \sqrt{-\Klo}}$ as in Lemma~\ref{lemmaaboutintermediaryfunctionclass}.
Let $\calA$ be any deterministic algorithm.


There is a function $\tilde{f} \in \tilde{\mathcal{F}}_{\tilde{L}, r}^{\xorigin}$ with minimizer $x^*$ such that running $\calA$ on $\tilde{f}$ yields iterates $x_0, x_1, x_2, \ldots$ satisfying 
$$\tilde{f}(x_k) - \tilde{f}(x^*) \geq 2 \hat \epsilon L r^2 \geq \epsilon \tilde{L} r^2$$
for all $k = 0, 1, \ldots, T-1$, where $T = \Theta(\frac{1}{\hat \epsilon} \cdot \frac{1}{\log({\hat \epsilon}^{-1})}) =  \Theta(\frac{1}{\epsilon} \cdot \frac{1}{\log({\epsilon}^{-1})^3})$ is as in Lemma~\ref{lemmaaboutintermediaryfunctionclass}.
\end{theorem}
%

Theorem~\ref{theoremtheorem} now follows directly from Theorem~\ref{thmonemore} by using Lemma~\ref{lemmaNbig} for the values of $\tilde{r}$ and $\tilde{c}$ in the ball-packing property (see Appendix~\ref{helper3} for the details).

\subsection{Comparison to RGD} \label{comparisontorgdnonstronglyconvexcase}
Let us compare the lower bound $\tilde{\Omega}(\frac{1}{\epsilon})$, proved in the previous section, with known upper bounds for RGD.
For g-convex optimization on a Hadamard manifold with sectional curvatures in the interval $[\Klo, 0]$, \citet{zhang2016complexitygeodesicallyconvex} show that (projected) RGD uses at most $O(\frac{r \sqrt{-\Klo}}{\epsilon})$ queries to solve the nonstrongly g-convex computational task.  
There remains a gap (due to the factor $r \sqrt{-\Klo}$) between the upper bound $O(\frac{r \sqrt{-\Klo}}{\epsilon})$ for RGD due to~\citet{zhang2016complexitygeodesicallyconvex} and our lower bound $\tilde{\Omega}(\frac{1}{\epsilon})$.
We can resolve this issue by tightening the upper bound for RGD (or more precisely a regularized version of RGD).  We use a simple reduction from the strongly g-convex rate for RGD stated in Proposition~\ref{lemmacurvindependentrateforRGD0} (which is essentially due to~\citet{zhang2016complexitygeodesicallyconvex}). 
Also see~\citep[Sec.~3]{martinezrubio2021global} for alternative reductions of this flavor.

\begin{proposition} \label{betterRGDnonstronglyconvexlemma}
Let $\calM$ be a hyperbolic space of curvature $K < 0$, and let $\xorigin \in \calM$.
Let $L > 0$, $r > 0$ and $\epsilon > 0$.
Let $f \in \tilde{\calF}_{L, r}^{\xorigin}(\calM)$ have minimizer $x^*$.
Define the regularized function $\tilde{f}(x) = f(x) + \frac{\sigma}{2} \dist(x, \xorigin)^2$ where $\sigma = \frac{\epsilon  L}{2}$.
Define $\zeta_{2 r, K} = \frac{2 r \sqrt{-K}}{\tanh(2 r \sqrt{-K})}$.

Then projected RGD
$$x_{k+1} = \mathrm{Proj}_D\Big(\exp_{x_k}\Big(-\frac{1}{L + \sigma \zeta_{2r, K}} \grad \tilde{f}(x_k)\Big)\Big), \quad\quad x_0 = \xorigin, \quad \quad D = B(\xorigin, 2 r)$$ 
finds a point $x$ with $f(x) - f(x^*) \leq \epsilon \cdot \frac{1}{2} L r^2$ in at most $O((\frac{1}{\epsilon} + r \sqrt{-K}) \log(\frac{1}{\epsilon} + r \sqrt{-K}))$ queries.

In particular, if $\epsilon \leq \tilde{O}(\frac{1}{r \sqrt{-K}})$, then the number of queries is $\tilde{O}(\frac{1}{\epsilon})$.  This gives a curvature independent rate for the setting of Theorem~\ref{theoremtheorem}.
\end{proposition}

%
\noindent The proof of this proposition is in Appendix~\ref{AppendixcurvindependentrateforRGD2}.
Proposition~\ref{betterRGDnonstronglyconvexlemma} shows that a regularized version of RGD is optimal (up to log factors) for the nonstrongly g-convex problem we consider.

\section{Reduction to Euclidean convexity when $r \leq O(\frac{1}{\sqrt{\kappa}})$} \label{reductiontoconvexoptwhenrissmall}
In this section we show that if the optimization is carried out in a small enough region, that is, if $r$ is small, then geodesically convex optimization essentially reduces to Euclidean convex optimization; hence, \emph{local} acceleration is possible.
To state the following proposition, we need certain properties of the Riemannian curvature tensor $R$ and its covariant derivative $\nabla R$ which are worked out in~\citep[Sec.~2]{criscitiello2020accelerated}.  
In particular, the assumption $\norm{\nabla R} \leq F$ is satisfied with $F = 0$ for all locally symmetric spaces, which includes all hyperbolic spaces, $\calP_n$ and $\mathcal{SLP}_n$.  
\begin{proposition} \label{gconvextoconvexwhenrsmall}
Let $\calM$ be a complete Riemannian manifold which has sectional curvatures in the interval $[-K, K]$ with $K \geq 0$ and also satisfies $\norm{\nabla R(x)} \leq F$ for all $x \in \calM$.

Let $\xorigin \in \calM$.
Let $f \colon \calM \rightarrow \reals$ be a twice continuously differentiable function which has a minimizer $x^*$.
Assume $f$ is $\mu$-strongly g-convex and $L$-smooth in $B(\xorigin, r)$, $\kappa = \frac{L}{\mu}$, and $x^*$ is contained in $B(\xorigin, r)$, where $r\in (0, {\frac{1}{\sqrt{\kappa}}} \cdot \min\{\frac{1}{4 \sqrt{K}}, \frac{K}{4 F}\}]$.
Define the pullback $\hat{f} = f \circ \exp_{\xorigin}$.
Then, $\hat{f}$ is $\frac{\mu}{2}$-strongly convex and $\frac{3}{2} L$-smooth in the Euclidean sense in $B_{\xorigin}(0, r) \subseteq \T_{\xorigin}\calM$.
\end{proposition}
\begin{proof}
Following~\citep[Sec.~2]{criscitiello2020accelerated}, if $\norm{s} \leq \min\{\frac{1}{4 \sqrt{K}}, \frac{K}{4 F}\}$ then
$$\norm{\nabla^2 \hat{f}(s) - P_{\xorigin, s}^* \Hess f(y) P_{\xorigin, s}} \leq  \frac{7}{9} L K \norm{s}^2
+ \frac{3}{2} K \norm{s} \norm{\grad f(y)}$$
where $y = \exp_{\xorigin}(s)$ and $P_{x, s}$ denotes parallel transport along the curve $c(t) = \exp_x(ts)$ from $t=0$ to $t=1$.
Using $L$-smoothness of $f$ and $x^* \in B(\xorigin, r)$, 
$$\norm{\grad f(y)} = \norm{\grad f(y) - P_{x^* \rightarrow y} \grad f(x^*)} \leq L \dist(y, x^*) \leq 2 L r.$$
Therefore, we determine that if $\norm{s} \leq r$ then
$$\norm{\nabla^2 \hat{f}(s) - P_{\xorigin, s}^* \Hess f(y) P_{\xorigin, s}} \leq 4 L K r^2 \leq \frac{\mu}{2}$$
by our choice of $r$.

By $\mu$-strong g-convexity and $L$-smoothness of $f$, we know all eigenvalues of the symmetric operator $P_{\xorigin, s}^* \Hess f(y) P_{\xorigin, s}$ are in $[\mu, L]$.
Hence, all eigenvalues of $\nabla^2 \hat{f}(s)$ are in the interval $[\frac{\mu}{2}, L + \frac{\mu}{2}]$ for all $\norm{s} \leq r$.
\end{proof}

\subsection*{Simple upper bound $O(\sqrt{\kappa})$ when $r$ is small}
Using Proposition~\ref{gconvextoconvexwhenrsmall}, we can easily construct an algorithm which achieves acceleration when initialized sufficiently close to $x^*$.
To do so we rely on the fact that it is possible to accelerate gradient descent in Euclidean spaces even when smoothness and strong convexity hold only in some convex set containing the minimizer, as stated in the following theorem due to~\citet{nesterovacceleratedgradientinconvexset2007}. 

\begin{theorem}\citep{nesterovacceleratedgradientinconvexset2007} \label{nesterovproximalalgo}
Let $D$ be a convex set in $\Rd$, and let $f \colon \reals^d \rightarrow \reals$ be a differentiable function which has a minimizer $x^*$. 
Assume $f$ is $L$-smooth and $\mu$-strongly convex in $D$ and $x^*$ is contained in $D$. Let $x_0 \in D$ and $\kappa = \frac{L}{\mu}$.
There is an algorithm producing iterates $x_k \in D$ such that {$f(x_k) - f(x^*) \leq c_1 \kappa (1 - c_2 {\frac{1}{\sqrt{\kappa}}})^k (f(x_0) - f(x^*))$} for all $k \geq 0$.  Here, $c_1 > 0, c_2 \in (0,1)$ are absolute constants.
The algorithm makes one gradient query per iterate.  Given access to projection onto $D$, each step of this algorithm can be computed efficiently.
\end{theorem}
\noindent This theorem is a consequence of~\citep[Thm. 6]{nesterovacceleratedgradientinconvexset2007} and~\citep[Sec. 5.1]{nesterovacceleratedgradientinconvexset2007}.
Although not explicitly stated in~\citep{nesterovacceleratedgradientinconvexset2007}, we note that the convergence proof of this algorithm only uses $L$-smoothness and $\mu$-strong convexity of $f$ in $D$.
In short, this is because all the iterates produced by the algorithm (including the auxiliary iterates called $y_k$ and $v_k$) remain in $D$ (note that the same cannot be said of the algorithm in~\citep[Sec.~2.2.4]{nesterov2004introductory}).

Let $\calM$ be a complete Riemannian manifold with constants $K$ and $F$ as in Proposition~\ref{gconvextoconvexwhenrsmall}.
Consider a function $f \colon \calM \rightarrow \reals$ with a minimizer $x^*$.  Assume $f$ is $\mu$-strongly g-convex and $L$-smooth in some g-convex set $D$ containing the ball $B(x^*, r)$ where $r = {\frac{1}{\sqrt{\kappa}}} \cdot \min\{\frac{1}{4 \sqrt{K}}, \frac{K}{4 F}\}$.
Suppose $x_0 \in B(x^*, r)$ and $B(x_0, r) \subseteq D$.
Then consider the algorithm which simply runs on $\hat{f} = f \circ \exp_{x_0}$ the algorithm from Theorem~\ref{nesterovproximalalgo} in the ball $B_{x_0}(0, r) \subseteq \T_{x_0}\calM$.
Since $\hat{f}$ has condition number at most $3 \kappa$ in this ball by Proposition~\ref{gconvextoconvexwhenrsmall} with $\xorigin = x_0$, this algorithm decreases the suboptimality gap of $\hat{f}$ (and thus also of $f$) at an accelerated rate.

{This gives a simple proof that when $\dist(x_0, x^*) \leq O({\frac{1}{\sqrt{\kappa}}})$, there is an accelerated algorithm.
We can combine\footnote{This was suggested by David Mart\'{\i}nez-Rubio.} this algorithm with RGD to get a globally convergent algorithm which performs at least as well as RGD and which is eventually accelerated, like the algorithm of \citet{ahn2020nesterovs}.
Precisely, consider running $T = \tilde \Theta(\kappa)$ steps of the algorithm we just described (with $r = {\frac{1}{\sqrt{\kappa}}} \cdot \min\{\frac{1}{4 \sqrt{K}}, \frac{K}{4 F}\}$) in parallel with $T$ steps of projected RGD, both initialized at the same point $x_0$.  
After these $T$ steps, we set $x_1 \in \calM$ to be the last iterate produced by one of the algorithms based on function value and we repeat starting from $x_1$, etc.
By design, this algorithm performs at least as well as RGD in terms of function value.
By our choice of $T = \tilde \Theta(\kappa)$, once an iterate $x_k$ of the algorithm enters $B(x^*, r)$, then the distance to the minimizer of the subsequent outer iterates $x_{k+1}, x_{k+2}, ...$ is decreasing.  So once this method is restarted in $B(x^*, r)$, it stays in this ball, and the NAG sequence in a tangent space provides cost function value decrease at an accelerated rate.  
Therefore this algorithm is eventually accelerated.  
We note that the algorithms of \citet{zhang2018estimatesequence} and \citet{ahn2020nesterovs} do not require a bound on $\norm{\nabla R}$ while the algorithm we presented does.}

\begin{remark}
Our main results show the query complexity lower bound $\tilde{\Omega}(\kappa)$ for the function class $\mathcal{F}_{\kappa, r}^{\xorigin}(\calM)$ when $r$ is large, but say nothing about the case when $r$ is small.
When $r$ is small, the discussion above highlights that it is possible to achieve a rate of $O(\sqrt{\kappa})$.
Proposition~\ref{gconvextoconvexwhenrsmall} also suggests that the $\Omega(\sqrt{\kappa})$ \emph{lower bound} from Euclidean spaces also holds for Riemannian manifolds, provided $r$ is sufficiently small.  
Since manifolds are locally Euclidean, this is of course expected.

The central idea is that any algorithm $\calA$ querying points $x$ on $\calM$ can be converted into an algorithm $\tilde \calA$ querying points in a fixed tangent space $\T_{\xorigin} \calM$ using the exponential map in the obvious way.
The lower bound for Euclidean spaces provides a hard function $\tilde{f} \colon \T_{\xorigin}\calM \rightarrow \reals$ for $\tilde{\calA}$. 
In turn, the function $f = \tilde{f} \circ \exp_{\xorigin}^{-1}$ is a hard function for $\calA$.
Proposition~\ref{gconvextoconvexwhenrsmall} guarantees $f$ is g-convex on $\calM$ in a small enough ball of radius $r$.
There is a caveat: the minimizer of $\tilde{f}$ may not be within a distance $r$ of the origin of the tangent space.  However, this can be fixed, at no expense, by scaling the linear term in the ``worst function in the world''~\citep[p.~67]{nesterov2004introductory} used for proving lower bounds in Euclidean spaces.
\end{remark}
}


\acks{We thank David Mart\'inez-Rubio for helpful discussions and feedback on a version of this paper.}

\bibliography{../boumal_bibtex/boumal}

\appendix
\section{Further literature review} \label{literature}
{\citet{liu2017gconvexacceleration} were the first to claim acceleration on Riemannian manifolds.
However, their algorithm requires solving nonlinear equations at each iteration which a priori might be as difficult as the optimization problem itself.}

The results of~\citet{ahn2020nesterovs} mentioned previously are an improvement on the results of~\citet{zhang2018estimatesequence} who also show that acceleration is possible if the initial iterate is sufficiently close to the minimizer $x^*$.
\citet{ahn2020nesterovs}'s algorithm additionally converges globally.
\citet{jinsra2021riemannianacceleration} propose a framework for generating and analyzing eventually-accelerated algorithms; the algorithm of~\citet{ahn2020nesterovs} is an instance of this framework.

{\citet{martinezrubio2021global} presents algorithms for acceleration on spheres and hyperbolic spaces.  For hyperbolic spaces, \citet{martinezrubio2021global} proves the rates $e^{\tilde O(r)} \sqrt{\kappa}$ for the strongly g-convex case, and $e^{\tilde O(r)} \frac{1}{\sqrt{\epsilon}}$ for the nonstrongly g-convex case (to find a point $x$ satisfying $f(x) - f(x^*) \leq \epsilon \cdot \frac{1}{2} L r^2$).
The key idea consists of pulling back the optimization problem to a vector space via a geodesic map; the pullback satisfies a relaxed notion of convexity.  
This idea is similar to the method of trivializations, introduced in~\citep{pmlr-v97-lezcano-casado19a} and applied to momentum methods in~\citep{lezcanocasado2020adaptive}.}

\citet{alimisis2021momentum} tackle the problem of acceleration on the class of smooth nonstrongly g-convex functions.  
In certain scenarios (when $r$ and curvature are sufficiently small), their algorithm outperforms RGD; {however, in general their algorithm requires $O(\frac{r^2}{\epsilon})$ iterations to solve the problem, which is not better than RGD.}
Nevertheless, the experimental results of~\citet{alimisis2021momentum} show promise.  \citet{huang2021sparsepca} develop an algorithm for Riemannian optimization based on FISTA~\citep{beck2009fast} which also demonstrates promising experimental results.

\citet{alimisis2019continuoustime} construct an ordinary differential equation (ODE) to model a Riemmanian version of Nesterov's accelerated gradient method.
They prove that this ODE achieves an accelerated rate.  
It is unclear whether the discretization of this ODE preserves a similar acceleration.
Recently, techniques from dynamical systems and symplectic geometry have been used to derive ODEs~\citep{duruisseaux2021ODEacceleration} and discretize such ODEs to obtain algorithms on Riemannian manifolds~\citep{franca2021b,franca2021a}.
It is also unclear whether such algorithms achieve acceleration.


In stark contrast to the results presented in this paper, on non-convex functions it is possible to achieve acceleration \emph{for finding (first- and second-order) critical points} on Riemannian manifolds, even negatively curved manifolds~\citep{criscitiello2020accelerated}.


One can also {sometimes} model non-Euclidean geometries using a Bregman distance function; such geometries have different properties than Riemannian geometries.  {For example, the functions in consideration are still convex in the Euclidean sense (unlike g-convex functions); the Bregman geometry alters the notion smoothness and conditioning of these functions.}  \citet{acceleratingmirrordescentisnotpossible} recently showed that acceleration is not possible in this setting.  The techniques they use and the key geometric obstructions to acceleration are significantly different from the Riemannian setting.



%

\section{Useful geometric propositions, and characterizations of g-convexity and smoothness} \label{geolemmasandcharofgconvexity}
In the appendices, we use the following geometric propositions, which are consequences of the Euclidean law of cosines, the hyperbolic law of cosines~\citep[Thm.~3.5.3]{ratcliffe2019hyperbolic}, and Toponogov's triangle comparison theorem (see~\citep[Thm.~11.10]{lee2018riemannian}, ~\citep[Sec.~6.5]{buragoburago}, or~\citep[Thm.~8.13.3]{alexander2019alexandrov}).
In the appendices, we also use the equivalent characterizations of $\mu$-strong g-convexity and $L$-smoothness given in Lemma~\ref{muHesslemma}.

\begin{proposition} \label{TopogonovEuclidean}
Let $\calM$ be a Hadamard manifold.
Let $xyz$ be a geodesic triangle of $\calM$ with vertices $x, y, z\in \calM$ and side lengths $\dist(y, z) = a, \dist(x, z) = b, \dist(x, y) = c$.  
Also let the angle at $x$ be $\alpha$, i.e., $\alpha = \arccos\Big(\frac{\inner{\exp_x^{-1}(y)}{\exp_x^{-1}(z)}}{\dist(x, y) \dist(x, z)}\Big)$.
Then
$a^2 \geq b^2 + c^2 - 2 b c \cos(\alpha)$~\citep[Prop.~12.10]{lee2018riemannian}.
Equivalently, 
$$\dist(y, z)^2 \geq \dist(x, z)^2 + \dist(x, y)^2 - 2 \inner{\exp_{x}^{-1}(y)}{\exp_{x}^{-1}(z)} = \norm{\exp_{x}^{-1}(y) - \exp_x^{-1}(z)}^2.$$
\end{proposition}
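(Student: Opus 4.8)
The plan is to obtain the inequality from Toponogov's comparison theorem applied with the flat model space, and then to check that the two displayed formulas are merely algebraic restatements of the same inequality. First I would build the Euclidean comparison triangle: since $\dist$ is a genuine metric on $\calM$, the numbers $a = \dist(y,z)$, $b = \dist(x,z)$, $c = \dist(x,y)$ satisfy the triangle inequalities, so there is a (possibly degenerate) triangle $\bar x\bar y\bar z$ in $\reals^2$ with $\norm{\bar y - \bar z} = a$, $\norm{\bar x - \bar z} = b$, $\norm{\bar x - \bar y} = c$. Writing $\bar\alpha$ for its angle at $\bar x$, the Euclidean law of cosines gives $a^2 = b^2 + c^2 - 2bc\cos\bar\alpha$.

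Next I would invoke the triangle comparison for nonpositively curved spaces: because $\calM$ is Hadamard (complete, simply connected, sectional curvature $\leq 0$), its geodesic triangles are ``thinner'' than their Euclidean comparison triangles, so each vertex angle of the triangle $xyz$ is no larger than the corresponding angle of $\bar x\bar y\bar z$; see \citep[Thm.~11.10]{lee2018riemannian}, or the CAT$(0)$ angle comparison in \citep[Sec.~6.5]{buragoburago} or \citep[Thm.~8.13.3]{alexander2019alexandrov}. Hence $\alpha \leq \bar\alpha$, and since $t \mapsto \cos t$ is decreasing on $[0,\pi]$ with both angles in $[0,\pi]$, we get $\cos\alpha \geq \cos\bar\alpha$, so $b^2 + c^2 - 2bc\cos\alpha \leq b^2 + c^2 - 2bc\cos\bar\alpha = a^2$. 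This is the claimed inequality $a^2 \geq b^2 + c^2 - 2bc\cos\alpha$, which is also precisely \citep[Prop.~12.10]{lee2018riemannian}, so alternatively one can simply cite that statement.

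Finally I would pass to the $\exp_x^{-1}$ formulation. On a Hadamard manifold $\exp_x$ is a diffeomorphism and a radial isometry, so $\norm{\exp_x^{-1}(y)} = \dist(x,y) = c$ and $\norm{\exp_x^{-1}(z)} = \dist(x,z) = b$; and by the very definition of the angle at $x$, $\inner{\exp_x^{-1}(y)}{\exp_x^{-1}(z)} = \dist(x,y)\dist(x,z)\cos\alpha = bc\cos\alpha$. Expanding the square in the tangent space $\T_x\calM$ then gives $\norm{\exp_x^{-1}(y) - \exp_x^{-1}(z)}^2 = c^2 + b^2 - 2bc\cos\alpha$, so the inequality reads $\dist(y,z)^2 \geq \dist(x,z)^2 + \dist(x,y)^2 - 2\inner{\exp_x^{-1}(y)}{\exp_x^{-1}(z)} = \norm{\exp_x^{-1}(y) - \exp_x^{-1}(z)}^2$, exactly as stated.

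The main obstacle is the angle-comparison step: one must be careful that the inequality goes in the correct direction (thin triangles, hence small angles, when the sectional curvature is $\leq 0$) and that the comparison still applies when the triangle is degenerate or a vertex angle equals $0$ or $\pi$. Invoking the packaged statement \citep[Prop.~12.10]{lee2018riemannian} sidesteps these subtleties, and the rest is the routine algebraic identity relating the law-of-cosines form to $\norm{\exp_x^{-1}(y) - \exp_x^{-1}(z)}^2$.
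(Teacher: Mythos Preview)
Your proposal is correct and matches the paper's approach: the paper does not give a separate proof but simply records the proposition as a consequence of Toponogov's triangle comparison theorem together with the Euclidean law of cosines, citing \citep[Prop.~12.10]{lee2018riemannian} directly. Your write-up fills in exactly those details (comparison triangle, angle inequality, monotonicity of cosine, and the algebraic identity with $\exp_x^{-1}$), so there is nothing to add or correct.
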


\begin{proposition} \label{TopogonovHyperbolicBoundedAbove}
Consider the same setting as Proposition~\ref{TopogonovEuclidean}.
In addition, assume the sectional curvatures of $\calM$ are in the interval $(-\infty, \Kup]$ with $\Kup < 0$.
Then
$$\cosh(a \sqrt{-\Kup}) \geq \cosh(b \sqrt{-\Kup}) \cosh(c \sqrt{-\Kup}) - \sinh(b \sqrt{-\Kup}) \sinh(c \sqrt{-\Kup}) \cos(\alpha).$$
\end{proposition}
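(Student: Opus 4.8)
The plan is to reduce the statement to the hyperbolic plane via the side--angle--side (``hinge'') form of the triangle comparison theorem for manifolds with sectional curvature bounded above. This comparison holds globally here because $\calM$ is Hadamard, so there are no injectivity-radius caveats.

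First I would normalize the curvature bound by rescaling the metric. Put $\tilde g = (-\Kup)\, g$; then $\tilde\calM = (\calM, \tilde g)$ is again Hadamard, with sectional curvatures now in $(-\infty, -1]$, and every distance is multiplied by $\sqrt{-\Kup}$ while angles are unchanged. Writing $\tilde a = a\sqrt{-\Kup}$, $\tilde b = b\sqrt{-\Kup}$, $\tilde c = c\sqrt{-\Kup}$, the triangle $xyz$ in $\tilde\calM$ has these side lengths and still has angle $\alpha$ at $x$, so it suffices to prove $\cosh\tilde a \geq \cosh\tilde b\cosh\tilde c - \sinh\tilde b\sinh\tilde c\cos\alpha$, i.e.\ the case $\Kup = -1$.

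Next I would apply the hinge comparison. Since $\tilde\calM$ has curvature $\leq -1$, I build a comparison hinge in the hyperbolic plane $\mathbb{H}^2$ (curvature $-1$): two geodesic segments of lengths $\tilde b$ and $\tilde c$ issuing from a common point with angle $\alpha$ between them; let $\bar a$ denote the distance between their far endpoints. The comparison theorem (a standard consequence of the Rauch comparison theorem / the Toponogov-type comparison for curvature bounded above; see the references cited above) then gives $\tilde a = \dist_{\tilde\calM}(y, z) \geq \bar a$. The point to be careful about is the \emph{direction} of this inequality: because curvature is bounded \emph{above} by $-1$, geodesics in $\tilde\calM$ diverge at least as fast as in $\mathbb{H}^2$, so the side opposite the hinge is at least as long as in the model, not shorter. (Equivalently: the $\tilde\calM$-triangle has all angles no larger than those of the $\mathbb{H}^2$ triangle with the same three side lengths, and in $\mathbb{H}^2$ the angle between two fixed-length sides increases with the length of the opposite side.) This is the only genuinely delicate point; there is no substantive computation.

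Finally, the hyperbolic law of cosines in $\mathbb{H}^2$ (\citep[Thm.~3.5.3]{ratcliffe2019hyperbolic}) reads $\cosh\bar a = \cosh\tilde b\cosh\tilde c - \sinh\tilde b\sinh\tilde c\cos\alpha$, and since $t\mapsto\cosh t$ is increasing on $[0,\infty)$, $\tilde a\geq\bar a$ gives $\cosh\tilde a\geq\cosh\bar a$, which is exactly the claimed inequality once the rescaling is undone. Degenerate configurations ($b=0$, $c=0$, or $x,y,z$ collinear) are either immediate by direct substitution or covered directly by the comparison theorem.
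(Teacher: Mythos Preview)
Your proposal is correct and matches the paper's approach: the paper does not give a detailed proof of this proposition, stating only that it is a consequence of the hyperbolic law of cosines and Toponogov's triangle comparison theorem (with citations to \citep[Thm.~3.5.3]{ratcliffe2019hyperbolic}, \citep[Thm.~11.10]{lee2018riemannian}, \citep[Sec.~6.5]{buragoburago}, \citep[Thm.~8.13.3]{alexander2019alexandrov}). Your argument---rescale to curvature $\leq -1$, apply the hinge comparison for curvature bounded above to get $\tilde a \geq \bar a$, then invoke the hyperbolic law of cosines and the monotonicity of $\cosh$---is exactly the content of that citation unpacked, and your attention to the direction of the inequality (upper curvature bound, so geodesics spread at least as fast as in the model) is the one point that actually needs care.
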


\begin{lemma} \label{muHesslemma}
Let $\calM$ be a Hadamard manifold, and $D \subseteq \calM$ be a g-convex set.
Let $f \colon \calM \rightarrow \reals$ be {twice continuously} differentiable.
With reference to Definition~\ref{definitionLsmoothness}:
\begin{itemize}
\item If $f$ is $\mu$-strongly g-convex in $D$ then $f(y) \geq f(x) + \inner{\grad f(x)}{\exp_x^{-1}(y)} + \frac{\mu}{2} \dist(x, y)^2$ for all $x, y \in D$.
\item If $f$ is $L$-smooth in $D$ then $\left|f(y) - f(x) -  \inner{\grad f(x)}{\exp_x^{-1}(y)}\right| \leq \frac{L}{2} \dist(x, y)^2$ for all $x, y \in D$.
\end{itemize}
\end{lemma}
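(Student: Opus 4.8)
The plan is to reduce both statements to one-dimensional Taylor estimates along the geodesic joining $x$ and $y$. Fix $x, y \in D$ and let $\gamma \colon [0,1] \to \calM$ be the unique minimal geodesic with $\gamma(0) = x$ and $\gamma(1) = y$, so that $\dot\gamma(0) = \exp_x^{-1}(y)$ and, since $\gamma$ is a geodesic, it has constant speed $\norm{\dot\gamma(t)} = \norm{\dot\gamma(0)} = \dist(x, y)$ for all $t \in [0,1]$. Because $D$ is g-convex, $\gamma([0,1]) \subseteq D$, so the second-order hypotheses on $f$ (either $\Hess f \succeq \mu I$ or $\norm{\Hess f} \le L$) hold at every point $\gamma(t)$.

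Define $\phi \colon [0,1] \to \reals$ by $\phi(t) = f(\gamma(t))$. Then $\phi'(t) = \inner{\grad f(\gamma(t))}{\dot\gamma(t)}$, and since $\gamma$ is a geodesic its covariant acceleration vanishes, $\frac{\D}{\dt}\dot\gamma = 0$, so differentiating once more gives $\phi''(t) = \inner{\Hess f(\gamma(t))[\dot\gamma(t)]}{\dot\gamma(t)}$. In particular $\phi(0) = f(x)$, $\phi(1) = f(y)$, and $\phi'(0) = \inner{\grad f(x)}{\exp_x^{-1}(y)}$. If $\Hess f(\gamma(t)) \succeq \mu I$ for all $t$, then $\phi''(t) \ge \mu \norm{\dot\gamma(t)}^2 = \mu \dist(x,y)^2$; if instead $\norm{\Hess f(\gamma(t))} \le L$ for all $t$, then $|\phi''(t)| \le L \norm{\dot\gamma(t)}^2 = L \dist(x,y)^2$.

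Now apply Taylor's theorem with integral remainder:
\[
\phi(1) = \phi(0) + \phi'(0) + \int_0^1 (1-t)\, \phi''(t)\, \dt.
\]
Using $\int_0^1 (1-t)\, \dt = \tfrac12$ together with the pointwise bounds on $\phi''$ yields, respectively, $f(y) \ge f(x) + \inner{\grad f(x)}{\exp_x^{-1}(y)} + \tfrac{\mu}{2}\dist(x,y)^2$ in the strongly g-convex case, and $\left| f(y) - f(x) - \inner{\grad f(x)}{\exp_x^{-1}(y)} \right| \le \tfrac{L}{2}\dist(x,y)^2$ in the $L$-smooth case.

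I do not expect a genuine obstacle here; the only two points requiring (minor) care are that g-convexity of $D$ is what keeps the entire geodesic inside $D$, so the Hessian bounds are available along all of it, and the identity $\phi''(t) = \inner{\Hess f(\gamma(t))[\dot\gamma(t)]}{\dot\gamma(t)}$, which is precisely where the geodesic property is used — on a non-geodesic curve an extra term $\inner{\grad f}{\frac{\D}{\dt}\dot\gamma}$ would appear and the clean $\tfrac{\mu}{2}$ and $\tfrac{L}{2}$ constants would be lost.
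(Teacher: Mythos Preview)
Your argument is correct and is exactly the standard reduction-to-one-dimension proof one would expect. The paper actually states this lemma without proof (it is listed among standard preliminaries in the appendix on geometric propositions and characterizations of g-convexity), so there is no ``paper's own proof'' to compare against; your write-up supplies precisely the routine verification the authors omitted.
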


\section{Proof of the simple geometric lemma~\ref{smallandlargeballslemma}} \label{appsimplegeolemma}
For each $x \in B$, let $N(x)$ be the number of smaller balls which contain $x$:
$$N(x) = \left|\{j \in \{1, \ldots, n\} : x \in B_j\}\right|.$$
Therefore, $g \in \arg \max_{y \in B} N(y)$.
The sum of the volumes of the smaller balls is 
$$n \Vol(B_1) = \int_{x \in B} N(x) dV(x) \leq \int_{x \in B} \Big(\max_{y \in B} N(y)\Big) dV(x) = \Big(\max_{y \in B} N(y)\Big) \Vol(B).$$
So $\left|A\right| = \max_{y \in B} N(y) \geq n \Vol(B_1) / \Vol(B) = n q^d / r^d.$

\section{Details for Cases 1 and 2 in proof of the key lemma~\ref{keylemma}}
\subsection{Case 1: $x_k$ is not close to any previous query point} \label{case1}
By Proposition~\ref{TopogonovEuclidean}, nonpositive curvature yields $\norm{\exp_{x_k}^{-1}(z_j) - \exp_{x_k}^{-1}(\xorigin)} \leq \dist(z_j, \xorigin) \leq r.$
By the inductive hypothesis and the assumptions $k \leq 2 w$ and $r\sqrt{-\Klo} \geq 8$, 
$$\norm{\grad H_{j, k}(x_k)} \leq k \frac{1}{4 w \sqrt{-\Klo}} \leq \frac{1}{2 \sqrt{-\Klo}} \leq \frac{r}{2}.$$
Therefore, using the definition of $H_{j, k}$ (equation~\eqref{definitionofHjk}), 
\begin{align*}
\norm{\grad f_{j, k}(x_k) - \exp_{x_k}^{-1}(\xorigin)} &= \norm{\exp_{x_k}^{-1}(z_j) + \grad H_{j, k}(x_k) - \exp_{x_k}^{-1}(\xorigin)}
\leq \frac{3 r}{2}.
\end{align*}

Since $g_{\mathrm{norm}}(\cdot)$~\eqref{gnormeq} from Lemma~\ref{lemmabumpfcts} is increasing on $[0, \infty)$,
$$w^{-1}g_{\mathrm{norm}}(\Rball^{(k)}) \leq w^{-1} \lim_{t \rightarrow \infty} g_{\mathrm{norm}}(t) = \frac{w^{-1}}{9 e^{1/3} \sqrt{-\Klo}} \leq \frac{1}{9 e^{1/3} \sqrt{-\Klo}} \leq \frac{r}{9 e^{1/3}} \leq \frac{r}{2}.$$
So each $B_{j, k}$ is contained in the ball 
$$B_{x_k}\bigg(\exp_{x_k}^{-1}(\xorigin), \frac{3r}{2} + w^{-1} g_{\mathrm{norm}}(\Rball^{(k)})\bigg) \subseteq B_{x_k}(\exp_{x_k}^{-1}(\xorigin), 2 r).$$
Defining, $B_k = B_{x_k}(\exp_{x_k}^{-1}(\xorigin), 2 r)$, we have
$\frac{\Vol(B_{j, k})}{\Vol(B_k)} = \frac{w^{-d} g_{\mathrm{norm}}(\Rball^{(k)})^d}{(2 r)^d}.$

Now assume $k=0$ or $\sqrt{-\Klo} \dist(x_k, x_{\ell}) > 4$, where $x_\ell$ is defined in equation~\eqref{closestpoint}.
Using $r\sqrt{-\Klo} \geq 8$ and the definition of $\Rball^{(k)}$~\eqref{Rkball}, this assumption implies $\Rball^{(k)} \geq 1/\sqrt{-\Klo}$.
Thus,
\begin{align*}
g_{\mathrm{norm}}(\Rball^{(k)}) \geq g_{\mathrm{norm}}(1 / \sqrt{-\Klo})
&\geq \frac{8/\sqrt{-\Klo}}{e^{1/3}(1485+72)} \geq \frac{1}{300 \sqrt{-\Klo}}.
\end{align*}
We conclude
\begin{align} \label{ratioofgradientvolumesCase1}
\frac{\Vol(B_{j, k})}{\Vol(B_k)} &= \frac{w^{-d} g_{\mathrm{norm}}(\Rball^{(k)})^d}{(2 r)^d}
\geq \frac{w^{-d}}{(300 \sqrt{-\Klo})^d \cdot (2 r)^d} = \frac{1}{(600 w r \sqrt{-\Klo})^d}.
\end{align}

\subsection{Case 2: $x_k$ is close to a previous query point} \label{case2}
Assume $\sqrt{-\Klo} \dist(x_k, x_{\ell}) \leq 4$, where $x_{\ell}$ is defined in equation~\eqref{closestpoint}.
Since $r \geq 8 / \sqrt{-\Klo}$, we have
$\Rball^{(k)} = \frac{1}{4} \dist(x_k, x_{\ell}).$

By assumption, $\dist(x_k, \xorigin) \leq \mathscr{R}$.
Since $\dist(z_j, \xorigin) \leq r$ and $\dist(x_k, x_\ell) \leq 4 / \sqrt{-\Klo} \leq r / 2$, the triangle inequality implies $\dist(x_k, z_j)$ and $\dist(x_\ell, z_j)$ are both at most $3 \mathscr{R}$.
The inductive hypothesis~\ref{IHprop5} implies $f_{j, k}(x) = \frac{1}{2} \dist(x, z_j)^2 + H_{j, k}(x)$ and $\norm{\Hess H_{j, k}(x)} \leq \frac{k}{4 w} \leq \frac{1}{2}$ for all $x \in \calM$, using $k \leq 2 w$.
So by Lemma~\ref{Lipschitzgradsquareddistance}, 
\begin{align} \label{Case2boundonhessianoffjk}
\norm{\Hess f_{j, k}(x)} \leq \max\{\dist(x_k, z_j), \dist(x_\ell, z_j)\} \sqrt{-\Klo} + \frac{3}{2} \leq 3 \mathscr{R} \sqrt{-\Klo} + \frac{3}{2}
\end{align}
for all $x \in B(z_j, \max\{\dist(x_k, z_j), \dist(x_\ell, z_j)\})$.
Additionally, the inductive hypothesis~\ref{IHprop3} implies $\grad f_{j, k}(x_{\ell}) = g_{\ell}$ for all $j \in \tilde A_k$.
Therefore, by Definition~\ref{definitionLsmoothness}: 
\begin{align*}
\norm{\grad f_{j, k}(x_k) - P_{x_\ell \rightarrow x_k} g_{\ell}}
&\leq \Big(3 \mathscr{R} \sqrt{-\Klo} + \frac{3}{2}\Big) \dist(x_k, x_{\ell}) \quad \text{for all $j \in \tilde A_k$}.
\end{align*}
We have shown that all the gradients $\grad f_{j, k}(x_k), j \in \tilde{A}_k,$ are contained in a ball in $\T_{x_k} \calM$ centered at $P_{x_\ell \rightarrow x_k} g_{\ell}$ with radius $(3 \mathscr{R} \sqrt{-\Klo} + \frac{3}{2}) \dist(x_k, x_{\ell})$.

Recall the definition of the balls $B_{j, k}$ defined in equation~\eqref{definingBjk}.
We conclude all the balls $B_{j, k}, j \in \tilde A_k$, are contained in a ball $B_k \subseteq \T_{x_k} \calM$ centered at $P_{x_\ell \rightarrow x_k} g_{\ell}$ with radius 
\begin{align*}
&\Big(3 \mathscr{R} \sqrt{-\Klo} + \frac{3}{2}\Big) \dist(x_k, x_{\ell}) + w^{-1} g_{\mathrm{norm}}\Big(\frac{1}{4} \dist(x_k, x_\ell)\Big) 
\leq (3 \mathscr{R} \sqrt{-\Klo} + 2) \dist(x_k, x_{\ell}),
\end{align*}
using that $w \geq 1$ and $g_{\mathrm{norm}}\Big(\frac{1}{4} \dist(x_k, x_\ell)\Big) \leq \frac{8 \dist(x_k, x_\ell) / 4}{e^{1/3}(1485)}  \leq \frac{\dist(x_k, x_{\ell})}{1000}.$
Therefore,
\begin{equation} \label{ratioofgradientvolumesCase2modded}
\begin{split}
\frac{\Vol(B_{j, k})}{\Vol(B_k)} &= \frac{w^{-d} g_{\mathrm{norm}}(\dist(x_k, x_{\ell}) / 4)^d}{((3 \mathscr{R} \sqrt{-\Klo} + 2) \dist(x_k, x_{\ell}))^d} \geq \frac{w^{-d} \dist(x_k, x_{\ell})^d}{(2000(3 \mathscr{R} \sqrt{-\Klo} + 2) \dist(x_k, x_{\ell}))^d} \\
&= \frac{1}{(2000 w(3 \mathscr{R} \sqrt{-\Klo} + 2))^d},
\end{split}
\end{equation}
using
$g_{\mathrm{norm}}\Big(\frac{1}{4}\dist(x_k, x_\ell)\Big) \geq \frac{2 \dist(x_k, x_\ell)}{e^{1/3}(1485 + 72)} \geq \frac{\dist(x_k, x_\ell)}{2000},$
which is due to $\sqrt{-\Klo} \dist(x_k, x_{\ell}) \leq 4$.


\section{Bump functions} \label{appendixbumpfunction}
\begin{lemma}[Family of bump functions] \label{lemmabumpfcts}
Let $\calM$ be a Hadamard manifold with sectional curvatures in the interval $[\Klo, 0]$ with $\Klo < 0$.
Let $\Rball > 0, w > 0$, $x_k \in \calM$. 
Define 
\begin{align} \label{defoffctA}
a \colon [0, \infty) \rightarrow \reals, \quad \quad a(R) = \frac{R}{4(4\sqrt{-\Klo} + 55 / R)},
\end{align}
\begin{align} \label{gnormeq}
g_{\mathrm{norm}} \colon [0, \infty) \rightarrow \reals, \quad \quad g_{\mathrm{norm}}(R) = \frac{8 R}{e^{1/3}(1485 + 72 R \sqrt{-\Klo})}.
\end{align}
There is a family of functions $\{h_g \colon \calM \rightarrow \reals\}$, indexed by $g \in B_{x_k}(0, w^{-1} g_{\mathrm{norm}}(\Rball))$,
satisfying for each $g \in B_{x_k}(0, w^{-1} g_{\mathrm{norm}}(\Rball)) \subseteq \T_{x_k} \calM$:
\begin{enumerate} [label=\textbf{BF\arabic*}]
\item $\grad h_g(x_k) = g$; \label{BFprop2}
\item the support of each $h_g$ is contained in $B(x_k, \Rball)$; \label{BFprop3}
\item $\norm{\grad h_g(x)} \leq \frac{1}{4 w \sqrt{-\Klo}}$, $\norm{\Hess h_g(x)} \leq \frac{1}{4 w}$ for all $x \in \calM$; \label{BFprop4}

\item $h_g(x_k) = \frac{3}{8} \norm{g} {g}_{\mathrm{norm}}^{-1}\big(w \norm{g}\big)$, and $\left|h_g(x)\right| \leq w^{-1} a(\Rball)$ for all $x \in \calM$.
 \label{BFprop1}
\end{enumerate}
\end{lemma}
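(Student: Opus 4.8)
The plan is to build each $h_g$ as a single explicit smooth bump based on the Riemannian distance to an auxiliary point $z_g$ placed near $x_k$ in the direction of $g$, with the radius of its support dictated by $\norm{g}$ through the function $g_{\mathrm{norm}}$; the four properties then follow from the chain rule together with the Hessian comparison estimate for squared-distance functions on a Hadamard manifold.

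First fix a model profile: a $C^\infty$ function $\phi\colon\reals\to[0,1]$ that equals $1$ on $(-\infty,\tfrac14]$, is nonincreasing, and vanishes on $[1,\infty)$ --- concretely one may take $\phi(u)=\exp\!\big(1-\tfrac1{1-u}\big)$ for $u<1$ and $\phi(u)=0$ for $u\ge1$, for which $\phi(\tfrac14)=e^{-1/3}$ and $\phi'(\tfrac14)=-\tfrac{16}9e^{-1/3}$ (this is the source of the $e^{1/3}$ and $\tfrac38$ in the statement). For $g=0$ set $h_0\equiv0$. For $g\ne0$, set $\rho=g_{\mathrm{norm}}^{-1}(w\norm g)$; since $g_{\mathrm{norm}}$ is increasing and $\norm g\le w^{-1}g_{\mathrm{norm}}(\Rball)$ we have $0<\rho\le\Rball$. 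Put $L=\tfrac23\rho$, let $z_g=\exp_{x_k}\!\big(\tfrac L2\tfrac g{\norm g}\big)$, and define
\[
h_g(x)=A\,\phi\!\left(\frac{\dist(x,z_g)^2}{L^2}\right),\qquad A=\frac{9L\norm g\,e^{1/3}}{16}.
\]
Since $\dist(\cdot,z_g)^2$ is $C^\infty$ on the Hadamard manifold $\calM$ (no cut locus) and $\phi$ is constant near $0$, $h_g$ is $C^\infty$; its support $B(z_g,L)$ lies in $B(x_k,L+\tfrac L2)=B(x_k,\rho)\subseteq B(x_k,\Rball)$, which is the support property~\ref{BFprop3}. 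Using $\grad_x\dist(x,z_g)^2=-2\exp_x^{-1}(z_g)$ and $\exp_{x_k}^{-1}(z_g)=\tfrac L2\tfrac g{\norm g}$, one computes $\grad h_g(x_k)=-\tfrac AL\phi'(\tfrac14)\tfrac g{\norm g}=\tfrac{16Ae^{-1/3}}{9L}\tfrac g{\norm g}=g$ by the choice of $A$, and $h_g(x_k)=A\phi(\tfrac14)=Ae^{-1/3}=\tfrac{9L\norm g}{16}=\tfrac38\norm g\rho=\tfrac38\norm g\,g_{\mathrm{norm}}^{-1}(w\norm g)$, establishing the gradient condition~\ref{BFprop2} and the value formula in~\ref{BFprop1}.

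For the remaining uniform bounds, the chain rule shows $\grad h_g$ is a scalar multiple of $\exp_x^{-1}(z_g)$ and $\Hess h_g$ is a sum of a $\phi''$-term carrying $\grad(\tfrac12\dist^2)^{\otimes2}$ and a $\phi'$-term carrying $\Hess(\tfrac12\dist(\cdot,z_g)^2)$. Bounding the latter via the comparison estimate $I\preceq\Hess(\tfrac12\dist(\cdot,z_g)^2)(x)\preceq\big(\dist(x,z_g)\sqrt{-\Klo}\coth(\dist(x,z_g)\sqrt{-\Klo})\big)I$, valid on a Hadamard manifold with sectional curvatures in $[\Klo,0]$ (Lemma~\ref{Lipschitzgradsquareddistance}; see also \citep[Ch.~11--12]{lee2018riemannian}), and using $\coth s\le1+\tfrac1s$ so that the $\tfrac1{\dist}$-singularity of $\Hess\dist(\cdot,z_g)$ at $z_g$ is absorbed by the vanishing of $\phi'$ there, one obtains
\[
\norm{\grad h_g(x)}\le C_2'\,\frac AL,\qquad\norm{\Hess h_g(x)}\le C_2'\,\frac AL\sqrt{-\Klo}+C_1'\,\frac A{L^2},\qquad|h_g(x)|\le A,
\]
where $C_2'=2\sup_u|\phi'(u)|\sqrt u$ and $C_1'=2\sup|\phi'|+4\sup_u u|\phi''(u)|$ are finite absolute constants of $\phi$ (for the profile above one checks $C_2'\le4$ and $C_1'\le55$). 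Substituting $A=\tfrac38e^{1/3}\rho\norm g$, $L=\tfrac23\rho$ turns the Hessian estimate into $C_2'\tfrac9{16}e^{1/3}\norm g\sqrt{-\Klo}+C_1'\tfrac{27}{32}e^{1/3}\norm g/\rho$, and since $\tfrac{8\rho}{e^{1/3}(27C_1'+18C_2'\rho\sqrt{-\Klo})}$ is precisely the defining formula of $g_{\mathrm{norm}}$ once $C_1'=55$, $C_2'=4$ (i.e. $1485=27\cdot55$, $72=18\cdot4$), the relation $w\norm g=g_{\mathrm{norm}}(\rho)$ makes this at most $\tfrac1{4w}$; the gradient bound in~\ref{BFprop4} follows because its left-hand side is dominated by the first summand of the Hessian bound. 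Finally $|h_g|\le A=\tfrac38e^{1/3}\rho\,w^{-1}g_{\mathrm{norm}}(\rho)\le w^{-1}a(\rho)\le w^{-1}a(\Rball)$, by a direct comparison of the explicit formulas for $\tfrac38e^{1/3}g_{\mathrm{norm}}$ and $a$ (the inequality reduces to $12(55+4\rho\sqrt{-\Klo})\le1485+72\rho\sqrt{-\Klo}$) and the fact that $a$ is increasing with $\rho\le\Rball$; this is the global half of~\ref{BFprop1}.

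The main obstacle is the last step: exhibiting one explicit $C^\infty$ profile $\phi$ (flat near $0$, supported in $(-\infty,1]$, with $\phi(\tfrac14)=e^{-1/3}$ and $\phi'(\tfrac14)=-\tfrac{16}9e^{-1/3}$) whose derivative constants $C_1',C_2'$ are small enough that all four properties close simultaneously with the prescribed numerical constants --- the constraints pull against one another, since a faster-decaying $\phi$ shrinks $\rho$ relative to $\Rball$ but enlarges $\sup|\phi'|$ and $\sup|\phi''|$ --- together with the careful accounting of the $\coth$-term near $z_g$ described above. This is a finite (if fiddly) calculus exercise, and it is exactly where the otherwise-opaque constants $4,55$ in $a$ and $1485,72$ in $g_{\mathrm{norm}}$ originate.
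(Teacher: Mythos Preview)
Your approach is essentially identical to the paper's (Appendix~\ref{appendixbumpfunction}): the same explicit profile $\phi(u)=e\cdot\exp(-1/(1-u))$, the same off-center bump $A\,\phi(\dist(\cdot,z_g)^2/L^2)$ with $z_g$ at distance $L/2$ from $x_k$ in direction $g$ and $L=\tfrac23\rho$, and the same derivation of the constants $4$ and $55$ from bounding the two Hessian terms separately. One presentational slip worth fixing: your concrete $\phi$ is neither equal to $1$ on $(-\infty,\tfrac14]$ nor constant near $0$ as you assert, but neither property is needed---$\dist(\cdot,z_g)^2$ is $C^\infty$ everywhere on a Hadamard manifold (Lemma~\ref{Lipschitzgradsquareddistance}), so $h_g$ is smooth as a composition regardless, and all your subsequent computations correctly use only the actual values $\phi(\tfrac14)=e^{-1/3}$ and $\phi'(\tfrac14)=-\tfrac{16}{9}e^{-1/3}$ of the concrete profile.
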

\subsection{Proof of Lemma \ref{lemmabumpfcts}} \label{appendixbumpfunctionmain}
Define $\phi_R \colon \reals \rightarrow \reals$ by
$$\phi_R(t) = e\cdot \exp\bigg(-1/\Big(1-\frac{2t}{R^2}\Big)\bigg) = \exp(2 t / (2 t - R^2)) \quad \quad \text{for $t \in (-\infty, R^2/2)$},$$
and $\phi_R(t) = 0$ elsewhere.  The function $\phi_R\colon \reals \rightarrow \reals$ is $C^\infty$~\citep[Lem.~2.20]{lee2012smoothmanifolds}.
We consider bump functions supported in $B(p, R)$ of the form $h(x) = a \cdot \phi_R(\dist(x, p)^2/2)$ for $a \in \reals.$
As a composition of $C^{\infty}$ functions, these bump functions are also $C^\infty$.
\begin{remark}
Since $\dist(x, p)^2/2 \geq 0$, the values of $\phi_R$ on $(-\infty, 0)$ are irrelevant.
All that matters is that $\phi_R$ is infinitely differentiable in a neighborhood of the origin.
\end{remark}
We have $\phi(R^2/8) = e^{-1/3}$, and for $t \in (-\infty, R^2/2)$
$$\phi_R'(t) = - \phi_R(t) \cdot 2R^2 / (R^2 - 2 t)^2, \quad \quad \phi_R''(t) = \phi_R(t) \cdot 4 R^2 (4 t - R^2) / (R^2 - 2 t)^4.$$
We have partitioned the proof of Lemma~\ref{lemmabumpfcts} into several subsections: ~\ref{bumpfunctionconstrictionproof},~\ref{boundinggradofbumpfunctionsproof} and~\ref{boudinghessianofbumpfunctionsproof}.

\subsubsection{Bump function construction} \label{bumpfunctionconstrictionproof}
For each $p \in B(x_k, \Rball/3)$: let $R = 2 \dist(x_k, p)$ and define $\tilde{h}_{x_k, p} : \calM \rightarrow \reals$ by 
$$\tilde{h}_{x_k, p}(x) = w^{-1} a(R) \phi_R(\dist(x, p)^2 / 2).$$ 
By construction, $\tilde{h}_{x_k, p}$ is supported in the closed ball $B(p, R) \subseteq B(x_k, \Rball)$.  

We have $$\grad \tilde{h}_{x_k, p}(x) = - w^{-1} a(R) \phi_R'(\dist(x, p)^2/2) \exp_{x}^{-1}(p).$$  So using that $\norm{\exp_{x_k}^{-1}(p)} = \dist(x_k, p) = R/2,$
\begin{equation} \label{usedbelow}
\begin{split}
\norm{\grad \tilde{h}_{x_k, p}(x_k)} &= w^{-1} a(R) |\phi_R'(R^2/8)| R/2 
\\ &= w^{-1} a(R) \phi(R^2/8) \frac{32}{9 R^2} R/2 = w^{-1} e^{-1/3} a(R) \frac{16}{9 R} \\
&= w^{-1} \frac{4 e^{-1/3}}{9 R(4\sqrt{-\Klo}/R + 55 / R^2)} = w^{-1} g_{\mathrm{norm}}\Big(\frac{3}{2} R\Big)
\end{split}
\end{equation}
where $R$ can take any value in $[0, 2 \Rball / 3]$.
Since the function ${g}_{\mathrm{norm}}$ is strictly increasing on $[0, \infty)$ and ${g}_{\mathrm{norm}}(0) = 0$, we see that $\norm{\grad \tilde{h}_{x_k, p}(x_k)}$ takes all values in the interval $[0, w^{-1} g_{\mathrm{norm}}(\Rball)]$ (as $p$ varies).

On the other hand, $\grad \tilde{h}_{x_k, p}(x_k) = \norm{\grad \tilde{h}_{x_k, p}(x_k)} \frac{\exp_{x_k}^{-1}(p)}{\norm{{\exp_{x_k}^{-1}(p)}}}.$
Therefore,
$$\{\grad \tilde{h}_{x_k, p}(x_k) : p \in B(x_k, \Rball/3)\} = B_{x_k}(0, w^{-1} g_{\mathrm{norm}}( \Rball)).$$
More precisely, for each $g \in B_{x_k}(0, w^{-1} g_{\mathrm{norm}}( \Rball))$ there is exactly one $p \in B(x_k, \Rball/3)$ such that $g = \grad \tilde{h}_{x_k, p}(x_k)$, and vice versa.  Finally, define 
$$h_{\grad \tilde{h}_{x_k, p}(x_k)} := \tilde{h}_{x_k, p} \quad \forall p \in B(x_k, \Rball/3).$$
This defines the family of functions in Lemma~\ref{lemmabumpfcts}, and also establishes property~\ref{BFprop2}.
By construction, property~\ref{BFprop3} is also satisfied.

We calculate
\begin{align*}
h_{\grad \tilde{h}_{x_k, p}(x_k)}(x_k) &= \tilde{h}_{x_k, p}(x_k) = w^{-1} a(R) \phi_R(\dist(x_k, p)^2 / 2) = w^{-1} a(R) \phi_R(R^2 / 8) \\
&= w^{-1} a(R) e^{-1/3} = \norm{\grad \tilde{h}_{x_k, p}(x_k)} \frac{9 R}{16} \\
& = \norm{\grad \tilde{h}_{x_k, p}(x_k)} \frac{9}{16} \cdot \frac{2}{3} {g}_{\mathrm{norm}}^{-1}\bigg(w \norm{\grad \tilde{h}_{x_k, p}(x_k)}\bigg)
\end{align*}
where we used equation~\eqref{usedbelow} for the last two equalities.
This shows the first part of property~\ref{BFprop1}.

\subsubsection{Bounding the function values and gradients of the bump functions} \label{boundinggradofbumpfunctionsproof}
The maximum of $\left|\tilde{h}_{x_k, p}(x)\right|$ is attained when $x = p$ and equals $w^{-1} a(R)$, which is at most $w^{-1} a(2 \Rball / 3) \leq w^{-1} a(\Rball)$.
This shows the second part of property~\ref{BFprop1}.

By Section~\ref{bumpfunctionconstrictionproof}, we know that
$$\grad \tilde{h}_{x_k, p}(x) = - w^{-1} a(R) \phi_R'(\dist(x, p)^2/2) \exp_{x}^{-1}(p)$$
if $\dist(x, p) \leq R$ and $\grad \tilde{h}_{x_k, p}(x) = 0$ otherwise.  Therefore, for any $x\in\calM$,
\begin{align*}
\norm{\grad \tilde{h}_{x_k, p}(x)} &\leq w^{-1} a(R) \max_{t \in [0, R]} \left|\phi_R'(t^2/2)\right| t \\
&= w^{-1} a(R) \max_{t \in [0, R]} t \cdot \phi_R(t^2/2) \cdot 2R^2 / (R^2 -  t^2)^2
\end{align*}
It is easy to see that the maximizer of this problem is $t = R / 3^{1/4}$, which yields
\begin{align*}
\norm{\grad \tilde{h}_{x_k, p}(x)} &\leq w^{-1} a(R) R^{-1} \sqrt{36 + 21 \sqrt{3}} e^{-1/2-\sqrt{3}/2} \leq 3 w^{-1} a(R) R^{-1} \\
& \leq w^{-1} \frac{R}{55 + 4 R \sqrt{-\Klo}} \leq \frac{1}{4 w \sqrt{-\Klo}}.
\end{align*}
This proves the first part of property~\ref{BFprop4}.

\subsubsection{Bounding the Hessian of the bump functions} \label{boudinghessianofbumpfunctionsproof}
For $v \in \T_x M$ and $\dist(x, p) \leq R$, we have
\begin{align*}
\langle v, \Hess \tilde{h}_{x_k, p}(x) v \rangle &= w^{-1} a(R) \phi_R''(t^2/2) \langle v, -\exp_x^{-1}(p) \rangle^2 &&+ w^{-1} a(R) \phi_R'(t^2/2) \langle v, \mathscr{H}(x) v\rangle 
\\ &= (\text{term 1}) &&+ (\text{term 2})
\end{align*}
where $t = \dist(x, p)$ and $\mathscr{H}(x)$ is the Hessian of the function $x \mapsto \dist(x, p)^2/2$.

We have $t = \dist(x, p) = \norm{\exp_x^{-1}(p)}$ and 
$$\norm{\mathscr{H}(x)} \leq 1 + \dist(x, p) \sqrt{-\Klo} = 1 + t \sqrt{-\Klo},$$ 
by Lemma~\ref{Lipschitzgradsquareddistance}.
So for $t \in [0, R]$ we have that
\begin{align*}
\left|\text{term 1}\right| &\leq (w^{-1} a(R) \norm{v}^2)\bigg[\phi(t^2/2) \frac{4R^2 \left| 2 t^2 - R^2 \right|}{(R^2-t^2)^4} t^2\bigg] 
\leq (w^{-1} a(R) \norm{v}^2)\bigg[4 R^6 \frac{\phi(t^2/2)}{(R^2-t^2)^4}\bigg] \\
&\leq (w^{-1} a(R) \norm{v}^2)\bigg[4 R^6 \cdot 256 e^{-3} / R^8\bigg] \leq (w^{-1} a(R) \norm{v}^2)\bigg[51 / R^2\bigg], \text{and}
\end{align*}
\begin{align*}
\left|\text{term 2}\right| &\leq (w^{-1} a(R) \norm{v}^2) \bigg[\phi(t^2/2) (1+t \sqrt{-\Klo}) \frac{2 R^2}{(R^2 - t^2)^2}\bigg] \\
&\leq (w^{-1} a(R) \norm{v}^2)\bigg[2 (1+R \sqrt{-\Klo}) R^2 \frac{\phi(t^2/2)}{(R^2 - t^2)^2}\bigg] \\
&\leq (w^{-1} a(R) \norm{v}^2)\bigg[2 (1+R \sqrt{-\Klo}) R^2 \cdot 4 e^{-1} / R^4\bigg] \\
&\leq (w^{-1} a(R) \norm{v}^2) \bigg[4 \sqrt{-\Klo} / R + 4 / R^2\bigg].
\end{align*}
Of course $\|\Hess \tilde{h}_{x_k, p}(x)\| = 0$ for $x \not \in B(p, R)$.  So for all $x$, we have
$\|\Hess \tilde{h}_{x_k, p}(x)\| \leq w^{-1} a(R) (4\sqrt{-\Klo}/R + 55 / R^2) = \frac{1}{4 w}.$
We have proven the second part of property~\ref{BFprop4}.

\subsection{Bump functions for function-value queries} \label{bumpfunctionfctvalqueriesproof}
In this section, we construct a family of bump functions parametrized by both function value and gradient, as explained in Appendix~\ref{fctvalues}.
To do this, we first prove Lemma~\ref{lemmabumpfunctionsfunctionvalues}.
Then we prove Lemma~\ref{bumpfctsfctvalsandgrads} (stated in Appendix~\ref{fctvalues}).

\begin{lemma} \label{lemmabumpfunctionsfunctionvalues}
Let $\calM$ be a Hadamard manifold with sectional curvatures bounded below by $\Klo < 0$.
Let $\Rball > 0, w > 0, x_k \in \calM$. 
Define $a \colon [0, \infty) \rightarrow \reals$ as in equation~\eqref{defoffctA}.
There is a family of bump functions
$$\{\hat{h}_f \colon \calM \rightarrow R\}_{f \in [-w^{-1} a(\Rball), w^{-1} a(\Rball)]},$$
satisfying for each $f \in [-w^{-1} a(\Rball), w^{-1} a(\Rball)]$:
\begin{itemize}
\item $\hat{h}_f(x_k) = f$;
\item $\grad \hat{h}_f(x_k) = 0$;
\item the support of each $\hat{h}_f$ is contained in $B(x_k, \Rball)$;
\item $\left|\hat{h}_f(x)\right| \leq w^{-1} a(\Rball)$, $\norm{\grad \hat{h}_f(x)} \leq \frac{w^{-1}}{4 \sqrt{-\Klo}}$, $\norm{\Hess \hat{h}_f(x)} \leq \frac{1}{4 w}$ for all $x \in \calM$.
\end{itemize}
\end{lemma}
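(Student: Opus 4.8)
The plan is to reuse the bump-function construction behind Lemma~\ref{lemmabumpfcts}, specialized to a bump \emph{centered at the query point $x_k$ itself}. Centering there makes the gradient at $x_k$ vanish automatically (because $\exp_{x_k}^{-1}(x_k) = 0$), while the value at $x_k$ becomes a free scaling parameter. Concretely, using the $C^\infty$ cut-off $\phi_R$ from Appendix~\ref{appendixbumpfunctionmain}, I would set, for each $f \in [-w^{-1} a(\Rball),\, w^{-1} a(\Rball)]$,
\[
\hat h_f(x) \;=\; f \cdot \phi_{\Rball}\!\left(\tfrac12 \dist(x, x_k)^2\right).
\]
Since $x \mapsto \tfrac12 \dist(x, x_k)^2$ is $C^\infty$ on a Hadamard manifold (Lemma~\ref{Lipschitzgradsquareddistance}) and $\phi_{\Rball}$ is $C^\infty$ on $\reals$, $\hat h_f$ is $C^\infty$; and because $\phi_{\Rball}$ vanishes on $[\tfrac12\Rball^2, \infty)$, the support of $\hat h_f$ lies in $B(x_k, \Rball)$.

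Next I would verify the pointwise properties. Evaluating at $x_k$ gives $\hat h_f(x_k) = f\,\phi_{\Rball}(0) = f$, and $\grad \hat h_f(x_k) = -f\,\phi_{\Rball}'(0)\,\exp_{x_k}^{-1}(x_k) = 0$ (chain rule together with $\grad(\tfrac12\dist(\cdot,x_k)^2) = -\exp_{\cdot}^{-1}(x_k)$ from Lemma~\ref{Lipschitzgradsquareddistance}). For the value bound, note that $0 \le \phi_{\Rball}(t) \le \phi_{\Rball}(0) = 1$ for all $t \ge 0$ (on $[0,\tfrac12\Rball^2)$ the exponent $2t/(2t-\Rball^2)$ is nonpositive, and $\phi_{\Rball}$ vanishes beyond), so $\left|\hat h_f(x)\right| \le |f| \le w^{-1} a(\Rball)$. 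For the gradient and Hessian bounds, I would observe that $\hat h_f$ is exactly the scalar multiple $\frac{f}{w^{-1} a(\Rball)}$ of the bump function $w^{-1} a(\Rball)\,\phi_{\Rball}(\tfrac12\dist(\cdot,x_k)^2)$ treated in the proof of Lemma~\ref{lemmabumpfcts} with $R = \Rball$ and the base point $p = x_k$. Since $\grad$ and $\Hess$ scale linearly in this coefficient and $|f|/(w^{-1}a(\Rball)) \le 1$, the estimates already carried out in Appendices~\ref{boundinggradofbumpfunctionsproof} and~\ref{boudinghessianofbumpfunctionsproof} yield $\norm{\grad \hat h_f(x)} \le \frac{1}{4 w \sqrt{-\Klo}} \le \frac{w^{-1}}{4\sqrt{-\Klo}}$ and $\norm{\Hess \hat h_f(x)} \le \frac{1}{4w}$ for all $x \in \calM$.

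I do not expect any genuine obstacle: the statement is essentially Lemma~\ref{lemmabumpfcts} read ``at the center'', and the only thing requiring care is the bookkeeping that lets the gradient/Hessian bounds from Appendices~\ref{boundinggradofbumpfunctionsproof}--\ref{boudinghessianofbumpfunctionsproof} be imported verbatim; if one prefers a fully self-contained argument, those two short $\phi_R$ computations can simply be repeated with the coefficient $f$ in place of $w^{-1} a(\Rball)$. A minor point to double-check along the way is that $\phi_{\Rball}'(0)$ is finite (it equals $-2/\Rball^2$), though it is in any case multiplied by the zero vector $\exp_{x_k}^{-1}(x_k)$.
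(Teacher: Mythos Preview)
Your proposal is correct and essentially identical to the paper's proof: both define $\hat h_f(x) = f\cdot \phi_{\Rball}\!\big(\tfrac12\dist(x,x_k)^2\big)$ (the paper writes this as $c\,w^{-1}a(\Rball)\,\phi_{\Rball}(\dist(x,x_k)^2/2)$ with $c\in[-1,1]$), and both verify the gradient and Hessian bounds by invoking the calculations of Appendices~\ref{boundinggradofbumpfunctionsproof}--\ref{boudinghessianofbumpfunctionsproof} with $R=\Rball$, $p=x_k$, and the observation that the scalar prefactor has absolute value at most $w^{-1}a(\Rball)$. The only cosmetic difference is that you frame the scaling as multiplying a reference bump by $f/(w^{-1}a(\Rball))$, whereas the paper parametrizes directly by $c$.
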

\begin{proof}
We use the notation established in Section~\ref{appendixbumpfunctionmain}.
For $f \in [-w^{-1} a(\Rball), w^{-1} a(\Rball)]$, define the smooth functions $\hat{h}_f \colon \calM \rightarrow \reals$ as follows:
$$\hat{h}_{c w^{-1} a(\Rball)}(x) = c w^{-1} {a}(\Rball) \phi_{\Rball}(\dist(x, x_k)^2/2), \quad \quad \forall c \in [-1, 1], \forall x \in \calM.$$
For each $f \in [-w^{-1} a(\Rball), w^{-1} a(\Rball)]$, we know:
\begin{itemize}
\item $\hat{h}_f$ is supported in $B(x_k, \Rball)$;
\item $\hat{h}_f(x_k) = c w^{-1} a(\Rball) = f$;
\item $\grad \hat{h}_f(x_k) = 0$, since $x \mapsto \dist(x, x_k)^2/2$ has zero gradient at $x = x_k$;
\item using the calculations from Section~\ref{boundinggradofbumpfunctionsproof} and $\left|c\right|\leq 1$,
\begin{align*}
\norm{\grad \hat{h}_f(x)} &= \left|c\right| w^{-1} a(\Rball) \left|\phi_{\Rball}'(\dist(x, x_k)^2/2)\right| \cdot \norm{\exp_x^{-1}(x_k)} \\
&\leq \left|c\right| w^{-1} a(\Rball) \max_{t \in [0, \Rball]}{\left|\phi_{\Rball}'(t^2/2)\right| \cdot t} 
\leq w^{-1} \frac{1}{4 \sqrt{-\Klo}};
\end{align*}
\item using the calculations from Section~\ref{boudinghessianofbumpfunctionsproof} and $\left|c\right|\leq 1$,
$$\norm{\Hess \hat{h}_f(x)} \leq \left|c\right| w^{-1} a(\Rball) (4\sqrt{-\Klo}/\Rball + 55/\Rball^2) \leq \frac{1}{4 w}.$$
\end{itemize}
\end{proof}

\begin{proof}[Proof of Lemma~\ref{bumpfctsfctvalsandgrads}]
For all
$$\hat{f} \in [-w^{-1} a(\Rball), w^{-1} a(\Rball)], \quad \quad g \in B_{x_k}(0, w^{-1} g_{\mathrm{norm}}(\Rball)),$$
Lemmas~\ref{lemmabumpfcts} (property~\ref{BFprop1}) and~\ref{lemmabumpfunctionsfunctionvalues} imply
$(\hat{h}_{\hat{f}} + h_g)(x_k) = \hat{f} + \frac{3}{8} \norm{g} {g}_{\mathrm{norm}}^{-1}\Big(w \norm{g}\Big)$
and
$\grad (\hat{h}_{\hat{f}} + h_g)(x_k) = 0 + g = g.$

We know that $\norm{g} \in [0, w^{-1} g_{\mathrm{norm}}(\Rball)]$.
Additionally, $g_{\mathrm{norm}}(0) = 0$ and $g_{\mathrm{norm}}$ is strictly increasing.
Therefore (introducing the change of variables $g_{\mathrm{norm}}(t) = w \norm{g}$)
$$\min_{t \in [0, \Rball]} \frac{3}{8} w^{-1} g_{\mathrm{norm}}(t) \cdot t \leq \frac{3}{8} \norm{g} {g}_{\mathrm{norm}}^{-1}\bigg(w \norm{g}\bigg) \leq \max_{t \in [0, \Rball]} \frac{3}{8} w^{-1} g_{\mathrm{norm}}(t) \cdot t.$$
Using that $t \mapsto g_{\mathrm{norm}}(t)$ is increasing,
$$0 \leq \frac{3}{8} \norm{g} {g}_{\mathrm{norm}}^{-1}\bigg(w \norm{g}\bigg) \leq \frac{3}{8} w^{-1} \Rball g_{\mathrm{norm}}(\Rball).$$
Therefore, for any
$f \in [-w^{-1} a(\Rball) + \frac{3}{8} w^{-1} \Rball g_{\mathrm{norm}}(\Rball), w^{-1} a(\Rball)]$
and for any $g \in B_{x_k}(0, w^{-1} g_{\mathrm{norm}}(\Rball)),$
we can define 
$h_{f, g} = \hat{h}_{f - \frac{3}{8} \norm{g} {g}_{\mathrm{norm}}^{-1}(w \norm{g})} + h_g.$
By construction, we have $h_{f, g}(x_k) = f$ and $\grad h_{f, g}(x_k) = g$.
Moreover, Lemmas~\ref{lemmabumpfcts} and~\ref{lemmabumpfunctionsfunctionvalues} imply
\begin{itemize}
\item the support of each $h_{f, g}$ is contained in $B(x_k, \Rball)$;
\item $\norm{\grad h_{f, g}(x)} \leq \frac{1}{4 w \sqrt{-\Klo}} + \frac{1}{4 w \sqrt{-\Klo}} \leq \frac{1}{2 w \sqrt{-\Klo}}$ for all $x \in \calM$;
\item $\norm{\Hess h_{f, g}(x)} \leq \frac{1}{4 w} + \frac{1}{4 w} \leq \frac{1}{2 w}$ for all $x \in \calM$.
\end{itemize}

\end{proof}

\section{Incorporating function-value queries} \label{fctvalues}
We want to extend the lower bound in Theorem~\ref{theoremnoacceleration} to algorithms which can make function-value and unbounded queries.
We do this in two steps.
First in Appendix~\ref{fctvalues} (this section), we prove Theorem~\ref{theoremnoaccelerationinfull} below, an extension of Theorem~\ref{theoremnoacceleration} providing a lower bound for algorithms using function-values but making bounded queries.
Second in Appendix~\ref{apppermittingqueriesoutsidedomain}, we prove Theorem~\ref{maintheoremunboundedqueries}, an extension of Theorem~\ref{theoremnoaccelerationinfull} providing a lower bound for algorithms making function-value and unbounded queries.


\begin{theorem} \label{theoremnoaccelerationinfull}
Let $\calM$ be a Hadamard manifold of dimension $d \geq 2$ which satisfies the ball-packing property~\aref{assumptionNbigWeak} with constants $\tilde{r}, \tilde{c}$ and point $\xorigin \in \calM$.  
Also assume $\calM$ has sectional curvatures in the interval $[\Klo, 0]$ with $\Klo < 0$.
Let  $r \geq \max\big\{\tilde{r}, \frac{8}{\sqrt{-\Klo}}, \frac{4(d+2)}{\tilde{c}}\big\}$.  
Define
${\kappa} = 4 r\sqrt{-\Klo} + 3.$
Let $\calA$ be any deterministic algorithm, and assume that $\calA$ always queries in $B(\xorigin, \mathscr{R})$, with $\mathscr{R} \geq r$.


There is a function $f \in \mathcal{F}_{\kappa, r}^{\xorigin}(\calM)$ with minimizer $x^*$ such that running $\calA$ on $f$ yields iterates $x_0, x_1, x_2, \ldots$ satisfying $\dist(x_{k}, x^*) \geq \frac{r}{4}$ for all $k = 0, 1, \ldots, T-1$, where
\begin{align} \label{rangeforkinfull}
T = \Bigg\lfloor \frac{\tilde{c} (d+2)^{-1} r}{\log\big(2000 \tilde{c} (d+2)^{-1} r (3 \mathscr{R} \sqrt{-\Klo} + 2)\big)}\Bigg\rfloor.
\end{align}

Moreover, $f$ is of the form $f(x) = \frac{1}{2} \dist(x, x^*)^2 + H(x)$ where $x^* \in B(\xorigin, \frac{3}{4} r)$ and $H \colon \calM \rightarrow \reals$ is a $C^\infty$ function satisfying
\begin{align} \label{usefulinequalitiesforprovingmaintheorem}
\left|H(x)\right| &\leq \frac{r}{64 \sqrt{-\Klo}}, & \norm{\grad H(x)} &\leq \frac{1}{2 \sqrt{-\Klo}}, & \norm{\Hess H(x)} &\leq \frac{1}{2}, \quad \forall x\in \calM.
\end{align}
%
%
%
\end{theorem}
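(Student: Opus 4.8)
The plan is to replay the inductive construction behind Lemma~\ref{keylemma} essentially verbatim, with one structural change: the resisting oracle now commits at each query $x_k$ to a \emph{pair} $(\phi_k,g_k)\in\reals\times\T_{x_k}\calM$, and the active candidate functions $f_{j,k}=\tfrac12\dist(\cdot,z_j)^2+H_{j,k}$ must be consistent with all of these pairs, i.e.\ $f_{j,k}(x_m)=\phi_m$ \emph{and} $\grad f_{j,k}(x_m)=g_m$ for $m<k$, on top of the analogues of \ref{prop1}--\ref{prop5}. The new ingredient this needs is Lemma~\ref{bumpfctsfctvalsandgrads}: given a base point $x_k$ and radius $\Rball$, it supplies a family $\{h_{f,g}\}$ of $C^\infty$ bumps supported in $B(x_k,\Rball)$ realizing \emph{any} prescribed value $f$ (over an interval) and gradient $g$ (over a ball) at $x_k$, with $\Rball$ governing the sizes, while keeping $\norm{\Hess h_{f,g}}\le\tfrac{1}{2w}$, $\norm{\grad h_{f,g}}\le\tfrac{1}{2w\sqrt{-\Klo}}$ and $\abs{h_{f,g}}\le\tfrac{2}{w}a(\Rball)$ on all of $\calM$. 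That family is just $\hat h_f$ (the scalar-parameter bump, which moves the value at $x_k$ without touching its gradient) added to the gradient-parameter bump $h_g$ of Lemma~\ref{lemmabumpfcts}.

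The induction is then a near-copy of Section~\ref{proofoflemmaSec3}. Base case: $f_{j,0}=\tfrac12\dist(\cdot,z_j)^2$, $H_{j,0}\equiv0$, $\grad f_{j,0}(z_j)=0$, trivial consistency (no queries yet). Inductive step at $x_k$: discard from $A_k$ the at most one index with $\dist(x_k,z_j)<\tfrac r4$ to form $\tilde A_k$, $\abs{\tilde A_k}\ge\abs{A_k}-1$; take $\Rball^{(k)}$ exactly as in~\eqref{Rkball}, so every bump support lies in $B(x_k,\Rball^{(k)})$, misses $x_0,\dots,x_{k-1}$ (preserving consistency with past pairs) and misses all surviving $z_j$ (preserving $\grad f_{j,k+1}(z_j)=0$); and set $f_{j,k+1}=f_{j,k}+h_{j,k}$ with $h_{j,k}=h_{\phi_k-f_{j,k}(x_k),\,g_k-\grad f_{j,k}(x_k)}$ once $(\phi_k,g_k)$ is chosen. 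Summed over the $T\le w$ perturbations, the bounds on the $h_{j,k}$ yield precisely the constraints~\eqref{usefulinequalitiesforprovingmaintheorem} on $H=\sum_m h_{j,m}$, together with $\tfrac12$-strong g-convexity and $[2r\sqrt{-\Klo}+\tfrac32]$-smoothness in $B(\xorigin,r)$. For $j\in\tilde A_k$ the achievable $(f_{j,k+1}(x_k),\grad f_{j,k+1}(x_k))$ form a box $B_{j,k}\subseteq\reals\times\T_{x_k}\calM$ centered at $(f_{j,k}(x_k),\grad f_{j,k}(x_k))$, with value-extent $\asymp w^{-1}a(\Rball^{(k)})$ and gradient-extent $\asymp w^{-1}g_{\mathrm{norm}}(\Rball^{(k)})$. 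The oracle picks $(\phi_k,g_k)$ in as many of the $B_{j,k}$ as possible and lets $A_{k+1}$ be that index set; a volume-counting argument as in Lemma~\ref{smallandlargeballslemma}, now in $\reals^{d+1}$ (and using only that the small sets lie in the big one), gives $\abs{A_{k+1}}\ge\abs{\tilde A_k}\,\Vol(B_{j,k})/\Vol(B_k)$ for any common enclosing box $B_k$. The extra coordinate raises the exponent from $d$ to $d+1$, which --- after choosing $w$ of order $\tilde c(d+2)^{-1}r$ --- degrades the $d^{-1}$ of Theorem~\ref{theoremnoacceleration} to the $(d+2)^{-1}$ of~\eqref{rangeforkinfull}; the induction $\abs{A_k}\ge2$ for $k\le T$ then goes through as before from $\abs{A_0}\ge e^{\tilde c r}$, and one finishes by taking any $j\in A_T$ and $f=f_{j,T}$, with minimizer $z_j\in B(\xorigin,\tfrac34r)$.

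What remains is to enclose $B_k$. On the $d$ gradient coordinates this is handled verbatim by Cases~1 and~2 of Appendices~\ref{case1} and~\ref{case2}. On the value coordinate, Case~2 ($x_k$ close to a prior query $x_\ell$) is routine: all surviving $f_{j,k}$ share the \emph{value} $\phi_\ell$ at $x_\ell$, so smoothness (through~\eqref{Case2boundonhessianoffjk}) confines $f_{j,k}(x_k)$ to an interval of length $O\!\big(\mathscr{R}\sqrt{-\Klo}\,\dist(x_k,x_\ell)^2\big)$ about the common number $\phi_\ell+\inner{g_\ell}{\exp_{x_\ell}^{-1}(x_k)}$, and the $\dist(x_k,x_\ell)^2$ there cancels the $a(\Rball^{(k)})\asymp\dist(x_k,x_\ell)^2$ scaling of the box. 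Case~1 is the delicate one: when $x_k$ is far from $\xorigin$ the naive spread of the values $f_{j,k}(x_k)=\tfrac12\dist(x_k,z_j)^2+H_{j,k}(x_k)$ is $\Theta(\mathscr{R}r)$ --- too large. The fix exploits that $\tfrac12\dist(\cdot,z_j)^2$ has value equal to $\tfrac12$ the squared norm of its gradient (Lemma~\ref{Lipschitzgradsquareddistance}: $\grad(\tfrac12\dist(\cdot,z_j)^2)(x)=-\exp_x^{-1}(z_j)$, of norm $\dist(x,z_j)$); hence $f_{j,k}(x_k)-\tfrac12\norm{\grad f_{j,k}(x_k)}^2$ is only $O(\mathscr{R}/\sqrt{-\Klo})$ for all $j\in\tilde A_k$, since $\abs{H_{j,k}}$ and $\norm{\grad H_{j,k}}$ are small. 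So instead of a box we enclose the $B_{j,k}$ in the preimage of a box under the volume-preserving shear $(\phi,g)\mapsto(\phi-\tfrac12\norm{g}^2,g)$ of $\reals\times\T_{x_k}\calM$; in the sheared coordinates the centers cluster in a value-interval of length $O(\mathscr{R}/\sqrt{-\Klo})$, matching $(3\mathscr{R}\sqrt{-\Klo}+2)$ up to constants, while $\Vol(B_{j,k})$ is unchanged. Combining the value and gradient bounds gives $\Vol(B_{j,k})/\Vol(B_k)\ge(2000w(3\mathscr{R}\sqrt{-\Klo}+2))^{-(d+1)}$, hence~\eqref{rangeforkinfull}.

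I expect this Case~1 value estimate --- in particular spotting the right ``reduced coordinate'' for the function value, i.e.\ the shear above --- to be the one genuinely new obstacle; everything else is the gradient-only argument with one extra scalar carried per query, which is why function-value consistency must be threaded through the whole induction.
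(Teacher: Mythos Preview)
Your plan is correct and closely tracks the paper's argument (Lemma~\ref{keylemmawithfctvals} plus the cylinder pigeonhole Lemma~\ref{smallandlargecylinderslemma}), with one genuine difference in Case~1 that is worth flagging.

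You assert that the naive value spread $\Theta(\mathscr{R} r)$ in Case~1 is ``too large'' and introduce the volume-preserving change of variable $(\phi,g)\mapsto(\phi-\tfrac12\norm{g}^2,g)$ to collapse it to $O(\mathscr{R}/\sqrt{-\Klo})$, so that the combined volume ratio comes out as $(2000w(3\mathscr{R}\sqrt{-\Klo}+2))^{-(d+1)}$. The paper does not do this: it simply centers the values at $\tfrac12\dist(x_k,\xorigin)^2$, bounds
\[
\bigl|\tfrac12\dist(x_k,z_j)^2-\tfrac12\dist(x_k,\xorigin)^2\bigr|\le (\dist(x_k,\xorigin)+r)\,r\le 2\mathscr{R} r
\]
by factoring the difference of squares and the triangle inequality, and accepts a value ratio of order $\bigl(w\,\mathscr{R}\sqrt{-\Klo}\cdot r\sqrt{-\Klo}\bigr)^{-1}$. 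Combined with the gradient ratio this gives an overall exponent $d+2$, not $d+1$ (see the statement of Lemma~\ref{keylemmawithfctvals}, where the denominator carries $(\cdot)^{d+2}$). Since the theorem already uses $(d+2)^{-1}$ and $w=\tilde c(d+2)^{-1}r$, the weaker exponent costs nothing in the final $T$.

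So your shear is a valid and somewhat sharper alternative (it would in fact give $(d+1)^{-1}$ in place of $(d+2)^{-1}$), but it is extra work for no gain at the level of~\eqref{rangeforkinfull}. The paper's route is the simpler one: treat the value coordinate exactly like the gradient coordinates via the cylinder lemma, pay one more factor of $3\mathscr{R}\sqrt{-\Klo}+2$ in Case~1, and be done.
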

\noindent Inequalities~\eqref{usefulinequalitiesforprovingmaintheorem} are included because they are useful for the proof for Theorem~\ref{maintheoremunboundedqueries} (see {Section~\ref{secwhereabbreviatesrR}}).

Before continuing to the details, let us first sketch the main ideas needed for this proof.
\cutchunk{In Sections~\ref{sec3dot2} and~\ref{proofoflemmaSec3}, we proved Theorem~\ref{theoremnoacceleration}, which provides a lower bound for algorithms which use gradient information but do not use function-value information.
In this section, we sketch the main ideas to prove Theorem~\ref{theoremnoaccelerationinfull}, i.e., to prove a lower bound for algorithms which use both function-value and gradient information.  Details are provided in Appendix~\ref{fctvalues}.}
In the case where the oracle only returns a gradient, Lemma~\ref{lemmabumpfcts} gives us a family of bump functions indexed by vectors $g$ such that for each $g$ in a ball of $\T_{x_k}\calM$ there is a bump function $h_g \colon \calM \rightarrow \reals$ satisfying $\grad h_g(x_k) = g$ (and a number of other properties).
This allowed us to use Lemma~\ref{smallandlargeballslemma} to choose the vector $g_k$.
In the case where the oracle also returns function values, we need a lemma which gives us a family of bump functions indexed by pairs $(f, g)$ lying in a \emph{cylinder} $I \times B$, where $I$ is a closed interval of the real line and $B$ is a closed ball in $\T_{x_k} \calM$.
For each pair $(f, g)$, the lemma should provide a bump function $h_{f, g} \colon \calM \rightarrow \reals$ satisfying $h_{f, g}(x_k) = f$ and $\grad h_{f, g}(x_k) = g$.  Lemma~\ref{bumpfctsfctvalsandgrads} in Appendix~\ref{fctvalues} does exactly this.

Lemma~\ref{bumpfctsfctvalsandgrads} works by constructing a bump function $h_{f, g}$ as a sum of two bump functions $\hat{h}_f$ and $h_g$ (the latter from Lemma~\ref{lemmabumpfcts}), the first controlling the function value of $h_{f, g}$, the second controlling its gradient.
We then use Lemma~\ref{smallandlargecylinderslemma}, which is analogous to Lemma~\ref{smallandlargeballslemma}, to choose a pair $(f_k, g_k)$ to return to the algorithm.
Since we are now comparing the volumes of sets of the form $I \times B$ which live in a space of dimension larger than $d$, we end up showing that
$\left|A_{k+1}\right| \geq \Omega(\left|A_k\right| / (\mathscr{R} \sqrt{-\Klo})^{d+2})$
instead of $\left|A_{k+1}\right| \geq \Omega(\left|A_k\right| / (\mathscr{R} \sqrt{-\Klo})^{d}).$
This is not an issue because $d+2 = \Theta(d)$.

\cutchunk{
\begin{remark}
Would allowing algorithms to query Hessian-vector products change the picture?  We expect not because, with exact arithmetic, a finite difference of two gradient vectors is enough to approximate a Hessian-vector product to \emph{arbitrary} precision~\citep[Ch. 10]{boumal2020intromanifolds}.
\end{remark}
}


\subsection{Proof of Theorem~\ref{theoremnoaccelerationinfull}}

Lemma~\ref{keylemmawithfctvals}, which is analogous to Lemma~\ref{keylemma}, forms the backbone of the proof of Theorem~\ref{theoremnoaccelerationinfull}.

\begin{lemma} \label{keylemmawithfctvals}
Let $\calM$ be a Hadamard manifold of dimension $d \geq 2$ with sectional curvatures in the interval $[\Klo, 0]$ with $\Klo < 0$.
Let $\xorigin \in \calM$, $r \geq 8 / \sqrt{-\Klo}$, $\mathscr{R} \geq r$.  Let $z_1, \ldots, z_N \in B(\xorigin, \frac{3}{4} r)$ be distinct points in $\calM$ such that $\dist(z_i, z_j) \geq \frac{r}{2}$ for all $i \neq j$.  
Define $A_0 = \{1, 2, \ldots, N\}$.
Let $\calA$ be any first-order algorithm which only queries points in $B(\xorigin, \mathscr{R})$.
Finally, let $w \geq 1$ (this is a tuning parameter we will set later).

For every nonnegative integer $k = 0, 1, 2, \ldots, \floor{w},$
the algorithm $\calA$ makes the query $x_k = \calA_k((f_0, (x_0, g_0)), \ldots, (f_{k-1}, (x_{k-1}, g_{k-1})))$ and there exists $f_k \in \reals$, $g_k \in \T_{x_k} \calM$ and a set $A_{k+1} \subseteq \{1, \ldots, N\}$ satisfying 
\begin{align} \label{L2lowerboundonAk}
\left|A_{k+1}\right| \geq \frac{12000 w(\left|A_k\right| - 1)}{\Big(2000 w(3 \mathscr{R} \sqrt{-\Klo} + 2)\Big)^{d+2}}
\end{align}
such that for each $j \in A_{k+1}$ there is a $C^{\infty}$ function $f_{j, k+1} \colon \calM \rightarrow \reals$ of the form
$$f_{j, k+1}(x) = \frac{1}{2} \dist(x, z_j)^2 + H_{j, k+1}(x)$$
satisfying:
\begin{enumerate} [label=\textbf{Lfv\arabic*}]
\item $f_{j, k+1}$ is $(1 - \frac{k+1}{2 w})$-strongly g-convex in $\calM$ and $[2r \sqrt{-\Klo} + 1 + \frac{k+1}{2 w}]$-smooth in $B(\xorigin, r)$; \label{L2prop1}

\item $\grad f_{j, k+1}(z_j) = 0$ (hence in particular the minimizer of $f_{j, k+1}$ is $z_j$); \label{L2prop2}

\item $f_{j, k+1}(x_{m}) = f_{m}$ and $\grad f_{j, k+1}(x_{m}) = g_{m}$ for $m = 0, 1, \ldots, k$; \label{L2prop3}

\item $\dist(x_{m}, z_j) \geq \frac{r}{4}$ for all $m = 0, 1, \ldots, k$. \label{L2prop4}

\item $\left|H_{j, k+1} (x) \right| \leq \frac{(k+1) r}{64 w \sqrt{-\Klo}}, \norm{\grad H_{j, k+1}(x)} \leq \frac{(k+1) }{2 w \sqrt{-\Klo}}, \norm{\Hess H_{j, k+1}(x)} \leq \frac{k+1}{2 w}$ for all $x \in \calM$. \label{L2prop5}
\end{enumerate}
\end{lemma}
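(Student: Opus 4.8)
The plan is to prove Lemma~\ref{keylemmawithfctvals} by induction on $k$, following the architecture of the proof of Lemma~\ref{keylemma} in Section~\ref{proofoflemmaSec3}, with two substitutions: the gradient-only bump family of Lemma~\ref{lemmabumpfcts} is replaced by the function-value-and-gradient bump family of Lemma~\ref{bumpfctsfctvalsandgrads}, and the ball-covering Lemma~\ref{smallandlargeballslemma} is replaced by the cylinder-covering Lemma~\ref{smallandlargecylinderslemma}. In the base case $k+1 = 0$ we set $f_{j,0}(x) = \tfrac12 \dist(x, z_j)^2$ for all $j \in A_0$ (so $H_{j,0} \equiv 0$); Lemma~\ref{Lipschitzgradsquareddistance} gives~\ref{L2prop1}, and~\ref{L2prop2}--\ref{L2prop5} are immediate since there are no past queries.

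For the inductive step, suppose $x_0, \dots, x_{k-1}$, the committed answers $(f_0, g_0), \dots, (f_{k-1}, g_{k-1})$, the set $A_k$ and the functions $f_{j,k}$, $j \in A_k$, satisfy~\ref{L2prop1}--\ref{L2prop5} up to index $k$. The algorithm queries $x_k$; let $x_\ell$ be a closest previous query as in~\eqref{closestpoint} (for $k \ge 1$); if $x_k = x_\ell$ return $(f_\ell, g_\ell)$ and take $A_{k+1} = A_k$, so assume $x_k \ne x_\ell$. Set $\tilde A_k = \{j \in A_k : \dist(x_k, z_j) \ge \tfrac r4\}$; since the $z_j$ are $\tfrac r2$-separated, at most one index is excluded, so $|\tilde A_k| \ge |A_k| - 1$. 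Choose the support radius $\Rball^{(k)}$ exactly as in~\eqref{Rkball}. Because $\Rball^{(k)} \le \tfrac r8$ while $\dist(x_k, z_j) \ge \tfrac r4$ for $j \in \tilde A_k$, and because (for $k\ge 1$) $x_0, \dots, x_{k-1} \notin B(x_k, \Rball^{(k)})$ as $\Rball^{(k)} \le \tfrac14\dist(x_k, x_\ell) < \dist(x_k, x_\ell)$, any modification of $f_{j,k}$ supported in $B(x_k, \Rball^{(k)})$ leaves the minimizer $z_j$ and the past value/gradient data untouched (the modification and its first two derivatives vanish off its support). Now invoke Lemma~\ref{bumpfctsfctvalsandgrads} at $x_k$ with radius $\Rball^{(k)}$ and the given $w$: it produces, for each pair $(f, g)$ ranging over a cylinder $I \times B_{x_k}(0, w^{-1} g_{\mathrm{norm}}(\Rball^{(k)}))$, where $I$ is an explicit interval of length $w^{-1}\bigl(2 a(\Rball^{(k)}) - \tfrac38 \Rball^{(k)} g_{\mathrm{norm}}(\Rball^{(k)})\bigr)$, a $C^\infty$ bump $h_{f,g}$ supported in $B(x_k, \Rball^{(k)})$ with $h_{f,g}(x_k) = f$, $\grad h_{f,g}(x_k) = g$, $\norm{\grad h_{f,g}(x)} \le \tfrac{1}{2 w \sqrt{-\Klo}}$ and $\norm{\Hess h_{f,g}(x)} \le \tfrac{1}{2w}$ everywhere (the factor $2$ relative to Lemma~\ref{lemmabumpfcts} is because $h_{f,g}$ is a sum of two bumps). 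For $j \in \tilde A_k$ the realizable pairs $(f_{j,k+1}(x_k), \grad f_{j,k+1}(x_k))$ with $f_{j,k+1} = f_{j,k} + h_{f,g}$ form the translated cylinder $C_{j,k} = I_{j,k} \times B_{j,k}$, where $I_{j,k} = f_{j,k}(x_k) + I$ and $B_{j,k}$ is as in~\eqref{definingBjk}. Choose $(f_k, g_k)$ lying in as many $C_{j,k}$ as possible, put $A_{k+1} = \{j \in \tilde A_k : (f_k, g_k) \in C_{j,k}\}$, and for each such $j$ let $h_{j,k}$ be the bump realizing $(f_k, g_k)$, $f_{j,k+1} = f_{j,k} + h_{j,k}$, $H_{j,k+1} = H_{j,k} + h_{j,k}$. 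Then~\ref{L2prop2},~\ref{L2prop3},~\ref{L2prop4} hold by construction, and~\ref{L2prop1},~\ref{L2prop5} follow by the same operator-norm bookkeeping as in Section~\ref{proofoflemmaSec3}, now with per-step increments $\tfrac{1}{2w}$ for Hessians, $\tfrac{1}{2w\sqrt{-\Klo}}$ for $\grad H$, and $\tfrac{r}{64 w \sqrt{-\Klo}}$ for $|H|$ (using $a(\Rball^{(k)}) \le a(r/8) \le \tfrac{r}{128\sqrt{-\Klo}}$), so after $k+1 \le \floor{w} + 1$ steps the cumulative perturbation stays below the stated thresholds.

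It remains to establish~\eqref{L2lowerboundonAk}. By Lemma~\ref{smallandlargecylinderslemma} it suffices to find a cylinder $C_k = I_k \times B_k$ containing every $C_{j,k}$ and to lower bound $\Vol(C_{j,k})/\Vol(C_k) = (|I_{j,k}|/|I_k|)\cdot(\Vol(B_{j,k})/\Vol(B_k))$. The ball factor is handled exactly as in Appendices~\ref{case1} and~\ref{case2}, giving $\Vol(B_{j,k})/\Vol(B_k) \ge (2000 w (3\mathscr{R}\sqrt{-\Klo}+2))^{-d}$ in both the case where $x_k$ is far from all previous queries (Case~1) and the case where $x_k$ is near $x_\ell$ (Case~2). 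For the interval factor we must show the centers $f_{j,k}(x_k)$, $j \in \tilde A_k$, cluster in a short interval $I_k$. In Case~1 write $f_{j,k}(x_k) = \tfrac12 \dist(x_k, z_j)^2 + H_{j,k}(x_k)$: since all $z_j \in B(\xorigin, \tfrac34 r)$ and $x_k \in B(\xorigin, \mathscr{R})$, the triangle inequality bounds the spread of $\tfrac12\dist(x_k,z_j)^2$ by $O(\mathscr{R} r)$, while $|H_{j,k}(x_k)| \le \tfrac{r}{64\sqrt{-\Klo}}$ by~\ref{L2prop5}, so $|I_k| \le O(\mathscr{R} r) + |I_{j,k}|$; combined with $\Rball^{(k)} \ge 1/\sqrt{-\Klo}$ in Case~1 (which forces $|I_{j,k}| \ge \Omega(w^{-1}(-\Klo)^{-1})$), this yields $|I_{j,k}|/|I_k| \ge \Omega\bigl(w^{-1}(\mathscr{R}\sqrt{-\Klo})^{-2}\bigr)$. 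In Case~2, all $f_{j,k}$ agree at $x_\ell$ with value $f_\ell$ and gradient $g_\ell$ by~\ref{L2prop3}, and are $(3\mathscr{R}\sqrt{-\Klo}+\tfrac32)$-smooth on a ball containing $x_k$ and $x_\ell$ as in~\eqref{Case2boundonhessianoffjk}, so the $L$-smoothness estimate of Lemma~\ref{muHesslemma} bounds the spread of $f_{j,k}(x_k)$ by $O((3\mathscr{R}\sqrt{-\Klo}+2)\dist(x_k,x_\ell)^2)$; since there $\Rball^{(k)} = \tfrac14\dist(x_k,x_\ell)$ makes $|I_{j,k}| = \Theta(w^{-1}\dist(x_k,x_\ell)^2)$, we again get $|I_{j,k}|/|I_k| \ge \Omega\bigl(w^{-1}(3\mathscr{R}\sqrt{-\Klo}+2)^{-1}\bigr)$. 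Multiplying the interval and ball factors and tracking the explicit constants gives~\eqref{L2lowerboundonAk}; the extra $+2$ in the exponent (versus $d$ in Lemma~\ref{keylemma}) is precisely the price of the interval factor.

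The main obstacle is this interval bound: $\dist(x_k, x_\ell)$ can be arbitrarily small, so both $|I_{j,k}|$ and the width of the enclosing interval $I_k$ can tend to $0$, and one must verify they vanish at the same rate (both $\propto \dist(x_k,x_\ell)^2$ in Case~2). This is exactly what forces the scale-aware shapes of $a$ and $g_{\mathrm{norm}}$ in~\eqref{defoffctA} and~\eqref{gnormeq}, so that a bump of tiny radius still carries a proportionally sized range of function values, and why $\tfrac12\dist(\cdot,z_j)^2$ had to be expanded to first order at $x_\ell$ rather than bounded crudely. A secondary, error-prone task is carrying the doubled perturbation increments ($\tfrac1{2w}$ rather than $\tfrac1{4w}$) through~\ref{L2prop1} and~\ref{L2prop5} and checking that strong g-convexity, smoothness, and $|H|$ all survive the full range $k \le \floor{w}$.
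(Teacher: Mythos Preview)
Your proposal is correct and follows essentially the same approach as the paper's proof: induction with $f_{j,0}=\tfrac12\dist(\cdot,z_j)^2$ as base case; in the inductive step, the same $\tilde A_k$, the same support radius $\Rball^{(k)}$ from~\eqref{Rkball}, the bump family of Lemma~\ref{bumpfctsfctvalsandgrads} in place of Lemma~\ref{lemmabumpfcts}, cylinders $C_{j,k}=I_{j,k}\times B_{j,k}$ in $\reals\times\T_{x_k}\calM$, and Lemma~\ref{smallandlargecylinderslemma} in place of Lemma~\ref{smallandlargeballslemma}. Your Case~1/Case~2 split for bounding the enclosing interval $I_k$ (via $|\tfrac12\dist(x_k,z_j)^2-\tfrac12\dist(x_k,\xorigin)^2|\le O(\mathscr{R}r)$ plus the $|H_{j,k}|$ bound in Case~1, and via the $L$-smoothness second-order expansion at $x_\ell$ in Case~2) is exactly what the paper does, and your observation that $|I_{j,k}|=\Theta(w^{-1}\dist(x_k,x_\ell)^2)$ in Case~2 (through $a(R)\asymp R^2$ for small $R$) is the key scale-matching the paper exploits.
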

\begin{proof}[Proof of Theorem~\ref{theoremnoaccelerationinfull}]
Let us apply Lemma~\ref{keylemmawithfctvals} to the manifold $\calM$ and algorithm $\calA$.
Let the points $z_1, \ldots, z_N$ be provided by the ball-packing property so that $N\geq e^{\tilde{c} r}$.

Set $w = \tilde{c} r (d+2)^{-1}$ in Lemma~\ref{keylemmawithfctvals}, and observe that
$$\min\{\floor{w}, T\} = \min\bigg\{\floor{\tilde{c} r (d+2)^{-1}}, \bigg\lfloor \frac{\tilde{c} r (d+2)^{-1}}{\log\big(2000 w(3 \mathscr{R} \sqrt{-\Klo} + 2)\big)} \bigg\rfloor\bigg\} = T$$
because $w = \tilde{c} r (d+2)^{-1} \geq 4$ and $\mathscr{R} \sqrt{-\Klo} \geq 8$.

For the same reasons as in the proof of Theorem~\ref{theoremnoacceleration}, it suffices to show that $\left|A_k\right| \geq 2$ for all $k \leq T$.
We induct on $k$.
(\textbf{Base case}) By the ball-packing property, $\left|A_0\right| \geq e^{\tilde{c} r} \geq 2$ since $r \geq 4 (d+2) / \tilde{c}$.
(\textbf{Inductive hypothesis}) Assume $\left|A_m\right| \geq 2$ for all $m \leq k$ and $k+1 \leq T$.
Therefore, $\left|A_{m}\right| - 1 \geq \left|A_{m}\right| / 2$ for all $m \leq k$.

Lemma~\ref{keylemmawithfctvals} implies
$$\left|A_{m+1}\right| \geq \frac{6000 w \left|A_m\right|}{\Big(2000 w(3 R \sqrt{-\Klo} + 2)\Big)^{d+2}} \geq \frac{2 \left|A_m\right|}{\Big(2000 w(3 \mathscr{R} \sqrt{-\Klo} + 2)\Big)^{d+2}} \quad \forall m \leq k.$$
Unrolling these inequalities and using $\left|A_0\right| \geq e^{\tilde{c} r}$, we get
$$\left|A_{k+1} \right| \geq \frac{e^{\tilde{c} r} 2^{k+1}}{ \big(2000 w(3 \mathscr{R} \sqrt{-\Klo} + 2)\big)^{(k+1) (d+2)}}.$$
On the other hand, $k+1 \leq T$ implies
$$\frac{e^{\tilde{c} r}}{ \big(2000 w(3 \mathscr{R} \sqrt{-\Klo} + 2)\big)^{(k+1) (d+2)}} \geq 1.$$
So $\left|A_{k+1}\right| \geq 2$.
Lastly, note that Lemma~\ref{keylemmawithfctvals} and our choice of $T$ implies for all $x \in \calM$
$$\left|H_{j, T}(x)\right| \leq \frac{r}{64 \sqrt{-\Klo}}, \quad \quad \norm{\grad H_{j, T}(x)} \leq \frac{1}{2\sqrt{-\Klo}}, \quad \quad \norm{\Hess H_{j, T}(x)} \leq \frac{1}{2}.$$ \end{proof}

\subsection{Proof of Lemma~\ref{keylemmawithfctvals}}
The proof approach for Lemma~\ref{keylemmawithfctvals} is very similar to the proof presented in Section~\ref{proofoflemmaSec3}, so we are more succinct, focusing on the additional analysis needed to handle function-value queries.
Before we prove this lemma, we state two lemmas which we use.
The following lemma is analogous to Lemma~\ref{lemmabumpfcts}, and its proof can be found in Appendix~\ref{bumpfunctionfctvalqueriesproof}.
\begin{lemma} \label{bumpfctsfctvalsandgrads}
Let $\calM$ be a Hadamard manifold with sectional curvatures in the interval $[\Klo, 0]$ with $\Klo < 0$.
Let $\Rball > 0, w > 0, x_k \in \calM$. 
Define $a \colon [0, \infty) \rightarrow \reals$ and $g_{\mathrm{norm}} \colon [0, \infty) \rightarrow \reals$ as in Lemma~\ref{appendixbumpfunction}.
There is a family of bump functions $\{{h}_{f, g} \colon \calM \rightarrow R\}$ indexed by $(f, g)$ satisfying
$$f \in \Big[-w^{-1} a(\Rball) + \frac{3}{8} w^{-1} \Rball g_{\mathrm{norm}}(\Rball), w^{-1} a(\Rball)\Big],  \quad g \in B_{x_k}(0, w^{-1} g_{\mathrm{norm}}(\Rball))$$
such that for each such $(f, g)$:
\begin{itemize}
\item ${h}_{f, g}(x_k) = f$;
\item $\grad {h}_{f, g}(x_k) = g$;
\item the support of each ${h}_{f, g}$ is contained in $B(x_k, \Rball)$;
\item $\left|h_{f, g}(x)\right| \leq 2w^{-1} a(\Rball)$, $\norm{\grad {h}_{f, g}(x)} \leq \frac{1}{2 w \sqrt{-\Klo}}$, $\norm{\Hess {h}_{f, g}(x)} \leq \frac{1}{2 w}$ for all $x \in \calM$.
\end{itemize}
The interval 
$[-w^{-1} a(\Rball) + \frac{3}{8} w^{-1} \Rball g_{\mathrm{norm}}(\Rball), w^{-1} a(\Rball)]$
has length at least $w^{-1} a(\Rball)$.
\end{lemma}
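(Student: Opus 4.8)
The plan is to build $h_{f,g}$ as a sum of two bump functions already at our disposal: one carrying the prescribed gradient at $x_k$, the other carrying the prescribed value at $x_k$. Indeed, Lemma~\ref{lemmabumpfcts} gives, for each $g \in B_{x_k}(0, w^{-1} g_{\mathrm{norm}}(\Rball))$, a $C^\infty$ function $h_g$ supported in $B(x_k, \Rball)$ with $\grad h_g(x_k) = g$, with $\norm{\grad h_g(x)} \leq \frac{1}{4 w \sqrt{-\Klo}}$ and $\norm{\Hess h_g(x)} \leq \frac{1}{4 w}$ for all $x$, with $\left|h_g(x)\right| \leq w^{-1} a(\Rball)$, and (by property~\ref{BFprop1}) with the explicit value $h_g(x_k) = \frac{3}{8} \norm{g} g_{\mathrm{norm}}^{-1}(w \norm{g})$. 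Likewise, Lemma~\ref{lemmabumpfunctionsfunctionvalues} gives, for each $c \in [-w^{-1} a(\Rball), w^{-1} a(\Rball)]$, a $C^\infty$ function $\hat h_c$ supported in $B(x_k, \Rball)$ with $\hat h_c(x_k) = c$, $\grad \hat h_c(x_k) = 0$, $\left|\hat h_c(x)\right| \leq w^{-1} a(\Rball)$, $\norm{\grad \hat h_c(x)} \leq \frac{w^{-1}}{4 \sqrt{-\Klo}}$, and $\norm{\Hess \hat h_c(x)} \leq \frac{1}{4 w}$. I then set $h_{f,g} = \hat h_{f - h_g(x_k)} + h_g$; as a sum of $C^\infty$ functions it is $C^\infty$, and by construction $\grad h_{f,g}(x_k) = 0 + g = g$ and $h_{f,g}(x_k) = (f - h_g(x_k)) + h_g(x_k) = f$.

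The one delicate point is that the argument $f - h_g(x_k)$ must lie in the interval $[-w^{-1} a(\Rball), w^{-1} a(\Rball)]$ on which $\hat h$ is defined. Since $g_{\mathrm{norm}}$ is strictly increasing on $[0, \infty)$ with $g_{\mathrm{norm}}(0) = 0$, the offset $h_g(x_k) = \frac{3}{8} \norm{g} g_{\mathrm{norm}}^{-1}(w \norm{g})$ is nonnegative, and from $\norm{g} \leq w^{-1} g_{\mathrm{norm}}(\Rball)$ we get $g_{\mathrm{norm}}^{-1}(w \norm{g}) \leq \Rball$, hence $0 \leq h_g(x_k) \leq \frac{3}{8} w^{-1} \Rball\, g_{\mathrm{norm}}(\Rball)$. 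Consequently, restricting $f$ to $[-w^{-1} a(\Rball) + \frac{3}{8} w^{-1} \Rball g_{\mathrm{norm}}(\Rball),\ w^{-1} a(\Rball)]$ forces $f - h_g(x_k) \in [-w^{-1} a(\Rball),\ w^{-1} a(\Rball)]$, so $\hat h_{f - h_g(x_k)}$ exists. All remaining claims then follow from the triangle inequality applied termwise: both summands are supported in $B(x_k, \Rball)$, so $h_{f,g}$ is; and for every $x$, $\left|h_{f,g}(x)\right| \leq 2 w^{-1} a(\Rball)$, $\norm{\grad h_{f,g}(x)} \leq \frac{1}{4 w \sqrt{-\Klo}} + \frac{w^{-1}}{4 \sqrt{-\Klo}} = \frac{1}{2 w \sqrt{-\Klo}}$, and $\norm{\Hess h_{f,g}(x)} \leq \frac{1}{4 w} + \frac{1}{4 w} = \frac{1}{2 w}$.

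Finally, for the claim that the $f$-interval has length at least $w^{-1} a(\Rball)$, I need $2 w^{-1} a(\Rball) - \frac{3}{8} w^{-1} \Rball g_{\mathrm{norm}}(\Rball) \geq w^{-1} a(\Rball)$, i.e. $a(\Rball) \geq \frac{3}{8} \Rball g_{\mathrm{norm}}(\Rball)$. Plugging in $a(R) = \frac{R}{4(4 \sqrt{-\Klo} + 55/R)}$ and $g_{\mathrm{norm}}(R) = \frac{8 R}{e^{1/3}(1485 + 72 R \sqrt{-\Klo})}$ and clearing denominators, this reduces to the elementary bound $e^{1/3}(1485 + 72 R \sqrt{-\Klo}) \geq 48 R \sqrt{-\Klo} + 660$, which holds for all $R \geq 0$ because $e^{1/3} > 1$ already gives $1485\, e^{1/3} \geq 660$ and $72\, e^{1/3} \geq 48$. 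The hard part here is not conceptual but accounting: one must shift the admissible range of $f$ by exactly the worst-case offset $\frac{3}{8} w^{-1} \Rball g_{\mathrm{norm}}(\Rball)$ so that subtracting $h_g(x_k)$ always lands inside the interval supplied by Lemma~\ref{lemmabumpfunctionsfunctionvalues}, and then check that the constants leave enough room for the length statement.
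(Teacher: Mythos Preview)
Your proof is correct and follows essentially the same approach as the paper: define $h_{f,g} = \hat h_{f - h_g(x_k)} + h_g$, use the explicit formula $h_g(x_k) = \frac{3}{8}\norm{g}\,g_{\mathrm{norm}}^{-1}(w\norm{g})$ to bound the offset by $\frac{3}{8} w^{-1} \Rball\, g_{\mathrm{norm}}(\Rball)$, and then add the termwise bounds. In fact you go slightly beyond the paper by explicitly verifying the final interval-length claim via the elementary inequality $e^{1/3}(1485 + 72 R\sqrt{-\Klo}) \geq 48 R\sqrt{-\Klo} + 660$, which the paper states but does not spell out.
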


By \emph{cylinder} we mean any subset $C$ of a Euclidean space of the form $C = I \times B$, where $I$ is a closed interval of $\reals$ and $B$ is a closed Euclidean ball.  
Note that these cylinders include their interior---there is a distinction between a cylinder and the surface of a cylinder.
The height of a cylinder $C = I \times B$ is the length of $I$; the radius of $C$ is the radius of the ball $B$.

\begin{lemma} \label{smallandlargecylinderslemma}
In the following $I, I_j$ denote closed intervals of $\reals$, and $B, B_j$ denote closed $d$-dimensional balls of Euclidean space.

Let $C_1 = I_1 \times B_1, \ldots, C_n = I_n \times B_n$ be $n$ cylinders of radius $q$ and height $a$ each.
Assume each of the cylinders is also contained in a larger cylinder $C = I \times B$ of radius $r$ and height $b$: $C_j \subseteq C$ for all $j=1, \ldots, n$.  
Choose
$$(f, g) \in \arg\max_{(t, y) \in C} \left|\{j \in \{1, \ldots, n\} : (t, y) \in C_j\}\right|,$$
and let $A = \{j \in \{1, \ldots, n\} : (f, g) \in C_j\}$.
Then
$$\left|A\right| \geq n\frac{\Vol(C_1)}{\Vol(C)} = n\frac{\Vol(B_1)}{\Vol(B)}\cdot\frac{a}{b} = n \frac{q^d}{r^d} \cdot \frac{a}{b}.$$
\end{lemma}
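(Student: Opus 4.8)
The plan is to repeat, essentially verbatim, the averaging argument used for Lemma~\ref{smallandlargeballslemma}, now carried out in the ambient space $\reals \times \reals^d$ that contains the cylinders. First I would introduce, for each point $(t, y) \in C$, the counting function $N(t, y) = \left|\{j \in \{1, \ldots, n\} : (t, y) \in C_j\}\right|$, so that by construction $(f, g) \in \arg\max_{(t, y) \in C} N(t, y)$ and $\left|A\right| = N(f, g) = \max_{(t, y) \in C} N(t, y)$.

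Next I would integrate $N$ over the large cylinder $C$. Writing $N$ as a sum of indicator functions of the $C_j$ and using that $C_j \subseteq C$ for every $j$ gives
$$\int_{C} N(t, y) \, dV(t, y) = \sum_{j=1}^n \Vol(C_j) = n \, \Vol(C_1),$$
since all the small cylinders have the same volume. Bounding the integrand by its maximum yields $n \, \Vol(C_1) \leq \left(\max_{(t, y) \in C} N(t, y)\right) \Vol(C) = \left|A\right| \Vol(C)$, hence $\left|A\right| \geq n \, \Vol(C_1)/\Vol(C)$.

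Finally I would compute the two volumes. Since a cylinder is a product of an interval and a ball, $\Vol(C_1) = a \cdot \Vol(B_1)$ and $\Vol(C) = b \cdot \Vol(B)$, while $\Vol(B_1)/\Vol(B) = q^d/r^d$ because $d$-dimensional balls scale by the $d$-th power of their radius. Substituting,
$$\left|A\right| \geq n \, \frac{\Vol(C_1)}{\Vol(C)} = n \, \frac{a}{b} \cdot \frac{\Vol(B_1)}{\Vol(B)} = n \, \frac{q^d}{r^d} \cdot \frac{a}{b},$$
as claimed.

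There is no real obstacle here: the only new ingredient beyond the proof of Lemma~\ref{smallandlargeballslemma} is the product formula for the volume of a cylinder. In fact one could bypass the computation entirely by observing that the proof of Lemma~\ref{smallandlargeballslemma} never used that the sets in question were balls rather than arbitrary equal-volume measurable subsets of a common set, so the present statement follows from it directly.
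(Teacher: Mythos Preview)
Your proof is correct and is exactly the argument the paper has in mind: the paper simply remarks that the proof of Lemma~\ref{smallandlargecylinderslemma} is essentially identical to that of Lemma~\ref{smallandlargeballslemma}, which is precisely the averaging argument you wrote out. Your final observation---that the ball proof never used anything about balls beyond equal volumes---is also apt.
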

\noindent The proof of Lemma~\ref{smallandlargecylinderslemma} is essentially identical to the proof of the analogous Lemma~\ref{smallandlargeballslemma}.  

Let us now prove Lemma~\ref{keylemmawithfctvals}.
We construct the function values $f_0, f_1, \ldots$, gradients $g_0, g_1, \ldots$, sets $A_0, A_1, \ldots$, and functions $f_{j, 0}, f_{j, 1}, \ldots$ inductively.
We prove the claim by induction on $k$.
The \textbf{base case} is the same as in Section~\ref{proofoflemmaSec3}.  
In particular, we define $f_{j, 0}(x) = \frac{1}{2} \dist(x, z_j)^2$ for all $j \in A_0 = \{1, \ldots, N\}$.

Let's consider the \textbf{inductive step}.  We are at iteration $k \geq 0$, and we assume properties \ref{L2prop1}, \ref{L2prop2}, \ref{L2prop3}, \ref{L2prop4}, \ref{L2prop5} hold with $k$ replacing $k+1$ in all expressions (the inductive hypothesis).
The algorithm queries a point $x_k$.
If $k \geq 1$, let $x_{\ell}, \ell < k,$ be a previous query point closest to $x_k$.
We can assume $x_\ell \neq x_k$.
Define $\tilde{A}_k$ as in equation~\eqref{definetildeAk}.

We shall define $f_{j, k+1} = f_{j, k} + h_{j, k}$
where $h_{j, k}$ is an appropriately chosen bump function.
We want $h_{j, k}$ to be a bump function whose support is contained in $B(x_k, \Rball^{(k)})$ where $\Rball^{(k)}$ is defined by equation~\eqref{Rkball}.
With this choice for $\Rball^{(k)}$, we set $h_{j, k}$ to be one of the bump functions $h_{f, g}$ supplied by Lemma~\ref{bumpfctsfctvalsandgrads} (which one remains to be determined).
With this setup, we immediately know the function $f_{j, k+1} = f_{j, k} + h_{f, g}$ satisfies properties \ref{L2prop2} and \ref{L2prop4}, as well as $f_{j, k+1}(x_m) = f_m$ and $\grad f_{j, k+1}(x_m) = g_m$ for $m=0, 1, \ldots, k-1$, for the reasons given in Section~\ref{proofoflemmaSec3}.
Additionally, using
$$2 w^{-1} a(\Rball^{(k)}) \leq 2 w^{-1} a\Big(\frac{r}{8}\Big) \leq 2 w^{-1} \frac{r/8}{4(4\sqrt{-\Klo})} = \frac{r}{64 w \sqrt{-\Klo}},$$
we see the function $f_{j, k+1} = f_{j, k} + h_{f, g}$ satisfies properties \ref{L2prop1} and \ref{L2prop5}.

It remains to choose $A_{k+1} \subseteq \tilde{A}_k$, $f_k \in \reals$ and $g_k \in \T_{x_k} \calM$ so that $f_{j, k+1}(x_k) = f_k, \grad f_{j, k+1}(x_k) = g_k$ for all $j \in A_{k+1}$, and inequality~\eqref{L2lowerboundonAk} is satisfied.

Consider the cylinders in $\reals \times \T_{x_k} \calM$ defined by 
$C_{j, k} = I_{j, k} \times B_{j, k}$
where we define
\begin{align} \label{definingIjk}
I_{j, k} = \Big[f_{j, k}(x_k) -w^{-1} a(\Rball^{(k)}) + \frac{3}{8} w^{-1} \Rball^{(k)} g_{\mathrm{norm}}(\Rball^{(k)}), f_{j, k}(x_k) + w^{-1} a(\Rball^{(k)})\Big]
\end{align}
and recall that $B_{j, k} = B_{x_k}(\grad f_{j, k}(x_k), w^{-1} g_{\mathrm{norm}}(\Rball^{(k)})).$
Let
$$(f_k, g_k) \in {\arg \max}_{(f, g) \in \reals \times \T_{x_k}\calM} \left|\{j \in \tilde{A}_k : (f, g) \in C_{j, k}\}\right|.$$
Define $A_{k+1} = \{j \in \tilde{A}_k : (f_k, g_k) \in C_{j, k}\}.$

For each $j \in A_{k+1}$ define $g_{j, k} = g_k - \grad f_{j, k}(x_k)$ and $f_{j, k}^{\text{(f.v.)}} = f_k - f_{j, k}(x_k)$.
Then Lemma~\ref{bumpfctsfctvalsandgrads} implies for each $j \in A_{k+1}$ there is a bump function $h_{j, k} := h_{f_{j, k}^{\text{(f.v.)}}, g_{j, k}}$ satisfying
$$h_{j, k}(x_k) = f_{j, k}^{\text{(f.v.)}} = f_k - f_{j, k}(x_k) \quad \text{ and } \quad \grad h_{j, k}(x_k) = g_{j, k} = g_k - \grad f_{j, k}(x_k).$$
Therefore for all $j\in A_{k+1}$, $f_{j, k+1}(x_k) = f_{j, k}(x_k) + h_{j, k}(x_k) = f_k$ and $\grad f_{j, k+1}(x_k) = \grad f_{j, k}(x_k) + \grad h_{j, k}(x_k) = g_k$.

It remains to verify inequality~\eqref{L2lowerboundonAk}.  To do so, we use Lemma~\ref{smallandlargecylinderslemma}.
To use this lemma, we need (a) a good upper bound on the radius of a ball $B_k \subseteq \T_{x_k} \calM$ containing the balls $B_{j, k}, j \in \tilde{A}_k$, and (b) a good upper bound for the length of an interval $I \subseteq \reals$ containing the intervals $I_{j, k}, j \in \tilde{A}_k$.
We've already done (a) in the proof from Section~\ref{proofoflemmaSec3}. 
Recall that we showed (using lines~\eqref{ratioofgradientvolumesCase1} and~\eqref{ratioofgradientvolumesCase2modded})
$$\frac{\Vol(B_{j, k})}{\Vol(B_k)} \geq \frac{1}{(2000 w(3 \mathscr{R} \sqrt{-\Klo} + 2))^d}.$$
For (b), we upper bound the length of an interval $I_k$ containing $I_{j, k}, j \in \tilde{A}_k$ in two cases, as in Section~\ref{proofoflemmaSec3}:

\textbf{Case 1}: either $k=0$, or $k\geq 1$ and $\sqrt{-\Klo} \dist(x_k, x_{\ell}) > 4$.  

\textbf{Case 2}: $k \geq 1$ and $\sqrt{-\Klo} \dist(x_k, x_{\ell}) \leq 4$.

In each case, we show that
$$\frac{\mathrm{Length}(I_{j, k})}{\mathrm{Length}(I_k)} \geq \frac{12000 w}{(2000 w (3 \mathscr{R} \sqrt{-\Klo} + 2))^2}.$$
Therefore, using Lemma~\ref{smallandlargecylinderslemma}, $(f_k, g_k)$ is contained in
\begin{align*}
&\left|A_{k+1}\right| \geq \left|\tilde{A}_k\right| \frac{\Vol(B_{j, k})}{\Vol(B_k)} \cdot \frac{\mathrm{Length}(I_{j, k})}{\mathrm{Length}(I_k)} \geq \frac{12000 w(\left|A_k\right| - 1)}{\Big(2000 w(3 \mathscr{R} \sqrt{-\Klo} + 2)\Big)^{d+2}}
\end{align*}
of the cylinders $C_{j, k}, j \in \tilde{A}_k$.
This concludes the inductive step, proving Lemma~\ref{keylemmawithfctvals}.

\subsubsection{Case 1 (for function-value queries)}
We have
\begin{equation} \label{previoususefulineqforfctvalscase1}
\begin{split}
&\left| \dist(x_k, z_j)^2 - \dist(x_k, \xorigin)^2 \right| \\
&= (\dist(x_k, z_j) + \dist(x_k, \xorigin)) \left| \dist(x_k, z_j) - \dist(x_k, \xorigin) \right| \\
&\leq (\dist(x_k, z_j) + \dist(x_k, \xorigin)) \dist(z_j, \xorigin) \\
&\leq (2 \dist(x_k, \xorigin) + \dist(z_j, \xorigin)) \dist(z_j, \xorigin) 
\leq (2 \dist(x_k, \xorigin) + r) r.
\end{split}
\end{equation}
By the inductive hypothesis and $k \leq w$, $\left| H_{j, k}(x_k) \right| \leq \frac{k w^{-1} r}{64 \sqrt{-\Klo}} \leq \frac{r}{64 \sqrt{-\Klo}}.$
Combining this with inequality~\eqref{previoususefulineqforfctvalscase1}, we conclude
\begin{equation} \label{ineqconclusioncase1fctvals}
\begin{split}
&\left|f_{j, k}(x_k) - \frac{1}{2} \dist(x_k, \xorigin)^2\right| = \left|H_{j, k}(x_k) + \frac{1}{2}\dist(x_k, z_j)^2 - \frac{1}{2} \dist(x_k, \xorigin)^2\right| \\
& \leq \frac{r}{64 \sqrt{-\Klo}} + \frac{1}{2}(2 \dist(x_k, \xorigin) + r) r \leq (\dist(x_k, \xorigin) + r) r \leq (\mathscr{R} + r) r
\end{split}
\end{equation}
using $r \sqrt{-\Klo} \geq 8$ for the penultimate inequality.
Therefore, all function values $f_{j, k}(x_k), j \in \tilde{A}_k,$ are contained in an interval centered at $\frac{1}{2} \dist(x_k, \xorigin)^2$ of length at most $2(\mathscr{R} + r) r$.
This implies that all the intervals $I_{j, k}, j \in \tilde{A}_k,$ are contained in an interval $I_k$ centered at $\frac{1}{2} \dist(x_k, \xorigin)^2$ of length at most 
\begin{align*}
2(\mathscr{R} + r) r + 2 w^{-1} a(\Rball^{(k)}) &\leq 2(\mathscr{R} + r) r + 2 w^{-1} a\Big(\frac{r}{8}\Big) \leq 2(\mathscr{R} + r) r + \frac{r}{64 \sqrt{-\Klo}} \\
&\leq 2(\mathscr{R} + 2 r) r \leq 6 \mathscr{R} r
\end{align*}
using that $w \geq 1$ and $r \sqrt{-\Klo} \geq 8$.

Now assume $k=0$ or $\sqrt{-\Klo} \dist(x_k, x_{\ell}) > 4$, where $x_\ell$ is defined in equation~\eqref{closestpoint}.
Using $r\sqrt{-\Klo} \geq 8$ and the definition of $\Rball^{(k)}$~\eqref{Rkball}, this assumption implies $\Rball^{(k)} \geq 1/\sqrt{-\Klo}$.
Therefore,
\begin{align*}
\frac{\mathrm{Length}(I_{j, k})}{\mathrm{Length}(I_k)} &\geq \frac{w^{-1} a(\Rball^{(k)})}{6 \mathscr{R} r} \geq \frac{w^{-1} a(1/\sqrt{-\Klo})}{6 \mathscr{R} r} \geq \frac{1}{2000 w (\mathscr{R} \sqrt{-\Klo}) (r \sqrt{-\Klo})}.
\end{align*}

\subsubsection{Case 2 (for function-value queries)}
Assume $k \geq 1$ and $\sqrt{-\Klo} \dist(x_k, x_\ell) \leq 4$.
Since $r \sqrt{-\Klo} \geq 8$, $\Rball^{(k)} = \dist(x_k, x_\ell) / 4$.
The analysis for this case is similar to Case 2 in Section~\ref{case2}.

The inductive hypothesis implies $\norm{\Hess H_{j, k}(x)} \leq \frac{k}{2 w} \leq \frac{1}{2}$ for all $x \in \calM$,
using $k \leq w$.
So as in equation~\eqref{Case2boundonhessianoffjk}, we know $$\norm{\Hess f_{j, k}(x)} \leq 3 \mathscr{R} \sqrt{-\Klo} + \frac{3}{2}, \quad\quad \forall x \in B(z_j, \max\{\dist(x_k, z_j), \dist(x_\ell, z_j)\}).$$
Additionally, the inductive hypothesis implies $f_{j, k}(x_{\ell}) = f_{\ell}$ and $\grad f_{j, k}(x_{\ell}) = g_{\ell}$ for all $j \in \tilde A_k$.
Therefore, by Lemma~\ref{muHesslemma}
\begin{align*}
\norm{f_{j, k}(x_k) - f_\ell - \inner{g_\ell}{\exp_{x_\ell}^{-1}(x_k)}} \leq \Big(3 \mathscr{R} \sqrt{-\Klo} + \frac{3}{2}\Big) \dist(x_k, x_{\ell})^2, \quad \quad  \text{$\forall j \in \tilde A_k$.}
\end{align*}
We have shown that all the function-values $f_{j, k}(x_k), j \in \tilde{A}_k,$ are contained in an interval centered at $f_\ell + \inner{g_\ell}{\exp_{x_\ell}^{-1}(x_k)}$ with length at most $2 (3 \mathscr{R} \sqrt{-\Klo} + \frac{3}{2}) \dist(x_k, x_{\ell})^2$.

Therefore, all the intervals $I_{j, k}, j \in \tilde A_k$, are contained in an interval $I_k \subseteq \reals$ of length 
\begin{align*}
&2 \Big(3 \mathscr{R} \sqrt{-\Klo} + \frac{3}{2}\Big) \dist(x_k, x_{\ell})^2 + 2 w^{-1} a(\dist(x_k, x_\ell) / 4) \leq 2 \Big(3 \mathscr{R} \sqrt{-\Klo} + 2\Big) \dist(x_k, x_{\ell})^2
\end{align*}
using $w \geq 1$ and
$a(\dist(x_k, x_\ell) / 4) = \frac{(\dist(x_k, x_\ell) / 4)^2}{4(\sqrt{-\Klo} \dist(x_k, x_\ell) + 55)} \leq \frac{\dist(x_k, x_\ell)^2}{64(55)} .$
Therefore,
\begin{align*}
&\frac{\mathrm{Length}(I_{j, k})}{\mathrm{Length}(I_k)} \geq \frac{w^{-1} a(\dist(x_k, x_\ell)/4)}{ 2 \Big(3 \mathscr{R} \sqrt{-\Klo} + 2\Big) \dist(x_k, x_{\ell})^2 }
\geq \frac{w^{-1} \dist(x_k, x_\ell)^2}{8000 \Big(3 \mathscr{R} \sqrt{-\Klo} + 2\Big) \dist(x_k, x_{\ell})^2} \\
&= \frac{3 \mathscr{R} \sqrt{-\Klo} + 2}{24}\cdot\frac{12000 w}{\Big(2000 w \Big(3 \mathscr{R} \sqrt{-\Klo} + 2\Big)\Big)^2} \geq \frac{12000 w}{\Big(2000 w \Big(3 \mathscr{R} \sqrt{-\Klo} + 2\Big)\Big)^2}
\end{align*}
using $\Rball^{(k)} = \dist(x_k, x_\ell) / 4$ and 
$$a(\dist(x_k, x_\ell)/4) = \frac{(\dist(x_k, x_\ell) / 4)^2}{4(\sqrt{-\Klo} \dist(x_k, x_\ell) + 55)} \geq \frac{(\dist(x_k, x_\ell) / 4)^2}{4(4 + 55)} \geq \frac{\dist(x_k, x_\ell)^2}{4000},$$
which itself follows from $\dist(x_k, x_\ell) \leq 4 / \sqrt{-\Klo}$.


\section{Unbounded queries} \label{apppermittingqueriesoutsidedomain}
We now want to extend the lower bound from Theorem~\ref{theoremnoaccelerationinfull}, which holds for algorithms querying only in $B(\xorigin, \mathscr{R})$, to algorithms which can query anywhere.
That is, we want to prove Theorem~\ref{maintheoremunboundedqueries}.

\begin{theorem} \label{maintheoremunboundedqueries}
Let $\calM$ be a Hadamard manifold of dimension $d \geq 2$ which satisfies the ball-packing property~\aref{assumptionNbigWeak} with constants $\tilde{r}, \tilde{c}$ and point $\xorigin \in \calM$.  
Also assume $\calM$ has sectional curvatures in the interval $[\Klo, 0]$ with $\Klo < 0$.
Let  $r \geq \max\big\{\tilde{r}, \frac{8}{\sqrt{-\Klo}}, \frac{4(d+2)}{\tilde{c}}\big\}$.  
Define
${\kappa} = 4 r\sqrt{-\Klo} + 3.$
Let $\calA$ be any deterministic algorithm.

There is a function $f \in \mathcal{F}_{{3 \kappa}, r}^{\xorigin}(\calM)$ with minimizer $x^*$ such that running $\calA$ on $f$ yields iterates $x_0, x_1, x_2, \ldots$ satisfying $\dist(x_{k}, x^*) \geq \frac{r}{4}$ for all $k = 0, 1, \ldots, {T}-1$, where
\begin{align*} 
{T} = \Bigg\lfloor \frac{\tilde{c} (d+2)^{-1} r}{\log\big(2000 \tilde{c} (d+2)^{-1} r (3 \mathscr{R} \sqrt{-\Klo} + 2)\big)}\Bigg\rfloor \geq \Bigg\lfloor \frac{\tilde{c} (d+2)^{-1} r}{\log\big(2\cdot 10^6 \cdot \tilde{c} (d+2)^{-1} r (r \sqrt{-\Klo})^2\big)}\Bigg\rfloor
\end{align*}
with $\mathscr{R} = 2^9 r \log(r \sqrt{-\Klo})^2$.
\cutchunk{Moreover, $f$ is $\frac{1}{6}$-strongly g-convex in $\calM$ and $[2 \mathscr{R} \sqrt{-\Klo} + \frac{3}{2}]$-smooth in the ball $B(\xorigin, \mathscr{R})$.
Outside of the ball $B(\xorigin, \mathscr{R})$, $f$ is simply half the squared distance to $\xorigin$, i.e., $f(x) = \frac{1}{2} \dist(x, \xorigin)^2$ for all $x \not\in B(\xorigin, \mathscr{R})$.}
\end{theorem}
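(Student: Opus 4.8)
The plan is to reduce to the bounded-query result Theorem~\ref{theoremnoaccelerationinfull} by surrounding each of its hard instances with a ``moat'' on which the cost function is an uninformative squared distance, so that an algorithm gains nothing by querying far from $\xorigin$. First I would fix the transition radius $\mathscr{R}=2^9 r\log(r\sqrt{-\Klo})^2$ and a once-and-for-all $C^\infty$ cut-off $\chi\colon\calM\to[0,1]$ depending only on $\dist(\cdot,\xorigin)$, equal to $1$ on $B(\xorigin,\tfrac12\mathscr{R})$ and vanishing outside $B(\xorigin,\mathscr{R})$ (with the usual profile bounds $\norm{\grad\chi}\le c_1/\mathscr{R}$ and $\norm{\Hess\chi}\le c_2\sqrt{-\Klo}/\mathscr{R}$ on the annulus). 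For any $C^\infty$ function $g$ I then set
\[
\Lambda(g)=\tfrac12\dist(\cdot,\xorigin)^2+\chi\cdot\big(g-\tfrac12\dist(\cdot,\xorigin)^2\big),
\]
so that $\Lambda(g)=g$ on $B(\xorigin,\tfrac12\mathscr{R})$ and $\Lambda(g)=\tfrac12\dist(\cdot,\xorigin)^2$ outside $B(\xorigin,\mathscr{R})$. The key feature is that $\Lambda$ is \emph{local}: $\Lambda(g)(x)$ and $\grad\Lambda(g)(x)$ are fixed functions of $x$, $g(x)$ and $\grad g(x)$ alone (using $\grad[\tfrac12\dist(\cdot,\xorigin)^2](x)=-\exp_x^{-1}(\xorigin)$). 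This is the reduction depicted in Figure~\ref{figredboundedqueries}.

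Given a deterministic $\calA$, I would run it inside an auxiliary algorithm $\hat\calA$ that queries only in $B(\xorigin,\mathscr{R})$: whenever $\calA$ issues a query $x$ with $\dist(x,\xorigin)\le\mathscr{R}$, $\hat\calA$ forwards $x$ to its oracle $\calO_g$ and returns $\big(\Lambda(g)(x),\grad\Lambda(g)(x)\big)$ to $\calA$; when $\dist(x,\xorigin)>\mathscr{R}$, $\Lambda(g)(x)=\tfrac12\dist(x,\xorigin)^2$ is independent of $g$, so $\hat\calA$ computes $\big(\tfrac12\dist(x,\xorigin)^2,-\exp_x^{-1}(\xorigin)\big)$ itself. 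Thus $\hat\calA$ is deterministic, queries only in $B(\xorigin,\mathscr{R})$, and by construction running $\calA$ on $\Lambda(g)$ produces exactly the iterates of $\hat\calA$ run on $g$; since $\Lambda$ acts on the pair $(g(x),\grad g(x))$, this also handles function-value queries. Applying Theorem~\ref{theoremnoaccelerationinfull} to $\hat\calA$ with query bound $\mathscr{R}$ supplies $\hat f=\tfrac12\dist(\cdot,x^*)^2+H\in\calF_{\kappa,r}^{\xorigin}(\calM)$ with $x^*\in B(\xorigin,\tfrac34 r)$, with $H$ obeying the bounds~\eqref{usefulinequalitiesforprovingmaintheorem} and compactly supported inside $B(\xorigin,\mathscr{R}+\tfrac r8)$, on which $\hat\calA$ stays at distance $\ge\tfrac r4$ from $x^*$ for the first $T$ steps, $T$ as in~\eqref{rangeforkinfull}. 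I then take $f=\Lambda(\hat f)$: since $\chi\equiv 1$ on $B(\xorigin,\tfrac12\mathscr{R})\supseteq B(\xorigin,r)$ we have $f=\hat f$ there, so $f$ has minimizer $x^*\in B(\xorigin,\tfrac34 r)$ and inherits $\hat f$'s smoothness bound $2r\sqrt{-\Klo}+\tfrac32$ in $B(\xorigin,r)$, running $\calA$ on $f$ reproduces $\hat\calA$ on $\hat f$, and every iterate is $\ge\tfrac r4$ from $x^*$---by Theorem~\ref{theoremnoaccelerationinfull} for iterates in $B(\xorigin,\mathscr{R})$, and trivially otherwise since $\mathscr{R}-\tfrac34 r>\tfrac r4$.

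It remains to verify $f=\Lambda(\hat f)\in\calF_{3\kappa,r}^{\xorigin}(\calM)$, i.e.\ that $f$ is globally strongly g-convex (with constant $\tfrac16$, say, so that with the above smoothness bound $2r\sqrt{-\Klo}+\tfrac32=\tfrac16\cdot 3\kappa$ in $B(\xorigin,r)$ it lands in the class); this is the heart of the matter. Writing $f=\tfrac12\dist(\cdot,\xorigin)^2+\chi(\Phi+H)$ with $\Phi=\tfrac12\dist(\cdot,x^*)^2-\tfrac12\dist(\cdot,\xorigin)^2$, strong g-convexity is immediate on $B(\xorigin,\tfrac12\mathscr{R})$ (there $f=\hat f$) and outside $B(\xorigin,\mathscr{R})$, so everything happens on the annulus $\tfrac12\mathscr{R}\le\dist(\cdot,\xorigin)\le\mathscr{R}$, where I would expand
\[
\Hess f=\Hess[\tfrac12\dist(\cdot,\xorigin)^2]+\chi\,\Hess(\Phi+H)+2\,\grad\chi\odot\grad(\Phi+H)+(\Phi+H)\,\Hess\chi .
\]
The first two terms recombine as $(1-\chi)\Hess[\tfrac12\dist(\cdot,\xorigin)^2]+\chi\Hess[\tfrac12\dist(\cdot,x^*)^2]+\chi\Hess H\succeq(1-\chi)I+\chi I-\tfrac12 I\succeq\tfrac12 I$, so the only danger lies in the two ``cross'' terms. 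The a priori bounds now play their role: $\norm{\grad(\Phi+H)}\le\dist(x^*,\xorigin)+\norm{\grad H}\le\tfrac34 r+\tfrac1{2\sqrt{-\Klo}}$ by Proposition~\ref{TopogonovEuclidean} and~\eqref{usefulinequalitiesforprovingmaintheorem}, while $\abs{\Phi}\le\dist(x^*,\xorigin)\dist(\cdot,\xorigin)+\tfrac12\dist(x^*,\xorigin)^2$ and $\abs{H}\le\tfrac r{64\sqrt{-\Klo}}$; combining these with the profile bounds on $\chi$, the Hessian-comparison control of $\Hess\dist(\cdot,\xorigin)$ around Lemma~\ref{Lipschitzgradsquareddistance}, and the fact that, at distances $\gg r$, $\Hess[\tfrac12\dist(\cdot,\xorigin)^2]$ already carries a large intrinsic Hessian (and the Hessians of $\tfrac12\dist(\cdot,x^*)^2$ and $\tfrac12\dist(\cdot,\xorigin)^2$ nearly cancel), one shows the cross terms are dominated, provided $\mathscr{R}$ is at least a suitable polynomial in $r$ and $r\sqrt{-\Klo}$. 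This gives $\Hess f\succeq\tfrac16 I$ globally, and incidentally keeps $f$ within smoothness $2\mathscr{R}\sqrt{-\Klo}+\tfrac32$ in $B(\xorigin,\mathscr{R})$.

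The hard part will be this last Hessian estimate on the annulus, together with the calibration of $\mathscr{R}$. One must push the transition far enough out---to distances of order at least $\tfrac12\mathscr{R}\gg r$---that the curvature of $\tfrac12\dist(\cdot,\xorigin)^2$ overpowers the negative curvature created by tapering off the centre-shift $\Phi$, whose amplitude grows like $\dist(\cdot,\xorigin)\cdot\dist(x^*,\xorigin)$ and is thus of order $r\mathscr{R}$ on the annulus; at the same time $\mathscr{R}$ must stay only polynomially larger than $r\sqrt{-\Klo}$, so that $\log\big(3\mathscr{R}\sqrt{-\Klo}\big)=O\big(\log(r\sqrt{-\Klo})\big)$ and the query count $T$ of~\eqref{rangeforkinfull} remains $\tilde\Omega(r)=\tilde\Omega(\kappa)$---indeed, with $\mathscr{R}=2^9 r\log(r\sqrt{-\Klo})^2$ one has $3\mathscr{R}\sqrt{-\Klo}+2\le 10^3 (r\sqrt{-\Klo})^2$ using $\log(t)^2\le t$ for $t\ge 8$, which yields the stated lower bound on $T$. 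The choice $\mathscr{R}=2^9 r\log(r\sqrt{-\Klo})^2$ is what reconciles these two requirements while still leaving room for a genuinely $C^\infty$ transition with controlled first and second derivatives.
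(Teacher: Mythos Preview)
Your reduction is exactly the paper's: wrap the given algorithm inside a bounded-query algorithm, invoke Theorem~\ref{theoremnoaccelerationinfull} to obtain a hard $\hat f$, and output the blend that equals $\hat f$ near $\xorigin$ and equals $\tfrac12\dist(\cdot,\xorigin)^2$ outside $B(\xorigin,\mathscr{R})$. The paper makes only cosmetic choices differently---its cutoff $s_{r,\mathscr{R}}$ is a function of $\mathscr{D}=\tfrac12\dist(\cdot,\xorigin)^2$ rather than of $\dist$, and the transition annulus is $[r,\mathscr{R}]$ rather than $[\tfrac12\mathscr{R},\mathscr{R}]$---and arrives at the same Hessian expansion you wrote.

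One point of your sketch needs sharpening, because it is precisely where the value $\mathscr{R}=2^9 r\log(r\sqrt{-\Klo})^2$ is forced. Your profile bound $\norm{\Hess\chi}\le c_2\sqrt{-\Klo}/\mathscr{R}$ together with $|\Phi+H|\lesssim r\mathscr{R}$ gives a cross term $(\Phi+H)\Hess\chi$ of operator norm $\Theta(r\sqrt{-\Klo})$; this is \emph{not} dominated by the $\tfrac12 I$ you retained from the first two terms, and it does not shrink as $\mathscr{R}$ grows. What actually happens---and what the paper exploits---is that the large tangential part of $(\Phi+H)\Hess\chi$ and the large tangential part of $(1-\chi)\Hess\mathscr{D}$ are both scalar multiples of the \emph{same} operator, so they combine into a single coefficient times $\langle v,\Hess\mathscr{D}(x)\,v\rangle$; in the paper's parametrization this coefficient is $1-t(\tau)-\tfrac{c r}{\mathscr{R}}\,|t'(\tau)|$. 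Near the inner edge of the annulus this coefficient can be slightly negative while $\langle v,\Hess\mathscr{D}\,v\rangle$ can be as large as $2\mathscr{R}\sqrt{-\Klo}$, so one needs the transition-profile estimate $1-t(\tau)-\tfrac{cr}{\mathscr{R}}|t'(\tau)|\ge -2e^{-\sqrt{\mathscr{R}/(cr)}}$ proved in Appendix~\ref{technicalfactsabouttforunboundedqueries}. The choice $\mathscr{R}=2^9 r\log(r\sqrt{-\Klo})^2$ is exactly what makes $e^{-\sqrt{\mathscr{R}/(cr)}}\cdot\mathscr{R}\sqrt{-\Klo}$ a small absolute constant. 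So your instinct that ``the large intrinsic Hessian of $\tfrac12\dist^2$ overpowers the cross terms'' is correct, but the mechanism is a signed cancellation in the scalar coefficient of $\Hess\mathscr{D}$, not a crude comparison of operator norms; once you lower-bound $(1-\chi)\Hess\mathscr{D}$ by $(1-\chi)I$ as in your display you have already discarded the term you need.
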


To prove Theorem~\ref{maintheoremunboundedqueries}, the high-level idea is to modify all hard instances $f$ from Theorem~\ref{theoremnoaccelerationinfull} so that $f(x) = \frac{1}{2} \dist(x, \xorigin)^2$ for $x \not \in B(\xorigin, \mathscr{R})$ (recall $\mathscr{R} \geq r$).  
This way, the algorithm gains no information by querying outside the ball $B(\xorigin, \mathscr{R})$.
On the other hand, we still want the hard functions $f$ to remain untouched in the ball $B(\xorigin, r)$.
In the region between radii $r$ and $\mathscr{R}$, we smoothly interpolate between these two choices of functions.
We show that we can choose $\mathscr{R}$ appropriately so that the lower bound $\tilde{\Omega}(r)$ still holds and the modified functions are still strongly g-convex.
Technically, we do this via a reduction, which is depicted in Figure~\ref{figredboundedqueries}.
This argument was inspired by~\citep[Sec.~5.2]{carmon2017lower}.

\subsection{Proof of Theorem~\ref{maintheoremunboundedqueries}: a reduction from Theorem~\ref{theoremnoaccelerationinfull}} \label{secpermittingqueriesoutsidedomain}
Define $\mathscr{D} \colon \calM \rightarrow \reals$ by $\mathscr{D}(x) = \frac{1}{2} \dist(x, \xorigin)^2$.
Given any $f \colon \calM \rightarrow \reals$ (think from Theorem~\ref{theoremnoaccelerationinfull}), define the function $f_{r, \mathscr{R}} \colon \calM \rightarrow \reals$ by
\begin{equation} \label{eqforfrR}
\begin{split}
f_{r, \mathscr{R}}(x) =& s_{r, \mathscr{R}}\big(\mathscr{D}(x)\big) f(x) + \Big[1 - s_{r, \mathscr{R}}\big(\mathscr{D}(x)\big)\Big] \mathscr{D}(x)
\end{split}
\end{equation}
where $s_{r, \mathscr{R}} \colon \reals \rightarrow \reals$ is a $C^{\infty}$ function which is $1$ on $(-\infty, \frac{1}{2}r^2]$ and $0$ on $[\frac{1}{2} \mathscr{R}^2, \infty)$.
More precisely, following~\citet[Lem.~2.20,~2.21]{lee2012smoothmanifolds} we define the $C^\infty$ function $t \colon \reals \rightarrow \reals$ by
$$\quad \quad t(\tau) = \begin{cases} 
      1 & \text{for } \tau \in (-\infty, 0]; \\
      \frac{e^{-\frac{1}{1-\tau}}}{e^{-\frac{1}{1-\tau}} + e^{-\frac{1}{\tau}}} & \text{for } \tau \in (0,1); \\
      0 & \text{for } \tau \in [1, \infty)
\end{cases}$$
and define $s_{r, \mathscr{R}} \colon \reals \rightarrow \reals$ by
$$s_{r, \mathscr{R}}(\mathscr{D}) = t\bigg(\frac{\mathscr{D} -\frac{1}{2} r^2}{\frac{1}{2} \mathscr{R}^2 - \frac{1}{2} r^2}\bigg), \quad \quad \text{ for all $\mathscr{D} \in \reals.$}$$
In Appendix~\ref{secwhereabbreviatesrR}, we show that if we set $\mathscr{R} = 2^9 r \log(r \sqrt{-\Klo})^2$ and if $f \in \mathcal{F}_{\kappa, r}^{\xorigin}(\calM)$ is from Theorem~\ref{theoremnoaccelerationinfull}, then $f_{r, \mathscr{R}} \in \mathcal{F}_{3 \kappa, r}^{\xorigin}(\calM)$.

\begin{definition} \label{oracledefinition}
The (first-order) oracle for a differentiable function $f \colon \calM \rightarrow \reals$ is the map $\calO_f \colon \calM \rightarrow \reals \times \T \calM$ given by $\calO_f(x) = (f(x), (x, \grad f(x))).$
\end{definition}

Given the oracle $\calO_f$ of any function $f$, we can use $\calO_f$ to emulate the oracle $\calO_{f_{r, \mathscr{R}}}$ using equation~\eqref{eqforfrR}, and the following formula for $\grad f_{r, \mathscr{R}}$:
\begin{align*}
\grad f_{r, \mathscr{R}}(x) =
\begin{cases} 
      -\exp_x^{-1}(\xorigin) & \text{if } \mathrm{d}(x, \xorigin) > \mathscr{R}; \\
      \grad f(x) & \text{if }\mathrm{d}(x, \xorigin) \leq r; \\
      -s_{r, \mathscr{R}}'\big(\mathscr{D}(x)\big) \big(f(x) - \mathscr{D}(x)\big) \exp_x^{-1}(\xorigin) \\ \quad- \big(1 - s_{r, \mathscr{R}}\big(\mathscr{D}(x)\big)\big) \exp_x^{-1}(\xorigin) \\ \quad + s_{r, \mathscr{R}}\big(\mathscr{D}(x)\big)\grad f(x) & \text{otherwise}. 
\end{cases}
\end{align*}
See Appendix~\ref{secwhereabbreviatesrR} for the derivation of this formula for $\grad f_{r, \mathscr{R}}$.

To prove a lower bound for an algorithm $\calB$ querying anywhere, we make $\calB$ interact with the oracle $\calO_{f_{r, \mathscr{R}}}$ (which we simulate using $\calO_f$).
This implicitly defines an algorithm $\calA$ which interacts with $\calO_f$---see Figure~\ref{figredboundedqueries}.
Explicitly, the algorithm $\calA$ internally runs the algorithm $\calB$ as a subroutine as follows:
\begin{itemize}
\item if $\calB$ outputs $y_k \not \in B(\xorigin, \mathscr{R})$, $\calA$ does not query the oracle $\calO_f$, but simply passes 
$$(f_{r, \mathscr{R}}(y_k), \grad f_{r, \mathscr{R}}(y_k)) = \Big(\frac{1}{2} \dist(y_k, \xorigin)^2, -\exp_{y_k}^{-1}(\xorigin)\Big)$$
to $\calB$; this corresponds to path 1'-2'-3' in Figure~\ref{figredboundedqueries};

\item if $\calB$ outputs $y_k \in B(\xorigin, \mathscr{R})$, $\calA$ queries $\calO_f$ at $x_i = y_k$, receives $(f(x_i), \grad f(x_i))$ from $\calO_f$, and passes $(f_{r, \mathscr{R}}(x_i), \grad f_{r, \mathscr{R}}(x_i))$ to $\calB$ (which it computes using \\ $(f(x_i), \grad f(x_i))$); this corresponds to path 1-2-3-4-5 in Figure~\ref{figredboundedqueries}.
%
\end{itemize}
Inside of $\calA$, the algorithm $\calB$ outputs the sequence $y_0, y_1, y_2, \ldots$.
The algorithm $\calA$ produces the sequence of queries ${x}_0 = y_{k_0}, {x}_1 = y_{k_1}, \ldots \in B(\xorigin, \mathscr{R})$ where $0 \leq k_0 < k_1 < k_2 < \ldots$ are integers.  Let $\calK_T = \{k_0, k_1, \ldots, k_{T-1}\}$.
\begin{figure}[h]
    \centering
    \includegraphics[width=1\textwidth]{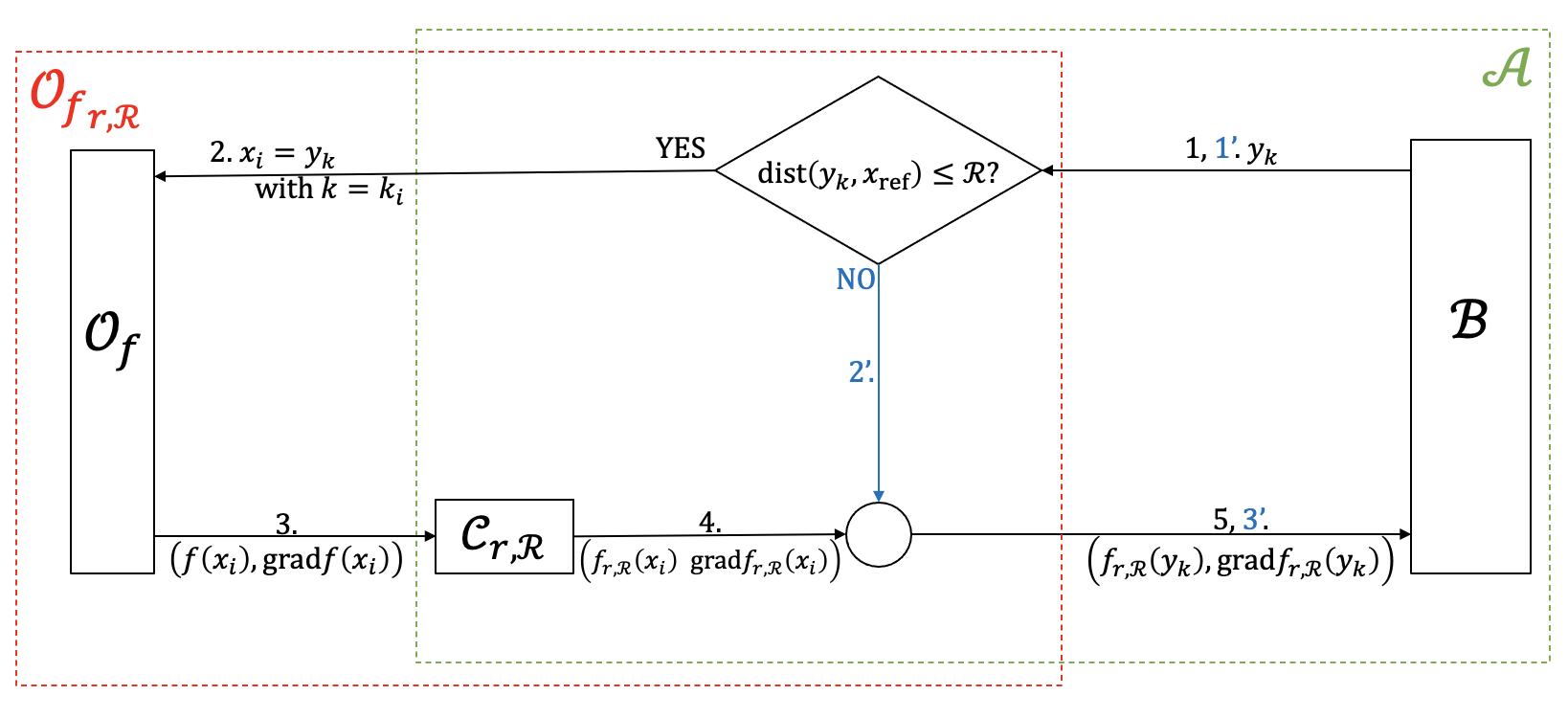}
    \caption{A diagram of the reduction used in Section~\ref{secpermittingqueriesoutsidedomain}.  
    The algorithm $\calA$ first internally runs $\calB$ to get an iterate $y_k$.  Then, depending on the distance between $y_k$ and $\xorigin$, $\calA$ either queries the oracle $\calO_f$ (path 1-2-3-4-5) or does not query the oracle (path 1'-2'-3').
    The box $\mathscr{C}_{r, \mathscr{R}}$ represents a map which when given a pair $(f(x), \grad f(x))$, outputs $(f_{r, \mathscr{R}}(x), \grad f_{r, \mathscr{R}}(x))$.  
    At step 2', $\calA$ computes $(\frac{1}{2} \dist(y_k, \xorigin)^2, -\exp_{y_k}(\xorigin))$ (not shown for clarity) which it then returns to $\calB$.}
    \label{figredboundedqueries}
\end{figure}

By design, algorithm $\calA$ makes queries only in $B(\xorigin, \mathscr{R})$.
Therefore, we can apply Theorem~\ref{theoremnoaccelerationinfull} to $\calA$.
That theorem implies there is a function $f \in \mathcal{F}_{\kappa, r}^{\xorigin}(\calM)$ with minimizer $x^*$ for which running $\calA$ on $f$ yields ${x}_0 = y_{k_0}, {x}_1 = y_{k_1}, \ldots, x_{T-1} = y_{k_{T-1}} \in B(\xorigin, \mathscr{R})$ satisfying $\dist(x^*, x_{k}) \geq \frac{r}{4}$ for all $k =0, 1, \ldots T-1$.
Here, $\kappa = 4 r \sqrt{-\Klo}+3$, and $T$ is given by Theorem~\ref{theoremnoaccelerationinfull} with $\mathscr{R} = 2^9 r \log(r \sqrt{-\Klo})^2$, that is,
$$T = \Bigg\lfloor \frac{\tilde{c} (d+2)^{-1} r}{\log\big(2000 \tilde{c} (d+2)^{-1} r (3 \mathscr{R} \sqrt{-\Klo} + 2)\big)}\Bigg\rfloor 
\geq \Bigg\lfloor \frac{\tilde{c} (d+2)^{-1} r}{\log\big(2\cdot 10^6 \cdot \tilde{c} (d+2)^{-1} r (r \sqrt{-\Klo})^2\big)}\Bigg\rfloor,$$
using that $r \sqrt{-\Klo} \geq 8$.  
In other words, $\text{$\dist(x^*, y_{k}) \geq \frac{r}{4}$ for all $k  \in \mathcal{K}_T.$}$

On the other hand, we know that $\text{$\dist(\xorigin, y_{k}) \geq \mathscr{R}$ for all $k \in \{0, 1, \ldots, T-1\} \setminus \calK_T$.}$  
Therefore, using that $\mathscr{R} \geq r$ and $x^* \in B(\xorigin, \frac{3}{4} r)$, 
$$\text{$\dist(x^*, y_{k}) \geq \frac{r}{4}$ for all $k \in \{0, 1, \ldots, T-1\} \setminus \calK_T$.}$$
We conclude $\dist(x^*, y_{k}) \geq \frac{r}{4}$ for all $k = 0, 1, ..., T-1$.
Finally, observe that (by our construction of $\calA$) if we run $\calB$ on the function $f_{r, \mathscr{R}}$ then we get exactly the sequence $y_0, y_1, \ldots, y_{T-1}$.  
Since $f_{r, \mathscr{R}} \in \mathcal{F}_{3 \kappa, r}^{\xorigin}(\calM)$ if $\mathscr{R} = 2^9 r \log(r \sqrt{-\Klo})^2$, as stated above, this proves Theorem~\ref{maintheoremunboundedqueries}.

\subsection{Verifying $f_{r, \mathscr{R}}$ is in the function class} \label{secwhereabbreviatesrR}
In this section, we abbreviate $s = s_{r, \mathscr{R}}$.
To finish the proof of Theorem~\ref{maintheoremunboundedqueries} (from Section~\ref{secpermittingqueriesoutsidedomain}), it remains to show that if $f \in \mathcal{F}_{\kappa, r}^{\xorigin}(\calM)$ is a hard function from Theorem~\ref{theoremnoaccelerationinfull}, then $f_{r, \mathscr{R}} \in \mathcal{F}_{3 \kappa, r}^{\xorigin}(\calM)$ for a suitable choice of $\mathscr{R}$.  
To do this, we use that a hard function $f$ from Theorem~\ref{theoremnoaccelerationinfull} is $\frac{1}{2}$-strongly g-convex in $\calM$ and $[2 r \sqrt{-\Klo} + \frac{3}{2}]$-smooth in $B(\xorigin, r)$.
We also use that $f$ has the form
$$f(x) = \frac{1}{2} \dist(x, x^*)^2 + H(x), \quad \text{with } x^* \in B\Big(\xorigin, \frac{3}{4}r\Big),$$
and, from inequalities~\eqref{usefulinequalitiesforprovingmaintheorem}, for all $x \in \calM$ we have
\begin{itemize}
\item $\norm{\grad H(x)} \leq \frac{1}{2 \sqrt{-\Klo}} \leq \frac{r}{16}$ (since we assume $r \sqrt{-\Klo} \geq 8$); and
\item $\left|H(x)\right| \leq \frac{r}{64 \sqrt{-\Klo}} \leq \frac{r^2}{512}$ (again since we assume $r \sqrt{-\Klo} \geq 8$).
\end{itemize}

Recall Definition~\ref{deffirstfctclass} for the function classes $\mathcal{F}_{\kappa, r}^{\xorigin}(\calM)$ and $\mathcal{F}_{3 \kappa, r}^{\xorigin}(\calM)$.  Since $f(x) = f_{r, \mathscr{R}}(x)$ for all $x \in B(\xorigin, r)$, it suffices to show that $\Hess f_{r, \mathscr{R}}(x) \succeq \frac{1}{6} I$ for all $x \in \calM$.
We just need to check that this is true when $r \leq \dist(x, \xorigin) \leq \mathscr{R}$.
Let's compute $\grad f_{r, \mathscr{R}}(x)$ and $\Hess f_{r, \mathscr{R}}(x)$ when $r \leq \dist(x, \xorigin) \leq \mathscr{R}$.  

Let $\gamma(t)$ be a geodesic with $\gamma(0) = x, \gamma'(0) = v$ and $\norm{v} = 1$.  
For the moment, define $\mathscr{D}(x) = \frac{1}{2} \dist(x, \xorigin)^2$, keeping in mind that $\mathscr{D} \colon \calM \rightarrow \reals$ depends on $\xorigin$.
Additionally, define 
$\tau(x) = \frac{\mathscr{D}(x) - \frac{1}{2} r^2}{\frac{1}{2} \mathscr{R}^2 - \frac{1}{2} r^2}$
so that 
$$s(\mathscr{D}(x)) = t(\tau(x)), \quad s'(\mathscr{D}(x)) = \frac{1}{\frac{1}{2} \mathscr{R}^2 - \frac{1}{2} r^2} t'(\tau(x)), \quad s''(\mathscr{D}(x)) = \frac{1}{(\frac{1}{2} \mathscr{R}^2 - \frac{1}{2} r^2)^2} t''(\tau(x)).$$

For the gradient, we have:
\begin{align*}
\inner{v}{\grad f_{r, \mathscr{R}}(x)} =& \frac{d}{dt}\Big[f_{r, \mathscr{R}}(\gamma(t))\Big]_{t=0} = \frac{d}{dt}\Big[s(\mathscr{D}(\gamma(t))) f(\gamma(t)) + [1 - s(\mathscr{D}(\gamma(t)))] \mathscr{D}(\gamma(t))\Big]_{t=0} \\
=& \frac{d}{dt}\Big[s(\mathscr{D}(\gamma(t)))\Big]_{t=0} (f(x) - \mathscr{D}(x)) + s(\mathscr{D}(x)) \frac{d}{dt}\Big[f(\gamma(t))\Big]_{t=0} \\
&+ [1 - s(\mathscr{D}(x))] \frac{d}{dt}\Big[\mathscr{D}(\gamma(t))\Big]_{t=0} \\
=& s'(\mathscr{D}(x)) \inner{v}{-\exp_x^{-1}(\xorigin)} (f(x) - \mathscr{D}(x)) + s(\mathscr{D}(x)) \inner{v}{\grad f(x)} \\
&+ [1 - s(\mathscr{D}(x))] \inner{v}{-\exp_x^{-1}(\xorigin)}.
\end{align*}

For the Hessian, we have:
\begin{align*}
\inner{v}{\Hess f_{r, \mathscr{R}}(x) v} &= \frac{d^2}{dt^2}\Big[f_{r, \mathscr{R}}(\gamma(t))\Big]_{t=0}
\\ &= \frac{d^2}{dt^2}\Big[s(\mathscr{D}(\gamma(t))) f(\gamma(t)) + [1 - s(\mathscr{D}(\gamma(t)))] \mathscr{D}(\gamma(t))\Big]_{t=0} \\
&=  \frac{d^2}{dt^2}\Big[s(\mathscr{D}(\gamma(t)))\Big]_{t=0} f(x) + 2 \frac{d}{dt}\Big[s(\mathscr{D}(\gamma(t)))\Big]_{t=0} \frac{d}{dt}\Big[f(\gamma(t))\Big]_{t=0} \\
&\quad + s(\mathscr{D}(x)) \frac{d^2}{dt^2}\Big[f(\gamma(t))\Big]_{t=0} - \frac{d^2}{dt^2}\Big[s(\mathscr{D}(\gamma(t)))\Big]_{t=0} \mathscr{D}(x) \\
&\quad - \frac{d}{dt}\Big[s(\mathscr{D}(\gamma(t)))\Big]_{t=0} \frac{d}{dt}\Big[\mathscr{D}(\gamma(t))\Big]_{t=0} + [1 - s(\mathscr{D}(x))] \frac{d^2}{dt^2}\Big[\mathscr{D}(\gamma(t))\Big]_{t=0}.
\end{align*}
Further simplifying yields:
\begin{align*}
\inner{v}{\Hess f_{r, \mathscr{R}}(x) v} &= s(\mathscr{D}(x)) \frac{d^2}{dt^2}\Big[f(\gamma(t))\Big]_{t=0} + [1 - s(\mathscr{D}(x))] \frac{d^2}{dt^2}\Big[\mathscr{D}(\gamma(t))\Big]_{t=0} \\
&\quad+  \frac{d^2}{dt^2}\Big[s(\mathscr{D}(\gamma(t)))\Big]_{t=0} (f(x) - \mathscr{D}(x)) \\
&\quad + 2 \frac{d}{dt}\Big[s(\mathscr{D}(\gamma(t)))\Big]_{t=0} \bigg(\frac{d}{dt}\Big[f(\gamma(t))\Big]_{t=0} - \frac{d}{dt}\Big[\mathscr{D}(\gamma(t))\Big]_{t=0}\bigg).
\end{align*}
Using that $f(x) = \frac{1}{2} \dist(x, x^*)^2 + H(x)$,
\begin{align*}
&\inner{v}{\Hess f_{r, \mathscr{R}}(x) v} = s(\mathscr{D}(x)) \inner{v}{\Hess f(x) v} + [1 - s(\mathscr{D}(x))] \inner{v}{\Hess \mathscr{D}(x)v} \\
&\quad+ \frac{d^2}{dt^2}\Big[s(\mathscr{D}(\gamma(t)))\Big]_{t=0} \bigg(\frac{1}{2}\dist(x, x^*)^2 + H(x) - \mathscr{D}(x)\bigg) \\
&\quad- 2 \frac{d}{dt}\Big[s(\mathscr{D}(\gamma(t)))\Big]_{t=0} \bigg(\inner{v}{\exp_{x}^{-1}(x^*) - \exp_{x}^{-1}(\xorigin)} - \inner{v}{\grad H(x)}\bigg).
\end{align*}
Rearranging yields
\begin{align*}
&\inner{v}{\Hess f_{r, \mathscr{R}}(x) v} = t(\tau(x)) \inner{v}{\Hess f(x) v} + [1 - t(\tau(x))] \inner{v}{\Hess \mathscr{D}(x)v} \\
&\quad+ \frac{d^2}{dt^2}\Big[s(\mathscr{D}(\gamma(t)))\Big]_{t=0} \frac{1}{2} \bigg(\dist(x, x^*) - \dist(x, \xorigin)\bigg)\bigg(\dist(x, x^*) + \dist(x, \xorigin)\bigg) \\
&\quad- 2 \frac{d}{dt}\Big[s(\mathscr{D}(\gamma(t)))\Big]_{t=0} \inner{v}{\exp_{x}^{-1}(x^*) - \exp_{x}^{-1}(\xorigin)} \\
&\quad+ \frac{d^2}{dt^2}\Big[s(\mathscr{D}(\gamma(t)))\Big]_{t=0} H(x) + 2 \frac{d}{dt}\Big[s(\mathscr{D}(\gamma(t)))\Big]_{t=0} \inner{v}{\grad H(x)}.
\end{align*}
Using $r \leq \dist(x, \xorigin) \leq \mathscr{R}$, we have $\left|\inner{v}{\exp_x^{-1}(\xorigin)}\right| \leq \dist(x, \xorigin) \leq \mathscr{R}.$
Using Proposition~\ref{TopogonovEuclidean},
$$\left|\inner{v}{\exp_x^{-1}(x^*) - \exp_x^{-1}(\xorigin)}\right| \leq \norm{\exp_x^{-1}(x^*) - \exp_x^{-1}(\xorigin)} \leq \dist(x^*, \xorigin) \leq r.$$
Using the triangle inequality,
\begin{align*}
&\left|\bigg(\dist(x, x^*) - \dist(x, \xorigin)\bigg)\bigg(\dist(x, x^*) + \dist(x, \xorigin)\bigg)\right| \\
&\leq \dist(x^*, \xorigin) \bigg(\dist(\xorigin, x^*) + 2\dist(x, \xorigin)\bigg) \leq r(r + 2 \mathscr{R}) \leq 3 r \mathscr{R}.
\end{align*}
Therefore, we have
\begin{align*}
\inner{v}{\Hess f_{r, \mathscr{R}}(x) v} &\geq t(\tau(x)) \inner{v}{\Hess f(x) v} + [1 - t(\tau(x))] \inner{v}{\Hess \mathscr{D}(x)v} \\
&\quad- \frac{3}{2} r \mathscr{R} \left|\frac{d^2}{dt^2}\Big[s(\mathscr{D}(\gamma(t)))\Big]_{t=0}\right| - 2 r \left|\frac{d}{dt}\Big[s(\mathscr{D}(\gamma(t)))\Big]_{t=0}\right| \\
&\quad- \frac{r^2}{512} \left|\frac{d^2}{dt^2}\Big[s(\mathscr{D}(\gamma(t)))\Big]_{t=0}\right| - \frac{r}{8} \left|\frac{d}{dt}\Big[s(\mathscr{D}(\gamma(t)))\Big]_{t=0}\right| \\
&\geq t(\tau(x)) \inner{v}{\Hess f(x) v} + [1 - t(\tau(x))] \inner{v}{\Hess \mathscr{D}(x)v} \\
&\quad-\frac{8}{5} r \mathscr{R} \left|\frac{d^2}{dt^2}\Big[s(\mathscr{D}(\gamma(t)))\Big]_{t=0}\right| - 3 r \left|\frac{d}{dt}\Big[s(\mathscr{D}(\gamma(t)))\Big]_{t=0}\right|.
\end{align*}
Additionally, we have
\begin{align*}
\left|\frac{d}{dt}\Big[s(\mathscr{D}(\gamma(t)))\Big]_{t=0}\right| &= \left|s'(\mathscr{D}(x)) \frac{d}{dt}\Big[\mathscr{D}(\gamma(t))\Big]_{t=0}\right|  = \frac{1}{\frac{1}{2} \mathscr{R}^2 - \frac{1}{2} r^2} \left|t'(\tau(x)) \inner{v}{-\exp_x^{-1}(\xorigin)}\right| \\&\leq \frac{\mathscr{R}}{\frac{1}{2} \mathscr{R}^2 - \frac{1}{2} r^2} \left|t'(\tau(x)) \right|,
\end{align*}
and
\begin{align*}
&\left|\frac{d^2}{dt^2}\Big[s(\mathscr{D}(\gamma(t)))\Big]_{t=0}\right| = \left|s''(\mathscr{D}(x)) \Bigg(\frac{d}{dt}\Big[\mathscr{D}(\gamma(t))\Big]_{t=0}\Bigg)^2 + s'(\mathscr{D}(x)) \frac{d^2}{dt^2}\Big[\mathscr{D}(\gamma(t))\Big]_{t=0}\right| \\
&\leq \frac{\mathscr{R}^2}{(\frac{1}{2} \mathscr{R}^2 - \frac{1}{2} r^2)^2} \left|t''(\tau(x))\right| + \frac{1}{\frac{1}{2} \mathscr{R}^2 - \frac{1}{2} r^2} \left|t'(\tau(x))\right| \inner{v}{\Hess \mathscr{D}(x) v}.
\end{align*}

In the following, we set $\mathscr{R} = 2^{9} r \log(r \sqrt{-\Klo})^2)$. 
This choice of $\mathscr{R}$ and $r\sqrt{-\Klo} \geq 8$ implies
$\mathscr{R} = 2^{9} r \log(r \sqrt{-\Klo})^2) \geq 2^{9} \log(8)^2 r \geq 2^{11} r.$
Since $\mathscr{R} \geq 2^{11} r$, we conclude
\begin{align*}
\inner{v}{\Hess f_{r, \mathscr{R}}(x) v} &\geq t(\tau(x)) \inner{v}{\Hess f(x) v} \\
&\quad+ \bigg[1 - t(\tau(x)) - \frac{2 r \mathscr{R}}{\frac{1}{2} \mathscr{R}^2 - \frac{1}{2} r^2} \left|t'(\tau(x))\right| \bigg] \inner{v}{\Hess \mathscr{D}(x)v} \\
&\quad- \frac{\frac{8}{5} r \mathscr{R}^3}{(\frac{1}{2} \mathscr{R}^2 - \frac{1}{2} r^2)^2} \left|t''(\tau(x))\right| - \frac{3 r \mathscr{R}}{\frac{1}{2} \mathscr{R}^2 - \frac{1}{2} r^2} \left|t'(\tau(x)) \right| \\
&\geq t(\tau(x)) \inner{v}{\Hess f(x) v} \\
&\quad + \bigg[1 - t(\tau(x)) - \frac{2 r \mathscr{R}}{\frac{1}{2}(1-2^{-22}) \mathscr{R}^2} \left|t'(\tau(x))\right| \bigg] \inner{v}{\Hess \mathscr{D}(x)v} \\
&\quad- \frac{\frac{8}{5} r \mathscr{R}^3}{(\frac{1}{2}(1-2^{-22}) \mathscr{R}^2)^2} \left|t''(\tau(x))\right| - \frac{3 r \mathscr{R}}{\frac{1}{2}(1-2^{-22})\mathscr{R}^2} \left|t'(\tau(x)) \right| \\
&= t(\tau(x)) \inner{v}{\Hess f(x) v} \\
&\quad+ \bigg[1 - t(\tau(x)) - \frac{4.5 r}{\mathscr{R}} \left|t'(\tau(x))\right| \bigg] \inner{v}{\Hess \mathscr{D}(x)v} \\
&\quad- \frac{\frac{32}{5} r}{(1-2^{-22})^2 \mathscr{R}} \left|t''(\tau(x))\right| - \frac{7 r}{\mathscr{R}} \left|t'(\tau(x)) \right|.
\end{align*}
One can check that $-2 \leq t'(\tau) \leq 0$ and $\left|t(\tau)''(0)\right| \leq 16$ for all $\tau \in (0,1)$.
So using $\mathscr{R} \geq 2^{11} r$,
\begin{align*}
\inner{v}{\Hess f_{r, \mathscr{R}}(x) v} &\geq t(\tau(x)) \inner{v}{\Hess f(x) v} + \bigg[1 - t(\tau(x)) - \frac{4.5 r}{\mathscr{R}} \left|t'(\tau(x))\right| \bigg] \inner{v}{\Hess \mathscr{D}(x)v} - \frac{1}{16}.
\end{align*}
Next we make two observations about the univariate function $t$:
\begin{itemize}
\item if $\tau \in [\frac{1}{2}, 1)$, then $1 - t(\tau) - \frac{4.5}{2^{11}} \left|t'(\tau)\right| \geq \frac{1}{2} - \frac{4.5}{2^{10}}$;
\item if $\tau \in (0, \frac{1}{2})$ and $\mathscr{R} \geq 9\cdot 4.5 r$, then $1 - t(\tau) - \frac{4.5 r}{\mathscr{R}} \left|t'(\tau)\right| \geq -2 e^{-\sqrt{\frac{\mathscr{R}}{9 r}}}$.  We prove this fact in the next Section~\ref{technicalfactsabouttforunboundedqueries}.
\end{itemize}
Using these facts, if $\tau(x) \in [\frac{1}{2}, 1)$ then (using $\inner{v}{\Hess \mathscr{D}(x) v} \geq 1$ and $\mathscr{R} \geq 2^{11} r$)
\begin{align*}
\inner{v}{\Hess f_{r, \mathscr{R}}(x) v} &\geq \frac{1}{2} - \frac{4.5}{2^{10}} - \frac{1}{16} \geq \frac{1}{6}.
\end{align*}
If $\tau(x) \in (0, \frac{1}{2})$, then (using $\inner{v}{\Hess f(x)v} \geq \frac{1}{2}$ and $\inner{v}{\Hess \mathscr{D}(x) v} \leq 2 \mathscr{R} \sqrt{-\Klo}$)
\begin{align*}
\inner{v}{\Hess f_{r, \mathscr{R}}(x) v} &\geq \frac{1}{2} \cdot \frac{1}{2} - 2 e^{-\sqrt{\frac{\mathscr{R}}{9 r}}} \cdot 2 \mathscr{R} \sqrt{-\Klo} - \frac{1}{16} \geq \frac{1}{2} \cdot \frac{1}{2} - \frac{1}{2^{6}} - \frac{1}{16} \geq \frac{1}{6},
\end{align*}
where the last inequality follows from choosing $\mathscr{R} = 2^{9} r \log(r \sqrt{-\Klo})^2$ and $r\sqrt{-\Klo} \geq 8$: 
\begin{align*}
2 e^{-\sqrt{\frac{\mathscr{R}}{9 r}}} \cdot 2 \mathscr{R} \sqrt{-\Klo} &= 4 e^{-{{\sqrt{\frac{2^9}{9}}} \log(r \sqrt{-\Klo})}} \cdot 2^{9} r \sqrt{-\Klo} \log(r \sqrt{-\Klo})^2 \\
&= 2^{11} (r \sqrt{-\Klo})^{-\sqrt{\frac{2^9}{9}}} r\sqrt{-\Klo} \log(r \sqrt{-\Klo})^2 \leq \frac{1}{2^{6}}.
\end{align*}

\subsection{Technical fact about the function $t \colon \reals \rightarrow \reals$ in the interval $(0, 1)$} \label{technicalfactsabouttforunboundedqueries}
\begin{lemma}
If $\tau \in (0, \frac{1}{2})$ and $c \geq 9$, then 
$1 - t(\tau) - \frac{1}{c} \left|t'(\tau)\right| \geq -{2}{e^{-{\sqrt{c/2}}}}.$
\end{lemma}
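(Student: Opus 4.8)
The plan is to reduce the stated inequality to a one-variable calculus estimate by first rewriting $t$ via the logistic function. Setting $\beta = \beta(\tau) := \frac{1}{\tau} - \frac{1}{1-\tau}$, one checks directly from the definition that $t(\tau) = \frac{1}{1+e^{-\beta}}$, so $t = \sigma(\beta)$ with $\sigma$ the logistic function; hence $t'(\tau) = t(1-t)\,\beta'(\tau)$ and $\beta'(\tau) = -S(\tau)$ where $S(\tau) := \frac{1}{\tau^2} + \frac{1}{(1-\tau)^2}$. Since $t \le 1$ this gives the clean bound $|t'| = S\,t(1-t) \le S(1-t)$, and therefore
\[
1 - t(\tau) - \frac{1}{c}\,|t'(\tau)| \ \ge\ (1-t(\tau))\left(1 - \frac{S(\tau)}{c}\right).
\]
If $S(\tau) \le c$ the right-hand side is nonnegative and there is nothing to prove, so from now on I assume $S(\tau) > c$. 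On $(0,\tfrac12)$ one has $\beta > 0$, hence $1-t = \frac{1}{1+e^{\beta}} \le e^{-\beta}$, and it suffices to prove $e^{-\beta}\,\frac{S-c}{c} \le 2\,e^{-\sqrt{c/2}}$.

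Next I would use an algebraic identity tying $\beta$ to $S$. Writing $a = 1/\tau$, $b = 1/(1-\tau)$, the relation $\frac1a + \frac1b = 1$ gives $a+b = ab$; with $p := a+b$ one computes $p^2 - 2p = S$, so $p = 1+\sqrt{1+S}$ and $\beta^2 = (a-b)^2 = p^2 - 4p = S - 2 - 2\sqrt{1+S}$. On $(0,\tfrac12)$ the range of $S$ is $(8,\infty)$, and $\beta^2$ is strictly increasing in $S$ there (derivative $1 - (1+S)^{-1/2} > 0$). Since $c \ge 9 > 8$, the number $\beta_0^2 := c - 2 - 2\sqrt{1+c}$ is nonnegative, and for $S > c$
\[
\beta^2 - \beta_0^2 = (S-c)\left(1 - \frac{2}{\sqrt{1+S}+\sqrt{1+c}}\right) \ \ge\ \frac{2}{3}\,(S-c),
\]
using $\sqrt{1+c} \ge \sqrt{10} > 3$. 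Substituting $\beta \ge \sqrt{\beta_0^2 + \tfrac23(S-c)}$ and introducing $w := \sqrt{\beta_0^2 + \tfrac23(S-c)} \ge \beta_0$ (so $S - c = \tfrac32(w^2 - \beta_0^2)$) reduces the target to $\frac{3}{2c}(w^2-\beta_0^2)\,e^{-w} \le 2\,e^{-\sqrt{c/2}}$ for $w \ge \beta_0$. A short calculation gives $\max_{w\ge\beta_0}(w^2-\beta_0^2)e^{-w} = 2w^\star e^{-w^\star}$, attained at $w^\star = 1+\sqrt{1+\beta_0^2}$ (where $w^{\star2} - \beta_0^2 = 2w^\star$), so it remains to prove $\frac{3w^\star}{c}\,e^{-w^\star} \le 2\,e^{-\sqrt{c/2}}$.

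Finally, using $\beta_0^2 = c - 2 - 2\sqrt{1+c}$ I would verify two elementary inequalities valid for $c \ge 9$: first $w^\star < 1 + \sqrt{c} \le \tfrac{2c}{3}$ (the second because $\tfrac{2c}{3} - \sqrt c - 1$ is positive at $c=9$ with positive derivative), which yields $\tfrac{3w^\star}{2c} \le 1$; and second $w^\star \ge \sqrt{c/2}$, which after squaring reduces to $c - 2 - 2\sqrt{1+c} \ge \tfrac{c}{2} - \sqrt{2c}$, and this again holds because its left-minus-right side is positive at $c = 9$ and has positive derivative for $c \ge 9$. Combining, $\tfrac{3w^\star}{2c} \le 1 \le e^{\,w^\star - \sqrt{c/2}}$, i.e. $\frac{3w^\star}{c}e^{-w^\star} \le 2 e^{-\sqrt{c/2}}$, which finishes the proof.

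The main obstacle is the final optimization: the naive simplification replacing $\frac{S}{c}-1$ by $\frac{S}{c}$ (equivalently, dropping $\beta_0^2$ and keeping only $\tfrac23(S-c)$ under the square root) loses a constant factor and already fails for moderate values such as $c \approx 14$, while bounding $e^{-\beta}$ by any fixed power $\beta^{-k}$ fails for large $c$ since the right-hand side decays exponentially in $\sqrt c$. One must keep the term $w^2 - \beta_0^2$ intact throughout, so that the true maximizer $w^\star \approx \sqrt{c}$ (not $\approx \sqrt{c/2}$) is captured; this gap between $\sqrt{c}$ and $\sqrt{c/2}$ is exactly what provides the room needed to dominate $e^{-\sqrt{c/2}}$.
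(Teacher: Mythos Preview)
Your proof is correct and takes a genuinely different route from the paper's. The paper works with the explicit rational expression for $t'$, drops a nonnegative term, and then invokes a polynomial inequality
\[
\frac{c\tau^4 - 2c\tau^3 + c\tau^2 - 2\tau^2 + 2\tau - 1}{c(\tau-1)^2\tau^2}\ \ge\ \frac{c-1}{2c} - \frac{1}{c\tau^2}
\]
that is relegated to a computer algebra system; after bounding the exponential prefactor by $e^{2-1/\tau}$ it minimizes the resulting one-variable function by computing the critical point $\tau = (1+\sqrt{(c+1)/2})^{-1}$. Your argument avoids any CAS step: the logistic reparametrization $t = \sigma(\beta)$ with $\beta = \tfrac1\tau - \tfrac{1}{1-\tau}$ immediately gives the factorization $1 - t - \tfrac1c|t'| \ge (1-t)(1 - S/c)$, and the identity $a+b=ab$ yields the clean relation $\beta^2 = S - 2 - 2\sqrt{1+S}$, after which the optimization in $w$ is entirely elementary. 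What the paper's approach buys is a slightly sharper intermediate bound (it keeps the factor $t$ in $|t'| = t(1-t)S$ longer), but this is irrelevant since the critical regime is $t \to 1$. What your approach buys is a self-contained proof with no appeal to symbolic verification, and a conceptual explanation (the final paragraph) of why the constant $\sqrt{c/2}$ is the right threshold: the maximizer $w^\star = 1+\sqrt{1+\beta_0^2} \approx \sqrt{c}$ sits strictly above $\sqrt{c/2}$, which is exactly the slack the inequality needs.
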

\begin{proof}
We have
$t'(\tau) = \frac{e^{\frac{1}{\tau -\tau ^2}}}{\left(e^{\frac{1}{1-\tau }}+e^{\frac{1}{\tau }}\right)^2} \cdot \frac{\left(-2 \tau ^2+2 \tau -1\right)}{(\tau -1)^2 \tau ^2} \leq 0$,
so
\begin{align} \label{teqone}
\left|t'(\tau)\right| = \frac{e^{\frac{1}{\tau -\tau ^2}}}{\left(e^{\frac{1}{1-\tau }}+e^{\frac{1}{\tau }}\right)^2} \cdot \frac{\left(2 \tau ^2-2 \tau +1\right)}{(\tau -1)^2 \tau ^2}.
\end{align}
Consider $\tau \in (0, \frac{1}{2})$ and also take $c \geq 9$.
Using~\eqref{teqone} we find
\begin{align*}
1 - t(\tau) - \frac{1}{c} \left|t'(\tau)\right| 
&= \frac{e^{\frac{1}{\tau -\tau ^2}} }{\left(e^{\frac{1}{1-\tau }}+e^{\frac{1}{\tau }}\right)^2}
   \frac{\left(c \tau ^4-2 c \tau ^3+c \tau ^2-2 \tau ^2+2 \tau -1\right)}{c (\tau
   -1)^2 \tau ^2}
   +\frac{e^{\frac{2}{1-\tau }}}{\left(e^{\frac{1}{1-\tau }}+e^{\frac{1}{\tau }}\right)^2}
   \\
   &\geq \frac{e^{\frac{1}{\tau -\tau ^2}} }{\left(e^{\frac{1}{1-\tau }}+e^{\frac{1}{\tau }}\right)^2}
   \frac{\left(c \tau ^4-2 c \tau ^3+c \tau ^2-2 \tau ^2+2 \tau -1\right)}{c (\tau
   -1)^2 \tau ^2}
   \\
   &\geq \frac{e^{\frac{1}{\tau -\tau ^2}} }{\left(e^{\frac{1}{1-\tau }}+e^{\frac{1}{\tau }}\right)^2}
   \left(\frac{c-1}{2 c}-\frac{1}{c \tau ^2}\right)
   \geq \frac{e^{\frac{1}{\tau -\tau ^2}} }{\left(e^{\frac{1}{1-\tau }}+e^{\frac{1}{\tau }}\right)^2}
   \min\left\{\frac{c-1}{2 c}-\frac{1}{c \tau ^2}, 0\right\}
\end{align*}
where for the penultimate inequality we used the fact
$$\frac{c \tau ^4-2 c \tau ^3+c \tau ^2-2 \tau ^2+2 \tau -1}{c (\tau -1)^2 \tau ^2}\geq \frac{c-1}{2 c}-\frac{1}{c \tau ^2} \quad \quad \forall \tau \leq \frac{1}{2}, c \geq 7.$$
This algebraic inequality can be verified by a computer algebra system, such as Mathematica.

For $\tau \in (0, \frac{1}{2}]$, we have
\begin{align} \label{teqtwo}
\frac{e^{\frac{1}{\tau -\tau ^2}}}{\left(e^{\frac{1}{1-\tau }}+e^{\frac{1}{\tau }}\right)^2} \leq \frac{e^{\frac{1}{\tau -\tau ^2}}}{\left(e^{\frac{1}{\tau }}\right)^2} = e^{\frac{1}{\tau -\tau ^2} - \frac{2}{\tau}} \leq e^{2 - \frac{1}{\tau}}
\end{align}
where for the last inequality we used that $\frac{1}{\tau -\tau ^2} - \frac{2}{\tau} \leq 2 - \frac{1}{\tau}$ for $\tau \in [0, \frac{1}{2}]$.
Therefore, using~\eqref{teqtwo},
\begin{align*}
1 - t(\tau) - \frac{1}{c} \left|t'(\tau)\right| 
   &\geq e^{2 - \frac{1}{\tau}}
   \min\left\{\frac{c-1}{2 c}-\frac{1}{c \tau ^2}, 0\right\} =
   \min\left\{e^{2 - \frac{1}{\tau}}\Big(\frac{c-1}{2 c}-\frac{1}{c \tau ^2}\Big), 0\right\}.
\end{align*}
We know $\tau \in (0, \frac{1}{2}]$ and $\frac{c-1}{2 c}-\frac{1}{c \tau ^2} \leq 0$ if and only if $0 < \tau \leq \frac{\sqrt{2}}{\sqrt{c-1}} \leq \frac{1}{2}$.
Additionally, $\lim_{\tau \rightarrow 0^+} e^{2 - \frac{1}{\tau}}\Big(\frac{c-1}{2 c}-\frac{1}{c \tau ^2}\Big) = 0$.
Therefore the minimum of $\tau \mapsto \min\left\{e^{2 - \frac{1}{\tau}}\Big(\frac{c-1}{2 c}-\frac{1}{c \tau ^2}\Big), 0\right\}$ for $\tau \in (0, \frac{1}{2})$ must occur at a critical point of $\tau \mapsto e^{2 - \frac{1}{\tau}}\Big(\frac{c-1}{2 c}-\frac{1}{c \tau ^2}\Big)$.
Let's compute that point:
$$0 = \frac{d}{d\tau}\Big[e^{2 - \frac{1}{\tau}}\Big(\frac{c-1}{2 c}-\frac{1}{c \tau ^2}\Big)\Big] = \frac{e^{2-\frac{1}{\tau }} \left(c \tau ^2-\tau ^2+4 \tau -2\right)}{2 c \tau ^4} \implies \tau = \frac{1}{\frac{\sqrt{c+1}}{\sqrt{2}}+1}.$$
Therefore,
\begin{align*}
1 - t(\tau) - \frac{1}{c} \left|t'(\tau)\right| 
   &\geq \min\left\{\Big[e^{2 - \frac{1}{\tau}}\Big(\frac{c-1}{2 c}-\frac{1}{c \tau ^2}\Big)\Big]_{\tau = \frac{1}{\frac{\sqrt{c+1}}{\sqrt{2}}+1}}, 0\right\} = -\frac{2 e^2 \left( 1 + \frac{\sqrt{c+1}}{\sqrt{2}}\right) }{c e^{1+\frac{\sqrt{c+1}}{\sqrt{2}}}} \\
   &\geq -\frac{2}{e^{\frac{\sqrt{c+1}}{\sqrt{2}}}} \geq -\frac{2}{e^{{\sqrt{c/2}}}}.
\end{align*}
\end{proof}

\cutchunk{
\section{The nonstrongly convex case: technical details} \label{nonstronglyconvexboundsapphighlevel}
Define the $C^\infty$ function $u_{\mathscr{R}} \colon \reals \rightarrow \reals$ by $u_{\mathscr{R}}(\mathscr{D}) = 1$ for all $\mathscr{D} \leq \frac{1}{2} \mathscr{R}^2$ and $u_{\mathscr{R}}(\mathscr{D}) = 1 - e^{-4 / \sqrt{2 \mathscr{D} / \mathscr{R}^2-1}}$ for all $\mathscr{D} > \frac{1}{2} \mathscr{R}^2$.
That $u_{\mathscr{R}}$ is $C^\infty$ can be using the same method used in the proof of Lemma 2.20 from~\citep[Ch.~2]{lee2012smoothmanifolds}.

\subsection{Verifying $\tilde{f}_{\mathscr{R}}$ is $\tilde L$-smooth and strictly g-convex in $\calM$} \label{nonstronglyconvexboundsapp}
To complete the proof of Theorem~\ref{theoremtheorem} (from Section~\ref{nonstronglyconvexcaseextension}), we show that given $f \in \hat{\mathcal{F}}_{L, r}^{\xorigin}$ with $f(x) = 3\mu \dist(x, \xorigin)^2$ for $x \not \in B(\xorigin, \mathscr{R})$, the function $\tilde{f}_{\mathscr{R}}$ defined by~\eqref{eqdefineftildeR} is strictly g-convex and {$24 \mu \mathscr{R} \sqrt{-\Klo}$}-smooth outside of the ball $B(\xorigin, \mathscr{R})$.

Let $\gamma(t)$ be a geodesic with $\gamma(0) = x, \gamma'(0) = v$ and $\norm{v} = 1$.  
For the moment, define $\mathscr{D}(x) = \frac{1}{2} \dist(x, \xorigin)^2$.
For the gradient, we have
\begin{align*}
\inner{\grad \tilde{f}_{\mathscr{R}}(x)}{v} =& \frac{d}{dt}[\tilde{f}(\gamma(t))]_{t=0} = \frac{d}{dt}[u_{\mathscr{R}}(\mathscr{D}(\gamma(t))) f(\gamma(t))]_{t=0} \\
=&u_{\mathscr{R}}(\mathscr{D}(x)) \frac{d}{dt}[f(\gamma(t))]_{t=0} + \frac{d}{dt}[u_{\mathscr{R}}(\mathscr{D}(\gamma(t)))]_{t=0} f(x) \\
=& u_{\mathscr{R}}(\mathscr{D}(x)) \inner{\grad f(x)}{v} + f(x) u_{\mathscr{R}}'(\mathscr{D}(x)) \frac{d}{dt}[\mathscr{D}(\gamma(t))]_{t=0} \\
=&u_{\mathscr{R}}(\mathscr{D}(x)) \inner{\grad f(x)}{v} - f(x) u_{\mathscr{R}}'(\mathscr{D}(x)) \inner{\exp_x^{-1}(\xorigin)}{v}
\end{align*}
which verifies formula~\eqref{eqfortildefRgrad}.

For the Hessian we have:
\begin{align*}
&\inner{v}{\Hess \tilde{f}_{\mathscr{R}}(x) v} = \frac{d^2}{dt^2}[\tilde{f}_{\mathscr{R}}(\gamma(t))]_{t=0} = \frac{d^2}{dt^2}[u_{\mathscr{R}}(\mathscr{D}(\gamma(t))) f(\gamma(t))]_{t=0} \\
=& \frac{d^2}{dt^2}[u_{\mathscr{R}}(\mathscr{D}(\gamma(t)))]_{t=0} f(x) + u_{\mathscr{R}}(\mathscr{D}(x)) \frac{d^2}{dt^2}[f(\gamma(t))]_{t=0} \\
   &+ 2 \frac{d}{dt}[u_{\mathscr{R}}(\mathscr{D}(\gamma(t)))]_{t=0} \frac{d}{dt}[f(\gamma(t))]_{t=0} \\
=& f(x) u_{\mathscr{R}}''(\mathscr{D}(x)) \Big(\frac{d}{dt}[\mathscr{D}(\gamma(t))]_{t=0}\Big)^2 + f(x) u_{\mathscr{R}}'(\mathscr{D}(x)) \frac{d^2}{dt^2}[\mathscr{D}(\gamma(t)))]_{t=0} \\
&+ u_{\mathscr{R}}(\mathscr{D}(x)) \frac{d^2}{dt^2}[f(\gamma(t))]_{t=0} + 2 \frac{d}{dt}[u_{\mathscr{R}}(\mathscr{D}(\gamma(t)))]_{t=0} \frac{d}{dt}[f(\gamma(t))]_{t=0} \\
=& f(x) u_{\mathscr{R}}''(\mathscr{D}(x)) \inner{\exp_x^{-1}(\xorigin)}{v}^2 + f(x) u_{\mathscr{R}}'(\mathscr{D}(x)) \inner{v}{\Hess d(x) v} \\
&+ u_{\mathscr{R}}(\mathscr{D}(x)) \frac{d^2}{dt^2}[f(\gamma(t))]_{t=0} - 2 \inner{\exp_x^{-1}(\xorigin)}{v} \inner{\grad f(x)}{v} \\
=& 6 \mu\Bigg( [u_{\mathscr{R}}(\mathscr{D}(x)) + \mathscr{D}(x) u_{\mathscr{R}}'(\mathscr{D}(x))]\inner{v}{\Hess \mathscr{D}(x) v} \\
&+ [2 u_{\mathscr{R}}'(\mathscr{D}(x)) + \mathscr{D}(x) u_{\mathscr{R}}''(\mathscr{D}(x)) ]\inner{\exp_x^{-1}(\xorigin)}{v}^2 \Bigg).
\end{align*}

Lemmas~\ref{technicallemmaaboutfctu1} and~\ref{technicallemmaaboutfctu2} from Appendix~\ref{technicalfactsaboutthefunctionuapp} shows that $u_{\mathscr{R}}(\mathscr{D}) + \mathscr{D} u_{\mathscr{R}}'(\mathscr{D}) \geq 0$ and $2 u_{\mathscr{R}}'(\mathscr{D}) + \mathscr{D} u_{\mathscr{R}}''(\mathscr{D}) \leq 0$ for all $\mathscr{D} > \frac{1}{2} \mathscr{R}^2$.
Therefore, $\inner{v}{\Hess \tilde{f}_{\mathscr{R}}(x) v}$ is at least
$$6 \mu\Bigg( [u_{\mathscr{R}}(\mathscr{D}(x)) + \mathscr{D}(x) u_{\mathscr{R}}'(\mathscr{D}(x))] + [2 u_{\mathscr{R}}'(\mathscr{D}(x)) + \mathscr{D}(x) u_{\mathscr{R}}''(\mathscr{D}(x)) ]2 \mathscr{D} \Bigg).$$
Lemma~\ref{technicallemmaaboutfctu3} shows that this is strictly greater than zero, verifying strict g-convexity.

Likewise, Lemma~\ref{technicallemmaaboutfctu4} along with Lemma~\ref{Lipschitzgradsquareddistance} imply $\inner{v}{\Hess \tilde{f}_{\mathscr{R}}(x) v}$ is at most
$$6 \mu\Bigg( [u_{\mathscr{R}}(\mathscr{D}(x)) + \mathscr{D}(x) u_{\mathscr{R}}'(\mathscr{D}(x))]2 \sqrt{-\Klo}\sqrt{2 \mathscr{D}}\Bigg) \leq {24 \mu \sqrt{-\Klo} \mathscr{R}.}$$

\subsection{Technical facts about the function $u_{\mathscr{R}} \colon \reals \rightarrow \reals$} \label{technicalfactsaboutthefunctionuapp}
In the following lemmas, we use the change of variables 
$$\tau = 1 / \sqrt{\frac{2 \mathscr{D}}{\mathscr{R}^2}-1} \iff \mathscr{D} = \mathscr{R}^2 \frac{1+\tau^2}{2\tau^2}.$$
for $\mathscr{D} \in (\frac{1}{2} \mathscr{R}^2, \infty)$ and $\tau \in (0, \infty).$

\begin{lemma} \label{technicallemmaaboutfctu1}
$u_{\mathscr{R}}(\mathscr{D}) + \mathscr{D} u_{\mathscr{R}}'(\mathscr{D}) \geq 0, \quad \forall \mathscr{D} \in (\frac{1}{2} \mathscr{R}^2, \infty)$.
\end{lemma}
\begin{proof}
Computing we find:
$$u_{\mathscr{R}}(\mathscr{D}) + \mathscr{D} u_{\mathscr{R}}'(\mathscr{D}) = 1-e^{-4\tau}(1+2\tau+2\tau^3).$$
On the other hand, we know $1 + 4 \tau + 8 \tau^2 + \frac{32}{3} \tau^3 \geq 1+2\tau+2\tau^3$ for all $\tau \geq 0$ (which can be verified with Mathematica), so
$$e^{4\tau} = \sum_{j = 0}^{\infty} \frac{1}{j!} (4\tau)^j \geq \sum_{j = 0}^{3} \frac{1}{j!} (4\tau)^j = 1 + 4 \tau + 8 \tau^2 + \frac{32}{3} \tau^3 \geq 1+2\tau+2\tau^3, \quad \forall \tau \geq 0,$$
which allows us to conclude.
\end{proof}

\begin{lemma} \label{technicallemmaaboutfctu2}
$2 u_{\mathscr{R}}'(\mathscr{D}) + \mathscr{D} u_{\mathscr{R}}''(\mathscr{D}) \leq 0, \quad \forall \mathscr{D} \in (\frac{1}{2} \mathscr{R}^2, \infty)$.
\end{lemma}
\begin{proof}
Computing we find:
$$2 u_{\mathscr{R}}'(\mathscr{D}) + \mathscr{D} u_{\mathscr{R}}''(\mathscr{D}) = 2 \mathscr{R}^{-2} e^{-4\tau} \tau^3 (-1-4\tau + 3\tau^2-4\tau^3)$$
and one can check that $-1-4\tau + 3\tau^2-4\tau^3 \leq 0$ for all $\tau \geq 0$.
\end{proof}

\begin{lemma} \label{technicallemmaaboutfctu3}
$[u_{\mathscr{R}}(\mathscr{D}) + \mathscr{D} u_{\mathscr{R}}'(\mathscr{D})] + [2 u_{\mathscr{R}}'(\mathscr{D}) + \mathscr{D} u_{\mathscr{R}}''(\mathscr{D}) ]2 \mathscr{D} > 0, \quad \forall \mathscr{D} \in (\frac{1}{2} \mathscr{R}^2, \infty)$.
\end{lemma}
\begin{proof}
Computing we find:
$$[u_{\mathscr{R}}(\mathscr{D}) + \mathscr{D} u_{\mathscr{R}}'(\mathscr{D})] + [2 u_{\mathscr{R}}'(\mathscr{D}) + \mathscr{D} u_{\mathscr{R}}''(\mathscr{D}) ]2 \mathscr{D} = 1 - e^{-4\tau}(1+4\tau + 8\tau^2 - 2 \tau^3 + 16 \tau^4 - 6 \tau^5 + 8 \tau^6).$$
On the other hand, we know $\sum_{j = 0}^{7} \frac{1}{j!} (4\tau)^j > 1+4\tau + 8\tau^2 - 2 \tau^3 + 16 \tau^4 - 6 \tau^5 + 8 \tau^6$ for all $\tau > 0$ (which can be verified with Mathematica), which allows us to conclude.
\end{proof}

\begin{lemma} \label{technicallemmaaboutfctu4}
$(u_{\mathscr{R}}(\mathscr{D}) + \mathscr{D} u_{\mathscr{R}}'(\mathscr{D})) 2 \sqrt{2 \mathscr{D}} \leq 4 \mathscr{R}, \quad \forall \mathscr{D} \in (\frac{1}{2} \mathscr{R}^2, \infty)$.
\end{lemma}
\begin{proof}
This follows from a calculation similar to the proof of Lemma~\ref{technicallemmaaboutfctu1}.
\end{proof}



\subsection{Technical fact from Section~\ref{nonstronglyconvexcaseextension}: $x_k \not \in B(\xorigin, \mathscr{R})$ implies $\tilde{f}_{\mathscr{R}}(x_k) \geq 2 \hat \epsilon L r^2$} \label{technicalfacttofinishproofoftheorem1p5}
Recall Lemma~\ref{lemmaaboutintermediaryfunctionclass} provides a $\mu = 64 \hat{\epsilon} L$-strongly g-convex function $f \in \hat{\mathcal{F}}_{ L, r}^{\xorigin}(\calM)$, with minimizer $x^*$, satisfying $f(x_k) - f(x^*) \geq 2 \hat\epsilon L r^2$ for all $k \leq T-1$.

If $x_k \not \in B(\xorigin, \mathscr{R})$, then consider the geodesic segment $\gamma_{x^* \rightarrow x_k} \colon [0,1] \rightarrow \calM$ given by $\gamma_{x^* \rightarrow x_k}(t) = \exp_{x^*}(t \exp_{x^*}^{-1}(x_k))$.
Continuity of $t \mapsto \dist(\gamma_{x^* \rightarrow x_k}(t), \xorigin)$ implies there exists a $t \in (0,1)$ so that $\mathscr{R} = \dist(\gamma_{x^* \rightarrow x_k}(t), \xorigin)$.  Let $y = \gamma(t)$.
Geodesic convexity of $\tilde{f}_{\mathscr{R}}$ implies
$$\tilde{f}_{\mathscr{R}}(y) \leq (1-t)\tilde{f}_{\mathscr{R}}(x^*) + t \tilde{f}_{\mathscr{R}}(x_k) \leq (1-t)\tilde{f}_{\mathscr{R}}(y) + t \tilde{f}_{\mathscr{R}}(x_k),$$
which implies $\tilde{f}_{\mathscr{R}}(y) \leq \tilde{f}_{\mathscr{R}}(x_k)$.
On the other hand, we know $\tilde{f}_{\mathscr{R}} = f$ in $B(\xorigin, \mathscr{R})$. Therefore $\tilde{f}_{\mathscr{R}}$ is $\mu$-strongly g-convex in $B(\xorigin, \mathscr{R})$, so
$$\tilde{f}_{\mathscr{R}}(x_k) \geq \tilde{f}_{\mathscr{R}}(y) \geq \frac{\mu}{2} \dist(y, x^*)^2 \geq \frac{\mu}{2} (\mathscr{R} - r)^2 \geq 32 \hat{\epsilon} L (2^{11} r)^2 \geq 2 \hat \epsilon L r^2.$$
}

\section{Technical details for the ball-packing property}
\subsection{Geodesics diverge: Proof of Lemma~\ref{geodesicsdiverge}} \label{Appgeodesicsdiverge}
The angle between $v_1$ and $v_2$ is in the interval $[0, \pi]$; therefore, the statement of the lemma requires a proof only for $\theta \in [0, \pi]$.  
We split this into two cases.  
For both we use the following consequence of Proposition~\ref{TopogonovHyperbolicBoundedAbove}:
\begin{align} \label{topinequalityweuseforlargeball}
\cosh(\dist(z_1, z_2) \sqrt{-\Kup}) \geq \cosh(s \sqrt{-\Kup})^2 - \sinh(s \sqrt{-\Kup})^2 \cos(\theta).
\end{align}

If $\theta > \frac{\pi}{2}$, then 
$$\cosh(\dist(z_1, z_2) \sqrt{-\Kup}) \geq \cosh(s \sqrt{-\Kup})^2 \geq \cosh(s \sqrt{-\Kup}),$$
and so $\dist(z_1, z_2) \geq s \geq \frac{2}{3}s$.
So we can assume that $\theta \leq \frac{\pi}{2}$.

Note that
$e^{1 - \frac{2}{3} t} \geq \sqrt{3\bigg(1 - \frac{\cosh(t)^2 - \cosh(2 t / 3)}{\sinh(t)^2}\bigg)}  \text{ for all $t \geq 0$}.$
Therefore, 
\begin{align*}
\theta &= e^{1 - \frac{2}{3} s \sqrt{-\Kup}} \geq \sqrt{3\bigg(1 - \frac{\cosh(s \sqrt{-\Kup})^2 - \cosh(2 s \sqrt{-\Kup} / 3)}{\sinh(s \sqrt{-\Kup})^2}\bigg)}
\end{align*}
which implies
$\frac{\cosh(s \sqrt{-\Kup})^2 - \cosh(2 s \sqrt{-\Kup} / 3)}{\sinh(s \sqrt{-\Kup})^2} \geq 1 - \frac{1}{3} \theta^2 \geq \cos(\theta)$ (since $\theta \in (0, \frac{\pi}{2}]$).
Rearranging this inequality and applying inequality~\eqref{topinequalityweuseforlargeball},
$$\cosh(2 s \sqrt{-\Kup} / 3) \leq \cosh(s \sqrt{-\Kup})^2 - \sinh(s \sqrt{-\Kup})^2 \cos(\theta) \leq \cosh(\dist(z_1, z_2) \sqrt{-\Kup}).$$
We conclude $\dist(z_1, z_2) \geq \frac{2}{3} s$.

\subsection{Placing well-separated points on the unit sphere} \label{appendixNbig}
To prove Lemma~\ref{lemmaNbig}, we used the following lemma about placing well-separated points on the unit sphere $\mathbb{S}^{d-1} = \{x \in \reals^{d} : \norm{x}=1 \}$.
For $x, y \in \mathbb{S}^{d-1}$, $\dist_{\mathbb{S}^{d-1}}(x, y)$ equals the angle between the vectors $x$ and $y$: $\dist_{\mathbb{S}^{d-1}}(x, y) = \arccos(x^\top y)$.

Below, we use $\Vol(\mathbb{S}^{d-1})$ to denote the volume of the ``surface'' of the sphere (with the usual metric).  Note that $\Vol(\mathbb{S}^{d-1})$ does \emph{not} denote the volume of the unit Euclidean ball in $\reals^d$.  
\begin{lemma} \label{packingonsphere}
For any $d \geq 2$ and $\theta \in \big(0, \frac{\pi}{2}\big]$, there are $N \geq \frac{1}{\theta^{d-1}}$
vectors $v_1, \ldots, v_{N}$ on the $d-1$-dimensional unit sphere $\mathbb{S}^{d-1}$ satisfying
$\dist_{\mathbb{S}^{d-1}}(v_i, v_j) \geq \theta \quad \forall i \neq j.$
\end{lemma}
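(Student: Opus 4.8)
The plan is to prove the packing bound on the sphere $\mathbb{S}^{d-1}$ via the standard volumetric argument: a maximal $\theta$-separated set is automatically a $\theta$-net, and comparing the total volume of the spherical caps around the net points with the volume of the whole sphere yields a lower bound on the number of points. First I would take $v_1, \ldots, v_N$ to be a maximal set of points in $\mathbb{S}^{d-1}$ that are pairwise at geodesic (angular) distance at least $\theta$; such a maximal set exists by compactness (or Zorn's lemma). By maximality, the closed caps $C_i = \{x \in \mathbb{S}^{d-1} : \dist_{\mathbb{S}^{d-1}}(x, v_i) < \theta\}$ cover $\mathbb{S}^{d-1}$, so $\Vol(\mathbb{S}^{d-1}) \leq \sum_{i=1}^N \Vol(C_i) = N \cdot \Vol(C(\theta))$, where $C(\theta)$ denotes a spherical cap of angular radius $\theta$.

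The crux is then a clean upper bound $\Vol(C(\theta)) \leq \theta^{d-1} \Vol(\mathbb{S}^{d-1})$ for $\theta \in (0, \tfrac{\pi}{2}]$, which immediately gives $N \geq \theta^{-(d-1)}$. I would prove this by writing both volumes as integrals over the polar angle: $\Vol(C(\theta)) = \Vol(\mathbb{S}^{d-2}) \int_0^\theta \sin^{d-2}(\phi)\, d\phi$ and $\Vol(\mathbb{S}^{d-1}) = \Vol(\mathbb{S}^{d-2}) \int_0^\pi \sin^{d-2}(\phi)\, d\phi$, so it suffices to show $\int_0^\theta \sin^{d-2}(\phi)\, d\phi \leq \theta^{d-1} \int_0^\pi \sin^{d-2}(\phi)\, d\phi$. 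For the left side I would use $\sin(\phi) \leq \phi$ to get $\int_0^\theta \sin^{d-2}(\phi)\, d\phi \leq \int_0^\theta \phi^{d-2}\, d\phi = \frac{\theta^{d-1}}{d-1}$. For the right side I would lower bound $\int_0^\pi \sin^{d-2}(\phi)\, d\phi \geq \frac{1}{d-1}$; for instance, $\int_0^\pi \sin^{d-2}(\phi)\,d\phi \geq \int_0^{\pi/2}\sin^{d-2}(\phi)\,d\phi \geq \int_0^{\pi/2}\left(\tfrac{2}{\pi}\phi\right)^{d-2}d\phi = \frac{\pi}{2(d-1)} \geq \frac{1}{d-1}$, using concavity of $\sin$ on $[0,\pi/2]$ (so $\sin\phi \geq \tfrac{2}{\pi}\phi$ there). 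Combining, $\int_0^\theta \sin^{d-2}(\phi)\,d\phi \leq \frac{\theta^{d-1}}{d-1} \leq \theta^{d-1}\int_0^\pi \sin^{d-2}(\phi)\,d\phi$, which is exactly what is needed.

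The main obstacle — really the only delicate point — is making sure the elementary inequality $\Vol(C(\theta)) \leq \theta^{d-1}\Vol(\mathbb{S}^{d-1})$ holds with the stated constant $1$ rather than some dimension-dependent constant; the reduction to $\int_0^\theta \sin^{d-2} \leq \theta^{d-1}\int_0^\pi \sin^{d-2}$ above handles this uniformly in $d \geq 2$ and in $\theta \in (0,\pi/2]$, with the restriction $\theta \leq \pi/2$ never actually being needed for this particular bound (it is needed elsewhere in the application, via $\cos\theta \geq 1 - \theta^2/3$). One should double-check the edge case $d = 2$: there $\sin^{d-2} \equiv 1$, $\Vol(C(\theta)) = 2\theta$, $\Vol(\mathbb{S}^1) = 2\pi$, and indeed $2\theta \leq \theta \cdot 2\pi$, so $N \geq \theta^{-1}$ holds (this just recovers the obvious fact that $\lfloor 2\pi/(2\theta)\rfloor \geq \theta^{-1}$ points fit on a circle). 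With the volume inequality in hand, the maximality/covering step closes the proof.
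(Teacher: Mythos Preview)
Your proposal is correct and follows essentially the same volumetric argument as the paper: take a maximal $\theta$-separated set, use that it is a $\theta$-cover, bound the cap volume by $\frac{\theta^{d-1}}{d-1}\Vol(\mathbb{S}^{d-2})$ via $\sin\phi\le\phi$, and compare to $\Vol(\mathbb{S}^{d-1})$. The only cosmetic difference is that the paper writes the sphere volumes via Gamma functions and asserts $\pi^{1/2}(d-1)\Gamma(\tfrac{d-1}{2})/\Gamma(\tfrac{d}{2})\ge 1$, whereas you establish the equivalent inequality $\int_0^\pi \sin^{d-2}\phi\,d\phi \ge \tfrac{1}{d-1}$ directly from the Jordan bound $\sin\phi\ge\tfrac{2}{\pi}\phi$ on $[0,\pi/2]$, making your version slightly more self-contained.
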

\begin{proof}
The sphere $\mathbb{S}^{d-1}$ is a metric space.  
The packing number on any metric space is lower bounded by the covering number~\citep[Lem.~4.2.8]{vershynin_2018}.
More precisely~\citep[Lem.~4.2.8]{vershynin_2018} imples there exist $N$ distinct vectors $\mathcal{V} = \{v_1, v_2, \ldots, v_N\}$ with $v_j \in \Sd$ such that 
\begin{enumerate}
\item $\dist_{\Sd}(v_i, v_j) \geq \theta$ for all $i \neq j$;
\item and moreover the geodesic balls on the sphere (spherical caps) of radius $\theta$ centered at $v \in \mathcal{V}$ cover $\mathbb{S}^{d-1}$, i.e., $\bigcup_{v \in \mathcal{V}} B_v^{\mathbb{S}^{d-1}}(\theta) \supseteq \mathbb{S}^{d-1}$.
\end{enumerate}
(The set $\mathcal{V}$ is said to be a maximally $\theta$-separated net.)
Therefore, the sum of the volumes of the balls $\{B_v^{\mathbb{S}^{d-1}}(\theta)\}_{v \in \mathcal{V}}$ must at least be the volume of the unit sphere, i.e., 
$$N \cdot \Vol\Big(B^{\mathbb{S}^{d-1}}(\theta)\Big) = \sum_{v \in \mathcal{V}} \Vol\Big(B_v^{\mathbb{S}^{d-1}}(\theta))\Big) \geq \Vol(\mathbb{S}^{d-1}) = \frac{2 \pi^{d/2}}{\Gamma(\frac{d}{2})}.$$
The last equality is the standard formula for the surface area of a sphere in Euclidean space.
The volume of a geodesic ball of radius $\theta$ on $\mathbb{S}^{d-1}$ is
$$\Vol\Big(B^{\mathbb{S}^{d-1}}(\theta)\Big) = \Vol(\mathbb{S}^{d-2}) \int_0^\theta \sin^{d-2}(\eta) d\eta = \frac{2 \pi^{\frac{d-1}{2}}}{\Gamma(\frac{d-1}{2})} \int_0^\theta \sin^{d-2}(\eta) d\eta,$$
see~\citep[Cor.~10.17]{lee2018riemannian} or~\citep[p.~314]{noncompactsymmetricspacevolumes}.
Using that $\theta \leq \frac{\pi}{2}$ and $\sin(\eta) \leq \eta$ for all $\eta \in [0, \frac{\pi}{2}]$, $\Vol\Big(B^{\mathbb{S}^{d-1}}(\theta)\Big) \leq \frac{2 \pi^{\frac{d-1}{2}}}{\Gamma(\frac{d-1}{2})} \frac{1}{d-1} \theta^{d-1}$.
Therefore,
$$N \geq \pi^{1/2} \frac{{(d-1)\Gamma(\frac{d-1}{2})}}{{\Gamma(\frac{d}{2})}} \frac{1}{\theta^{d-1}} \geq \pi^{1/2} \frac{{(d-1)\Gamma(\frac{d-1}{2})}}{{\Gamma(\frac{d}{2})}} \frac{1}{\theta^{d-1}} \geq \frac{1}{\theta^{d-1}}.$$


%
\end{proof}
\section{Geometry influences the objective function} \label{geometryinfluencesfunctions}
\citet{hamilton2021nogo} show that there is no strongly g-convex function which has bounded condition number on all of the hyperbolic plane.
This statement is of course not true in Euclidean space.
{Using a different technique, \citet{martinezrubio2021global} proves a similar result (see Proposition C.6 therein).}
We extend the result of \citet{hamilton2021nogo} to Hadamard spaces with sectional curvature upper bounded by $\Kup < 0$.

\begin{proposition} \label{geometryinfluencesobjective}
Let $\calM$ be a Hadamard manifold whose sectional curvatures are in the interval $(-\infty, \Kup]$ with $\Kup < 0$.
Let $f \colon \calM \rightarrow \reals$ be $L$-smooth and $\mu$-strongly g-convex in a ball $B(\xorigin, r)$.
Then  $\frac{L}{\mu} \geq \frac{1}{8}\Big(r \sqrt{-\Kup} - 1\Big)$
provided $r \geq \frac{4}{\mu} \norm{\grad f(\xorigin)} + \frac{1}{\sqrt{-\Kup}}$.
\end{proposition}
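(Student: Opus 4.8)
\textbf{Proof plan for Proposition~\ref{geometryinfluencesobjective}.}

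The plan is to pick a well-chosen point $x$ on the sphere $\partial B(\xorigin, r)$ and compare two lower bounds on $\|\grad f(x)\|$. On the one hand, since $f$ is $\mu$-strongly g-convex on $B(\xorigin, r)$, the gradient of $f$ must ``grow'' as we move away from any near-minimizer inside the ball: concretely, using the characterization in Lemma~\ref{muHesslemma} together with Cauchy--Schwarz one gets $\|\grad f(x)\| \dist(x,y) \geq \inner{\grad f(x)}{-\exp_x^{-1}(y)} \geq \frac{\mu}{2}\dist(x,y)^2$ for $y$ also in the ball (apply strong convexity at $x$ evaluated at $y$ and also at $y$ evaluated at $x$, then add). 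So if $\grad f$ is small somewhere deep inside the ball, then $\|\grad f(x)\|$ is forced to be at least of order $\mu r$ near the boundary. First I would make this precise: there is a point $y_0 \in B(\xorigin, r/2)$, say, with $\|\grad f(y_0)\|$ controlled --- either $\xorigin$ itself works if $\|\grad f(\xorigin)\|$ is small enough (which is exactly what the hypothesis $r \geq \frac{4}{\mu}\|\grad f(\xorigin)\| + \frac{1}{\sqrt{-\Kup}}$ buys us), or one argues that somewhere in the ball the gradient is comparatively small. The cleanest route is to just use $\xorigin$ as the anchor and the hypothesis to absorb $\|\grad f(\xorigin)\|$ into a fraction of $\mu r$.

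On the other hand, I want an \emph{upper} bound on $\|\grad f(x)\|$ in terms of $L$ that beats the linear-in-$r$ growth --- and this is where negative curvature enters. The key geometric point is that on a Hadamard manifold with curvature $\leq \Kup < 0$, the squared-distance function (or more relevantly the gradient field $x \mapsto -\exp_x^{-1}(\xorigin)$) is badly behaved: parallel transport around does not commute with the exponential map, and in fact, moving along the sphere $\partial B(\xorigin,r)$, the vector $-\exp_x^{-1}(\xorigin)$ rotates/changes in a way that, combined with $L$-Lipschitzness of $\grad f$, constrains things. More directly, I would use the ``geodesics diverge'' phenomenon (Lemma~\ref{geodesicsdiverge}): take two points $z_1, z_2$ at distance $r$ from $\xorigin$ whose initial directions make a \emph{small} angle $\theta \approx e^{1 - \frac{2}{3} r\sqrt{-\Kup}}$, so that $\dist(z_1, z_2) \geq \frac{2}{3} r$. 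By $L$-smoothness, $\|\grad f(z_1) - P_{z_2 \to z_1}\grad f(z_2)\| \leq L \dist(z_1,z_2) \leq L r$; but by the strong-convexity lower bound applied along the short geodesic from $\xorigin$ direction, $\grad f(z_1)$ and $\grad f(z_2)$ each point ``outward'' with norm $\gtrsim \mu r$, and they are \emph{nearly parallel transports of each other along the near-radial geodesics}, which forces the two outward gradients (after parallel transport) to nearly agree --- yet the underlying geometry says the radial directions at $z_1$ and $z_2$, when parallel transported to a common point, differ by much more than $\theta$ because geodesics spread. Contradiction unless $L/\mu$ is large. I expect the exact bookkeeping here --- tracking the angle defect under parallel transport and turning ``geodesics diverge'' into a statement about the \emph{directions} of gradients rather than just about distances between endpoints --- to be the main obstacle. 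One likely needs a companion of Lemma~\ref{geodesicsdiverge} stating that the angle at $z_1$ in the triangle $\xorigin z_1 z_2$ is $\Omega(1)$ (not $O(\theta)$) when $\dist(z_1,z_2) \gtrsim r$, which again follows from Toponogov comparison (Proposition~\ref{TopogonovHyperbolicBoundedAbove}).

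Assembling: fix $\theta = e^{1 - \frac{2}{3}r\sqrt{-\Kup}}$ and choose $z_1, z_2 \in \partial B(\xorigin, r)$ with angle $\theta$ between their initial velocities. Lower-bound $\inner{\grad f(z_i)}{-\exp_{z_i}^{-1}(\xorigin)} \geq \frac{\mu}{2} r^2 - r\|\grad f(\xorigin)\|$ (the correction term coming from using $\xorigin$ rather than the true minimizer, and bounded by $\frac{\mu r^2}{4}$ thanks to the hypothesis on $r$), so each $\grad f(z_i)$ has a component of size $\geq \frac{\mu r}{4}$ along the inward radial direction $u_i := -\exp_{z_i}^{-1}(\xorigin)/r$. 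Then compare $\grad f(z_1)$ with $P_{z_2 \to z_1}\grad f(z_2)$: the difference has norm $\leq L\dist(z_1,z_2)$, but the inward radial directions $u_1$ and $P_{z_2\to z_1} u_2$ differ by an angle $\geq c$ for an absolute constant $c>0$ (by the Toponogov-based angle bound, since $\dist(z_1,z_2) \geq \frac23 r$ is a definite fraction of the side lengths), which with the size-$\frac{\mu r}{4}$ components forces $L \dist(z_1,z_2) \geq c' \mu r$ for an absolute $c'$, i.e. $\frac{L}{\mu} \geq c'' $ --- wait, that only gives a constant, so instead I should flip the logic and use that the \emph{angular spread} achievable with a \emph{small} initial angle is precisely $\theta^{-1}$-large, giving the factor $r\sqrt{-\Kup}$: use many points $z_1,\dots,z_M$ on a small arc (angular width $M\theta$ still small), so the inward radial directions sweep through a total angle $\gtrsim 1$ while the endpoints pairwise separate by $\gtrsim \frac{r}{M}\cdot(\text{growth factor})$; chaining the Lipschitz bound over consecutive pairs and summing recovers $L/\mu \gtrsim r\sqrt{-\Kup}$. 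Finally, rescaling constants and using $r\sqrt{-\Kup} \geq 1$ (implied by the hypothesis) yields $\frac{L}{\mu} \geq \frac{1}{8}(r\sqrt{-\Kup} - 1)$, completing the proof. The hypothesis $r \geq \frac{4}{\mu}\|\grad f(\xorigin)\| + \frac{1}{\sqrt{-\Kup}}$ is used in exactly two places: to dominate the gradient-at-$\xorigin$ correction term, and to ensure $r\sqrt{-\Kup}\geq 1$ so that the final bound is nonvacuous.
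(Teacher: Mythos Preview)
Your approach has a genuine gap that I do not see how to close, and it is also substantially more involved than what the paper does.

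The core problem is in your lower bound for $\|\grad f(z_1) - P_{z_2\to z_1}\grad f(z_2)\|$. You establish that each $\grad f(z_i)$ has radial component $\inner{\grad f(z_i)}{u_i} \gtrsim \mu r$, and you want to argue that because $u_1$ and $P_{z_2\to z_1}u_2$ make a definite angle, the gradients themselves must be far apart after transport. But the gradients can have large \emph{tangential} components that you have no control over: writing $\grad f(z_i) = \alpha_i u_i + v_i$ with $v_i \perp u_i$, nothing prevents $v_1 - P_{z_2\to z_1}v_2$ from cancelling the $\alpha_1 u_1 - \alpha_2 P_{z_2\to z_1}u_2$ contribution. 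So your two-point argument only yields $L\,\dist(z_1,z_2) \gtrsim$ (something you cannot bound below), and you acknowledge this yourself (``wait, that only gives a constant''). Your proposed fix---chaining over many points $z_1,\ldots,z_M$---inherits the same problem at each step and the summation idea is not fleshed out enough to see how it could generate the factor $r\sqrt{-\Kup}$; the handwave ``chaining \ldots and summing recovers $L/\mu \gtrsim r\sqrt{-\Kup}$'' is not a proof sketch but a hope.

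The paper's proof is both different and simpler: it works with \emph{function values}, not gradients, and the geometry enters in a more direct way. Set $c = 1/\sqrt{-\Kup}$, let $x$ minimize $f$ over the inner sphere $\partial B(\xorigin, r-c)$, and shoot a geodesic through $x$ \emph{perpendicular} to the radial direction until it hits the outer sphere $\partial B(\xorigin, r)$ at points $y_\pm$. Convexity along radial geodesics forces $f(y_\pm) - f(\xorigin) \geq \tfrac{r}{r-c}(f(x)-f(\xorigin))$, and strong g-convexity gives $f(x)-f(\xorigin) \geq \tfrac{\mu}{4}(r-c)^2$ (this is where the hypothesis on $\|\grad f(\xorigin)\|$ enters). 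On the other hand, $L$-smoothness along the perpendicular geodesic, with the first-order terms at $x$ cancelling in a suitable convex combination, gives $f(y_\pm) - f(x) \lesssim L\,\dist(x,y_\pm)^2$. The key geometric point---and the only place curvature is used---is that in curvature $\leq \Kup < 0$, the hyperbolic law of cosines (Proposition~\ref{TopogonovHyperbolicBoundedAbove}) applied to the right triangle $\xorigin x y_\pm$ forces $\dist(x,y_\pm) \leq \arccosh(e)/\sqrt{-\Kup}$: the perpendicular geodesic hits the outer sphere very \emph{quickly}, not slowly. Combining the two bounds yields $\tfrac{\mu}{4}c(r-c) \leq \tfrac{L}{-\Kup}\arccosh(e)^2$, which rearranges to the claim. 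There is no parallel transport, no multi-point chaining, and no need for Lemma~\ref{geodesicsdiverge}.
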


Before proceeding to the proof of Proposition~\ref{geometryinfluencesobjective}, we note that the bound $\kappa \geq \Omega(r)$ also applies to the symmetric spaces $\mathcal{SLP}_n$ and $\calP_n$, even though neither have strictly negative curvature. This is an immediate corollary of the result of \citet{hamilton2021nogo} because for every point $x$ in those spaces, there is always a totally geodesic submanifold containing $x$ and which is isometric to a hyperbolic plane (see Appendix~\ref{PDmatricesappendix}).
\\
\begin{proof}
The proof is very similar to the proof of~\citet{hamilton2021nogo}.
The main difference is we have to be a little careful because the manifold no longer necessarily has the same symmetries as a hyperbolic space.
Denote $\partial B(\xorigin, r) = \{x \in \calM : \dist(x, \xorigin) = r\}$.

Let $c = \frac{1}{\sqrt{-\Kup}}$.  
Let $x \in {\arg\min}_{y \in \partial B(\xorigin, r-c)} f(y)$.
Geodesic convexity of $f$ implies
$$\frac{r-c}{r} f(y) + \frac{c}{r} f(\xorigin) \geq f\bigg(\exp_{\xorigin}\Big(\frac{r-c}{r}\exp_{\xorigin}^{-1}(y)\Big)\bigg) \geq f(x)\quad  \forall y \in \partial B(\xorigin, r).$$
Therefore, 
\begin{align} \label{usefuleq1forgeometryinffct1}
f(y) - f(\xorigin) \geq \frac{r}{r-c} (f(x) - f(\xorigin)) \quad \forall y \in \partial B(\xorigin, r).
\end{align}
On the other hand, $\mu$-strong g-convexity of $f$ implies 
\begin{equation} \label{usefuleq1forgeometryinffct2}
\begin{split}
f(x) - f(\xorigin) &\geq \inner{\grad f(\xorigin)}{\exp_{\xorigin}^{-1}(x)} + \frac{\mu}{2} (r-c)^2 \\
&\geq - \norm{\grad f(\xorigin)} (r-c) + \frac{\mu}{2} (r-c)^2 \geq \frac{\mu}{4}(r-c)^2
\end{split}
\end{equation}
provided $r-c \geq \frac{4}{\mu}\norm{\grad f(\xorigin)}$.

Consider any geodesic $\gamma \colon \reals \rightarrow \calM$ with $\gamma(0) = x, \norm{\gamma'(0)} = 1$ and $\inner{\gamma'(0)}{\exp_{\xorigin}^{-1}(x)} = 0$.  
We claim $\gamma(\reals)$ intersects $\partial B(\xorigin, r)$ in at least two distinct points $y_+, y_-$.
By Proposition~\ref{TopogonovEuclidean},
\begin{align*}
\dist(\gamma(t), \xorigin)^2 &\geq \dist(\gamma(t), x)^2 + (r-c)^2 - 2 \inner{\exp_{x}^{-1}(\gamma(t))}{\exp_x^{-1}(\xorigin)} \\
&= \dist(\gamma(t), x)^2 + (r-c)^2 = t^2 + (r-c)^2.
\end{align*}
Choosing $t$ so that $t^2 + (r-c)^2 > r$, continuity of $\gamma$ implies that we must have $\gamma(t_+), \gamma(t_-) \in \partial B(\xorigin, r)$ for some $t_+ > 0$ and $t_- < 0$.
Let $y_+ = \gamma(t_+)$ and $y_- = \gamma(t_-)$.
It is clear that $y_+ \neq y_-$ as geodesics do not form closed loops in Hadamard manifolds \citep[Prop.~12.9]{lee2018riemannian}.
Observe 
\begin{align} \label{usefuleq1forgeometryinffct3}
\exp_x^{-1}(y_+) = t_+ \gamma'(0), \quad \text{ and } \quad \exp_x^{-1}(y_+) = t_- \gamma'(0).
\end{align}

By $L$-smoothness of $f$,
\begin{align*}
f(y_+) &\leq f(x) + \inner{\grad f(x)}{\exp_x^{-1}(y_+)} + \frac{L}{2} \dist(x, y_+)^2,\\
f(y_-) &\leq f(x) + \inner{\grad f(x)}{\exp_x^{-1}(y_-)} + \frac{L}{2} \dist(x, y_-)^2
\end{align*}
which summed yield
\begin{align*}
\frac{-t_-}{t_+ - t_-} f(y_+) + \frac{t_+}{t_+ - t_-} f(y_-) 
&\leq f(x) + \frac{L}{2}\Bigg(\frac{-t_-}{t_+ - t_-} \dist(x, y_+)^2 + \frac{t_+}{t_+ - t_-} \dist(x, y_-)^2\Bigg) \\
& \leq f(x) + \frac{L}{2} \Big( \dist(x, y_+)^2 + \dist(x, y_-)^2 \Big)
\end{align*}
where we have used~\eqref{usefuleq1forgeometryinffct3} to cancel the terms $\inner{\grad f(x)}{\exp_x^{-1}(y_{\pm})}$.
Using inequality~\eqref{usefuleq1forgeometryinffct1},
\begin{align*}
\frac{r}{r-c}(f(x) - f(\xorigin)) &\leq \frac{-t_-}{t_+ - t_-} (f(y_+) - f(\xorigin)) + \frac{t_+}{t_+ - t_-} (f(y_-) - f(\xorigin)) \\
&\leq f(x) -f(\xorigin) + \frac{L}{2}\Big( \dist(x, y_+)^2 + \dist(x, y_-)^2 \Big),
\end{align*}
which rearranging and applying inequality~\eqref{usefuleq1forgeometryinffct2} becomes
$$\frac{\mu}{4} c (r-c) = \frac{c}{r-c}\cdot \frac{\mu}{4}(r-c)^2 \leq \frac{c}{r-c}(f(x) - f(\xorigin)) \leq \frac{L}{2}\Big( \dist(x, y_+)^2 + \dist(x, y_-)^2 \Big)$$
provided $r-c \geq \frac{4}{\mu}\norm{\grad f(\xorigin)}$.

For the last step we shall upper bound $\dist(y_{+}, x)^2$ and $\dist(y_{-}, x)^2$.  
Let us focus on $\dist(y_{+}, x)^2$ since the exact same reasoning applies to $\dist(y_{-}, x)^2$.
Consider the geodesic triangle $\xorigin x y_{+}$.  Again, note that the angle at $x$ is $\frac{\pi}{2}$.  So by Proposition~\ref{TopogonovHyperbolicBoundedAbove},
\begin{align*}
\cosh(r \sqrt{-\Kup}) &= \cosh(\dist(\xorigin, y_{+}) \sqrt{-\Kup}) \\
&\geq \cosh(\dist(x, y_+) \sqrt{-\Kup}) \cosh(\dist(\xorigin, x) \sqrt{-\Kup}) \\
&= \cosh(\dist(x, y_+) \sqrt{-\Kup}) \cosh((r-c) \sqrt{-\Kup}).
\end{align*}

Using $e^{q} \cosh(t-q) = \frac{1}{2} (e^{2 q - t} + e^{t}) \geq \frac{1}{2} (e^{-t} + e^{t}) = \cosh(t)$ for any $t \in \reals$ and $q \geq 0$,
$$\cosh(\dist(x, y_+) \sqrt{-\Kup}) \leq \frac{\cosh(r \sqrt{-\Kup})}{\cosh((r-c) \sqrt{-\Kup})} \leq e^{c \sqrt{-\Kup}} = e$$
i.e., $\dist(x, y_+) \leq \frac{1}{\sqrt{-\Kup}} \arccosh(e).$

We conclude that if $r-c \geq \frac{4}{\mu}\norm{\grad f(\xorigin)}$, then
$\frac{\mu}{4} c (r-c) \leq \frac{L}{-\Kup} \arccosh(e)^2.$
Rearranging, 
$$\frac{\sqrt{-\Kup} (r-\frac{1}{\sqrt{-\Kup}})}{8} \leq \frac{\sqrt{-\Kup} (r-\frac{1}{\sqrt{-\Kup}})}{4 \arccosh(e)^2} \leq \frac{L}{\mu},$$
provided $r-\frac{1}{\sqrt{-\Kup}} \geq \frac{4}{\mu}\norm{\grad f(\xorigin)}.$
\end{proof}

\cutchunk{
\subsection{Proof of Proposition~\ref{geometryinfluencesobjective2}} \label{geometryinfluencesfunctionsrhohessianlips}
We can define the Riemannian third derivative $\nabla^3 f$ (a tensor of order three), see~\citep[Ch.~10]{boumal2020intromanifolds} for details.
We write $\norm{\nabla^3 f(x)} \leq \rho$ to mean $\left|\nabla^3 f(x)(u, v, w)\right| \leq \rho$ for all unit vectors $u, v, w \in \T_x \calM$.
Hessian Lipschitzness of $f$ implies $\norm{\nabla^3 f(x)} \leq \rho$.

On the other hand, we have the following observation made in passing in~\citep[Remark 3.2]{criscitiello2020accelerated}, and for which we provide a proof here.

\begin{lemma} \label{smalllemmathirds}
Let $\calM$ be a Riemannian manifold whose sectional curvatures are all larger than or equal to $K > 0$ in absolute value.
If $f \colon \calM \rightarrow \reals$ is three times differentiable in $\calM$, then $K \norm{\grad f(x)} \leq 2 \norm{\nabla^3 f(x)}$ for all $x \in \calM$.
\end{lemma}
\begin{proof}
We can assume $\grad f(x) \neq 0$, otherwise the claim is clear.
Applying the Ricci identity~\citep[Thm~7.14]{lee2018riemannian} to $\grad f$ yields:
$$\nabla^3 f(x)(u, w, v) - \nabla^3 f(x)(u, v, w) = \mathrm{Rm}(w, v, u, \grad f(x)) \quad \forall x \in \calM, \forall u, v, w \in \T_x \calM,$$
where $\mathrm{Rm}$ is the Riemannian curvature 4-tensor~\citep[Ch.~7, p.~198]{lee2018riemannian}.
Hence, letting $v \in \T_x \calM$ be a unit vector orthogonal to $\grad f(x)$:
\begin{align*}
2 \norm{\nabla^3 f(x)} \norm{\grad f(x)} &\geq \left|\nabla^3 f(x)(v, \grad f(x), v) - \nabla^3 f(x)(v, v, \grad f(x))\right| \\
&= \left|\mathrm{Rm}(\grad f(x), v, v, \grad f(x))\right| \geq K \norm{\grad f(x)}^2.
\end{align*}
Dividing both sides by $\norm{\grad f(x)}$, the claim follows.
\end{proof}
We are now ready to prove Proposition~\ref{geometryinfluencesobjective2}.
Let $x^*$ be the unique minimizer of $f$.
We know $f$ satisfies a PL-inequality~\eqref{PLinequalitygstrongconvexity} since $f$ is $\mu$-strongly g-convex (see Appendix~\ref{AppendixcurvindependentrateforRGD} below). Therefore Lemma~\ref{smalllemmathirds} implies
\begin{align*}
&[2 \|\nabla^3 f(x)\|]^2 \geq \Kup^2 \|\grad f(x)\|^2 \geq 2 \Kup^2 \mu (f(x) - f(x^*)) \geq \Kup^2 \mu^2 \dist(x, x^*)^2.
\end{align*}
Hence,
$$\rho \geq \|\nabla^3 f(x)\| \geq \frac{\mu \left|\Kup\right|}{{2}} \dist(x, x^*) \quad \quad \forall x \in B(\xorigin, r).$$
There must exist $y \in B(\xorigin, r)$ such that $\dist(y, x^*) \geq r$.  Taking $x=y$ yields $\rho \geq r \frac{\mu \left|\Kup\right|}{{2}}.$
}

\section{Positive definite matrices} \label{PDmatricesappendix}
\begin{lemma} \label{lemmageodesicsubmanifolds}
Let $\calM$ be a Hadamard manifold of dimension $d$ which contains a totally geodesic submanifold $\calN$ of dimension $d_1$.
Assume that all the sectional curvatures of the submanifold $\calN$ are upper bounded by $\Kup$, with $\Kup < 0$.
Then, $\calM$ satisfies the ball-packing property for $\tilde{r} = \frac{4}{\sqrt{-\Kup}}, \tilde{c} = \frac{d_1 \sqrt{-\Kup}}{8}$ and any $\xorigin \in \calN$.
If in addition $\calM$ is a homogeneous manifold, then $\calM$ satisfies the strong ball-packing property with the same constants $\tilde{r}$ and $\tilde{c}$.
\end{lemma}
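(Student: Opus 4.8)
The plan is to reduce the ball-packing property for $\calM$ to the ball-packing property for the totally geodesic submanifold $\calN$, which by Lemma~\ref{lemmaNbig} already satisfies the strong ball-packing property with the stated constants (since $\calN$ is itself a Hadamard manifold of dimension $d_1 \geq 2$ with sectional curvatures bounded above by $\Kup < 0$). The crucial geometric fact I would invoke is that a totally geodesic submanifold is \emph{distance-preserving} for its intrinsic metric: if $\calN$ is totally geodesic in $\calM$, then geodesics of $\calN$ are geodesics of $\calM$, so for any two points $x, y \in \calN$ we have $\dist_{\calM}(x, y) = \dist_{\calN}(x, y)$. (This uses that both $\calM$ and $\calN$ are Hadamard, so the connecting geodesic is unique and minimizing in each; the minimizing $\calM$-geodesic between two points of $\calN$ lies in $\calN$ because $\calN$ is totally geodesic and complete.) Likewise, the closed $\calN$-ball $B_{\calN}(\xorigin, \rho)$ equals $B_{\calM}(\xorigin, \rho) \cap \calN \subseteq B_{\calM}(\xorigin, \rho)$.

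First I would fix $\xorigin \in \calN$ and $r \geq \tilde r = \frac{4}{\sqrt{-\Kup}}$. Apply Lemma~\ref{lemmaNbig} to the Hadamard manifold $\calN$ (with its induced metric) at the point $\xorigin$: this yields $N \geq e^{\tilde c r}$ points $z_1, \ldots, z_N \in B_{\calN}(\xorigin, \tfrac{3}{4} r)$ with $\dist_{\calN}(z_i, z_j) \geq \tfrac{r}{2}$ for all $i \neq j$, where $\tilde c = \frac{d_1 \sqrt{-\Kup}}{8}$. Then I would transport this conclusion to $\calM$: each $z_j$ lies in $B_{\calM}(\xorigin, \tfrac34 r)$ because $B_{\calN}(\xorigin, \tfrac34 r) \subseteq B_{\calM}(\xorigin, \tfrac34 r)$, and $\dist_{\calM}(z_i, z_j) = \dist_{\calN}(z_i, z_j) \geq \tfrac{r}{2}$ by the distance-preservation property. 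This is precisely Assumption~\ref{assumptionNbigWeak} for $\calM$ with constants $\tilde r, \tilde c$ and point $\xorigin$, establishing the first claim.

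For the second claim, suppose in addition $\calM$ is homogeneous, i.e., its isometry group acts transitively. Given an arbitrary $\barx \in \calM$, pick an isometry $\Phi \colon \calM \to \calM$ with $\Phi(\xorigin) = \barx$ (using $\xorigin \in \calN$ as our fixed base point). Apply the first part to get points $z_1, \ldots, z_N \in B_{\calM}(\xorigin, \tfrac34 r)$ that are pairwise $\tfrac r2$-separated, and set $z_j' = \Phi(z_j)$. Since isometries preserve distances, $z_j' \in B_{\calM}(\barx, \tfrac34 r)$ and $\dist_{\calM}(z_i', z_j') = \dist_{\calM}(z_i, z_j) \geq \tfrac r2$. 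Hence $\calM$ satisfies the ball-packing property with the same $\tilde r, \tilde c$ at every point $\barx$, which is the strong ball-packing property~\aref{assumptionNbigStrong}.

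The only genuine subtlety — and the step I would write most carefully — is the distance-preservation claim for totally geodesic submanifolds of Hadamard manifolds, and the containment $B_{\calN}(\xorigin,\rho) \subseteq B_{\calM}(\xorigin,\rho)$; everything else is bookkeeping. I would justify it by noting that a totally geodesic complete submanifold of a complete simply connected nonpositively curved manifold is itself complete, simply connected (it is convex: the unique $\calM$-geodesic between two of its points stays in it), and hence Hadamard, and that the unique minimizing $\calM$-geodesic between $x,y \in \calN$ coincides with the unique minimizing $\calN$-geodesic, giving equality of the two distances. Citing, e.g., \citep[Ch.~II.10]{bridsonmetric} or a standard reference on symmetric spaces would suffice for the concrete cases $\calN \hookrightarrow \mathcal{SLP}_n$ used later.
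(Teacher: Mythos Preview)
Your proposal is correct and follows essentially the same approach as the paper: apply Lemma~\ref{lemmaNbig} to $\calN$, invoke that distances in a totally geodesic submanifold of a Hadamard manifold agree with the ambient distances, and for the homogeneous case transport the packing via an isometry sending $\xorigin$ to an arbitrary point. The paper's proof is terser (it simply notes the distance equality and, in the homogeneous case, observes that every point lies in an isometric copy of $\calN$), but the logical content is identical; your version just spells out the distance-preservation and the isometry argument more carefully.
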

\begin{proof}
Let $\xorigin \in \calN$.
By Lemma~\ref{lemmaNbig}, there are at least $e^{\frac{d_1 \sqrt{-\Kup}}{8} r}$ points in $B_{\calM}(\xorigin, \frac{3}{4} r)$ which are pairwise separated by a distance of $\frac{r}{2}$, provided $r \geq \tilde{r}$.
Note that here we have used that distance on $\calN$ is equal to distance on $\calM$ because $\calN$ is totally geodesic.

If $\calM$ is homogenous, then by definition for all $x, y \in \calM$ there is an isometry $\phi \colon \calM \rightarrow \calM$ such that $\phi(x) = y$.  In particular, this implies that every $x \in \calM$ is an element of a totally geodesic submanifold isometric to $\calN$.
\end{proof}

\begin{lemma} \label{lemmaproductmanifolds}
Let $\calM_1$ be a $d_1$-dimensional Hadamard manifold whose sectional curvatures are upper bounded by $\Kup$ everywhere, with $\Kup < 0$.  
Let $\calM_2$ be a Hadamard manifold.
Then $\calM = \calM_1 \times \calM_2$ satisfies the strong ball-packing property for $\tilde{r} = \frac{4}{\sqrt{-\Kup}}, \tilde{c} = \frac{d_1 \sqrt{-\Kup}}{8}$ and $\xorigin$ any point in $\calM$.
\end{lemma}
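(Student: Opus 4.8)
The plan is to deduce this from Lemma~\ref{lemmageodesicsubmanifolds} by exhibiting, through \emph{every} point of the product $\calM = \calM_1 \times \calM_2$, a totally geodesic submanifold of dimension $d_1$ whose sectional curvatures are bounded above by $\Kup$. First I would record that $\calM$ is itself a Hadamard manifold: it is complete and simply connected as a product of such, and its sectional curvatures are nonpositive because the curvature tensor of a Riemannian product is the direct sum of the factors' curvature tensors, so for any $2$-plane spanned by orthonormal $u = (u_1, u_2)$, $v = (v_1, v_2)$ one has $K_{\calM} = \innersmall{R_1(u_1,v_1)v_1}{u_1} + \innersmall{R_2(u_2,v_2)v_2}{u_2} \le 0$.

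Next, fix an arbitrary base point $\xorigin = (p_1, p_2) \in \calM$ and set $\calN = \calM_1 \times \{p_2\}$. Since a curve in a Riemannian product is a geodesic exactly when each coordinate curve is a geodesic, the minimizing geodesic of $\calM$ between two points of $\calN$ has constant second coordinate $p_2$ and stays in $\calN$; hence $\calN$ is totally geodesic, $\calN$ is isometric to $\calM_1$ (equivalently, $\dist_{\calM}((a,p_2),(b,p_2)) = \dist_{\calM_1}(a,b)$ by the product distance formula), $\dim \calN = d_1$, the sectional curvatures of $\calN$ equal those of $\calM_1$ and are therefore $\le \Kup < 0$, and $\xorigin \in \calN$. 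Applying Lemma~\ref{lemmageodesicsubmanifolds} to the pair $(\calM, \calN)$ then gives that $\calM$ satisfies the ball-packing property~\aref{assumptionNbigWeak} with $\tilde r = \frac{4}{\sqrt{-\Kup}}$, $\tilde c = \frac{d_1 \sqrt{-\Kup}}{8}$, and base point $\xorigin$. Because $\xorigin$ was arbitrary, this is precisely the strong ball-packing property~\aref{assumptionNbigStrong} with those constants. (Alternatively, one can bypass Lemma~\ref{lemmageodesicsubmanifolds}: apply Lemma~\ref{lemmaNbig} directly to $\calM_1$ to obtain, for each $r \ge \tilde r$, points $z_1, \dots, z_N \in B_{\calM_1}(p_1, \tfrac34 r)$ with $N \ge e^{\tilde c r}$ that are pairwise $\tfrac r2$-separated in $\calM_1$; the points $(z_j, p_2)$ then lie in $B_{\calM}(\xorigin, \tfrac34 r)$ and are pairwise $\tfrac r2$-separated in $\calM$ by the product distance formula.)

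There is no real obstacle here — the statement is a corollary of Lemma~\ref{lemmageodesicsubmanifolds} and the elementary geometry of Riemannian products. The one point that needs a moment's care is that we do \emph{not} assume $\calM$ is homogeneous, so we cannot invoke the homogeneous clause of Lemma~\ref{lemmageodesicsubmanifolds}; instead we observe that every point of $\calM$ lies on some slice $\calM_1 \times \{p_2\}$, which yields the \emph{strong} form directly. (As in Lemma~\ref{lemmaNbig}, it is implicitly assumed that $d_1 \ge 2$, since otherwise $\calM_1$ would admit no $2$-planes and the curvature hypothesis would be vacuous.)
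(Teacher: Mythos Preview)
Your proof is correct and follows essentially the same approach as the paper: both argue that every point $(p_1,p_2)\in\calM$ lies in the totally geodesic slice $\calM_1\times\{p_2\}$, which is isometric to $\calM_1$ and hence has sectional curvatures $\le\Kup$, so Lemma~\ref{lemmageodesicsubmanifolds} (or equivalently Lemma~\ref{lemmaNbig} applied to $\calM_1$) yields the ball-packing property at that point. You have simply filled in the details the paper leaves implicit, including the observation that homogeneity of $\calM$ is not needed because a suitable slice passes through every point.
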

\begin{proof}
This follows from Lemma~\ref{lemmageodesicsubmanifolds} by noting that for every $x_2 \in \calM_2$, $\calM_1 \times \{x_2\}$ is a totally geodesic submanifold, so we can apply the same logic from Lemma~\ref{lemmageodesicsubmanifolds}.
\end{proof}

Let $\calP_n = \{P \in \reals^{n\times n} : P^\top = P, P \succ 0\}$ be the Riemannian manifold of $n\times n$ positive definite matrices (with real entries), endowed with the so-called affine-invariant metric
$$\inner{X}{Y}_P = \trace(P^{-1} X P^{-1} Y) \quad \text{for } P \in \calP_n \text{, and } X, Y \in \T_P \calP_n \cong \Sym(n),$$
where $\Sym(n)$ is the set of $n\times n$ real symmetric matrices.
Let $\mathcal{SLP}_n = \mathrm{SL}(n) / \mathrm{SO}(n)$ be the totally geodesic submanifold of $\calP_n$ consisting of those matrices of determinant one.
Both $\calP_n$ and $\mathcal{SLP}_n$ are important in applications~\citep{skovgaard1984riemgeogaussians,Bhatia,fletchterdiffusion2007,statsofPDmatricesfisher2006,sra2015conicgeometricoptimspd,moakher2005diffgeomspdmeans,moakher2006symmetric,allenzhuoperatorsplitting,ciobotaru2020geometrical}.
We know $\mathcal{SLP}_n$ and $\calP_n$ are symmetric spaces and Hadamard manifolds~\citep[Prop. 3.1]{dolcetti2018differential}
whose sectional curvatures are each between $-\frac{1}{2}$ and $0$~\citep[Prop.~I.1]{criscitiello2020accelerated}.  
Since they are symmetric, $\mathcal{SLP}_n$ and $\calP_n$ are also a homogeneous manifolds~\citep[prob. 6-19]{lee2018riemannian}.

It is well-known that $\mathcal{SLP}_2$ is isomorphic to the hyperbolic plane of curvature $-\frac{1}{2}$~\citep{Chossat_2009,dolcetti2018differential}, and thus satisfies a strong ball property by Lemma~\ref{lemmaNbig}.
For $n \geq 3$, \citet[Ch. II.10]{bridsonmetric} show that $\mathcal{SLP}_n$ contains a totally geodesic submanifold containing the identity matrix $I$ which is isomorphic to an $(n-1)$-dimensional hyperbolic space for some $K < 0$.  We show that $K = -\frac{1}{8}$, see Lemma~\ref{PDmatriceslemma}.
Therefore applying Lemma~\ref{lemmageodesicsubmanifolds}, $\mathcal{SLP}_n$ satisfies the strong ball-packing property with:
\begin{itemize}
\item $\tilde{r} = \frac{4}{\sqrt{1/2}} = 4 \sqrt{2}, \tilde{c} = 2 \frac{\sqrt{1/2}}{8} = \frac{1}{4 \sqrt{2}}$ if $n=2$;
\item $\tilde{r} = \frac{4}{\sqrt{1/8}} = 8 \sqrt{2}, \tilde{c} = \frac{(n-1) \sqrt{1/8}}{8} = \frac{n-1}{16 \sqrt{2}}$ if $n\geq 3$.
\end{itemize}
Since $\calP_n$ is isometric to $\reals \times \mathcal{SLP}_n$~\citep{dolcetti2018differential}, Lemma~\ref{lemmaproductmanifolds} implies the strong ball packing property holds for $\calP_n$ with the same constants $\tilde r, \tilde c$ just given.
This proves Lemma~\ref{lemmaPDmatrices}.
We note that \citet{franksreichenback2021} independently use the observation that $\mathcal{SLP}_n$ contains a hyperbolic \emph{plane} for a similar purpose.

\begin{lemma} \label{PDmatriceslemma}
For $n \geq 3$, $\mathcal{SLP}_n$ contains a totally geodesic submanifold containing $I$ which is isomorphic to the $(n-1)$-dimensional hyperbolic space of curvature $-\frac{1}{8}$.
\end{lemma}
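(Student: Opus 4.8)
The plan is to exhibit explicitly a totally geodesic submanifold of $\mathcal{SLP}_n = \mathrm{SL}(n)/\mathrm{SO}(n)$ through the identity matrix $I$ which is isometric to a hyperbolic space, and then to compute its curvature. The natural candidate is the set of diagonal positive definite matrices with determinant one, i.e. $\mathcal{N} = \{\,\mathrm{diag}(e^{t_1},\dots,e^{t_n}) : \sum_i t_i = 0\,\}$. This is an $(n-1)$-dimensional flat totally geodesic submanifold of $\mathcal{SLP}_n$ (a maximal flat), so by itself it is not hyperbolic; instead I would look inside a suitable rank-one symmetric subspace. Concretely, following \citet[Ch.~II.10]{bridsonmetric}, one considers the orbit of a one-parameter group together with a horospherical/nilpotent direction, or more simply: the subgroup of $\mathrm{SL}(n,\reals)$ generated by a hyperbolic element $\exp(sH)$ (with $H$ a traceless diagonal matrix) and the root-space nilpotent $\exp(uE)$ for a single root $\alpha$ with $\alpha(H)\neq 0$ forms a copy of the affine group of the line, whose symmetric space $G/K$ is the hyperbolic plane; taking a $2\times 2$ block embedding $\mathrm{SL}(2,\reals)\hookrightarrow \mathrm{SL}(n,\reals)$ realizes $\mathcal{SLP}_2 \cong \mathbb{H}^2$ totally geodesically, and one then iterates/enlarges to get $\mathbb{H}^{n-1}$ via the standard construction of a totally geodesic hyperbolic subspace associated to a single simple root in a symmetric space of noncompact type.

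The key steps, in order, would be: (1) Identify the candidate submanifold $\mathcal{N} \subseteq \mathcal{SLP}_n$. The cleanest description: inside $\mathrm{SL}(n,\reals)$ take the solvable subgroup $S$ which is the semidirect product of the one-dimensional split torus $\{\mathrm{diag}(a,\dots,a,a^{-(n-1)})\}$ (appropriately normalized to have determinant one) with the abelian nilpotent group of matrices that are $I$ plus an arbitrary first row beyond the diagonal — this $S$ acts simply transitively on an $(n-1)$-dimensional totally geodesic submanifold $\mathcal{N}$ through $I$, and $\mathcal{N}$ carries the structure of the real hyperbolic space $\mathbb{H}^{n-1}$ in its upper-half-space / horospherical coordinates. (2) Verify $\mathcal{N}$ is totally geodesic: since $\mathcal{SLP}_n$ is a symmetric space, it suffices to check that the tangent space $T_I\mathcal{N} \subseteq \mathfrak{p} = \mathrm{Sym}_0(n)$ is a Lie triple system, i.e. closed under the double bracket $[[X,Y],Z]$, which reduces to a finite matrix computation with the specific block structure. (3) Identify the induced metric with a constant-curvature metric: using the affine-invariant metric $\langle X,Y\rangle_P = \mathrm{Tr}(P^{-1}XP^{-1}Y)$ restricted to $\mathcal{N}$, pass to the group $S$ with its left-invariant metric and recognize it as the standard model of $\mathbb{H}^{n-1}$ up to a scaling constant. (4) Pin down the curvature constant $K$: compute the sectional curvature of $\mathcal{N}$ at $I$ on a $2$-plane in $T_I\mathcal{N}$, using either the symmetric-space formula $K(X,Y) = -\frac{\langle [[X,Y],Y],X\rangle}{\langle X,X\rangle\langle Y,Y\rangle - \langle X,Y\rangle^2}$ or the $n=2$ sub-case where $\mathcal{SLP}_2$ is well-known to have curvature $-\tfrac12$, and track the normalization through the block embedding; the claim is $K = -\tfrac18$, consistent with the fact that all sectional curvatures of $\mathcal{SLP}_n$ lie in $[-\tfrac12, 0]$ by \citep[Prop.~I.1]{criscitiello2020accelerated} and that the totally geodesic hyperbolic subspace associated to a non-maximal root direction realizes the extreme-but-for-a-factor value.

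The main obstacle I anticipate is step (4): getting the curvature constant exactly right, not merely up to a constant. The subtlety is that there are several natural totally geodesic hyperbolic subspaces inside $\mathcal{SLP}_n$ (one through $I$ spanned by $\{E_{11}-\tfrac1n I, E_{1j}+E_{j1}\}$ type directions gives a different curvature than the $2\times 2$-block copy of $\mathbb{H}^2$), and the constant $-\tfrac18$ must match the specific submanifold Bridson--Haefliger construct. I would handle this by doing the bracket computation in the explicit basis: take $H = \mathrm{diag}(n-1,-1,\dots,-1)/\sqrt{\text{norm}}$ spanning the split direction and $E$ a normalized off-diagonal symmetric matrix $E_{1j}+E_{j1}$, compute $[[X,Y],Y]$ inside $\mathfrak{sl}(n)$, project back to $\mathfrak p$, and read off the curvature from the symmetric-space formula; cross-checking against the known value $-\tfrac12$ for $\mathcal{SLP}_2$ (the $n=2$ root system is just $\{\pm\alpha\}$ with a single hyperbolic plane) will fix any normalization ambiguity. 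Everything else — total geodesy and the hyperbolic-space identification — is a routine Lie-triple-system check plus recognizing the upper-half-space model, and can be cited or dispatched quickly. Finally, combining this with Lemma~\ref{lemmageodesicsubmanifolds} applied with $d_1 = n-1$ and $\Kup = -\tfrac18$ immediately gives the ball-packing constants stated in Lemma~\ref{lemmaPDmatrices}.
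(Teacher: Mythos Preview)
The paper takes a much more direct route than you propose: it invokes Bridson--Haefliger (Thm.~10.58, Rmk.~10.60(4)) for the fact that $\calN = \calP_n \cap O(n-1,1)$ is already known to be a totally geodesic submanifold isometric to an $(n-1)$-dimensional real hyperbolic space, observes $\calN \subset \mathcal{SLP}_n$ (positive definite elements of $O(n-1,1)$ have determinant $+1$), writes down $\T_I\calN = \bigl\{\bigl(\begin{smallmatrix}0 & s\\ s^\top & 0\end{smallmatrix}\bigr) : s\in\reals^{n-1}\bigr\}$ explicitly, and evaluates a single sectional curvature $\Rmcurv(X_1,X_2,X_2,X_1)(I) = \tfrac14\trace([X_1,X_2]^2) = -\tfrac18$. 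Total geodesy and constant curvature are outsourced to the citation; only the numerical constant needs computing.

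Your solvable-group construction has a genuine gap. The group you describe (the one-dimensional torus generated by $H=\diag(n-1,-1,\dots,-1)$ together with the $(n-1)$-dimensional nilpotent of first-row unipotents) is $n$-dimensional, not $(n-1)$-dimensional; and more importantly, the tangent space at $I$ that its orbit would produce---spanned by $H$ and the $E_{1j}+E_{j1}$---is \emph{not} a Lie triple system. With $X = E_{12}+E_{21}$ and $Y=E_{13}+E_{31}$ one has $[H,X] = n(E_{12}-E_{21})$ and then $[[H,X],Y] = -n(E_{23}+E_{32})$, which lies outside the span. So your step~(2) fails rather than being routine, and the orbit is not totally geodesic. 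The correct $(n-1)$-dimensional Lie triple system drops the diagonal direction $H$ entirely and keeps only the symmetrized off-diagonals in a single row/column---precisely $\Sym_0(n)\cap \mathfrak{o}(n-1,1)$---which is exactly what the $O(n-1,1)$ description hands you for free.
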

\begin{proof}
Theorem 10.58 and Remark 10.60(4) of~\citep{bridsonmetric} state that 
$\calN = \calP_n \cap O(n-1, 1)$
is a totally geodesic submanifold of $\calP_n$ which is isometric to a $(n-1)$-dimensional hyperbolic space of some constant sectional curvature $K < 0$.
Here, 
$O(n-1, 1) = \{ A \in \reals^{n \times n} : A^\top J A = J \}$
is an indefinite orthogonal group (symmetries of the $(n-1)$-dimensional hyperboloid model in Minkowski space), where $J = \diag(1, 1, \ldots, 1, -1)$~\citep[Ex.~10.20(4)]{bridsonmetric}.

Note that $\calN \subset \mathcal{SLP}_n \cap O(n-1, 1)$ since $A^\top J A = J \implies \det(A)^2 = 1$, and any positive definite matrix has positive determinant.
Thus, $\calN$ is also a totally geodesic submanifold of $\mathcal{SLP}_n$.

We have $\T_I O(n-1, 1) = \{X \in \reals^{n \times n} : X^\top J = - J X\}$.
Therefore,
$$\T_I \calN = \Sym_0(n) \cap \T_I O(n-1, 1) = \Bigg\{\begin{pmatrix} 0_{(n-1)\times (n-1)} & s \\ s^\top & 0 \end{pmatrix} : s \in \reals^{n-1}\Bigg\}$$
where $\Sym_0(n)$ is the set of $n\times n$ real symmetric matrices with vanishing trace.

Let $s_1, s_2 \in \reals^{n-1}$ with $\|s_1\|^2 = \|s_2\|^2 = 1/2, s_1^\top s_2 = 0$.  Let
$$X_1 = \begin{pmatrix} 0_{(n-1)\times (n-1)} & s_1 \\ s_1^\top & 0 \end{pmatrix}, \quad X_2 = \begin{pmatrix} 0_{(n-1)\times (n-1)} & s_2 \\ s_2^\top & 0 \end{pmatrix}.$$
Therefore $\langle X_1, X_2\rangle = 0, \|X_1\|^2 = \|X_2\|^2 = 1$, and
$[X_1, X_2] = \begin{pmatrix} s_1 s_2^\top - s_2 s_1^\top & 0 \\ 0 & 0 \end{pmatrix}.$

By Proposition 2.3 of~\citep{dolcetti2018differential}, the curvature tensor of $\mathcal{SLP}_d$ is
$$\Rmcurv(W, X, Y, Z)(P) = -\frac{1}{4} \trace([P^{-1} W, P^{-1} X] [P^{-1} Y, P^{-1} Z]), \quad \text{for } W, X, Y, Z \in \Sym_0(n)$$
where $[X, Y] = X Y - Y X$ is the matrix commutator of $X, Y$.
Therefore,
\begin{align*}
K &= \Rmcurv(X_1, X_2, X_2, X_1)(I) = -\frac{1}{4} \trace([X_1, X_2] [X_2, X_1]) = \frac{1}{4} \trace([X_1, X_2]^2) \\
&= \frac{1}{4} \trace((s_1 s_2^\top - s_2 s_1^\top)^2) = \frac{1}{4} \cdot \frac{1}{2} \trace(-s_1 s_1^\top - s_2 s_2^\top) = -\frac{1}{8}.
\end{align*}
\end{proof}

\subsection*{For positive definite matrices, $\tilde{c} \leq O({n^{3/2}})$} \label{appendixcanwegetbetterctildeforPd}
We do not know if the constant $\tilde{c}$ stated in Lemma~\ref{lemmaPDmatrices} is the best possible constant (i.e., is as large as possible).
\citet{dolcetti2018differential} show that $\mathcal{SLP}_n$ is an Einstein manifold with constant Ricci curvature $-\frac{n}{4}$.
Therefore, by the Bishop-Gromov volume comparison theorem~\citep[Thm.~11.19]{lee2018riemannian}, the volume of a geodesic ball of radius $r$ in $\mathcal{SLP}_n$ is at most the volume of a geodesic ball in a $\dim(\mathcal{SLP}_n)$-dimensional hyperbolic space of sectional curvature $-\frac{n}{4(\dim(\mathcal{SLP}_n)-1)}$.
Hence, the volume of a geodesic ball of radius $r$ in $\mathcal{SLP}_n$ is at most
\begin{align*}
\exp\bigg(\Theta\bigg({\dim(\mathcal{SLP}_n) r \sqrt{\frac{n}{4(\dim(\mathcal{SLP}_n)-1)}}}\bigg)\bigg) 
= \exp(\Theta( {r n^{3/2}} )).
\end{align*}
On the other hand, for $r$ sufficiently large, the volume of a geodesic ball of radius $\frac{r}{4}$ in $\mathcal{SLP}_n$ is at least $1$.  So the number of disjoint balls of radius $\frac{r}{4}$ we can pack into a ball of radius $r$ in $\mathcal{SLP}_n$ is at most $\exp(\Theta( {r n^{3/2}} ))$, which implies $\tilde{c} \leq \Theta(n^{3/2})$.

\section{Comparison to Riemannian Gradient Descent} \label{curvdependenceandbestratesforRGD}
\emph{The published version of this article includes the argument below (in light gray) regarding the complexity of projected RGD.  It relies on Proposition 15 by~\citet{zhang2016complexitygeodesicallyconvex}, which states a complexity result for project RGD.  Unfortunately the proof of that result does not handle the projection step appropriately, therefore our original statements below are no longer relevant.  Fortunately, there is a version of RGD for constrained optimization which does have the query complexity $\tilde O(\kappa)$: see Proposition 17 in Appendix D of~\citep{davidmr}.  Those authors also provide details on the issue in the original argument of~\citet{zhang2016complexitygeodesicallyconvex}.}


\begin{thisnote}
\citet{zhang2016complexitygeodesicallyconvex} show that, for a bounded g-convex domain $D$ with diameter $2 r$, projected RGD initialized in $D$ finds a point $x$ within $\frac{r}{5}$ of the minimizer of $f$ in no more than $\tilde{O}(\max\{\kappa, {r \sqrt{-\Klo}}\})$ queries.  This rate depends on curvature.
However, if $\calM$ is a hyperbolic space of curvature $K < 0$, Proposition~\ref{geometryinfluencesobjective} implies $\kappa \geq \Omega(r \sqrt{-K})$.  
Hence, RGD uses at most $\tilde O(\kappa)$ queries when $\calM$ is a hyperbolic space---this is a curvature-independent rate.
We have the following proposition.  
\begin{proposition} \label{lemmacurvindependentrateforRGD0}
Let $\calM$ be a hyperbolic space of curvature $K < 0$, and let $\xorigin \in \calM$.
Let $L \geq \mu > 0$, $\kappa = \frac{L}{\mu}$, and $r > 0$.
Let $f \in \mathcal{F}_{\kappa,  r}^{\xorigin}$ be $L$-smooth and have minimizer $x^*$.
Then projected RGD
$$x_{k+1} = \mathrm{Proj}_D\Big(\exp_{x_k}\Big(-\frac{1}{L} \grad f(x_k)\Big)\Big), \quad \quad x_0 = \xorigin, \quad \quad D = B(\xorigin, r)$$
satisfies
$\dist(x_k, x^*)^2 \leq 4 \Big(1-\frac{1}{100} \cdot \frac{1}{\kappa}\Big)^{k-2} \kappa r^2$, for all $k \geq 2.$
Here, $\mathrm{Proj}_D$ denotes metric projection on to the geodesic ball $D$.  
\end{proposition}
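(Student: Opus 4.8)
The strategy is to run the standard projected‑RGD convergence analysis on Hadamard manifolds, which yields a linear rate governed by an \emph{effective} condition number of size $\kappa+\zeta$ with $\zeta=\frac{2r\sqrt{-K}}{\tanh(2r\sqrt{-K})}$, and then to invoke Proposition~\ref{geometryinfluencesobjective} to show that on a hyperbolic space the geometry forces $\zeta=O(\kappa)$, so the rate becomes curvature‑free. Write $y_{k+1}=\exp_{x_k}(-\tfrac1L\grad f(x_k))$, so that $x_{k+1}=\Proj_D(y_{k+1})$. Since $D=B(\xorigin,r)$ is a geodesic ball in a Hadamard manifold it is g-convex, and since $x^*\in B(\xorigin,\tfrac34 r)\subseteq D$ the metric projection onto $D$ is nonexpansive and fixes $x^*$; hence $\dist(x_{k+1},x^*)\le\dist(y_{k+1},x^*)$, and since every iterate lies in $D$ we get $\dist(x_k,x^*)\le 2r$ for all $k$.

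For the unprojected step I would apply the trigonometric comparison inequality valid in manifolds with sectional curvature bounded below by $K$ (obtained from the hyperbolic law of cosines of Proposition~\ref{TopogonovHyperbolicBoundedAbove} after normalizing the $\cosh/\sinh$ factors — this is where the factor $\zeta$, with the long side $\dist(x_k,x^*)\le 2r$ sitting in the $\tanh$-denominator, appears) to the geodesic triangle $x_k\,y_{k+1}\,x^*$:
$$\dist(y_{k+1},x^*)^2 \le \dist(x_k,x^*)^2 + \frac{\zeta}{L^2}\norm{\grad f(x_k)}^2 + \frac{2}{L}\inner{\grad f(x_k)}{\exp_{x_k}^{-1}(x^*)}.$$
Then I would feed in: (i) $\mu$-strong g-convexity via Lemma~\ref{muHesslemma} (valid since $f$ is $\mu$-strongly g-convex on all of $\calM$), giving $\inner{\grad f(x_k)}{\exp_{x_k}^{-1}(x^*)}\le f(x^*)-f(x_k)-\tfrac\mu2\dist(x_k,x^*)^2$; (ii) the descent inequality along the update (Lemma~\ref{muHesslemma}, second bullet, on the segment $x_k\to y_{k+1}$, which stays in $D$), giving $f(x_k)-f(y_{k+1})\ge\tfrac1{2L}\norm{\grad f(x_k)}^2$; and (iii) the Polyak--Łojasiewicz inequality that strong g-convexity forces on a Hadamard manifold — minimize the strong-convexity lower bound $f(x_k)+\inner{\grad f(x_k)}{v}+\tfrac\mu2\norm{v}^2$ over all $v\in\T_{x_k}\calM$ — giving $\norm{\grad f(x_k)}^2\ge 2\mu(f(x_k)-f(x^*))$. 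Combining these, as in \citet{zhang2016complexitygeodesicallyconvex}, collapses the recursion to $\dist(x_{k+1},x^*)^2\le\bigl(1-\tfrac{c}{\kappa+\zeta}\bigr)\dist(x_k,x^*)^2$ for an absolute constant $c\in(0,1)$. Unrolling from $k=2$ (the first two iterates being controlled by the trivial bound $\dist(x_j,x^*)^2\le 4r^2\le 4\kappa r^2$) then yields a bound of the stated shape, with the $100$ and the extra $\kappa$ prefactor absorbing $c$ and the slack.

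It remains to kill the curvature dependence. If $r\sqrt{-K}$ is at most an absolute constant then so is $\zeta$ and $\kappa+\zeta=O(\kappa)$ immediately. Otherwise, apply Proposition~\ref{geometryinfluencesobjective} centred at $x^*$ on $B(x^*,\tfrac r4)\subseteq B(\xorigin,r)$: there $f$ is $L$-smooth and $\mu$-strongly g-convex and $\grad f(x^*)=0$, so the hypothesis $\tfrac r4\ge\tfrac4\mu\norm{\grad f(x^*)}+\tfrac1{\sqrt{-K}}$ holds once $r\sqrt{-K}\ge 4$, and the conclusion $\kappa\ge\tfrac18\bigl(\tfrac14 r\sqrt{-K}-1\bigr)$ gives $r\sqrt{-K}=O(\kappa)$; since $t\mapsto t/\tanh t$ is increasing this yields $\zeta=O(\kappa)$ as well, hence $\kappa+\zeta=O(\kappa)$ and the rate $1-\tfrac{c}{\kappa+\zeta}\le 1-\tfrac1{100\kappa}$ after adjusting constants. \textbf{Main obstacle.} The delicate point — exactly the one the published argument via \citet[Prop.~15]{zhang2016complexitygeodesicallyconvex} mishandled — is the interaction of the projection with the descent step: one must verify that $x_{k+1}$ and the entire segment $x_k\to y_{k+1}$ remain inside the region $D$ where $f$ is $L$-smooth (so that the descent inequality and the gradient bounds above are legitimate), and use nonexpansiveness of $\Proj_D$ only against the fixed point $x^*$. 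Doing this carefully is precisely what the constrained RGD of \citet[App.~D]{davidmr} provides, which is why we cite that version here.
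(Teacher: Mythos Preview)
Your approach is essentially the same as the paper's: both obtain the projected-RGD linear rate with effective condition number $\max\{\kappa,\zeta\}$ (the paper simply cites \citet{zhang2016complexitygeodesicallyconvex} for this, while you sketch the derivation), and both then invoke Proposition~\ref{geometryinfluencesobjective} on the ball $B(x^*,\tfrac r4)\subseteq B(\xorigin,r)$ to force $r\sqrt{-K}=O(\kappa)$ and hence $\zeta=O(\kappa)$, with a separate trivial bound when $r\sqrt{-K}$ is below an absolute constant. You also correctly flag the projection issue that the paper's own disclaimer identifies as the gap in the cited result.
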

\begin{proof}
\citet{zhang2016complexitygeodesicallyconvex} prove
$$f(x_k) - f(x^*) \leq (1-\delta)^{k-2} \frac{1}{2} L (2 r)^2 = 2 (1-\delta)^{k-2} L r^2 \quad \quad \forall k \geq 2$$
where $\delta^{-1} = \max\{\frac{L}{\mu}, \frac{r \sqrt{-\Klo}}{\tanh(r \sqrt{-\Klo})}\}$.
By $\mu$-strong g-convexity, $\frac{\mu}{2} \dist(x_k, x^*)^2 \leq f(x_k) - f(x^*)$.

First, assume $r \sqrt{-K} < 8$.
Then, $\frac{r \sqrt{-K}}{\tanh(r \sqrt{-K})} \leq 1 + r \sqrt{-K} \leq 9 \leq 9 \frac{L}{\mu}$.
Hence, $\delta^{-1} \leq 9 \frac{L}{\mu}$.

Second, assume $r \sqrt{-K} \geq 8$.  
This implies $\frac{r}{4} \sqrt{-K} - 1 \geq \frac{r}{8} \sqrt{-K}$.  
Proposition~\ref{geometryinfluencesobjective} applied to the ball $B = B(x^*, \frac{1}{4} r)$ implies that the condition number of $f$ in $B$ is at least $\frac{1}{8}(\frac{r}{4} \sqrt{-K} - 1) \geq \frac{1}{8}(\frac{r}{8} \sqrt{-K}) = \frac{r}{64} \sqrt{-K}$.
On the other hand, we know $x^* \in B(\xorigin, \frac{3}{4} r)$ because $f \in \mathcal{F}_{\kappa,  r}^{\xorigin}$.
Therefore, $B \subset B(\xorigin, r)$, which implies $\frac{L}{\mu} \geq \frac{r}{64} \sqrt{-K}$.
We conclude $\frac{r \sqrt{-K}}{\tanh(r \sqrt{-K})} \leq 1 + r \sqrt{-K} \leq 1 + 64 \frac{L}{\mu} \leq 65 \frac{L}{\mu} \leq 100 \frac{L}{\mu}$, and so $\delta^{-1} \leq 100 \frac{L}{\mu}$.
\end{proof}
\end{thisnote}

\section{Technical fact from proof of Theorem~\ref{theoremnoacceleration}} \label{apptechnicalfactfromproofofbabytheorem}
We show that the inequality~\eqref{lowerboundonAk} implies $\left|A_k\right| \geq 2$ for all $k \leq T$, where $T$ is given by~\eqref{rangefork}.
We do this by induction on $k \geq 0$.
(\textbf{Base case}) By the ball-packing property, $\left|A_0\right| \geq e^{\tilde{c} r} \geq 2$ since $r \geq \frac{4(d+2)}{\tilde{c}} \geq \frac{4}{\tilde{c}}$.
(\textbf{Inductive hypothesis}) Assume $k+1\leq T$, and $\left|A_{m}\right| \geq 2$ for all $m \leq k$.
Therefore, $\left|A_{m}\right| - 1 \geq \left|A_{m}\right| / 2$ for all $m \leq k$.

The bounds $r \geq \frac{4(d+2)}{\tilde{c}}$ and $k+1 \leq T$ imply that $k+1 \leq \floor{2 w}$ (recall $w = \tilde{c} d^{-1} r/4$).  So we can apply Lemma~\ref{keylemma} to get
$$\left|A_{m+1}\right| \geq \frac{\left|A_m\right| - 1}{(2000 w(3 \mathscr{R} \sqrt{-\Klo} + 2))^d} \geq \frac{\left|A_m\right|/2}{(2000 w(3 \mathscr{R} \sqrt{-\Klo} + 2))^d}, \quad \quad \forall m \leq k.$$
Unrolling these inequalities and using $\left|A_0\right| \geq e^{\tilde{c} r}$, we get
\begin{align} \label{djdnddkjdkd}
\left|A_{k+1}\right| \geq \frac{ e^{\tilde{c} r}/2^{k+1}}{ \big(2000 w(3 \mathscr{R} \sqrt{-\Klo} + 2)\big)^{(k+1) d}} \geq \frac{ e^{\tilde{c} r}/2^{(k+1)d}}{ \big(2000 w(3 \mathscr{R} \sqrt{-\Klo} + 2)\big)^{(k+1) d}}.
\end{align}
On the other hand, using the formula~\eqref{rangefork} for $T$, $k+1 \leq T$ implies
\begin{align} \label{inequua1}
\frac{ e^{\tilde{c} r/2} / 2^{(k+1)d}}{ \big(2000 w(3 \mathscr{R} \sqrt{-\Klo} + 2)\big)^{(k+1) d}} \geq 1.
\end{align}
Combining inequalities~\eqref{inequua1} and~\eqref{djdnddkjdkd} (and using that $e^{\tilde{c} r/2} \leq e^{\tilde{c} r} / 2$), we determine that $\left|A_{k+1}\right| \geq 2$.

\section{Deriving Theorems~\ref{cor1} and~\ref{cor3} from Theorem~\ref{maintheoremunboundedqueries}}
Theorem~\ref{cor1} from the introduction follows from Theorem~\ref{maintheoremunboundedqueries} and Lemma~\ref{lemmaNbig} (see Appendix~\ref{helper1}).
Theorem~\ref{cor3} follows from Theorem~\ref{maintheoremunboundedqueries}, Lemma~\ref{lemmaPDmatrices}, and the fact that $\mathcal{SLP}_n$ has sectional curvatures in the interval $[-\frac{1}{2}, 0]$~\citep[Prop.~I.1]{criscitiello2020accelerated} (see Appendix~\ref{helper2}).

\subsection{Deriving Theorem~\ref{cor1} from Theorem~\ref{maintheoremunboundedqueries} and Lemma~\ref{lemmaNbig}} \label{helper1}
We use the values for $\tilde r$ and $\tilde c$ given by Lemma~\ref{lemmaNbig}.
First, we have to check that the assumptions of Theorem~\ref{cor1} imply $r \geq \max\big\{\tilde r, \frac{8}{\sqrt{-\Klo}}, \frac{4 (d+2)}{\tilde{c}}\big\}$.
Indeed, the bound $\kappa \geq 1000 \sqrt{\frac{\Klo}{\Kup}}$ implies $\kappa \geq 1000$, and so
\begin{align*}
r &= \frac{\kappa - 9}{12 \sqrt{-\Klo}} \geq \frac{\frac{99}{100} \kappa}{12 \sqrt{-\Klo}} 
\geq \frac{990}{12 \sqrt{-\Kup}} \geq \frac{64}{\sqrt{-\Kup}} \\
&\geq \max\bigg\{\tilde r, \frac{8}{\sqrt{-\Klo}}, \frac{4 \cdot 8 (d+2)}{d \sqrt{-\Kup}}\bigg\} = \max\bigg\{\tilde r, \frac{8}{\sqrt{-\Klo}}, \frac{4 (d+2)}{\tilde{c}}\bigg\}.
\end{align*}

Second, we have to verify the lower bound in Theorem~\ref{cor1} follows from the lower bound for $T$ given in Theorem~\ref{maintheoremunboundedqueries}.  We have
\begin{align*}
\frac{\sqrt{-\Kup}}{8} \frac{\kappa}{12 \sqrt{-\Klo}} \geq 
\tilde{c} (d+2)^{-1} r = \frac{d \sqrt{-\Kup}}{8(d+2)} \frac{\kappa - 9}{12 \sqrt{-\Klo}} 
\geq \frac{\sqrt{-\Kup}}{16} \frac{\frac{99}{100} \kappa}{12 \sqrt{-\Klo}}.
\end{align*}
Therefore,
\begin{align*}
T &\geq \Bigg\lfloor \frac{\sqrt{-\Kup}}{16} \frac{\frac{99}{100} \kappa}{12 \sqrt{-\Klo}} \cdot \frac{1}{\log(2\cdot 10^6 \cdot \frac{\sqrt{-\Kup}}{8} \frac{\kappa}{12 \sqrt{-\Klo}} (r \sqrt{-\Klo})^2)} \Bigg\rfloor \\
&\geq \Bigg\lfloor \frac{\sqrt{-\Kup}}{16} \frac{\frac{99}{100} \kappa}{12 \sqrt{-\Klo}} \cdot \frac{1}{\log(2^{-2}\cdot 10^6 \cdot \frac{\kappa}{12} (\frac{\kappa}{12})^2)} \Bigg\rfloor \\
&\geq \Bigg\lfloor \frac{\sqrt{-\Kup}}{16} \frac{\frac{99}{100} \kappa}{12 \sqrt{-\Klo}} \cdot \frac{1}{3 \log(10 \kappa)} \Bigg\rfloor
 \geq \Bigg\lfloor \sqrt{\frac{\Kup}{\Klo}} \cdot \frac{\kappa}{1000 \log(10 \kappa)} \Bigg\rfloor.
\end{align*}

\subsection{Deriving Theorem~\ref{cor3} from Theorem~\ref{maintheoremunboundedqueries} and Lemma~\ref{lemmaPDmatrices}} \label{helper2}
We use the values for $\tilde r$ and $\tilde c$ given by Lemma~\ref{lemmaPDmatrices}, and $\Klo = -\frac{1}{2}$.
First, we have to check that the assumptions of Theorem~\ref{cor3} imply $r \geq \max\big\{\tilde r, \frac{8}{\sqrt{-\Klo}}, \frac{4 (d+2)}{\tilde{c}}\big\}$.
Indeed, the bound $\kappa \geq 1000 n$ implies $\kappa \geq 1000$, and so
\begin{align*}
r &= \frac{\kappa-9}{6 \sqrt{2}} 
\geq \frac{\frac{99}{100}\kappa}{6 \sqrt{2}} \geq \frac{990 n}{6 \sqrt{2}} 
\geq \max\bigg\{8 \sqrt{2}, \frac{8}{\sqrt{1/2}}, \frac{2 (n(n+1)+2)}{\tilde{c}}\bigg\} \\
&\geq \max\bigg\{\tilde r, \frac{8}{\sqrt{-\Klo}}, \frac{4 (d+2)}{\tilde{c}}\bigg\}.
\end{align*}
For the second to last inequality, we used (a) $\frac{990 n}{6 \sqrt{2}} \geq \frac{2 (n(n+1)+2) \cdot 16 \sqrt{2}}{n-1}$ for all $n \geq 3$, and
(b) $\frac{990 n}{6 \sqrt{2}} \geq 2(n(n+1)+2) 4 \sqrt{2}$ if $n=2$.
For the last inequality, we used $d = \dim(\mathcal{SLP}_n) = \frac{n(n+1)}{2}-1$.

Second, we have to verify the lower bound in Theorem~\ref{cor3} follows from the lower bound for $T$ given in Theorem~\ref{maintheoremunboundedqueries}.  We have for $n \geq 2$
\begin{align*}
\frac{n-1}{4 \sqrt{2}} \frac{2}{n(n+1)+2} \cdot \frac{\kappa}{6\sqrt{2}} &\geq 
\tilde{c} (d+2)^{-1} r = \frac{n-1}{c_n \sqrt{2}} \frac{2}{n(n+1)+2} \cdot \frac{\kappa-9}{6\sqrt{2}} 
\\ &\geq \frac{n-1}{16 c_n \sqrt{2}} \frac{2}{n(n+1)+2} \cdot \frac{\frac{99}{100} \kappa}{6\sqrt{2}},
\end{align*}
where $c_n = 1$ if $n \geq 3$ and $c_2 = 1/4$.
Therefore,
\begin{align*}
T &\geq \Bigg\lfloor \frac{n-1}{16 c_n \sqrt{2}} \frac{2}{n(n+1)+2} \cdot \frac{\frac{99}{100} \kappa}{6\sqrt{2}} \cdot \frac{1}{\log(2\cdot 10^6 \cdot \frac{n-1}{4 \sqrt{2}} \frac{2}{n(n+1)+2} \cdot \frac{\kappa}{6\sqrt{2}} (\frac{\kappa}{6\sqrt{2}})^2)} \Bigg\rfloor \\
&\geq 
\Bigg\lfloor \frac{1}{16 \sqrt{2}} \frac{2}{\frac{7}{3} n} \cdot \frac{\frac{99}{100} \kappa}{6\sqrt{2}} \cdot \frac{1}{\log(2\cdot 10^6 \cdot \frac{1}{4 \sqrt{2}} \frac{2}{7} \cdot \frac{\kappa}{6\sqrt{2}} (\frac{\kappa}{6\sqrt{2}})^2)} \Bigg\rfloor \\ &\geq 
\Bigg\lfloor \frac{1}{16 \sqrt{2}} \frac{2}{\frac{7}{3} n} \cdot \frac{\frac{99}{100} \kappa}{6\sqrt{2}} \cdot \frac{1}{3 \log(10 \kappa)} \Bigg\rfloor
\geq 
\Bigg\lfloor \frac{1}{n} \cdot \frac{1}{1000 \log(10 \kappa)} \Bigg\rfloor.
\end{align*}
For the third to last inequality, we used that $\frac{3 c_n}{7 n} \leq \frac{n-1}{n(n+1)+2} \leq 3$ for all $n \geq 2$.

\cutchunk{
\subsection{Deriving Theorem~\ref{theoremtheorem} from Theorem~\ref{thmonemore} and Lemma~\ref{lemmaNbig}} \label{helper3}
We use the values for $\tilde r$ and $\tilde c$ given by Lemma~\ref{lemmaNbig}.
First, we have to check that the assumptions of Theorem~\ref{theoremtheorem} imply $2^{9} \epsilon \log({\epsilon}^{-1})^2 \leq \min\Big\{\frac{1}{2^{10} \tilde{r} \sqrt{-\Klo}}, 2^{-13}, \frac{\tilde{c}}{2^{12} (d+2) \sqrt{-\Klo}}\Big\}$.
Indeed, the bound $\epsilon \log({\epsilon}^{-1})^2 \leq 2^{-25} \sqrt{\frac{\Kup}{\Klo}}$ implies
\begin{align*}
2^9 \epsilon \log({\epsilon}^{-1})^2 &\leq 2^{-16} \sqrt{\frac{\Kup}{\Klo}}
\leq \min\bigg\{\frac{\sqrt{-\Kup}}{2^{12} \sqrt{-\Klo}}, 2^{-13}, \frac{d \sqrt{-\Kup}}{2^{15} (d+2) \sqrt{-\Klo}}\bigg\}
\\ &= \min\bigg\{\frac{1}{2^{10} \tilde{r} \sqrt{-\Klo}}, 2^{-13}, \frac{\tilde{c}}{2^{12} (d+2) \sqrt{-\Klo}}\bigg\}.
\end{align*}

Second, we have to verify the lower bound in Theorem~\ref{theoremtheorem} follows from the lower bound for $T$ given in Theorem~\ref{thmonemore}.  We have
\begin{align*}
\frac{\sqrt{-\Klo}}{8} r
&\geq 
\tilde{c} (d+2)^{-1} r = \frac{d \sqrt{-\Kup}}{8 (d+2)} \frac{1}{2^{19} \epsilon \log(\epsilon^{-1})^2 \sqrt{-\Klo}}
\geq
\frac{\sqrt{-\Kup}}{\sqrt{-\Klo}} \frac{1}{2^{23} \epsilon \log(\epsilon^{-1})^2 }.
\end{align*}
Therefore,
\begin{align*}
T &\geq \Bigg\lfloor\frac{\sqrt{-\Kup}}{\sqrt{-\Klo}} \frac{1}{2^{23} \epsilon \log(\epsilon^{-1})^2 } \cdot \frac{1}{\log(2\cdot 10^6 \cdot \frac{\sqrt{-\Klo}}{8} r (r \sqrt{-\Klo})^2)} \Bigg\rfloor \\
&= 
\Bigg\lfloor\sqrt{\frac{\Kup}{\Klo}} \frac{1}{2^{23} \epsilon \log(\epsilon^{-1})^2 } \cdot \frac{1}{\log(2^{-2}\cdot 10^6 \cdot (\frac{1}{2^{19} \epsilon \log(\epsilon^{-1})^2})^3)} \Bigg\rfloor
\\ &\geq 
\Bigg\lfloor\sqrt{\frac{\Kup}{\Klo}} \frac{1}{2^{23} \epsilon \log(\epsilon^{-1})^2 } \cdot \frac{1}{\log(2^{-2}\cdot 10^6 \cdot (\frac{1}{2^{19} \epsilon})^3)} \Bigg\rfloor
\geq 
\Bigg\lfloor\sqrt{\frac{\Kup}{\Klo}} \frac{1}{2^{25} \epsilon \log(\epsilon^{-1})^2 } \cdot \frac{1}{\log(\epsilon^{-1})} \Bigg\rfloor.
\end{align*}
}

\end{document}